\newcommand{\tx}{{\widetilde{x}}}
\newcommand{\tz}{{\widetilde{z}}}
\newcommand{\p}{\partial}
\newcommand{\dx}{\partial_x}
\newcommand{\dt}{\partial_t}
\newcommand{\dtx}{\partial_{\widetilde{x}}}
\newcommand{\dtz}{\partial_{\widetilde{z}}}
\newcommand{\tzeta}{\widetilde{\zeta}}
\newcommand{\tPhi}{\widetilde{\Phi}}
\newcommand{\tX}{\widetilde{X}}
\newcommand{\cT}{{\mathcal T}}
\newcommand{\cH}{{\mathcal H}}
\newcommand{\cL}{{\mathcal L}}
\newcommand{\cD}{{\mathcal D}}
\newcommand{\cE}{{\mathcal E}}
\newcommand{\cN}{{\mathcal N}}
\newcommand{\cK}{{\mathcal K}}
\newcommand{\wt}{\widetilde}
\newcommand{\lam}{{\lambda}}
\newcommand{\RR}{{\mathbb R}}
\newcommand{\cSt}{{\mathcal S}_{0}}
\newcommand{\cS}{{\mathcal S}}
\newcommand{\na}{{\nabla}}
\newcommand{\D}{{\Delta}}
\newcommand{\G}{{\Gamma}}
\newcommand{\Om}{{\Omega}}
\newcommand{\om}{{\omega}}
\newcommand{\al}{{\alpha}}
\newcommand{\ga}{{\gamma}}
\newcommand{\Lam}{{\Lambda}}
\newcommand{\ka}{{\kappa}}
\newcommand{\ma}{{\mathfrak a}}
\newcommand{\tN}{\widetilde{\cN}}
\newcommand{\ta}{\widetilde{\mathfrak a}}
\newcommand{\tv}{\widetilde{v}}
\newcommand{\tP}{\widetilde{P}}
\newcommand{\tn}{\widetilde{n}}
\newcommand{\ttze}{\widetilde{\dt\zeta}}
\newcommand{\f}{\frac}
\newtheorem{proposition}{Proposition}
\newtheorem{theorem}{Theorem}
\newtheorem{lemma}{Lemma}
\newtheorem{corollary}{Corollary}
\newtheorem{definition}{Definition}
\newtheorem{assumption}{Assumption}
\theoremstyle{remark}
\newtheorem{remark}{Remark}
\begin{document}

\title[ A priori energy estimate in weighted norms]{A priori energy estimate with decay in weighted norms for the water-waves problem with contact points}
\author{Mei Ming}
\address{School of Mathematics and Statistics,Yunnan University, Kunming 615000, P.R.China}
\email{mingmei@ynu.edu.cn}
\date{}

\maketitle

\begin{abstract}
We prove a weighted a priori energy estimate for the  two dimensional water-waves problem with contact points  in the absence of gravity and surface tension. When the surface graph function and its time derivative have some decay near the contact points, we show that  there is corresponding decay for the velocity, the pressure and other quantities in a short time interval. As a result, we have fixed contact points and contact angles. To prove the energy estimate, a conformal mapping is used to transform the equation for the mean curvature into an equivalent equation in a flat strip with some weights. Moreover, the weighted limits at contact points for the velocity, the pressure etc. are  tracked and discussed.   Our formulation can be adapted to deal with more general cases.
\end{abstract}

\tableofcontents

\section{Introduction}

We consider a  two-dimensional domain $\Om$ with two fixed contact points $p_l, p_r$ in the absence of  gravity and surface tension. In fact, the domain 
\[
\Om=\{(x,z)\,|\, b(x)\le z\le \zeta(t, x),\ x_l\le x\le x_r\}
\]
is changing with time, where
\[
\G_t=\{(x,z)\,|\, z=\zeta(t, x),\ x_l\le x\le x_r\}
\]
is the free surface curve denoted by a graph function $\zeta$ and $x_l, x_r$ are two constants.

The bottom is denoted by 
\[
\G_b=\{(x,z)\,|\, z=b(x), \ x_l\le x\le x_r\},
\]
where we assume that $b(x)$ can be smooth enough, and $\G_b$ becomes some line segment near the corners for the sake of simplicity.

The points of intersection $p_l, p_r$ between $\G_t$ and $\G_b$ take place only at $x=x_l, x_r$ (the left and the right), which means
\[
b(x)<\zeta(t, x)\qquad\hbox{when}\quad x_l<x<x_r,\ t\in[0, T]
\]
and $b(x_i)=\zeta(t,x_i)$ for $i=l, r$ and some $T>0$. 

\begin{center}
\resizebox{12cm}{!}{\includegraphics{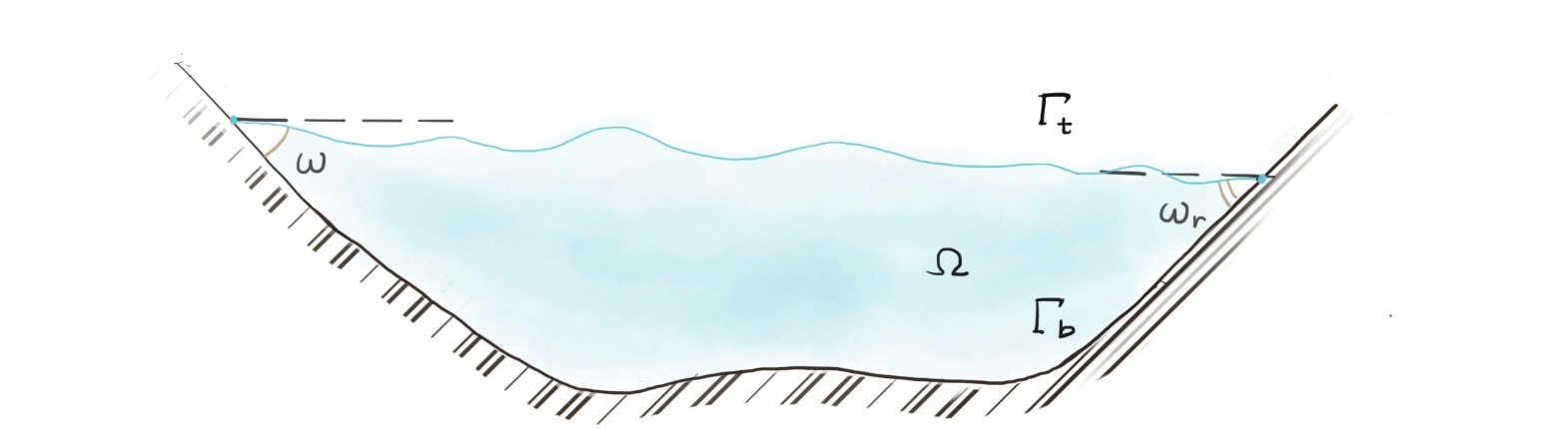}}\\
Figure 1. The physical domain
\end{center}

Meanwhile, we have some more assumptions on the free surface $\G_t$:

\begin{assumption}\label{config}
$\G_t$ are assumed to intersect with $\G_b$ at two fixed contact points when $t\in [0, T]$ for some $T>0$: There are the left contact point $p_l$ with fixed contact angle $\om$  and the right contact point $p_r$ with fixed angle $\om_r$, and both $\om, \om_r\in (0, \pi/2)$. Moreover, we take 
\[
\dx \zeta|_{x_i}=0,\quad i=l, r,\ t\in [0, T]
\]
which means the tangential lines at the contact points on $\G_t$ are both horizontal.
\end{assumption}

We plan to investigate  possible behaviors of ideal incompressible irrotational water waves in the case of fixed contact angles (some decay of the free surface indeed) and zero gravity. To present the water-waves system,  we start with Euler's equation:
\begin{equation}\label{Euler}
\dt v+v\cdot\na v=-\na P\qquad \hbox{in}\quad \Om,\quad t\ge 0
\end{equation}
where $v=(v_1, v_2)^T$ is the velocity field  and $P$ is the pressure to be determined.  We use the notation of material derivative frequently:  $D_t=\dt+v\cdot\na$.

The incompressibility and the irrotationality of the fluid are expressed by 
\begin{equation}\label{incomp-irro}
\na\cdot v=0,\quad \na\times v=0\qquad \hbox{in}\quad \Om,\quad t\ge 0.
\end{equation}

At the bottom, there is no fluid particles transported, which means 
\begin{equation}\label{bottom condi}
v\cdot n_b|_{\G_b}=0\qquad\hbox{for}\quad t\ge 0,
\end{equation}
where $n_b$ is the unit outward normal vector on $\G_b$.

The kinematic condition holds at the free surface, i.e.
\begin{equation}\label{kinematic condi}
\dt\zeta=(1+|\dx\zeta|^2)^{\f12}v\cdot n_t|_{\G_t: z=\zeta}\qquad \hbox{for} \quad t\ge 0,
\end{equation}
where $n_t$ is the unit outward normal vector on $\G_t$.

Moreover, ignoring the effect of surface tension, we have 
\begin{equation}\label{P condi}
P|_{\G_t}=0\qquad \hbox{for}\quad t\ge 0
\end{equation}
without loss of generality.

\bigskip

Before introducing our result, we recall briefly earlier works on the well-posedness of the water-waves problem with smooth boundaries, where one has smooth surface $\G_t$ satisfying 
\[
\G_t\cap \G_b=\emptyset.
\]
There is a very rich literature on various local well-posedess results for this problem, see \cite{Na, Yo1, Yo2, Craig, Wu1, Wu2, Lannes, Ig-Ta, Iguchi} and \cite{ABZ2009, CL, Lin, CS, SZ, SZ2, Sch, BG1, OT1, AM1, ZZ, AL, LannesBook, WZZZ} etc..
Starting around 2009, there are quite a few results focusing on the global well-posedness with small initial data and some long-time behaviors, see \cite{Wu5, GMS, AD, IP, DIP, HIT1, Wang1, Su} etc.. Moreover, there are also some works concerning  ``splash" or ``splat'' singularities on the free surfaces, see \cite{CCFG, CCFGG, CS2} etc..

\bigskip

Compared to the various results for smooth-boundary water waves, there are only very limited results on the mathematical theory of  well-posedness when the boundary  is Lipschitz, i.e. water waves with non-smooth boundaries (non-smooth-boundary water waves). In fact, the mathematical research for this problem just started these years, and there are in general two kinds of problems depending on the location of the corners. 

\medskip

The first kind of problem is the contact-line (contact-point in two dimensional case) problem, where there are intersections between the free surface and the fixed bottom, i.e. 
\[
\G_t\cap\G_b\neq\emptyset.
\]
As a pioneer work for this problem, Alazard, Burq and Zuily  mention in \cite{ABZ3} a special case with zero surface tension,  where the contact angle  with respect to a vertical wall is equal to $\pi/2$ (``the right angle" to keep the Taylor's sign condition, i.e. $\ma =-\na_{n_t}P|_{\G_t}>0$). They use a periodic extension to turn this problem into a classical smooth periodic case.  
de Poyferr\'e \cite{Poyferre} proves an a priori energy estimate in Sobolev spaces in general  n-dimensional bounded domains without surface tension. The contact angles there are assumed to be small to make sure that the solution has sufficient regularity near the corner. 
Under a similar assumption of small contact angles, Ming and Wang  obtain the a priori estimate as well as the local well-posedness  in two-dimensional corner domains  in the presence of  surface tension, see \cite{MW2, MW3}.  Meanwhile,  notice that both \cite{Poyferre} and \cite{MW2, MW3} use the geometric formulation  introduced in \cite{SZ}. 

There are  some works concerning properties of the corresponding Dirichlet-Neumann operator in Lipschitz domains, which is a very important ingredient in the research of water waves. Ming and Wang \cite{MW1} adapts some elliptic theory in corner domains (see for example \cite{PG}) to the desired form with boundary dependence.  Agrawal and Alazard  prove in \cite{AA} some refined Rellich inequalities and  some sharp inequality for the D-N operator.

Besides, there are also some more related works on models of water waves with contact lines. Lannes and M\'etivier \cite{LM} considers the Green-Naghdi system in a beach-type domain, which is a shallow-water model of the water-waves problem. Lannes \cite{Lannes1} works on the floating-body problem and proposes a new formulation  which can be easily  generalized in order to take into account the presence of a floating body.  Lannes and Iguchi \cite{LI} considers initial boundary value problem with a free boundary arising in wave-structure interaction, which contains the floating problem in the shallow water regime. In \cite{LI2023} Iguchi and Lannes prove the local well-posedness  for a shallow water model with a fixed  partially immersed object. For more related works, see \cite{Bocchi, BLW} etc..

For contact-line problems of Navier-Stokes system, Guo and Tice \cite{GT} shows  a priori estimates  in the case of  Stokes equations. Tice and Zheng \cite{TZ} proves the local well-posedness of the contact-line problem in two-dimensional Stokes flow. Moreover, Guo and Tice \cite{GT2} proves an a priori estimate for the contact-line problem of two-dimensional Navier-Stokes flow. For  Darcy's flow, one can see \cite{KM1, KM2} etc..

\medskip

 The second kind of problem focuses on crests or cusps on the free surface, i.e. the surface is Lipschitz. 
 Kinsey and Wu, Wu \cite{WuK, Wu3} prove the a priori estimate and the local well-posedness for two-dimensional gravity water waves in the half plane, where there is a crest on the free surface. The Taylor's sign  degenerates at the crest point,  and the authors flatten the domain with a Riemann mapping and then use some natural weighted norms to finish the energy estimate. The crest angle is less than $\pi/2$ here and doesn't change with respect to time.
 Based on these works, Agrawal \cite{Agra} shows that an initial interface with angled crests remains angled crests in a more general way.  
C\'ordoba, Enciso and Grubic \cite{CEG}  considers the case with cusps and crests on the surface in the absence of gravity, where the angles of these crests are less than $\pi/2$ and change in time. C\'ordoba, Enciso and Grubic   prove in \cite{CEG2023} the local well-posedness for gravity water waves in weighted Sobolev spaces which allow for interfaces with corners. In these two works, the Taylor's sign always degenerates at the crest point on the surface. Meanwhile, the weighted norms used in \cite{CEG2023} are introduced in \cite{KMR} for example, including both so-called homogeneous norm and inhomogeneous norm.
 
 We would like to mention the famous Stokes waves  which can  be dated back  to works by G. Stokes \cite{Stokes, Stokes2}, where  traveling-wave solutions with limit crest angle $2\pi/3$ are discussed. 
 
 \bigskip
 
It is straightforward to see that, the main difference between non-smooth-boundary water waves and smooth-boundary water waves  lies in the corners on the boundary. As a result, the analysis involving the corners (i.e. domain singularities) and  proper adaptions of the existing results of smooth-boundary water waves become  the key points in the non-smooth water waves.

\subsection{Main result of the paper} We focus on the case of two dimensional water waves with  contact points, where both the surface function $\zeta$ and the velocity $v$ have some decay near the corners. The Taylor's sign is also assumed to degenerate at the contact points but it keeps the positive sign with some weight, see Assumption \ref{assump on a}. Meanwhile, our problem is considered in the absence of gravity and surface tension. 

In fact,  it is not clear so far whether the Taylor's sign vanishes (degenerates) at the contact point, so it makes sense for us to consider the special case when the Taylor's sign degenerates at corner points. Moreover, similar weighted conditions for the Taylor's sign can be found in the surface-crest case, see  \cite{WuK, CEG2023}.

\medskip

Our work is inspired by two observations.
The first observation is about the singularity decomposition for solutions to elliptic systems (see \cite{PG} for example) and the homogeneous norm introduced in \cite{KMR} for elliptic estimates. On one hand, for an elliptic system in a corner domain, the singular part of the solution usually takes the form of $r^\lam$, where $r$ is the radius to the corner point, and $\lam$ is some related eigenvalue of this system. 

On the other hand,  according to the elliptic theory in corner domains (see for example \cite{KMR}), proper decay near the corners in boundary conditions can result in corresponding decay near the corners for the solution to an elliptic system. This kind of decay can be described by some homogeneous norms. 
In fact, for a sector $\mathcal K$ with the vertices at the origin,   the so-called homogeneous norm is defined as 
\[
\|u\|_{V^l_\beta(\mathcal K)}=\Big(\sum_{|\alpha|\le l}\int_\mathcal K  r^{2(\beta-l+|\al|)}\big|\na^\alpha_X\,u\big|^2dX\Big)^\frac12
\]
with $r$ the radius with respect to the vertices, $l$ the order of derivatives and $\beta$ the weight. A complete theory of elliptic estimates is developed in \cite{KMR} using this kind of weighted norms (which is similar as the elliptic theory in Sobolev spaces), and the authors  use a conformal mapping to transform the domain into a flat strip sometimes. According to this theory, these weighted norms can describe solutions to elliptic systems with boundary conditions given in the form of  $r^s$ with $s$ some real number (there are some more conditions about the power $s$ essentially avoiding the eigenvalues, see \cite{KMR}). Therefore, as we stated before, {\it proper decay on the boundary can lead to corresponding decay of the solution.}

The second observation is that one can use a conformal mapping $\cT$ to transform a fixed flat strip $\cSt$ (with complex coordinates $\wt Z=\tx+i \tz$, see \eqref{strip}) to the physical domain $\Om$ at time $t$ (see Figure 2 in Section \ref{preliminaries}), which behaves like $e^{\wt Z}$ if we focus on the left corner of $\Om$ and a natural weight (see \eqref{weight function}) appears:
\[
\al(\tx)\approx|\cT'|\approx e^\tx\qquad\hbox{near the left corner where}\quad \tx\rightarrow -\infty.
\] 
For more details about $\cT$, one can see Section \ref{preliminaries}.  Consequently, there will be some coefficients in form of $\cT$ appearing everywhere in the equation of water waves along with derivatives, but a very good point here is that one can rewrite the D-N operator in $\Om$ into the D-N operator in the flat strip with some coefficient (see \eqref{DN transform})! In fact, it is lucky to see that properties for the D-N operator defined in a flat strip are already well-studied, see for example \cite{Lannes, LannesBook}.

\medskip

As a result, it is interesting to see if this kind of homogeneous norms from \cite{KMR} can be adjusted to our problem, where we transform our equation to an equivalent equation in the  flat strip $\cSt$ with some natural weights. As in our previous works \cite{MW2, MW3, MW4}, we use the geometric formulation following Shatah and Zeng \cite{SZ}, so the system of water waves \eqref{Euler}-\eqref{P condi} is rewritten into an equation of the mean curvature $\kappa$ on the free surface, see \eqref{Nk eqn}.  Using a conformal mapping, this equation of $\kappa$ is transformed into a weighted equation on the surface the the flat strip.

Based on the homogeneous norms in \cite{KMR}, we introduce some weighted norms in the strip for the surface function $\zeta$, the velocity $v$ etc.. However, there are some difficulties arising even after we identify all the weighted norms and succeed to prove the corresponding weighted elliptic theory.

\medskip

The first difficulty lies in the estimate for the conformal mapping. The higher-order-derivative estimate is needed in the form of weighted estimate, and the time-dependent estimate is also involved as well. Combining our weighted elliptic theory with some computations in \cite{SWZ}, we succeed in finishing the desired estimates (see Section \ref{T estimate section}).

The second difficulty is about the higher-order terms of velocity $v$ and pressure $P$ on the bottom, while we notice that the surface $\G_t$ and the bottom $\G_b$ intersect at the contact points with different boundary conditions in the related elliptic systems for $v$ and $P$ etc.. Compared to estimates for smooth-boundary water waves or surface-crest water waves, this is a new part to deal with. Meanwhile, compared to the case of contact points with surface tension,  the regularity of the surface $\G_t$ (i.e. the graph function $\zeta$) is lower in our case without surface tension, and  there will be a loss of $1/2$-order derivative for the  velocity as long as one uses the kinematic condition \eqref{kinematic condi}. Therefore, the estimate for $v$ is very tricky. Luckily with our chosen weighted norms in the flat strip, we succeed to prove a delicate higher-order estimate for $v$ (see Section \ref{v estimate section}) combining some tricks  in \cite{SZ} (which considers the case of smooth free surface without any bottom).

Another difficulty lies in dealing with the corner behaviors of the surface, velocity and pressure etc.. In fact, it turns out that we need to trace all the weighted limits of these quantities at the corners. The non-zero weighted limits take place since we assume that the weighted Taylor's sign is bounded below by a positive number, which together with the boundary condition on the bottom of pressure $P$ imply that the weighted limits of pressure at corners are not vanishing. As a result, some more analysis for the corresponding weighted limits is needed. In our settings, we show that the weighted limits for $\dt\zeta$, $v$, $P$ and other quantities are constant with respect to time $t$.  

\medskip

As a result, one can see that our a priori estimate is different from all the known works mentioned before on well-posedness of water waves. So far there are only two works on the case of contact points without surface tension, see \cite{ABZ3} (for a special angle $\pi/2$) and \cite{Poyferre} (a priori estimate for general n dimensions), where the Taylor's sign is assumed to be bounded below and Sobolev spaces are used.  

Moreover, our result is also very different from the weighted formulations in \cite{WuK, CEG}. The first point is that there are contact points and the bottom $\G_b$ in our case, which is different from the case of crests on the free surface from the physical settings. The second point is that higher-order derivative terms appear on the bottom when we consider the energy estimate (as mentioned before), while there are no such terms for the case without fixed bottom. The last point is again about the weighted limits, which arise because of the weighted Taylor's sign and the boundary condition of pressure at the bottom.

\medskip

The main theorem of our paper is stated below in an informal way, while the exact main theorem is Theorem \ref{energy estimate} in Section \ref{energy}. To be more clear,  we  denote by $\tzeta=\zeta\circ \cT|_{\G_t}$, $\ttze=(\dt \zeta)\circ \cT|_{\G_t}$ for the surface graph function $z=\zeta$ with a slight abuse of notation (see Section \ref{equation}). Moreover, we recall the weight function $\al=\al(\tx)$ introduced by the conformal mapping $\cT$. The  energy $\cE(t)$ is defined in the beginning of Section \ref{energy}, which contains weighted norms of $(k+1/2)$-th order derivative of the mean curvature $\kappa\circ \cT|_{\G_t}$ and $k$-th order derivative of $(D_t\kappa)\circ \cT|_{\G_t}$ and some lower-order terms.

\begin{theorem} (Informal version of Theorem \ref{energy estimate}) Assume that the contact angles $\om, \om_r\in (0, \pi/2)$, the integer $k$ and real $\ga$ satisfy
\[
2\le k\le \min\{\f\pi\om, \f\pi{\om_r}\}, \quad 0<\ga+1\le \min\{\f{2\pi}{\om}, \f{2\pi}{\om_r}\}.
\]
Let  a solution to the water-waves system \eqref{Euler}-\eqref{P condi} be given by the free surface graph function $\zeta$ and its time derivative $\dt\zeta$ with $\al^{-1}\dtx\tzeta\in H^{k+3/2}(\RR)$ and $\al^{-\ga-2}\ttze-a_{\zeta_t}\in H^{k+2}(\RR)$ ($a_{\zeta_t}$ is the weighted limit of $\al^{-\ga-2}\ttze$ near corners, see \eqref{a zeta t def}) and the initial values are chosen from the set of initial values $\Lam_0$ (see Definition \ref{initial value bound} in Section \ref{ref set def}). Moreover, assume that  the weighted Taylor sign condition holds initially for $t=0$, i.e. 
there exists a positive constant $a_0$ such that
\[
\alpha^{-2\ga-3}(-\na_{n_t}P)\circ \cT|_{\G_t}\ge a_0>0 \qquad \hbox{when}\quad t=0.
\]
Then there exists $T^*>0$ depending only on the corresponding initial values such that the energy estimate for  the energy $\cE(t)$ is closed for $t\in [0, T]$ with $T\le T^*$. 
\end{theorem}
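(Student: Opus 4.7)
The plan is to establish a closed differential inequality of the form $\f{d}{dt}\cE(t)\le F(\cE(t))$ and then run a continuity argument on a short time interval, working throughout on the reference strip $\cSt$ after pulling back via the conformal mapping $\cT$. The natural quantity to differentiate is the mean curvature $\ka$ on $\G_t$, which satisfies an evolution equation of the form \eqref{Nk eqn} in the geometric formulation of \cite{SZ}. Under $\cT$, the D-N operator transforms as in \eqref{DN transform}, the weight $\al\approx|\cT'|$ appears naturally, and the curvature equation becomes a weighted equation on the upper boundary of $\cSt$, well suited to the norms of Section \ref{energy}.

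Before starting the top-order estimate I would assemble the required control on $\cT$ from Section \ref{T estimate section} (including its higher-order and time-dependent versions), together with the weighted elliptic regularity on $\cSt$ adapted from the homogeneous-norm theory of \cite{KMR}. These supply bounds on $v$, $P$ and their traces on $\G_t$ and $\G_b$ in terms of $\al^{-1}\dtx\tzeta$, $\al^{-\ga-2}\ttze - a_{\zeta_t}$ and $\cE(t)$; the velocity estimate of Section \ref{v estimate section} handles the $1/2$-derivative loss coming from \eqref{kinematic condi}, which is crucial in the absence of surface tension. The parameter restrictions $2\le k\le \min\{\pi/\om,\pi/\om_r\}$ and $0<\ga+1\le \min\{2\pi/\om,2\pi/\om_r\}$ ensure that the weight exponents avoid the singular indices of the elliptic systems on $\Om$ near $p_l,p_r$, so that the Kondratiev-type theory applies cleanly.

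The main step is to apply $k$ combined weighted tangential derivatives together with $D_t$ to the pulled-back curvature equation, pair with the appropriate weighted dual quantity, and integrate on the top boundary of $\cSt$. The leading-order terms yield $\f12\f{d}{dt}\cE(t)$ plus a boundary bilinear form that is coerced by the weighted Taylor sign hypothesis $\al^{-2\ga-3}(-\na_{n_t}P)\circ\cT|_{\G_t}\ge a_0$. Lower-order pieces, commutators with $D_t$, and time derivatives of the coefficients of the pulled-back operator are controlled by the conformal-map estimates and by product rules in the weighted spaces. The hard part, and what distinguishes this estimate from the smooth-boundary theory and from the surface-crest works \cite{WuK,CEG}, will be the higher-order traces of $v$ and $P$ on the bottom $\G_b$: since $\G_t$ and $\G_b$ meet at $p_l,p_r$ with \emph{different} boundary types, I expect to split off the explicit singular profiles $r^\lam$ dictated by the local eigenvalue problem at each corner and to absorb the remainders into the weighted norms.

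In parallel, the weighted limits at the contact points (such as $a_{\zeta_t}$, and the analogous limits for $v$, $P$) must be shown to be constant in $t$; this follows from the boundary conditions $P|_{\G_t}=0$ and $v\cdot n_b|_{\G_b}=0$ combined with the weighted Taylor sign, via a short ODE-type identity at each $p_l,p_r$. Once $\f{d}{dt}\cE(t)\le F(\cE(t))$ is in hand, the initial bound $\cE(0)\le C_0$ coming from the data in $\Lam_0$, together with the continuity of the weighted Taylor sign from $t=0$, provide $T^*>0$ depending only on these initial quantities on which $\cE(t)\le 2C_0$, thereby closing the estimate on $[0,T]$ for every $T\le T^*$.
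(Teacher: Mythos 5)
Your outline matches the paper's proof essentially step for step: pull back \eqref{Nk eqn} to $\cSt$ via $\cT$, differentiate the weighted energy built on $w=\wt\kappa$ and $\wt D_t w$, use the weighted Taylor sign for coercivity, control everything via the $\cT$-estimates of Section \ref{T estimate section}, the weighted elliptic theory of Section \ref{elliptic estimate}, the velocity/pressure estimates of Section \ref{v estimate section}, and show the corner limits are time-independent before closing by a continuity argument. The one small point of framing worth noting is that the paper handles the higher-order bottom traces of $v$, $P$ not by an explicit $r^\lam$ singular-profile decomposition but by localizing near the smooth bottom $\G_b$ (Corollaries \ref{higher order local}, \ref{v higher order}), while the "profile subtraction" actually takes place for the surface variables through the modified equation \eqref{new w eqn} with the constant-in-time limit $a_{w_t}$ removed from $\wt D_t w$.
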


To understand better the energy and the a priori estimate  performed in  $\cSt$ instead of the physical domain $\Om$,  we use the following  remark to see what does the conclusion look like if we transform the weighted norms in $\cSt$ back into corresponding weighted norms in $\Om$. To see more details, see Remark \ref{homo norm}.

\begin{remark} According to our theorem,  we set that the mean curvature $\kappa$ of the surface $\G_t$ (instead of using the surface directly)  and its material derivative $D_t\kappa$ satisfy 
\[
r^{1/2}\kappa\in L^2,\ r^{-1/2}(r^{-\ga}D_t\kappa-a_{w_t})\in L^2\quad\hbox{near the corners},
\] 
where $\ga>-1$, $r$ is  the radius with respect to to the corner point and $a_{w_t}$ is some weighted limited defined in Proposition \ref{dt Ka estimate}. This makes Assumption \ref{config} hold.
As a result,   we have for the velocity $v$ and the pressure $P$ that
\[
r^{-1}(r^{-\ga-2}v-a_{v})\in L^2,\quad r^{-1}(r^{-2\ga-4}P-a_p)\in L^2 \quad\hbox{near the corners},
\]
where  $a_{v}$, $a_p$ are corresponding weighted limits of $v, P$ defined in \eqref{av def}, \eqref{a p def}. {\it So one can see that if we set some decay near the corners on the free surface, we will have corresponding decay  for the velocity and the pressure etc. in a short time interval. Moreover, there is some faster decay for time derivatives. Therefore, it makes sense that we deal with the special case when the contact points and angles are fixed during this time period.}
\end{remark}

\begin{remark} (1) The weighted Taylor's sign  holds for $t\in [0, T]$, see Remark \ref{rmk on Taylor} for more details;
(2) The surface is set to be tangentially horizontal at contact points because we want to simplify the estimates technically. It is possible to generalize this point. Of course it is also possible to consider more general cases with contact points and angles changing with respect to time.
\end{remark}

\subsection{Organization of the paper} In Section \ref{preliminaries} we introduce the conformal mapping $\cT$ from the strip domain $\cSt$ to the corner domain $\Om$, and lower-order-derivative estimates of $\cT$ are proved. In Section \ref{elliptic estimate} we  show some weighted elliptic theory in strip domains and deal with some weighted estimates for the D-N operator. The equation of $\kappa$ is introduced in Section \ref{equation}, which is then transformed again using the conformal mapping. Moreover, weighted estimates for $\cT$, $v$, $P$ etc. are proved and the corresponding weighted limits are discussed. In the end, our a priori energy estimate is proved in Section \ref{energy}.

\subsection{Notations}
- The surface $\G_t$ is parameterized by the graph function $z=\zeta(t, x)$, and $\G_b$ is parameterized by $z=b(x)$ with $b$ smooth enough.\\
- we denote by $n_t, \tau_t$ the unit outward normal and tangential vectors on the free surface $\G_t$. $n_b, \tau_b$ are defined in a similar way on the bottom $\G_b$.\\
- $\Pi$: the second fundamental form where $\Pi(w)=\na_w n_t\in T_X\G_t$ (space of tangential vectors)for  $w\in T_X\G_t$ .\\
- $\cSt$ is the strip domain with height $\om$ defined in \eqref{strip} with the same notations $\G_t$, $\G_b$ for boundaries.\\
- $u^\top=(u\cdot \tau_t)\tau_t$ for a vector-valued function $u$ defined on $\G_t$, and $u^\perp=u\cdot n_t$ on $\G_t$.\\
- Let $\cH(f)$ or $f_\cH$ be the harmonic extension in $\Om$ (or $\cSt$) for some function $f$ defined on $\Gamma_t$, which is defined by the elliptic system
\[
\left\{\begin{array}{ll}
\Delta \cH(f)=0\qquad\hbox{in}\quad \Omega\ (\hbox{or}\ \cSt),\\
\cH(f)|_{\Gamma_t}=f,\quad \na_{n_b}\cH(f)|_{\Gamma_b}=0.
\end{array}\right.
\]
- We denote by
\[
\cN f=\na_{n_t}f_{\cH}|_{\G_t}
\]
the Dirichlet-Neumann (D-N) operator without the coefficient $\sqrt{1+|\dx \zeta|^2}$.\\
- The inner product $(\cdot,\cdot)$ stands for the inner product $(\cdot, \cdot)_{L^2(\RR)}$.\\
- The norm $\|\cdot\|_{H^s}$ stands for Sobolev norm in $\Om$ or $\cSt$,  and   $|\cdot|_{H^s}$ denotes the norm in $\RR$ (or in $\G_t$, $\G_b$) when no confusion will be  made. \\
- $\wt f$ stands for $f\circ \cT$ and is defined in $\cSt$ or the surface $\G_t$ of $\cSt$, where $f$ is a function in $\Om$ or the surface $\G_t$ of $\Om$. \\
- $\chi_l, \chi_r$ are some smooth enough cut-off functions near $\tx=-\infty, +\infty$:
\[
\chi_l(\tx)=\begin{cases} 1,\quad \tx\le -c_0-1,\\
0,\quad \tx\ge -c_0,
\end{cases}
\chi_r(\tx)=\begin{cases} 0,\quad \tx\le C_0,\\
1,\quad \tx\ge C_0+1,
\end{cases}
\]
where the constants $c_0$, $C_0$ are defined later in the weight function.\\
- $[s]$ stands for the integer part of $s\in \RR$.\\
- We denote by $a_{f, l}(\tz)=\lim_{\tx\rightarrow -\infty}\al^{-\beta}f$ the weighted limit when $\tx\rightarrow -\infty$ for a function $f(\tx, \tz)$, and we denote by $a_{f, r}$ similar weighted limit when $\tx\rightarrow +\infty$. \\
- The index $i$ ($i=l, r$) in the estimates stands for the summation of $i=l, r$.

\section{Preliminaries}\label{preliminaries}
We first present some Sobolev embedding inequalities, product estimates and a trace theorem.
\begin{lemma}\label{embedding}(1) There exists a constant $C$ independent of $f\in H^s(\RR)$ with $s>1/2$ such that
\[
|f|_{L^\infty(\RR)}\le C|f|_{H^{s}(\RR)};
\]

(2) Let $\mathcal G$ be an open subset of $\RR^2$ with a Lipschitz boundary. Then there exists a constant $C$ independent of $f\in H^s(\mathcal G)$ with $s>1$ such that
\[
|f|_{L^\infty(\mathcal G)}\le C|f|_{H^{s}(\mathcal G)}.
\]
\end{lemma}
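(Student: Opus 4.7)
The plan is to reduce both statements to classical Fourier arguments on Euclidean space, controlling $L^\infty$ by $L^1$ of the Fourier transform and then using Cauchy--Schwarz in frequency with the Bessel potential weight.

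For part (1), I would begin from Fourier inversion
\[
f(x)=(2\pi)^{-1/2}\int_\RR \hat f(\xi)\,e^{ix\xi}\,d\xi,
\]
which gives the pointwise bound $|f(x)|\le(2\pi)^{-1/2}\|\hat f\|_{L^1(\RR)}$. The key step is to control $\|\hat f\|_{L^1}$ by $|f|_{H^s}$ by inserting the weight $(1+|\xi|^2)^{s/2}$ and applying Cauchy--Schwarz:
\[
\|\hat f\|_{L^1(\RR)}=\int_\RR |\hat f(\xi)|(1+|\xi|^2)^{s/2}(1+|\xi|^2)^{-s/2}\,d\xi\le\Bigl(\int_\RR (1+|\xi|^2)^{-s}\,d\xi\Bigr)^{1/2}|f|_{H^s(\RR)}.
\]
The one-dimensional weight integral converges precisely when $2s>1$, i.e.\ when $s>1/2$, which explains the threshold in the statement.

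For part (2), the domain is only Lipschitz, so Fourier analysis cannot be done directly on $\mathcal G$. I would therefore first invoke an extension operator of Stein (or Calder\'on) type for Lipschitz domains, producing $\tilde f\in H^s(\RR^2)$ with $\tilde f|_{\mathcal G}=f$ and $\|\tilde f\|_{H^s(\RR^2)}\le C|f|_{H^s(\mathcal G)}$, where $C$ depends on the Lipschitz character of $\p\mathcal G$. The same Fourier argument as in (1), now carried out in two dimensions, gives
\[
|\tilde f|_{L^\infty(\RR^2)}\le (2\pi)^{-1}\Bigl(\int_{\RR^2}(1+|\xi|^2)^{-s}\,d\xi\Bigr)^{1/2}\|\tilde f\|_{H^s(\RR^2)},
\]
and the frequency integral is finite precisely when $2s>2$, i.e.\ $s>1$, which again matches the threshold in the lemma. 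Combining with the trivial bound $|f|_{L^\infty(\mathcal G)}\le|\tilde f|_{L^\infty(\RR^2)}$ finishes the proof.

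This is a classical Sobolev embedding, and there is no genuine obstacle. The only point requiring some care is the existence of a bounded extension operator between fractional Sobolev spaces on a Lipschitz domain, but this is entirely standard and I would simply cite it (e.g.\ Stein, \emph{Singular Integrals and Differentiability Properties of Functions}, Ch.\ VI) rather than reconstruct it.
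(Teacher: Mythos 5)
Your proof is correct. The paper itself offers no argument: it simply cites the Sobolev embedding theorem (Theorem 1.4.4.1 of Grisvard) and stops, whereas you reconstruct the classical proof — Fourier inversion plus Cauchy--Schwarz with the weight $(1+|\xi|^2)^{s/2}$ in dimension one, and extension to $\RR^2$ followed by the same frequency argument in dimension two — which is precisely the mechanism underlying the cited result, and your thresholds $s>1/2$ and $s>1$ come out for exactly the right reason. The only point worth a word of care is the extension step for non-integer $s$: Stein's operator is constructed for the integer-order spaces $W^{k,p}$, so for fractional $H^s(\mathcal G)$ you should either note that the same single operator is bounded on $H^s$ by interpolation between integer orders, or cite an extension theorem stated directly for fractional Sobolev spaces on Lipschitz domains (as in Grisvard); with that remark your argument is complete and, unlike the paper's, self-contained.
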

\begin{proof} One can check  for example Theorem 1.4.4.1 \cite{PG} for a  complete embedding statement and proof.

\end{proof}

\begin{lemma}\label{basic product} (1) Let $f\in H^1(\RR)$ and $g\in H^{1/2}(\RR)$. Then there exists a constant $C$ independent of $f, g$ such that
\[
|f g|_{H^{1/2}(\mathbb R)}\le C|f|_{H^1(\mathbb R)}|g|_{H^{1/2}(\mathbb R)};
\]

(2) Let $f\in W^{1, \infty}(\RR)$ and $g\in H^{1/2}(\RR)$. Then there exists a constant $C$ independent of $f, g$ such that
\[
|f g|_{H^{1/2}(\mathbb R)}\le C|f|_{W^{1,\infty}}|g|_{H^{1/2}(\mathbb R)};
\]

(3) Let ${\rm a}$ be a function in $W^{1,\infty}(\RR)$ such that ${\rm a}\ge a_0$ for some constant $a_0>0$ and $f\in H^{1/2}(\RR)$. Then there exists a constant $C$ depending on  $a_0^{-1}, |{\rm a} |_{W^{1,\infty}}$ such that
\[
|f|_{H^{1/2}(\RR)}\le C(a^{-1}_0, |{\rm a}|_{W^{1,\infty}})|{\rm a} f|_{H^{1/2}(\RR)}.
\]
\end{lemma}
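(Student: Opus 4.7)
The three statements are standard multiplier estimates in fractional Sobolev spaces on the real line, and my plan is to treat them in order, with part (3) reduced directly to part (2).

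For part (1), the cleanest route is bilinear interpolation. Fix $f\in H^1(\RR)$ and consider the multiplication operator $T_f : g \mapsto fg$. In one dimension, the Sobolev embedding $H^1(\RR)\hookrightarrow L^\infty(\RR)$ (a special case of Lemma \ref{embedding}) gives $|T_f g|_{L^2}\le |f|_{L^\infty}|g|_{L^2}\le C|f|_{H^1}|g|_{L^2}$, and the algebra property of $H^1(\RR)$ (proved by splitting $\dx(fg) = (\dx f)g + f(\dx g)$ and using $H^1\hookrightarrow L^\infty$ again) gives $|T_f g|_{H^1}\le C|f|_{H^1}|g|_{H^1}$. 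Interpolating between these two endpoints at level $\theta=1/2$, via the identification $[L^2(\RR),H^1(\RR)]_{1/2}=H^{1/2}(\RR)$, yields $|fg|_{H^{1/2}}\le C|f|_{H^1}|g|_{H^{1/2}}$.

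For part (2), I would work directly with the Gagliardo seminorm
\[
[u]_{H^{1/2}}^2 = \int\!\!\int \frac{|u(x)-u(y)|^2}{|x-y|^2}\, dx\, dy,
\]
together with the equivalence $|u|_{H^{1/2}}^2 \approx |u|_{L^2}^2 + [u]_{H^{1/2}}^2$. Writing $f(x)g(x)-f(y)g(y) = f(x)(g(x)-g(y)) + g(y)(f(x)-f(y))$, the first piece contributes at most $|f|_{L^\infty}^2[g]_{H^{1/2}}^2$. For the second piece I would split the integration domain into $|x-y|\le 1$, where $|f(x)-f(y)|\le |f|_{W^{1,\infty}}|x-y|$ cancels the singular factor and produces $C|f|_{W^{1,\infty}}^2|g|_{L^2}^2$, and $|x-y|> 1$, where $|f(x)-f(y)|\le 2|f|_{L^\infty}$ combined with the integrable tail $|x-y|^{-2}$ again yields $C|f|_{L^\infty}^2|g|_{L^2}^2$. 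Combined with the trivial bound $|fg|_{L^2}\le |f|_{L^\infty}|g|_{L^2}$, this gives (2).

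For part (3), the strategy is to write $f=({\rm a}f)\cdot(1/{\rm a})$ and apply (2) with multiplier $1/{\rm a}$. Since ${\rm a}\ge a_0>0$ and ${\rm a}\in W^{1,\infty}(\RR)$, a direct computation yields
\[
|1/{\rm a}|_{L^\infty}\le a_0^{-1},\qquad |\dx(1/{\rm a})|_{L^\infty} = |{\rm a}^{-2}\dx {\rm a}|_{L^\infty}\le a_0^{-2}|{\rm a}|_{W^{1,\infty}},
\]
so $|1/{\rm a}|_{W^{1,\infty}}\le C(a_0^{-1},|{\rm a}|_{W^{1,\infty}})$, and (2) closes the argument. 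The main care in the whole lemma is the two-region splitting in (2), where one must balance the Lipschitz bound of $f$ at small scales against the $L^\infty$ bound at large scales; once that is in place, parts (1) and (3) follow with little extra work.
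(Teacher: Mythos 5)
Your proof is correct for all three parts, but it diverges from the paper's in a couple of places, most notably in part (3).

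For part (1), the paper simply cites an external product lemma together with the embedding $H^1\hookrightarrow L^\infty$, while you give a self-contained argument by interpolating the multiplication operator $T_f$ between $L^2\to L^2$ and $H^1\to H^1$ endpoints. For part (2), the paper only says ``direct computation''; your Gagliardo-seminorm splitting (Lipschitz bound on $|x-y|\le 1$, $L^\infty$ bound plus integrable tail on $|x-y|>1$) is a clean instantiation of what that computation must be. For part (3), the paper takes the operator $\mathcal L=1/\mathrm{a}$, verifies it is bounded $L^2\to L^2$ and $H^1\to H^1$, and interpolates to get $H^{1/2}\to H^{1/2}$; you instead observe that $1/\mathrm{a}\in W^{1,\infty}$ with controlled norm and feed that straight into part (2). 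The two routes for (3) are genuinely different but land in the same place: the paper reuses the interpolation mechanism, whereas your reduction makes part (3) a corollary of part (2) and avoids invoking interpolation a second time. One small stylistic point: you use interpolation for part (1) and the Gagliardo seminorm for part (2), but the latter argument (with $|f|_{H^1}$ replacing $|f|_{W^{1,\infty}}$ via $H^1\hookrightarrow L^\infty$ and $|\dx f|_{L^2}$ in place of the Lipschitz bound, at the cost of one extra Cauchy--Schwarz in $y$) would also handle part (1), so you could have kept a single technique throughout. That said, nothing in your argument is wrong, and the logical dependencies are all in order.
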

\begin{proof} For the first estimate, it follows directly from Lemma 2.2 \cite{MW2} and Lemma \ref{embedding}. Moreover, the second estimate can be proved by a direct computation.

For the last estimate, one sets $\mathcal L=1/{\rm a}$ and finds that both $\mathcal L: L^2\rightarrow L^2$ and $\mathcal L: H^1\rightarrow H^1$ are bounded. Therefore, an interpolation with the bounds leads to the desired estimate immediately.

\end{proof}

\begin{lemma}\label{trace} (Traces in  a strip domain) The maps
\[
u\mapsto u|_{\G_j},\quad j=t,\,b
\]
are continuous  from $H^s(\cSt)$ onto $H^{s-1/2}(\G_j)$ for $s> 1/2$ and there hold
\[
|u|_{H^{s-1/2}(\G_j)}\le C \|u\|_{H^s(\cSt)}
\]
for some constant $C$ depending only on $\cSt$ and $j=t,\,b$.
\end{lemma}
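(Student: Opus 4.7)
The plan is to exploit the flat geometry of the strip $\cSt$: both boundaries $\G_t$ and $\G_b$ are horizontal lines parallel to the $\tx$-axis, so the standard Lipschitz-domain trace theorem of Grisvard (Theorem 1.5.1.2 in \cite{PG}) applies immediately and yields both the continuity of the trace map $H^s(\cSt)\to H^{s-1/2}(\G_j)$ and its surjectivity for $s>1/2$. I would spell this out by a short self-contained Fourier argument, which makes the constant $C$ transparent and independent of the particular boundary component.

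First I would take the partial Fourier transform in the tangential variable $\tx$, writing $\widehat{u}(\xi,\tz)$ for the transform at each horizontal level. Using Plancherel in $\tx$ together with the characterization
\[
\|u\|_{H^s(\cSt)}^2\simeq \int_{\RR}\Big(\sum_{j\le [s]}(1+\xi^2)^{s-j}\int_0^{\om}|\p_{\tz}^{j}\widehat{u}(\xi,\tz)|^2\,d\tz\Big)d\xi,
\]
the question reduces to a one-dimensional trace estimate on the interval $[0,\om]$ with parameter $\xi$. For each fixed $\xi$, the elementary one-dimensional estimate
\[
|\widehat{u}(\xi,\tz_0)|^2\le C\Bigl(\,(1+\xi^2)^{1/2}\!\!\int_0^{\om}|\widehat{u}(\xi,\tz)|^2d\tz+(1+\xi^2)^{-1/2}\!\!\int_0^{\om}|\p_{\tz}\widehat{u}(\xi,\tz)|^2d\tz\Bigr),
\]
valid at $\tz_0\in\{0,\om\}$ with $C$ independent of $\xi$, produces exactly the weight $(1+\xi^2)^{s-1/2}$ after multiplication by $(1+\xi^2)^{s-1/2}$ and integration in $\xi$, giving the desired bound $|u|_{H^{s-1/2}(\G_j)}\le C\|u\|_{H^s(\cSt)}$. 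For general (non-integer) $s>1/2$ one either runs the same inequality with the fractional-order norm characterization, or interpolates between two consecutive integer cases.

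For the onto part I would construct an explicit right inverse (a lifting). Given $g\in H^{s-1/2}(\G_t)$ one can set, for instance,
\[
u(\tx,\tz)=\mathcal{F}_{\xi\to\tx}^{-1}\!\Bigl[\widehat{g}(\xi)\,\varphi\bigl((1+\xi^2)^{1/2}(\om-\tz)\bigr)\Bigr],
\]
with $\varphi\in C_c^\infty(\RR)$ satisfying $\varphi(0)=1$ and support in $[0,\om/2)$, so that $u|_{\G_t}=g$, $u$ vanishes near $\G_b$, and a direct Plancherel computation shows $\|u\|_{H^s(\cSt)}\le C|g|_{H^{s-1/2}(\G_t)}$; the same construction (with $\tz$ replaced by $\om-\tz$) handles $\G_b$.

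No serious obstacle is expected here: the domain is as regular as one could hope, and the only care needed is to track the $\xi$-weight $(1+\xi^2)^{s-1/2}$ coming from the one-dimensional trace inequality so that it matches $H^{s-1/2}(\G_j)$. The constant $C$ depends on $s$ and on the height $\om$ of the strip but not on the particular choice $j\in\{t,b\}$, which is consistent with the statement.
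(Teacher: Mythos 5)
Your proof is correct in substance but follows a different route from the paper. The paper disposes of the lemma in one line by invoking the full-space trace theorem on $\RR^2$ (Theorem 1.5.1.1 of Grisvard) together with standard extension and restriction operators between $\RR^2$ and $\cSt$; you instead give a direct, self-contained argument adapted to the product structure of the strip: partial Fourier transform in $\tx$, a parameter-dependent one-dimensional trace inequality on the cross-section, and an explicit Fourier-multiplier lifting for surjectivity. Your approach buys transparency of the constant (dependence only on $s$ and the height $\om$, uniform in $j=t,b$) and makes the ``onto'' part explicit, which the paper leaves to the cited references, at the price of more bookkeeping. One detail to tighten: the displayed one-dimensional inequality uses a full $\p_{\tz}$-derivative, so as written it only covers $s\ge 1$, and for $1/2<s<1$ the fallback of ``interpolating between consecutive integer cases'' does not work (the trace is unbounded on $L^2$, so one endpoint is unavailable); there you must use the fractional one-dimensional trace inequality $|v(0)|^2\le C\big(\epsilon^{2s-1}[v]_{H^s(0,\om)}^2+\epsilon^{-1}\|v\|_{L^2(0,\om)}^2\big)$ with $\epsilon=(1+\xi^2)^{-1/2}$ (or the full-space Fourier argument), which is the first alternative you allude to. With that choice made explicit, the argument is complete and yields exactly the stated estimate.
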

\begin{proof} The proof follows from the trace theorem for the full space $\RR^2$ (Theorem 1.5.1.1 \cite{PG} for example) and standard extension and restriction theorems between $\RR^2$ and $\cSt$.

\end{proof}
\bigskip

We are going to introduce a conformal mapping $\cT$ defined from the fixed strip domain $\cSt$ to our physical domain $\Om$, which plays a key role in our paper.
 We show that this conformal mapping has very limited regularity near corners, although it can be smooth enough away from the corner.

\medskip

We define the fixed strip domain 
\begin{equation}\label{strip}
\cSt=\{(\tx,\tz)\,|\,-\om_*\le \tz\le 0\}
\end{equation}
so the depth of $\cSt$ is $ \om_*$. We choose here the depth 
\[
\om_*=\om,
\]
 i.e. the depth is equal to the left contact angle.
Moreover, we still denote by $\G_t$, $\G_b$ the upper and the lower boundaries of $\cSt$ when no confusion will be made.

\bigskip

We now define the conformal mapping
\[
\mathcal T:\tilde Z=\tx+i\,\tz\in  \cSt\mapsto Z=x+iz\in \Om,
\]
where we choose that {\it $\cT$ maps the left infinity of $\cSt$ to the left corner point $p_l$ of $\Om$, and  maps the right infinity of $\cSt$ to the right corner point $p_r$ of $\Om$.} 

\begin{remark}
Notice that we have one more freedom of choosing one point on $\G_t$  to fix the conformal mapping, which is also postponed to Section \ref{ref set def} and is not important.
We also notice that since $\Om$ depends on time $t$,  $\cT$ varies continuously with respect to $t$.
\end{remark}

\begin{center}
\resizebox{9cm}{!}{\includegraphics{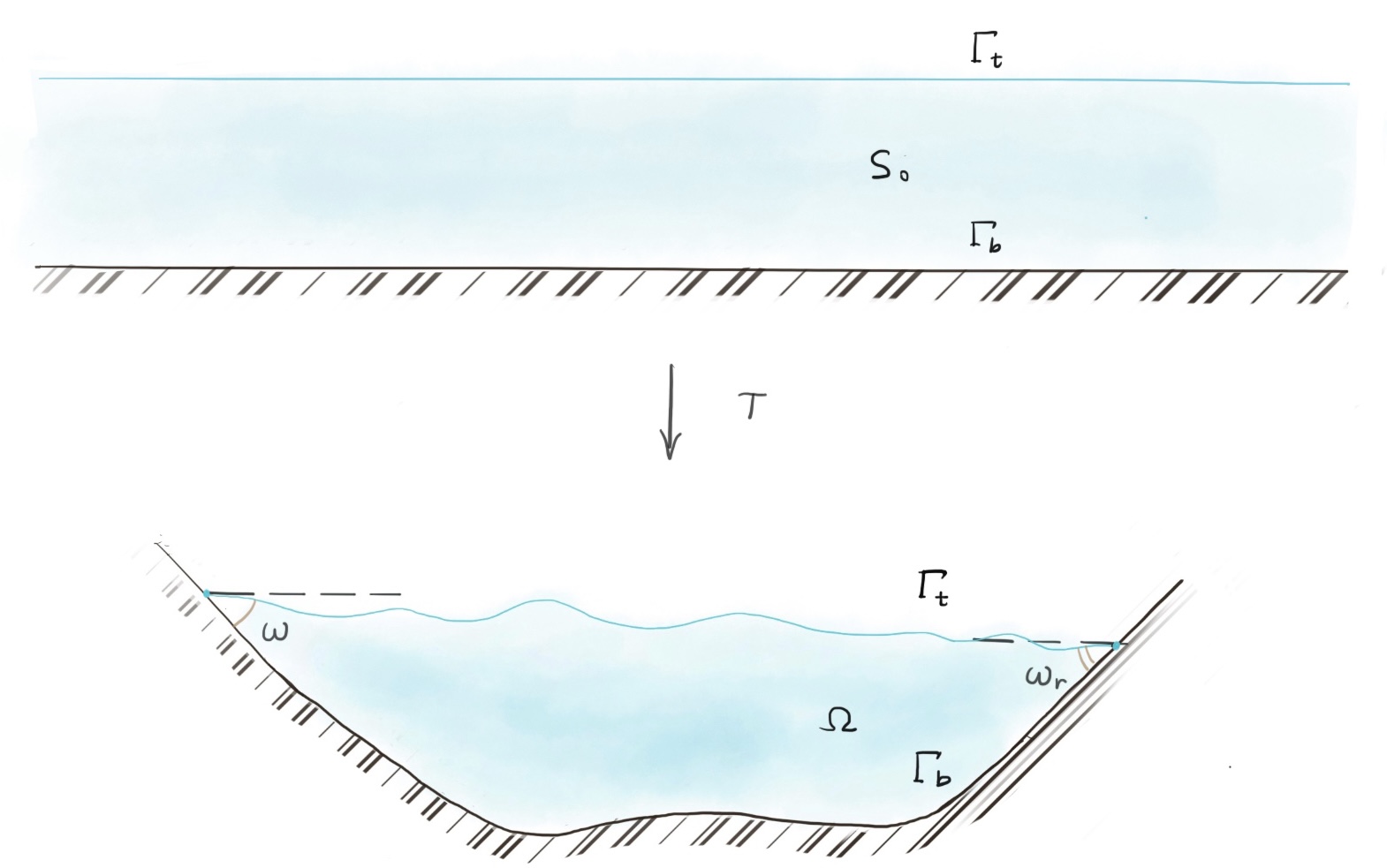}}\\
Figure 2. The conformal mapping
\end{center}

According to the properties of conformal mappings (see \cite{P} for example), we know directly that $\cT$ maps the upper (lower) boundary of $\cSt$ to the upper (lower) boundary of $\Om$, and $\cT$ can be smooth enough away from the corner.  When the surface $\G_t$ is  smooth enough, the limitation for the regularity of $\cT$ only comes from the part of the domain near the contact point.

\medskip

In fact,  we see in the following lines that {\it the conformal mapping only have limited boundary regularity due to the existence of the corner, and its regularity is measured by the angles of the corners.}

The following result focuses on the left contact point and is written in a more general case when the left contact angle $\om$ is different from $\om_*$ (recall that it is the height of the strip $\cSt$), and the boundary is assumed to be piecewise $C^{k,\sigma}$for the moment, which is the same as in \cite{P}. The detailed boundary dependence in weighted norms and  time dependence estimates are discussed later in Section \ref{ref set def}.

\begin{lemma}\label{conformal map corner} (Behavier near the left corner)
Let
\[
\beta=\f\om{ \om_*}
\]
and  the domain $\Om$ be given  with piecewise $C^{k,\sigma}$ boundary where $k$ is a nonegative integer and $\sigma\in (0,1)$.

Then  for  
\[
2\le k\le 1+\pi/\om,
\] there exist positive constants $c_0$ and  $c_1, C_1$ depending on $\Om$, $k$ such that
\[
c_1 e^{\beta\tx}\le |\cT^{(l)}(\tilde Z)|\le C_1 e^{\beta\tx},\qquad \forall \tx\le -c_0, \ l\le k.
\]
\end{lemma}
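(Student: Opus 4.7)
\medskip

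\noindent\textbf{Proof plan.} The idea is to separate the local corner structure, which is explicitly understood, from a smooth correction coming from the precise shape of $\partial\Om$ away from the tip. After a harmless translation placing $p_l$ at the origin, I would introduce the auxiliary variable
\[
w=e^{\beta\tilde Z},\qquad \beta=\om/\om_*,
\]
so that $\tilde Z\mapsto w$ maps the left end of the strip $\cSt=\{-\om_*\le\tz\le 0\}$ conformally onto a portion of the sector $\mathcal{K}_\om=\{w\neq 0:\arg w\in(-\om,0)\}$ of opening $\om$, with the two boundary rays $\{\tz=0\}$ and $\{\tz=-\om_*\}$ sent to the two bounding rays of $\mathcal{K}_\om$, and $\tx\to-\infty$ sent to $w=0$. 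Since $\om\le \om_*\pi/\om_*=\pi$ in our setting (and more generally the construction only requires $\beta\om_*\le 2\pi$), this auxiliary change of variable is a genuine conformal isomorphism onto its image.

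Composing with $\cT$, I obtain a holomorphic map $F(w):=\cT(\tilde Z(w))$ defined on a neighbourhood $\mathcal{K}_\om\cap\{|w|<\delta\}$, mapping it conformally onto a neighbourhood of $p_l=0$ in $\Om$. Both the source and target domains now have corners at $0$ of the \emph{same} opening $\om$, and by construction $F$ sends the two bounding rays of $\mathcal K_\om$ into the two boundary arcs of $\partial\Om$ meeting at $p_l$, which are pieces of $\G_t$ and $\G_b$ of class $C^{k,\sigma}$. The classical boundary regularity results for conformal maps near matching corners (Kellogg--Warschawski and Lehman--Pommerenke type statements, see e.g.\ the theorems collected in \cite{P}) then give that $F$ extends $C^{k,\sigma}$ up to $w=0$, with $F(0)=0$ and $F'(0)\neq 0$ (conformality at the corner, using $\om\in(0,\pi/2)$ so $\beta\le 1$). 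The restriction $k\le 1+\pi/\om$ is exactly the threshold below which no anomalous fractional power $w^{p\pi/\om}$ appears in the Lehman asymptotic expansion of $F$ before reaching the $C^{k,\sigma}$ order, so that the expansion is purely polynomial plus a $C^{k,\sigma}$ remainder.

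With $F\in C^{k,\sigma}$ at $0$ in hand, the bounds on $\cT^{(l)}$ follow by the chain rule. Writing $w(\tilde Z)=e^{\beta\tilde Z}$, so that $w^{(j)}(\tilde Z)=\beta^j e^{\beta\tilde Z}$, Fa\`a di Bruno's formula gives
\[
\cT^{(l)}(\tilde Z)=\sum_{j=1}^{l} B_{l,j}\big(\beta e^{\beta\tilde Z},\beta^2 e^{\beta\tilde Z},\dots\big)\,F^{(j)}(w)
=\sum_{j=1}^{l} c_{l,j}\,e^{j\beta\tilde Z}\,F^{(j)}(e^{\beta\tilde Z}),
\]
where $c_{l,l}=\beta^l$ and each $c_{l,j}$ is a polynomial in $\beta$. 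For $l\le k$ all the derivatives $F^{(j)}$ are bounded on a neighbourhood of $0$, whence the upper bound $|\cT^{(l)}(\tilde Z)|\le C_1 e^{\beta\tx}$ follows immediately by picking $c_0$ so large that $e^{\beta\tx}\le e^{-\beta c_0}$ is small and the $j=1$ term dominates. For the lower bound, I isolate the $j=1$ contribution, which is $c_{l,1}\,\beta^{l-1}\, e^{\beta\tilde Z}F'(e^{\beta\tilde Z})$ (with $c_{l,1}=\beta^{l-1}$ via $B_{l,1}(x_1,\dots,x_l)=x_l$), and use $|F'(0)|>0$ together with continuity of $F'$ at $0$ to bound this term from below by $\tfrac12\beta^l|F'(0)|e^{\beta\tx}$. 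The remaining terms contribute $O(e^{2\beta\tx})$, which is absorbed after enlarging $c_0$.

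The main technical obstacle, and the step I would justify most carefully, is the $C^{k,\sigma}$ extension of $F$ up to the matching corner together with the non-degeneracy $F'(0)\neq 0$. This is where the precise hypothesis on the piecewise $C^{k,\sigma}$ boundary, the bound $k\le 1+\pi/\om$, and the appeal to \cite{P} enter; everything else (the explicit choice of $w=e^{\beta\tilde Z}$, the chain-rule expansion, and the selection of $c_0$) is routine once that regularity is in place.
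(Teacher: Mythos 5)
Your proof is correct and takes essentially the same route as the paper: both introduce the exponential change of variables $\tilde Z\mapsto e^{\beta\tilde Z}$ (equivalently $\bar Z=e^{\beta\tilde Z}+p_l$) to conjugate $\cT$ into a conformal map $F$ from the sector $\cK$ of opening $\om$ onto $\Om$ near $p_l$, obtain boundedness of $F^{(j)}$ for $j\le k$ and the non-degeneracy $F'(p_l)\neq 0$ from a Kellogg--Warschawski/Lehman--Pommerenke type result for conformal maps between domains with matching corner angles, and then propagate the two-sided bounds to $\cT^{(l)}$ by the chain rule, isolating the $j=1$ term. The only presentational difference is that the paper makes the corner-regularity step explicit by flattening both $\cK$ and $\Om$ via $Z\mapsto Z^{\om/\pi}$ before invoking Kellogg--Warschawski for maps between smooth boundaries, whereas you cite the packaged matching-corner statement directly and organize the chain rule through Fa\`a di Bruno, which is a tidier version of the paper's remark that the leading part of $\cT^{(l)}$ is $F'(\bar Z)\bigl[e^{\beta\tilde Z}+p_l\bigr]^{(l)}$.
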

\begin{proof}

{\bf Step 1. } Mapping $F$ from  $\cK$ (a sector) to $\Om$. Recalling that $p_l$ represents the  left corner point of $\Om$, we define
\begin{equation}\label{F expression}
F(\bar Z)=\cT\circ \big[\beta^{-1}Ln(\bar Z-p_l)\big]\qquad\hbox{for}\quad \bar Z\in \cK
\end{equation}
where
\[
\cK=\big\{\bar Z\,\big|\,\bar Z=p_l+re^{i\theta},\,r\ge 0,\, -\om\le\theta\le0\big\}.
\]
Notice here that $\cK$ is the straightened sector of $\Om$ with the same angle $\om$ of the corner.
Therefore, we relate the estimate for $\cT$ to the estimate for $F$.
\medskip

{\bf Step 2.} Flattening of domains. Letting
\[
\al_0=\f\om \pi,
\]
we define
\[
F_1=Z^{\al_0}+p_l: \cK_{\al_0}\rightarrow \cK,
\]
which maps the half plane
\[
\cK_{\al_0}=\big\{Z\,\big|\,Z=re^{i\theta},\,r\ge 0,\, -\f{\om}{{\al_0}}=-\pi\le\theta\le 0\big\}
\]
to the sector $\cK$.

We also define
\[
F_2=(Z-p_l)^{1/{\al_0}}:\Om\rightarrow \Om_{\al_0}
\]
which flattens $\Om$ in a similar way and move the corner point to $(0,0)$.

We then set the mapping
\[
\wt F=F_2\circ F\circ F_1: \cK_{\al_0}\rightarrow \Om_{\al_0},
\]
which now maps between domains without corners.
Thanks to the assumption of  the boundary regularity, we know that boundaries of $\cK_{\al_0}$ and $\Om_{\al_0}$ are smooth enough near $(0,0)$ ($k\ge 2$ is needed in the following analysis in fact).

Consequently, we know that $\wt F$ is smooth enough near $(0,0)$ and its derivatives are bounded by Theorem 3.6 \cite{P} (Kellogg-Warschawski Theorem).
Besides, direct computations using conformal mappings to the unit disk and applying Theorem 3.7 \cite{P}, we have $Arg \wt F'(0,0)=0$,
which implies
\begin{equation}\label{F prime}
 \wt F'(0,0)=(c,0)\quad\hbox{for some}\quad c>0.
\end{equation}

\medskip

{\bf Step 3.} Estimates of $F$ and $\cT$.
First, we expand $F$ near  $p_l$:
\[
\begin{split}
    F(\bar Z)&
    =\big[\wt F\big((\bar Z-p_l)^{1/{\al_0}}\big)\big]^{\al_0}+p_l\\
    &=p_l+\wt F'(0,0)^{\al_0}(\bar Z-p_l)+\frac{\wt F''(0,0)}{2\wt F'(0,0)^{1-{\al_0}}}(\bar Z-p_l)^{1+1/{\al_0}}+o\big((\bar Z-p_l)^{1+1/{\al_0}}\big).
\end{split}
\]
Notice here that the second term doesn't vanish and the third term makes sense thanks to \eqref{F prime}.

Therefore, there exist two constants $c_1, C_1$ depending on a neighborhood of the corner such that
\[
c_1\le |F'(\bar Z)|\le C_1,\quad\hbox{when}\quad \bar Z\rightarrow p_l.
\]

Moreover, for $k$th-order derivative of $F$ satisfying $ k< 1+1/{\al_0}=1+\pi/\om$, we always have  $F^{(k)}(0,0)=0$ and
\[
 |F^{(k)}(\bar Z)|\le C,\quad\hbox{when}\quad \bar Z\rightarrow p_l
\]
for some constant $C$ depending on $k$ and a  neighborhood of the corner of $\Om$.

\bigskip

Now we are ready to deal with derivatives of $\cT$. In fact,   we obtain thanks to \eqref{F expression} that
\[
\cT'(\tilde Z)=\frac{F'(\bar Z)}{\big[\beta^{-1}Ln(\bar Z-p_l)\big]'}
=F'(\bar Z)\big[e^{\beta\tilde Z}+p_l\big]',
\]
where
\[
\bar Z=\bar Z(\tilde Z)=e^{\beta\tilde Z}+p_l.
\]
As a result, using the estimates of $ F$, one has the estimate for $\cT'$ when $\tx\le -c_0$ for some $c_0$.

Moreover, for higher-order derivatives of $\cT$, direct computations show that the main part in $\cT^{(l)}$ is always
\[
F'(\bar Z)\big[e^{\beta\tilde Z}+p_l\big]^{(l)},
\]
so the desired estimates also follow from the estimate for $F'$.

\end{proof}

A similar result also holds near the right corner and the proof is omitted.
\begin{lemma}\label{conformal map corner right} (Behavier near the right corner)
Let
\[
\beta_r=\f{\om_r}{ \om_*}
\]
and  the domain $\Om$ satisfy the same assumption as in the previous lemma.

Then  for  
\[
2\le k\le 1+\pi/\om_r,
\]
 there exist positive constants $C_0$ and  $c_2, C_2$ depending on $\Om$, $l$ such that
\[
c_2 e^{\beta_r\tx}\le |\cT^{(l)}(\tilde Z)|\le C_2 e^{\beta_r\tx},\qquad \forall\tx\ge C_0, \ l\le k.
\]
\end{lemma}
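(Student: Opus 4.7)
The plan is to reproduce the three-step argument of Lemma \ref{conformal map corner} with $p_l, \beta, \om$ replaced by $p_r, \beta_r, \om_r$ and with the behaviour near $\tx = -\infty$ mirrored to $\tx = +\infty$. First I would introduce the straightened sector $\cK_r$ at $p_r$ of opening $\om_r$ and define, analogously to \eqref{F expression}, an auxiliary map by
\[
F(\bar Z) = \cT \circ \big[-\beta_r^{-1} Ln(p_r - \bar Z)\big],
\]
whose inverse $\bar Z = p_r - e^{-\beta_r \widetilde Z}$ sends the right infinity of $\cSt$ to $p_r$ along the branch compatible with the orientation of $\Om$ at the right corner. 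The flattening step then follows Step 2 verbatim with $\al_0 = \om_r/\pi$: apply power mappings $Z \mapsto Z^{\al_0}$ and $Z \mapsto Z^{1/\al_0}$ based at $p_r$ to obtain a composed conformal map $\widetilde F$ between two domains whose boundaries are $C^{k,\sigma}$ near the origin, and invoke Theorem 3.6 of \cite{P} together with Theorem 3.7 of \cite{P} to conclude $|\widetilde F^{(l)}(0)| \le C$ for $l \le k$ and $\widetilde F'(0) = (c,0)$ with $c > 0$.

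In the third step, Taylor-expanding $F$ at $\bar Z = p_r$ exactly as before yields $c_2 \le |F'(\bar Z)| \le C_2$ and $|F^{(l)}(\bar Z)| \le C$ for $l \le k < 1 + \pi/\om_r$, the key observation being that the first genuinely singular term in the expansion is of order $(\bar Z - p_r)^{1 + 1/\al_0}$ and therefore remains controllable up to order $k$. The bounds for $\cT^{(l)}$ near $\tx = +\infty$ then follow from the chain rule applied to $\cT = F \circ [p_r - e^{-\beta_r \widetilde Z}]$: each iterated derivative is a sum of products of bounded derivatives of $F$ with powers of $e^{-\beta_r \widetilde Z}$, and the leading term supplies the claimed two-sided exponential bound for $\tx \ge C_0$.

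The only delicate point is bookkeeping the orientation: the branch of $Ln(p_r - \bar Z)$ must be chosen so that the top and bottom of $\cSt$ near $\tx = +\infty$ are sent by $\cT$ to the correct sides of $\G_t$ and $\G_b$ meeting at $p_r$ with opening $\om_r$. Once this is fixed, every estimate transfers verbatim from the left-corner proof and no new analytic ingredient beyond Kellogg-Warschawski is required; I do not anticipate any genuine analytic obstacle, only a careful handling of the sign and branch conventions forced by the change of end.
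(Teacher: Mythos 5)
The paper does not give a proof for this lemma; it states only that ``a similar result also holds near the right corner and the proof is omitted,'' so your proposal is filling a gap the author left deliberately, and the mirror-symmetric adaptation of the three-step argument from the left-corner lemma is indeed what is intended. Your choice of the auxiliary map $F(\bar Z) = \cT \circ \bigl[-\beta_r^{-1} Ln(p_r - \bar Z)\bigr]$, with inverse $\bar Z = p_r - e^{-\beta_r \widetilde Z}$, is exactly the right substitution, and the flattening and Kellogg--Warschawski steps carry over without new analytic content, so the argument as a whole is sound.

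One thing you should have flagged explicitly: the exponent in the statement as written is off by a sign. With $\bar Z = p_r - e^{-\beta_r \widetilde Z}$, the chain rule gives $\cT'(\widetilde Z) = F'(\bar Z)\,\beta_r\,e^{-\beta_r \widetilde Z}$, so your own computation produces two-sided bounds of the form $c_2\,e^{-\beta_r \tx} \le |\cT^{(l)}(\widetilde Z)| \le C_2\,e^{-\beta_r \tx}$ for $\tx \ge C_0$. This is the correct conclusion — as $\tx \to +\infty$ the strip end collapses onto the corner $p_r$, so $|\cT'|$ must vanish there, and it is also what is consistent with the weight function \eqref{weight function} (which is $e^{-\beta_r\tx}$ for $\tx \ge C_0$) and with the conclusion of Proposition \ref{CM estimate}. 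The lemma as printed has $e^{\beta_r \tx}$, which would blow up as $\tx \to +\infty$ and is evidently a typo. Your write-up says the leading term ``supplies the claimed two-sided exponential bound,'' which silently glosses over the fact that what you actually derived contradicts the printed statement and instead agrees with what every other use of the lemma in the paper requires; a careful proof would state this discrepancy and the corrected bound.
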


\bigskip

Based on the estimates near the corner, we now deal with the global estimates for $\cT$ in $\cSt$ and take the time parameter into considerations as well. To begin with, we first introduce the following $C^\infty(\RR)$ weight function
\[
\alpha_\beta(\tx)=\begin{cases}
e^{\beta \tx},\quad \tx\le -c_0,\\
e^{-\beta_r \tx},\quad \tx\ge C_0
\end{cases}
\]
with  $\beta, \beta_r$ defined as above.
Since  the contact angles $\om, \om_r$ are fixed in our settings, one can see that the weight function $\al_\beta(\tx)$ doesn't depend on time $t$.

\begin{proposition}\label{CM estimate}(Global estimates with time parameter)
Assume that the domain $\Om$ and $\cT$ are given by Definition \ref{ref set} in Section \ref{ref set def}. Then there exist two positive constants $c, C$ depending on the bound $L$  from Definition \ref{initial value bound} such that
\[
c\,\al(\tx)\le |\cT'(t, \tilde Z)|\le C \al(\tx),\qquad \forall \tilde Z=(\tx, \tz)\in \cSt,\ \forall t\in [0, T].
\]

Moreover, one also has  for  all $t\in [0, T]$ that
\[
|\cT^{(l)}(t, \tilde Z)|\le C \al(\tx),\qquad\forall\tilde Z\in \cSt,\   l\le k+1\le  1+\min\{\f\pi\om, \f\pi{\om_r}\}
\]
with some constant $C>0$ depending on the initial values  and $k$.
\end{proposition}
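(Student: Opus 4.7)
The plan is to patch together the local corner estimates from Lemmas \ref{conformal map corner} and \ref{conformal map corner right} with an interior estimate on the bounded middle region, and then upgrade the constants to be uniform in $t$ using the a priori bound $L$ from Definition \ref{initial value bound}.

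First, I would split the strip along the cutoff locations of the weight function: set $\cSt_- = \{\tx \le -c_0\}$, $\cSt_0 = \{-c_0 \le \tx \le C_0\}$ and $\cSt_+ = \{\tx \ge C_0\}$. Applying Lemma \ref{conformal map corner} with its parameter equal to $k+1$ (which is admissible by the hypothesis $k+1 \le 1+\pi/\om$) yields on $\cSt_-$ both the two-sided bound $c_1 e^{\beta \tx} \le |\cT'(t,\tilde Z)| \le C_1 e^{\beta \tx}$ and the upper bound $|\cT^{(l)}(t,\tilde Z)| \le C\,e^{\beta \tx}$ for every $l \le k+1$. Since $\al(\tx)=e^{\beta \tx}$ on $\cSt_-$, these are exactly the inequalities asserted by the proposition on this subregion. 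Lemma \ref{conformal map corner right} gives the analogous estimates on $\cSt_+$, where $\al(\tx)=e^{-\beta_r \tx}$.

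Second, on the compact middle region $\cSt_0$, the weight $\al$ is bounded above and below by positive constants, so the proposition reduces to showing that $|\cT^{(l)}|$ for $l\le k+1$ is bounded above and $|\cT'|$ is bounded below there. The upper bound follows from the Kellogg--Warschawski theorem (Theorem 3.6 of \cite{P}) applied on a neighborhood of $\cSt_0$ whose upper and lower boundaries inherit the smoothness of $\zeta(t,\cdot)$ and $b$ away from the contact points. The lower bound $|\cT'|\ge c>0$ is a consequence of $\cT$ being a conformal isomorphism ($\cT'$ nowhere vanishing) together with continuity and compactness, patched at $\tx=-c_0$ and $\tx=C_0$ with the lower bounds obtained from the corner lemmas.

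Third, to make the constants uniform for $t\in[0,T]$ I would trace the dependence of each ingredient above on the initial data. Under Definition \ref{ref set}, the family $\{\Om(t)\}_{t\in[0,T]}$ consists of Lipschitz domains with the \emph{fixed} contact angles $\om, \om_r$ whose boundary parameterizations $\zeta(t,\cdot)$ have a $C^{k,\sigma}$-type norm uniformly controlled by $L$ via the weighted Sobolev bound furnished by Definition \ref{initial value bound}. Consequently the sector-straightening, the flattening maps $F_1, F_2$, the Kellogg--Warschawski constants for $\wt F$, and the thresholds $c_0, C_0$ appearing in the proofs of Lemmas \ref{conformal map corner} and \ref{conformal map corner right} can all be chosen depending only on $L$ and $k$. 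Continuity of $\cT$ in $t$, guaranteed by the normalization point fixed in Definition \ref{ref set}, then legitimizes taking the resulting constants independent of $t$.

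The main obstacle I anticipate is precisely this uniform-in-$t$ control: the lemmas as stated give only per-time asymptotic estimates, and one must verify that the Kellogg--Warschawski constants and the leading Taylor coefficients of $\wt F$ at the straightened corner depend continuously on the boundary data in a norm controlled by $L$. Once this is pinned down, the patching across $\cSt_-$, $\cSt_0$, $\cSt_+$ is routine.
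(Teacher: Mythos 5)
Your proposal is correct and follows essentially the same approach as the paper: decompose the strip into a left tail, a bounded middle region, and a right tail; invoke Lemmas \ref{conformal map corner} and \ref{conformal map corner right} (with the lemma parameter chosen as $k+1$) on the tails; use smoothness of the conformal map away from the corners together with compactness on the middle region; and finally control the constants uniformly in $t$ through the uniform $H^{k+5/2}$-type bound coming from Definition \ref{initial value bound} (which gives piecewise $C^{k+1,\sigma}$ boundaries). Your closing remark correctly identifies the one point where the paper's written argument is terse — namely that one must verify the Kellogg--Warschawski constants and the normalization depend on the boundary data only through a norm dominated by $L$ — but this is the same ingredient the paper appeals to; you are not departing from the paper's route.
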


\begin{proof}
In fact,   we only need to sum up based on Lemma \ref{conformal map corner} and Lemma \ref{conformal map corner right} and use the bound Definition \ref{initial value bound} to control the estimate with respect to time $t$. 

First of all, according to the settings in Definition \ref{ref set} and Definition \ref{initial value bound}, the surface $\G_t$ lies in a weighted $H^{k+5/2}$ Sobolev space and has uniform bound with respect to the time, so it is locally in $H^{k+5/2}$, which means that the boundary of $\Om$ is piecewise $C^{k+1,\sigma}$ for some $\sigma\in (0, 1)$ by the Sobolev embedding theorem. Therefore,  applying Lemma \ref{conformal map corner} and Lemma \ref{conformal map corner right}, we know immediately that the constants in the estimates there are all controlled by the bound from  Definition \ref{initial value bound}.

Second, since we already have $\cT^{(l)}$ controlled by $\al(\tx)$ near $\infty$, what is left for us to deal with is the behavior of $\cT^{(l)}$ on the bounded domain $\{(\tx, \tz)\in \cSt | -c_0\le \tx\le C_0\}$. Recalling that the conformal mapping can be a smooth-enough diffeomorphism away from corners due to the regularity of boundary,  we know immediately that $\cT'$ satisfies
\[
c_3\le |\cT'|\le C_3
\]
for some constants $c_3, C_3$ depending on the initial values. Besides, we also have
\[
\big|\cT^{(l)}\big|\le C,\qquad l\le k+1
\]
for some positive $C$ depending on the initial values and $k$.

 As a result, summing up all these estimates above, we  derive the desired estimates.

\end{proof}

\bigskip

In the end, we specify  the following weight function used throughout this paper, which is a special case of $\al_\beta$. Recall that we choose the height $\om_*$ of the strip $\cSt$ to be the left contact angle $\om$, i.e.  $\beta=1$ for the left corner:
\begin{equation}\label{weight function}
\alpha(\tx)=\begin{cases}
e^{\tx},\quad\quad  \tx\le -c_0\\
e^{-\beta_r\tx},\quad \tx\ge C_0
\end{cases}
\end{equation}
with $\beta_r=\om_r/\om$.

\medskip

 We now present a natural and useful  corollary, which holds since we set the surface and the bottom to be graphs of functions.
\begin{corollary}\label{dx x estimate}  Let $\cT$ satisfy Definition \ref{ref set} and Definition \ref{initial value bound}. Then there exist constants $c, C$ depending on the bound in Definition \ref{initial value bound} such that
\[
c\al(\tx)\le \big|\dtx x\big|\le C \al(\tx)\quad \hbox{on}\quad \G_t, \ \G_b
\]
holds uniformly with respect to $t\in [0, T]$.
\end{corollary}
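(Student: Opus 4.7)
The plan is to reduce the claim to the two-sided bound on $|\cT'|$ already established in Proposition \ref{CM estimate}, by exploiting the fact that both $\G_t$ and $\G_b$ are graphs over the $x$-axis with bounded slope. Since $\cT$ is holomorphic on $\cSt$, the Cauchy--Riemann equations give
\[
|\cT'(\tilde Z)|^2 = (\dtx x)^2 + (\dtx z)^2.
\]
Thus the task amounts to controlling the ratio $\dtx z / \dtx x$ on each boundary component in terms of the slope of the corresponding graph.

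On $\G_t$ (i.e.\ $\tz = 0$), the relation $z(\tx, 0) = \zeta\bigl(t, x(\tx, 0)\bigr)$ can be differentiated in $\tx$ to yield $\dtx z = (\dx \zeta) \, \dtx x$, so that
\[
|\cT'|^2 = (\dtx x)^2 \bigl(1 + |\dx \zeta|^2\bigr) \qquad \text{on } \G_t.
\]
The same computation on $\G_b$ (using $z = b(x)$) gives $|\cT'|^2 = (\dtx x)^2 (1 + |b'|^2)$. Hence on either boundary component,
\[
|\dtx x| \;=\; \frac{|\cT'|}{\sqrt{1 + |\dx \zeta|^2}} \quad \text{or} \quad \frac{|\cT'|}{\sqrt{1 + |b'|^2}} .
\]

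Next I would observe that $|\dx \zeta|_{L^\infty(\RR)}$ and $|b'|_{L^\infty(\RR)}$ are uniformly bounded on $[0,T]$: for $\zeta$ this follows from the regularity assumptions built into Definition \ref{initial value bound} combined with the Sobolev embedding in Lemma \ref{embedding}(1), while for $b$ it is part of the setup that the bottom is smooth and in fact a line segment near each corner. Consequently the denominators $\sqrt{1+|\dx\zeta|^2}$ and $\sqrt{1+|b'|^2}$ are bounded above and below by positive constants depending only on the bound $L$ in Definition \ref{initial value bound}. Combining this with the two-sided estimate $c\,\al(\tx) \le |\cT'(t,\tilde Z)| \le C\,\al(\tx)$ from Proposition \ref{CM estimate}, restricted to $\tilde Z \in \G_t \cup \G_b$, delivers the claim with constants depending only on $L$.

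There is no serious obstacle here: the argument is pointwise in $\tx$ and requires only that the two boundary components are graphs with slope controlled in $L^\infty$. The only mildly delicate point is to make sure the bound on $|\dx\zeta|$ is genuinely uniform in $t \in [0,T]$, which is precisely what the reference set $\Lam_0$ and the accompanying a priori control of $\zeta$ in Definition \ref{initial value bound} are designed to provide; once that is recorded, the rest is algebraic.
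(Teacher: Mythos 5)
Your proposal is correct and follows essentially the same route as the paper: on each boundary the chain rule for the graph parametrization gives $|\cT'|^2=(\dtx x)^2(1+|\dx\zeta|^2)$ (resp. with $b'$), the slope is bounded in $L^\infty$ via the assumed uniform control of the surface, and Proposition \ref{CM estimate} then yields the two-sided bound. The only difference is cosmetic — you write out the bottom case, which the paper dismisses as ``similar.''
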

\begin{proof}  We only prove the case of $\G_t$ here, and the case of $\G_b$ follows in a similar way. Since
\[
\cT'(Z)=(\dtx x, \dtx z)\qquad\hbox{  in}\quad  \cSt,
\]
  we have
\[
\cT'|_{\G_t: \tz=0}=(\dtx x|_{\tz=0}, \dtx \tzeta)
\]
where $\tzeta=\zeta\circ \cT|_{\G_t}=\zeta(t, x(t, \tx, 0))$ and
\[
\dtx\tzeta=\dtx x\wt{\dx \zeta}.
\]

As a result,  we obtain
\[
\cT'|_{\G_t}=\dtx x|_{\tz=0}(1, \wt{\dx \zeta}),
\]
which implies immediately that
\[
\big|\dtx x|_{\tz=0}\big|\le \big|\cT'|_{\G_t}\big|\le \big(1+|\wt{\dx \zeta}|^2_{L^\infty}\big)^{\f12}\big|\dtx x|_{\tz=0}\big|,
\]
where $|\wt{\dx \zeta}|_{L^\infty}=\big|(\dtx x)^{-1}\dtx\tzeta\big|_{L^\infty}$ is controlled by the upper bound in Definition \ref{initial value bound}.

Consequently, applying the previous proposition leads to the desired estimate.

\end{proof}

\begin{remark}\label{lower order T} The estimates above will be used later only for the lower-order $W^{k+1,\infty}$  estimates of $\cT$  due to the Sobolev  embedding theorem.  There will be weighted higher-order estimates for $\cT$ in the Section \ref{ref set def}, which show the dependence on the boundary, especially for the dependence on the surface function $\zeta$.
\end{remark}

\section{Weighted elliptic estimates related to D-N operator }\label{elliptic estimate}

We present some weighted space in this section, which will be used later in the energy estimate. The weighted elliptic estimates are proved here for the mixed-boundary system as well as for the Dirichlet system. Moreover, the corresponding estimates for the D-N operator $\tN_0$ in the strip domain $\cSt$ are  proved.
\subsection{Weighted elliptic estimates on the strip domain}
We  consider  the following mixed-boundary problem 
\begin{equation}\label{MBVP on S}
\begin{cases}
\Delta u=h\qquad\hbox{in}\quad \cSt,\\
u|_{\Gamma_t}=f,\quad \na_{n_b}u|_{\Gamma_b}=g.
\end{cases}
\end{equation}

Before defining the weighted space, we explain the related eigenvalues of \eqref{MBVP on S}, needed in following lines. To begin with,   we introduce the Laplace transform  with respect to $\tx$
\[
\breve u(\lambda,\cdot)=(\cL u)(\lambda,\cdot)=\displaystyle\int_{\RR}e^{-\lam \tx}u(\tx,\cdot)d\tx, \quad \forall \lam\in \mathbb C.
\]
Performing Laplace Transform, system  \eqref{MBVP on S} formally becomes
\[
\begin{cases}
\lam^2 \breve u+\dtz^2\breve u=\cL(h),\qquad \tz\in [-\om, 0]\\
\breve u|_{\tz=0}=\cL(f),\quad -\dtz\breve u|_{\tz=-\om}=\cL( g).
\end{cases}
\]
As a result, the corresponding eigenvalue problem reads
\[
\begin{cases}
-\phi''(\tz)-\lam^2\phi(\tz)=0,\qquad \tz\in [-\om, 0]\\
\phi(0)=0,\quad -\phi'(-\om)=0.
\end{cases}
\]
Moreover, direct computation shows that the eigenvalues are
\[
\lam_m=\f{(m+1/2)\pi}{ \om},\quad m\in\mathbb Z.
\]
Notice that the first positive eigenvalue is $\lam_0=\pi/(2\om)$ when $m=0$, which will be used later.

\bigskip

Next, we introduce a simple version of weighted norms in $\cSt$. For any $\gamma\in\RR$ and an integer $l\ge 0$, recall (5.2.1) \cite{KMR} the following norms
\[
\|u\|_{\mathcal W^l_{2,\ga}(\cSt)}=\|e^{\ga \tx}u\|_{H^l(\cSt)}
\]
and
\[
|u|_{\mathcal W^{l-1/2}_{2,\ga}(\Gamma_t)}=|e^{\ga \tx}u|_{H^{l-1/2}(\Gamma_t)},
\]
where $\Gamma_t$ can be replaced by $\Gamma_b$.

We present  a specialized version of  Theorem 5.2.2  from page 160 of \cite{KMR}.
\begin{proposition}\label{KMR elliptic estimate}
{\it Suppose that  no eigenvalues $\lambda$ of the problem \eqref{MBVP on S} lie on the line $Re \lambda=-\ga$. Then this problem is uniquely solvable in $\mathcal W^l_{2,\ga}(\cSt)$ for every given $(h, f,  g)$ from the space $\mathcal W^{l-2}_{2,\ga}(\cSt)\times\mathcal W^{l-1/2}_{2,\ga}(\Gamma_t)\times\mathcal W^{l-3/2}_{2,\ga}(\Gamma_b)$ with $l\ge 2$. Furthermore, the solution satisfies the estimate
\[
\|u\|_{\mathcal W^l_{2,\ga}(\cSt)}\le C\big(\|h\|_{\mathcal W^{l-2}_{2,\ga}(\cSt)}+| f|_{\mathcal W^{l-1/2}_{2,\ga}(\Gamma_t)}+| g|_{\mathcal W^{l-3/2}_{2,\ga}(\Gamma_b)}\big)
\]
with a constant $C$ depending on $l, \cSt$.
}
\end{proposition}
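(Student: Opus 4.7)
The plan is to follow the standard KMR strategy for weighted elliptic estimates on cylindrical domains: reduce the problem to a parametrized ODE in $\tz$ via Laplace transform in $\tx$, and transfer the resulting estimate back by Plancherel. First I would note that the substitution $v = e^{\ga\tx}u$ is an isometry from $\mathcal W^l_{2,\ga}(\cSt)$ onto $H^l(\cSt)$, and analogously on the boundary pieces, and conjugates $\cL$ at $\lam$ into $\cL$ at $\lam + \ga$. This reduces the proposition to the unweighted case $\ga=0$, with the non-eigenvalue condition transferred to the imaginary axis $Re\,\lam = 0$.

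Applying $\cL$ in $\tx$ converts \eqref{MBVP on S} into the one-parameter family of ODEs
\[
-\phi''(\tz) - \lam^2 \phi(\tz) = \breve h(\lam,\tz), \quad \phi(0) = \breve f(\lam),\quad -\phi'(-\om) = \breve g(\lam),
\]
on $[-\om,0]$, parametrized by $\lam$ along the line $Re\,\lam = -\ga$. Since the eigenvalues $\lam_m = (m+1/2)\pi/\om$ have been computed explicitly above, and none of them lie on this line by hypothesis, the ODE is uniquely solvable for each such $\lam$ and admits an explicit Green's function representation built from $\sinh(\lam\tz)$ and $\cosh(\lam(\tz+\om))$. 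From this explicit form one extracts the uniform pointwise bound
\[
\|\phi\|_{H^l_\lam([-\om,0])} \le C\bigl(\|\breve h\|_{H^{l-2}_\lam} + \langle\lam\rangle^{l-1/2}|\breve f(\lam)| + \langle\lam\rangle^{l-3/2}|\breve g(\lam)|\bigr),
\]
where $\|\cdot\|_{H^k_\lam}$ weights each $\tz$-derivative against a factor of $\lam$, reflecting that $\lam$ is the Laplace symbol of $\dtx$. For $|\lam|$ large this is immediate from the coercivity of $\lam^2 - \dtz^2$; for $|\lam|$ bounded it follows from continuity of the resolvent, which is well defined because the line $Re\,\lam=-\ga$ stays at positive distance from the discrete spectrum $\{\lam_m\}$.

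Integrating the pointwise estimate along $Re\,\lam = -\ga$ and invoking Plancherel restores the $\mathcal W^l_{2,\ga}$-norms on both sides, while the fractional trace orders $l-1/2$ and $l-3/2$ emerge correctly from the pairing of the $\langle\lam\rangle$-weights with the Plancherel identity (the factor $\langle\lam\rangle^{l-1/2}$ matches the $H^{l-1/2}$ trace via the standard Fourier characterization). Existence and uniqueness in $\mathcal W^l_{2,\ga}(\cSt)$ both come for free from the invertibility of the resolvent family. The one delicate step is establishing the uniform resolvent bound on the whole line $Re\,\lam=-\ga$: the high-frequency and bounded-$\lam$ regimes must be patched, and this is precisely where the non-eigenvalue hypothesis is essentially used. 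Since this is exactly the content of Theorem 5.2.2 of \cite{KMR} specialized to our Dirichlet--Neumann boundary conditions, the proof reduces to citing that theorem together with the explicit eigenvalue computation above.
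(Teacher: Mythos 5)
Your proposal is correct and matches the paper's treatment: the paper gives no independent argument for this proposition but simply presents it as a specialization of Theorem 5.2.2 of \cite{KMR}, and your sketch (Laplace transform in $\tx$, uniform resolvent bounds for the parametrized ODE on $[-\om,0]$ away from the eigenvalues $\lam_m=(m+1/2)\pi/\om$, and Parseval along the line $Re\,\lam=-\ga$) is exactly the standard argument behind that cited theorem, to which you ultimately defer as well. Minor cosmetic slips (trigonometric rather than hyperbolic homogeneous solutions for $-\phi''-\lam^2\phi$, and the sign bookkeeping in the conjugation $v=e^{\ga\tx}u$) do not affect the substance.
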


\begin{remark}\label{H1 weighted estimate} Checking the proofs of Theorem 3.6.1 and Theorem 5.2.2 in \cite{KMR} line by line, one knows that the estimate in Proposition \ref{KMR elliptic estimate} can be extended to the case $l=1$:
\[
\|u\|_{\mathcal W^1_{2,\ga}(\cSt)}\le C\big(\|h\|_{\mathcal W^{1}_{2,\ga}(\cSt)^*}+| f|_{\mathcal W^{1/2}_{2,\ga}(\Gamma_t)}+| g|_{\mathcal W^{0}_{2,\ga}(\Gamma_b)}\big),
\]
where $\mathcal W^{1}_{2,\ga}(\cSt)^*$ is the dual space of $\mathcal W^{1}_{2,\ga}(\cSt)$ and the norm for $g$ is not a sharp one but easier to handle.

\end{remark}

\medskip

In the end, we introduce the modified weighted norm used later in our paper. Let
\[
\|u\|_{H^l_\ga}:=\|\al^\ga u\|_{H^l(\cS_0)}
\]
and the norm on the boundary is defined in a similar way.  We will use both of the two notations above for the sake of convenience.

\begin{proposition}\label{Weighted elliptic estimate on S}
 Let the integer $l\ge 2$ and real  $\ga$ satisfy
\[
|\ga |< \min\{\f\pi{2 \om}, \f\pi{2\om_r}\}.
\]
Suppose that  system \eqref{MBVP on S} admits a solution $u$ such that $\alpha^{\pm\ga}u\in H^l(\cSt)$ with the right-hand side satisfies
\[
(\alpha^{\ga}h,\alpha^{\ga} f,\alpha^{\ga} g)\in H^{l-2}(\cSt)\times H^{l-1/2}(\Gamma_t)\times H^{l-3/2}(\Gamma_b).
\]

Then  the estimate holds
\[
\|\alpha^{\ga}u\|_{H^l(\cSt)}\le C\big(\|\alpha^{\ga}h\|_{H^{l-2}(\cSt)}+| \alpha^{\ga}f|_{H^{l-1/2}(\Gamma_t)}+| \alpha^{\ga}g|_{H^{l-3/2}(\Gamma_b)}\big)
\]
with a constant $C$ depending on $l, \cSt, c_0, \ga$.
\end{proposition}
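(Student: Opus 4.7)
The weight $\alpha(\tx)$ is the pure exponential $e^{\tx}$ on $\{\tx\le -c_0\}$ and $e^{-\beta_r\tx}$ on $\{\tx\ge C_0\}$, while $\alpha\sim 1$ on the bounded middle slab $\{-c_0\le\tx\le C_0\}$. On either tail the weighted norm $\|\alpha^\gamma\cdot\|_{H^l(\cSt)}$ matches a pure-exponential KMR norm (of the form $\|\cdot\|_{\mathcal W^l_{2,\gamma}}$ on the left and $\|\cdot\|_{\mathcal W^l_{2,-\gamma\beta_r}}$ on the right). My plan is to cut $u$ into three pieces with the cut-offs $\chi_l,\chi_r$ from the Notations and $\chi_m=1-\chi_l-\chi_r$, then apply Proposition \ref{KMR elliptic estimate} on each tail and a classical elliptic estimate on the middle slab, before assembling everything via the triangle inequality.

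\textbf{Tail estimates.} The localization $\chi_l u$ satisfies
\[
\Delta(\chi_l u)=\chi_l h+[\Delta,\chi_l]u\ \text{in}\ \cSt,\quad (\chi_l u)|_{\Gamma_t}=\chi_l f,\quad \na_{n_b}(\chi_l u)|_{\Gamma_b}=\chi_l g,
\]
and the commutator $[\Delta,\chi_l]u=2\na\chi_l\cdot\na u+(\Delta\chi_l)u$ is supported in the transition band $\{-c_0-1\le\tx\le-c_0\}$ where $\alpha\sim 1$. Since $\alpha^\gamma=e^{\gamma\tx}$ on $\mathrm{supp}(\chi_l)$, the weighted $H^l$ norm of $\chi_l u$ equals the KMR norm $\|\chi_l u\|_{\mathcal W^l_{2,\gamma}}$, and the hypothesis $|\gamma|<\pi/(2\omega)$ keeps the line $\mathrm{Re}\,\lambda=-\gamma$ off the spectrum $\{(m+1/2)\pi/\omega\}$ computed earlier. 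Proposition \ref{KMR elliptic estimate} therefore yields
\[
\|\alpha^\gamma\chi_l u\|_{H^l(\cSt)}\le C\bigl(\|\alpha^\gamma h\|_{H^{l-2}}+|\alpha^\gamma f|_{H^{l-1/2}(\Gamma_t)}+|\alpha^\gamma g|_{H^{l-3/2}(\Gamma_b)}+\|u\|_{H^{l-1}(B_l)}\bigr),
\]
where $B_l=\{-c_0-1\le\tx\le-c_0\}$ is the bounded overlap. The right tail is handled identically after reflecting $\tx\mapsto-\tx$, or equivalently by applying Proposition \ref{KMR elliptic estimate} with parameter $-\gamma\beta_r$: the eigenvalue-avoidance condition then reads $|\gamma\beta_r|<\pi/(2\omega)$, which is exactly $|\gamma|<\pi/(2\omega_r)$.

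\textbf{Middle piece and conclusion.} On the support of $\chi_m$ the weight is bounded above and below, so $\|\alpha^\gamma\chi_m u\|_{H^l(\cSt)}\sim\|\chi_m u\|_{H^l(\cSt)}$, and $\chi_m u$ solves a mixed-boundary Laplace problem supported in a bounded subdomain with right-hand side $\chi_m h+[\Delta,\chi_m]u$. A standard elliptic estimate (Proposition \ref{KMR elliptic estimate} with $\gamma=0$, which again avoids the spectrum) bounds it by the unweighted norms of $h,f,g$ on $\mathrm{supp}(\chi_m)$ and the same kind of $\|u\|_{H^{l-1}}$ remainder on the overlap band. Unweighted norms on bounded sets are controlled by their weighted counterparts, so adding the three contributions gives
\[
\|\alpha^\gamma u\|_{H^l}\le C\bigl(\|\alpha^\gamma h\|_{H^{l-2}}+|\alpha^\gamma f|_{H^{l-1/2}(\Gamma_t)}+|\alpha^\gamma g|_{H^{l-3/2}(\Gamma_b)}\bigr)+C\|u\|_{H^{l-1}(B_l\cup B_r)},
\]
with $B_r$ the right transition band. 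The residual lower-order term must be removed; this is the one place the two-sided hypothesis $\alpha^{\pm\gamma}u\in H^l$ is used, together with the uniqueness statement of Proposition \ref{KMR elliptic estimate} (no solution in $\mathcal W^l_{2,\pm\gamma}$ is obstructed by a homogeneous mode), to run a standard compactness-and-uniqueness absorption.

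\textbf{Main obstacle.} The only real difficulty is this bookkeeping on the overlap bands: the commutators live precisely where the weight transitions from exponential to constant, so one must verify that no derivative is lost when transferring estimates between $\|\alpha^\gamma\cdot\|_{H^l}$ and $\|\cdot\|_{\mathcal W^l_{2,\gamma}}$, and that the compactness-uniqueness step produces a constant depending only on the data of the problem. The two-sided membership $\alpha^{\pm\gamma}u\in H^l$ is what blocks the obvious counterexamples---growing-exponential homogeneous modes associated with eigenvalues $\lambda_m$ lying on the wrong side of $-\gamma$---and so allows the absorption to close cleanly.
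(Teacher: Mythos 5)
Your localization into the two tails and the middle slab, the application of Proposition \ref{KMR elliptic estimate} on each tail (with exponent $-\ga\beta_r$ on the right) and a standard estimate on the middle piece, reproduces Steps 1--2 of the paper's proof, and up to the inequality containing the residual term $\|u\|_{H^{l-1}(B_l\cup B_r)}$ your argument is sound. The genuine gap is in how you remove that residual. Your ``compactness-and-uniqueness absorption'' requires uniqueness of the homogeneous problem in the \emph{one-sided} space $\{u:\al^{\ga}u\in H^l(\cSt)\}$: along a contradiction sequence you only control $\|\al^{\ga}u_n\|_{H^l}$, while the qualitative hypothesis $\al^{\pm\ga}u_n\in H^l$ carries no uniform bound and is not inherited by the weak limit, so it cannot be invoked to kill the limiting homogeneous solution. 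For $\ga>0$ the weight $\al^{\ga}$ decays at \emph{both} ends (it equals $e^{\ga\tx}$ on the left and $e^{-\ga\beta_r\tx}$ on the right), so this one-sided space allows growth at both infinities and is not any single space $\mathcal W^l_{2,\mu}$; the uniqueness statement of Proposition \ref{KMR elliptic estimate} therefore does not apply to it, and injectivity in that class is precisely the nontrivial content of the proposition, not something you may cite. (For the opposite sign, where the weight $\al^{-\ga}$ is bounded below, the solution itself lies in $H^l=\mathcal W^l_{2,0}$ and your scheme does close, since $0$ is not an eigenvalue.)

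The paper's proof is organized around exactly this asymmetry, which your symmetric treatment of the two tails glosses over. For the weight bounded below it removes the residual by interpolation together with the global variational estimate of Lemma \ref{H1 estimate on S}, which works because the unweighted data norms are then dominated by the weighted ones; for the decaying weight $\al^{\ga}$ this domination fails, and the paper runs a separate duality step (its Step 4): it first upgrades the variational estimate to show the operator is an isomorphism from $H^1_{-\ga}$ onto $(H^1_{\ga})^*$, then passes to the adjoint problem to obtain the $H^1_{\ga}$ bound, and only afterwards repeats the localization to reach order $l$. To repair your proof you would either have to supply a genuine injectivity argument in the two-ended space with decaying weight (e.g.\ via the KMR asymptotic expansions on each half-strip, or a Phragm\'en--Lindel\"of-type argument), or follow the paper's duality route; as written, the crucial case is asserted rather than proved.
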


\begin{remark} In fact, the unique existence of the solution can also be obtained. We refer to Theorem 2.6.4 \cite{KMR2001} where  this weighted norm in $\cSt$ is transformed back to the so-called homogeneous weighted norm in the physical domain $\Om$.  The restriction of the power $\ga$ comes directly from the existence part, which takes the symmetric interval with respect to the smallest positive eigenvalue $\lam_m$ with $m=0$ defined above. Therefore, we only focus on the weighted estimate here.
\end{remark}

The following  lemma is about the unique solvability and estimate of system \eqref{MBVP on S} in $H^1$ variational formulation.
\begin{lemma}\label{H1 estimate on S}
{\it Let the right-hand side of system \eqref{MBVP on S} satisfy
\[
(h, f, g)\in H^{-1}(\cSt)\times H^{1/2}(\Gamma_t)\times H^{-1/2}(\Gamma_b).
\]
Then the system   admits a unique solution $u\in H^1(\cSt)$ with the estimate
\[
\|u\|_{H^1(\cSt)}\le C\big(\|h\|_{H^{-1}(\cSt)}+|f|_{ H^{1/2}(\Gamma_t)}+|g|_ {H^{-1/2}(\Gamma_b)}\big)
\]
where the constant $C$ depends on $\cSt$.
}
\end{lemma}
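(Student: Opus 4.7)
The plan is to reduce the mixed boundary value problem to one with homogeneous Dirichlet data on $\Gamma_t$ and then apply the Lax--Milgram theorem. First I would use the continuity of the trace map on $\cSt$ (Lemma \ref{trace}) together with a bounded right inverse to lift $f$: choose $u_0\in H^1(\cSt)$ with $u_0|_{\Gamma_t}=f$ and $\|u_0\|_{H^1(\cSt)}\le C|f|_{H^{1/2}(\Gamma_t)}$. Setting $w=u-u_0$, the problem becomes $\Delta w=\tilde h:=h-\Delta u_0$ in $\cSt$, with $w|_{\Gamma_t}=0$ and $\na_{n_b}w|_{\Gamma_b}=\tilde g:= g-\na_{n_b}u_0|_{\Gamma_b}$, where $\tilde h\in H^{-1}(\cSt)$ and $\tilde g\in H^{-1/2}(\Gamma_b)$ are controlled by $\|h\|_{H^{-1}}+|f|_{H^{1/2}}+|g|_{H^{-1/2}}$.

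Next I would introduce the Hilbert space $V=\{v\in H^1(\cSt)\,|\,v|_{\Gamma_t}=0\}$ and write down the weak formulation: find $w\in V$ such that
\[
\int_{\cSt}\na w\cdot\na\varphi\,dX=-\langle \tilde h,\varphi\rangle_{\cSt}+\langle \tilde g,\varphi|_{\Gamma_b}\rangle_{\Gamma_b}\qquad\forall\varphi\in V,
\]
where the pairings are understood as the natural dualities $H^{-1}(\cSt)\times V$ and $H^{-1/2}(\Gamma_b)\times H^{1/2}(\Gamma_b)$; the continuity of the trace $V\to H^{1/2}(\Gamma_b)$ (Lemma \ref{trace}) ensures that the right-hand side is a bounded linear functional on $V$.

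Coercivity of the Dirichlet form on $V$ is the crucial point. Since $v\in V$ vanishes on $\Gamma_t=\{\tz=0\}$ and the strip has finite height $\om$, the identity $v(\tx,\tz)=-\int_\tz^0\dtz v(\tx,s)\,ds$ combined with Cauchy--Schwarz gives the Poincar\'e-type inequality $\|v\|_{L^2(\cSt)}\le \om\|\dtz v\|_{L^2(\cSt)}$, so $\|\na v\|_{L^2(\cSt)}$ is a norm on $V$ equivalent to $\|v\|_{H^1(\cSt)}$. Lax--Milgram then produces a unique $w\in V$ with $\|w\|_{H^1(\cSt)}\le C(\|\tilde h\|_{H^{-1}(\cSt)}+|\tilde g|_{H^{-1/2}(\Gamma_b)})$. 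Adding the bound on $u_0$ yields the advertised estimate, and uniqueness of $u$ follows immediately from uniqueness of $w$ in $V$ by taking $(h,f,g)=0$.

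The main subtlety is precisely the coercivity: because $\cSt$ is unbounded in $\tx$, one must exploit the homogeneous Dirichlet condition on $\Gamma_t$ together with the finite height of the strip. Without both of these ingredients the bilinear form fails to be coercive and Lax--Milgram would break down; all other steps are standard variational arguments.
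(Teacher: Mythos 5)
Your proof is correct and takes the same route the paper indicates: it invokes the Lax--Milgram theorem after reducing to homogeneous Dirichlet data on $\Gamma_t$, with coercivity supplied by the Poincar\'e inequality coming from the finite height of the strip and the vanishing on $\Gamma_t$. The paper's own ``proof'' is just a one-line citation to \cite{Lannes} for this standard variational argument, so you have filled in exactly the details the author omitted.
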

\begin{proof} This can be proved in a standard way by applying Lax-Milgram theorem, see \cite{Lannes} for example.

\end{proof}

\begin{proof}  (Proof of Proposition \ref{Weighted elliptic estimate on S}). In fact, the proof is similar to the analysis in Section 2.6.1 \cite{KMR2001}, which is proved in $\Om$ with an equivalent weighted norm in $\Om$. The idea is to start with the existence and estimate in standard variational formula $H^1$ norm in $\Om$, and then apply Corollary 6.3.3 \cite{KMR} to show that the elliptic operator is an isomorphism for $\vert\ga\vert\le \min\{\pi/(2 \om), \pi/(2\om_r)\}$, i.e. to stay in the interval  bounded by the first positive eigenvalue.  In order to be self contained, we provide a direct proof adjust to our case in the following lines.

\medskip

We first prove the case where the weight is $\al^{-\ga}$ ($\ga>0$), which is an easier part. Based on the estimates with weight $\al^{-\ga}$, we  deal with the case with weight $\al^\ga$ by a dual argument.
To begin with, we first introduce some localizations to reduce the estimates to the case in Proposition \ref{KMR elliptic estimate}. 

{\bf Step 1.} Localization and estimates near $\infty$. We only focus on the left infinity, and the proof for the right infinity follows in a similar way.

Recall that $\chi_l$ is  a $\mathcal C^\infty(\RR)$ cut-off function, and let $\bar\chi_l(\tx)$ be a similar cut-off function satisfying
\[
\chi_l\bar\chi_l=\bar\chi_l.
\]
Defining
\[
u_l=\chi_l u,\quad u_r=\chi_r u,\quad u_R=(1-\chi_l-\chi_r)u,
\]
we have immediately 
\[
u=u_l+u_r+u_R
\] and
\[
\alpha^{-\ga}u_l=\chi_l\alpha^{-\ga}u=\chi_l e^{-\ga  \tx}u.
\]
As a result, we know that
\[
\|\alpha^{-\ga}u_l\|_{H^l(\cSt)}=\|\chi_l e^{- \ga \tx}u\|_{H^l(\cSt)}
=\|u_l\|_{\mathcal W^l_{2,-\ga }(\cSt)},
\]
and $u_l$ satisfies the system
\[
\begin{cases}
\Delta u_l=\chi_lh+[\Delta, \chi_l]u:= h_l\qquad\hbox{in}\quad \cSt,\\
u_l|_{\Gamma_t}=\chi_lf:=f_l,
\quad \na_{n_b}u_l|_{\Gamma_b}=\chi_lg+(\na_{n_b}\chi_l)u|_{\Gamma_b}:= g_l.
\end{cases}
\]
Applying Proposition \ref{KMR elliptic estimate}, we get
\[
\|u_l\|_{\mathcal W^l_{2,-\ga }(\cSt)}\le C\big(\|h_l\|_{\mathcal W^{l-2}_{2,-\ga}(\cSt)}
+| f_l|_{\mathcal W^{l-1/2}_{2,-\ga }(\Gamma_t)}+| g_l|_{\mathcal W^{l-3/2}_{2,-\ga }(\Gamma_b)}\big)
\]
where $l\ge 2$ and $C=C(\cSt)$. The right-side terms will be handled one by one. In fact, for $h_l$ term, one has
\[
\begin{split}
\|h_l\|_{\mathcal W^{l-2}_{2,-\ga }(\cSt)}&
\le \|\chi_lh\|_{\mathcal W^{l-2}_{2,-\ga }(\cSt)}+\|[\Delta, \chi_l]u\|_{\mathcal W^{l-2}_{2,-\ga }(\cSt)}\\
&\le \|\chi_lh\|_{\mathcal W^{l-2}_{2,-\ga }(\cSt)}+\|e^{- \ga \tx}[\Delta, \chi_l]u\|_{H^{l-2}(\cSt)}\\
&\le \|\chi_lh\|_{\mathcal W^{l-2}_{2,-\ga }(\cSt)}+C(c_0,\ga, l)\|(1-\bar\chi_l-\bar\chi_r)u\|_{H^{l-1}(\cSt)}
\end{split}
\]
where $\bar \chi_r$ is defined in a similar way as $\bar \chi_l$.

For $g_l$ term, one has similarly
\[
\begin{split}
|g_l|_{\mathcal W^{l-3/2}_{2,- \ga }(\Gamma_b)}&
\le |\chi_lg|_{\mathcal W^{l-3/2}_{2,- \ga }(\Gamma_b)}+|(\na_{n_b}\chi_l)u\|_{\mathcal W^{l-2}_{2,- \ga }(\Gamma_b)}\\
&\le |\chi_lg|_{\mathcal W^{l-3/2}_{2,- \ga }(\Gamma_b)}+C(c_0,\ga, l)\|(1-\bar\chi_l-\bar\chi_r)u\|_{H^{l-3/2}(\Gamma_b)}.
\end{split}
\]
Summing up these estimates above, we obtain
\begin{equation}\label{uc estimate}
\begin{split}
\|u_l\|_{\mathcal W^l_{2,- \ga }(\cSt)}\le &C\Big(\|\chi_lh\|_{\mathcal W^{l-2}_{2,- \ga }(\cSt)}
+| \chi_lf|_{\mathcal W^{l-1/2}_{2,- \ga }(\Gamma_t)}+| \chi_lg|_{\mathcal W^{l-3/2}_{2,- \ga }(\Gamma_b)}
+\|(1-\bar\chi_l-\bar\chi_r)u\|_{H^{l-1}(\cSt)}\Big),
\end{split}
\end{equation}
where the constant $C=C(c_0,\ga, l,\cSt)$. Moreover, similar estimate can be proved for $u_r$ where $-\ga$ is replaced by $\ga \beta_r$.

\medskip

{\bf Step 2. }Estimates for $u_R$. Recalling that $u_R=(1-\chi_l-\chi_r)u$, we have directly
\[
\|\alpha^{-\ga}u_R\|_{H^l(\cSt)}\le C\|u_R\|_{H^l(\cSt)}
\]
with $C=C(c_0,\ga, l)$ and $u_R$ satisfies the system
\[
\begin{cases}
\Delta u_R=(1-\chi_l-\chi_r)h-[\Delta, \chi_l+\chi_r]u\qquad\hbox{in}\quad \cSt,\\
u_R|_{\Gamma_t}=(1-\chi_l-\chi_r)f,
\quad \na_{n_b}u_R|_{\Gamma_b}=(1-\chi_l-\chi_r)g-(\na_{n_b}\chi_l+\na_{n_b}\chi_r)u|_{\Gamma_b}.
\end{cases}
\]
Standard elliptic estimates in strip domains (see \cite{Lannes}) and similar arguments as for $u_l$ case  lead to
\begin{equation}\label{uR estimate}
\begin{split}
\|u_R\|_{H^l(\cSt)}\le
&C\Big(\|(1-\chi_l-\chi_r)h\|_{H^{l-2}(\cSt)}+|(1-\chi_l-\chi_r)f|_{H^{l-1/2}(\Gamma_t)}
\\
&\quad +|(1-\chi_l-\chi_r)g|_{H^{l-3/2}(\Gamma_b)}+\|(1-\bar\chi_l-\bar\chi_r)u\|_{H^{l-1}(\cSt)}\Big),
\end{split}
\end{equation}
where the constant $C=C(c_0,\ga, l,\cSt)$.

Summing up \eqref{uc estimate}  and \eqref{uR estimate}, we obtain
\[
\begin{split}
\|\alpha^{-\ga}u\|_{H^l(\cSt)}\le& C\Big(\|\alpha^{-\ga}h\|_{H^{l-2}(\cSt)}+|\alpha^{-\ga} f|_{H^{l-1/2}(\Gamma_t)}
+|\alpha^{-\ga}g|_{H^{l-3/2}(\Gamma_b)}\\
&\quad +\|(1-\bar \chi_l-\bar\chi_r) u\|_{H^{l-1}(\cSt)}\Big).
\end{split}
\]
To finish the proof, we  need to deal with the lower-order term on the right side.

\medskip

{\bf Step 3.} Lower-order estimate.
In fact, a standard interpolation in Sobolev spaces leads to
\[
\|(1-\bar \chi_l -\bar\chi_r)u\|_{H^{l-1}(\cSt)}\le \epsilon \|(1-\bar \chi_l -\bar\chi_r)u\|_{H^{l}(\cSt)}+C_\epsilon \|(1-\bar \chi_l -\bar\chi_r)u\|_{L^2(\cSt)}
\]
where $C_\epsilon=C(1/\epsilon, \cSt)$.

For the first term on the right side, thanks to the definition of $\bar\chi_l$, one has directly that
\[
\|(1-\bar \chi_l-\bar\chi_r) u\|_{H^{l}(\cSt)}\le C(c_0,\ga, l)\|\alpha^{-\ga}u\|_{H^l(\cSt)}.
\]
On the other hand, applying Lemma \ref{H1 estimate on S} and turning Sobolev norms into the weighted norms lead to
\[
\begin{split}
\|(1-\bar \chi_l-\bar\chi_r) u\|_{L^2(\cSt)}&\le C \|u\|_{L^2(\cSt)}\\
&\le C\big(\|h\|_{H^{-1}(\cSt)}+|f|_{ H^{1/2}(\Gamma_t)}+|g|_ {H^{-1/2}(\Gamma_b)}\big)\\
&\le C\big(\|\alpha^{-\ga}h\|_{H^{l-2}(\cSt)}+|\alpha^{-\ga}f|_{ H^{l-1/2}(\Gamma_t)}
+|\alpha^{-\ga}g|_ {H^{l-3/2}(\Gamma_b)}\big)
\end{split}
\]
since $\ga> 0$.

As a result, we conclude that
\[
\begin{split}
\|(1-\bar \chi_l-\bar\chi_r) u\|_{H^{l-1}(\cSt)}\le & \epsilon C \|\alpha^{-\ga}u\|_{H^l(\cSt)}+C\big(\|\alpha^{-\ga}h\|_{H^{l-2}(\cSt)}+|\alpha^{-\ga}f|_{ H^{l-1/2}(\Gamma_t)}\\
&\quad
+|\alpha^{-\ga}g|_ {H^{l-3/2}(\Gamma_b)}\big),
\end{split}
\]
which together with Step 2 finishes the proof for the case with  weight $\al^{-\ga}$.

{\bf Step 4.} Estimate for the case $\al^\ga$ ($\ga>0$). To prove this case, we first need to investigate again the variational formula for  system \eqref{MBVP on S} with homogeneous boundary conditions and modify the $H^1$ estimate to obtain the weighted $H^1_{-\ga}$ estimate. Second, a dual argument is applied here to obtain the estimate for $H^1_\ga$ case. Third, the inhomogeneous-boundary case follows in a natural way by a trace theorem.

To begin with, we consider system \eqref{MBVP on S} with homogeneous boundary conditions:
\begin{equation}\label{MBVP H on S}
\begin{cases}
\Delta u=h\qquad\hbox{in}\quad \cSt,\\
u|_{\Gamma_t}=0,\quad \na_{n_b}u|_{\Gamma_b}=0
\end{cases}
\end{equation}
and we modify the variation formula for $u\in H^1_0(\cSt)=\{ v\in H^1(\cSt)\,|\, v|_{\G_t}=0,\,\na_{n_b} v|_{\G_b}=0\}$ (See Section 2.6.1 \cite{KMR2001}) now with the right side $h\in (H^1_\ga)^*$ (the dual space of $H^1_\ga$):
\[
\int_{\cSt}\na u\cdot\na v d\tx d\tz=\langle h, \,v\rangle,
\]
where $v\in H^1_0(\cSt)$ and the right side is the dual product on $H^1_\ga$.

One knows immediately that
\[
\langle h, \,v\rangle\le \|h\|_{(H^1_\ga)^*}\|v\|_{H^1_\ga}\le C\|h\|_{(H^1_\ga)^*}\|v\|_{H^1}
\]
where $C$ depends on the weight function $\al$.

Consequently, we obtain
\[
\|u\|_{H^1}\le C\|h\|_{(H^1_\ga)^*}.
\]

On the other hand, repeating the proof in Step 1  and Step 2 with $l=1$, $f, g=0$  and using Remark \ref{H1 weighted estimate}, we find
\[
\|\al^{-\ga} u\|_{H^1}\le C\Big(\|\chi_l h\|_{(\mathcal W^{1}_{2,\ga})^*}+\|\chi_r h\|_{(\mathcal W^{1}_{2,\ga\beta_r})^*}+\|(1-\chi_l-\chi_r) h\|_{H^{-1}}+\|(1-\bar\chi_l-\bar\chi_r)u\|_{H^1}\Big).
\]
Moreover, direct proof shows that
\[
\|\chi_l h\|_{(\mathcal W^{1}_{2,\ga})^*}+\|\chi_r h\|_{(\mathcal W^{1}_{2,\ga\beta_r})^*}+\|(1-\chi_l-\chi_r) h\|_{H^{-1}}\le C \|h\|_{(H^1_\ga)^*}
\]
with the constant $C$ depending on $\chi_i, \al$.

Summing up these estimates above, we conclude that
\begin{equation}\label{variational estimate H1ga}
\|u\|_{H^1_{-\ga}}\le C \|h\|_{(H^1_\ga)^*},
\end{equation}
which shows that the elliptic operator  $\mathcal A$ for  system \eqref{MBVP H on S} is an isomorphism from $H^1_{-\ga}$ to $(H^1_\ga)^*$ with homogeneous boundary conditions  and is consistent with Section 2.6.1 \cite{KMR2001}.

Next, we consider the adjoint operator $\mathcal A^*$ and the elliptic operator $\mathcal A^+$ associated to the formally adjoint problem of \eqref{MBVP H on S}:
\[
\begin{cases}
\Delta v=h\qquad\hbox{in}\quad \cSt,\\
v|_{\Gamma_t}=0,\quad -\na_{n_b}v|_{\Gamma_b}=0,
\end{cases}
\]
where we can see that $\mathcal A^+$ is in fact the same as $\mathcal A$.

We know from Lemma 6.3.4 and (6.3.23) in \cite{KMR}   that  $v\in H^1_{-\ga}$ is a solution of $\mathcal A^+v=(h, 0, 0)$ for a given  $h\in (H^1_\ga)^*$ if and only if $\mathcal A^* v=h$. As a result, this implies for all  $u\in H^1_\ga$ that
\[
\langle h,\,u\rangle =\langle \mathcal A^*v,\, u\rangle =\langle v,\, \mathcal Au\rangle\le \|v\|_{H^1_{-\ga}}\|\mathcal A u\|_{(H^1_{-\ga})^*}.
\]
Besides, we have  by \eqref{variational estimate H1ga} that
\[
\|v\|_{H^1_{-\ga}}\le  C\|\mathcal A^+v\|_{(H^1_\ga)^*}=C\|h\|_{(H^1_\ga)^*},
\]
which leads to
\[
\langle h,\,u\rangle\le C\|h\|_{(H^1_\ga)^*}\|\mathcal A u\|_{(H^1_{-\ga})^*}.
\]
Therefore, we finally obtain the following variational weighted estimate with weight $\al^\ga$:
\[
\|u\|_{H^1_\ga}\le  C\|\mathcal A u\|_{(H^1_{-\ga})^*}
\]
i.e. $\mathcal A$ is also an isomorphism from $H^1_\ga$ to $(H^1_{-\ga})^*$. Moreover, $\|\mathcal A u\|_{(H^1_{-\ga})^*}$ can be controlled directly by $\|\al^\ga h\|_{L^2(\cSt)}$.

To close the proof, repeating the proof from Step 1 to Step 3 and using a trace theorem, the weighted estimate for higher-order case can be easily finished.

\end{proof}




In the end, we also mention the following Dirichlet system:
\begin{equation}\label{DVP on S}
\begin{cases}
\Delta u=h\qquad\hbox{in}\quad \cSt,\\
u|_{\Gamma_t}=f,\quad u|_{\Gamma_b}=g.
\end{cases}
\end{equation}

A similar weighted estimate holds for this system and the proof is omitted here. Moreover, we also refer to Theorem 3.11 \cite{BK} for the corresponding weighted estimate in $\Om$.
\begin{proposition}\label{Dirichlet system estimate}  Let the integer $l\ge 1$ and the real  $\ga $ satisfy
\[
|\ga| < \min\{\f\pi{\om}, \f\pi{\om_r}\}.
\]
Suppose that  the problem \eqref{DVP on S} admits a solution $u$ such that $\alpha^{\ga}u\in H^l(\cSt)$ with the right-hand side given:
\[
(\alpha^{\ga}h,\alpha^{\ga} f,\alpha^{\ga} g)\in H^{l-2}(\cSt)\times H^{l-\f12}(\Gamma_t)\times H^{l-\f12}(\Gamma_b).
\]

Then  the estimate holds
\[
\|\alpha^{\ga}u\|_{H^l(\cSt)}\le C\big(\|\alpha^{\ga}h\|_{H^{l-2}(\cSt)}+| \alpha^{\ga}f|_{H^{l-\f12}(\Gamma_t)}+| \alpha^{\ga}g|_{H^{l-\f12}(\Gamma_b)}\big)
\]
with a constant $C$ depending on $l, \cSt, c_0, \ga$.

\end{proposition}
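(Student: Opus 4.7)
The plan is to mirror the proof of Proposition \ref{Weighted elliptic estimate on S} step by step, adjusting only for the switch from mixed to pure Dirichlet boundary conditions. First, I would set up the reduced eigenvalue problem obtained by Laplace transforming \eqref{DVP on S} in $\tx$: the ODE $-\phi''(\tz)-\lam^2\phi=0$ on $[-\om,0]$ with $\phi(0)=\phi(-\om)=0$ has eigenvalues $\lam_m=m\pi/\om$ for $m\in\mathbb{Z}\setminus\{0\}$, so the smallest positive eigenvalue is $\pi/\om$ rather than $\pi/(2\om)$. Together with the same analysis near the right corner (rescaled by $\beta_r=\om_r/\om$), this is exactly what permits the enlarged admissible range $|\ga|<\min\{\pi/\om,\pi/\om_r\}$ stated in the proposition.

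Next, I would invoke the Dirichlet analogue of Proposition \ref{KMR elliptic estimate}, again a specialization of Theorem 5.2.2 in \cite{KMR}: whenever $\ga$ avoids the eigenvalues above, the Dirichlet problem is uniquely solvable in $\mathcal{W}^l_{2,\ga}(\cSt)$ with the estimate
\[
\|u\|_{\mathcal{W}^l_{2,\ga}(\cSt)}\le C\bigl(\|h\|_{\mathcal{W}^{l-2}_{2,\ga}(\cSt)}+|f|_{\mathcal{W}^{l-1/2}_{2,\ga}(\G_t)}+|g|_{\mathcal{W}^{l-1/2}_{2,\ga}(\G_b)}\bigr).
\]
Note that the boundary norm on $g$ is $H^{l-1/2}$ rather than $H^{l-3/2}$, because Dirichlet data carries the same regularity as the trace $u|_{\G_b}$. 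I would also record the Dirichlet $H^1$ variational estimate obtained by Lax-Milgram on $\{v\in H^1(\cSt)\,|\,v|_{\G_t}=v|_{\G_b}=0\}$, which is even more direct than Lemma \ref{H1 estimate on S} thanks to the full Poincar\'e inequality.

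The remainder is then a verbatim repetition of Steps 1--4 of the proof of Proposition \ref{Weighted elliptic estimate on S}: decompose $u=\chi_l u+\chi_r u+(1-\chi_l-\chi_r)u$; apply the weighted estimate of the preceding step to the two corner pieces (matching $\al^{\mp\ga}$ to $e^{\mp\ga\tx}$ on the left and to $e^{\pm\ga\beta_r\tx}$ on the right); use standard bounded-domain elliptic theory for the middle piece; control the commutators $[\Delta,\chi_i]u$ and lower-order traces by $H^{l-1}$ norms on the bounded middle strip; and absorb those via Sobolev interpolation together with the $H^1$ variational bound just mentioned. This closes the case of weight $\al^{-\ga}$ with $\ga>0$. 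For the case of weight $\al^\ga$ with $\ga>0$, one runs the same duality argument as in Step 4 of the previous proof: lift $f$ and $g$ through Lemma \ref{trace} to reduce to homogeneous Dirichlet data, show that the (formally self-adjoint) Dirichlet Laplacian is an isomorphism from $H^1_{-\ga}$ to $(H^1_\ga)^*$ using the $\al^{-\ga}$ estimate just obtained, and dualize to obtain the $H^1_\ga$ estimate, then upgrade to higher order by repeating the localization. The borderline $l=1$ case is handled by the Dirichlet analogue of Remark \ref{H1 weighted estimate}, extracted from the proofs of Theorem 3.6.1 and Theorem 5.2.2 in \cite{KMR}. The only delicate point throughout is the bookkeeping needed to verify that the enlarged eigenvalue-avoidance range $|\ga|<\min\{\pi/\om,\pi/\om_r\}$ places both corner weights $e^{-\ga\tx}$ and $e^{\ga\beta_r\tx}$ simultaneously in the admissible regime; all Sobolev and trace estimates are in fact lighter than in the mixed case because Dirichlet data gains a half order of regularity relative to Neumann flux.
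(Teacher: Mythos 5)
Your proposal is correct and takes precisely the approach the paper intends: the paper explicitly omits the proof, noting only that a similar argument to Proposition \ref{Weighted elliptic estimate on S} works, and the accompanying remark attributes the enlarged admissible range to the Dirichlet eigenvalue $\lam=\pi/\om$, which you correctly derive from the Dirichlet--Dirichlet Sturm--Liouville problem $\phi(0)=\phi(-\om)=0$ giving $\lam_m=m\pi/\om$. Your adjustments (Dirichlet trace regularity $H^{l-1/2}$ for $g$, full Poincar\'e in the variational step, the same localization plus duality) are exactly the right bookkeeping changes, so this is the filled-in version of the proof the paper leaves to the reader rather than an alternative argument.
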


\begin{remark} The range of the weight is given in Theorem 3.11 \cite{BK} and Theorem 2.6.1 \cite{KMR2001}, which is also decided by  the first eigenvalue $\lam=\pi/\om$ for the Dirichlet system.

\end{remark}

\medskip

\subsection{Weighted estimates for D-N operator}
We first recall some basic properties for the D-N operator $\tN_0$ in the strip domain $\cSt$ from Proposition 3.8 \cite{Lannes}. The Sobolev norm $|\cdot|_{H^s}$ always represents the norm on $\G_t$ from now on when no confusion will be made.

\begin{lemma}\label{basic DN estimate} Let $f, g$ be any two functions in Schwartz class $\cS(\RR)$.  Then\\
(1) The operator $\tN_0$ is self-adjoint:
\[
\big(\tN_0 f,\, g\big)=\big(f, \tN_0 g\big)
\]
where $(\cdot, \cdot)$ is the $L^2(\RR)$ inner product;

(2) The operator $\tN_0$ is positive:
\[
\big(\tN_0f,\, f\big)\ge 0;
\]
(3) There exists a constant $C$ independent of $f, g$ such that the following estimate holds:
\[
\big|\big(\tN_0 f,\, g\big)\big|\le C|f|_{H^{1/2}}|g|_{H^{1/2}}.
\]
Moreover, there exist some positive $\mu$ and $c$ independent of $f$  such that
\[
\big(\tN_0f,\, f\big)+\mu|f|^2_{L^2}\ge c|f|^2_{H^{1/2}}.
\]
\end{lemma}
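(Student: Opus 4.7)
The plan is to reduce all four statements to properties of the harmonic extension $f_\cH$ in the strip $\cSt$. For parts (1)-(3), the main tool will be Green's identity; for part (4), the specific strip geometry lets us use Fourier analysis in $\tx$ to compute the symbol of $\tN_0$ explicitly.

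First I would set up the variational identity. Given $f,g \in \cS(\RR)$, let $f_\cH, g_\cH$ denote their harmonic extensions to $\cSt$ with vanishing Neumann data on $\G_b$. Multiplying $\Delta f_\cH = 0$ by $g_\cH$ and integrating by parts yields
\[
\int_{\cSt}\na f_\cH\cdot\na g_\cH\,d\tx d\tz \;=\; \int_{\G_t} g\,\tN_0 f\,d\tx,
\]
where the $\G_b$-boundary contribution drops because of the Neumann condition. This identity is symmetric in $(f,g)$, which gives self-adjointness (1), and taking $g=f$ yields $(\tN_0 f, f) = \|\na f_\cH\|_{L^2(\cSt)}^2 \ge 0$, establishing (2). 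For (3), Cauchy-Schwarz on the left-hand side combined with standard elliptic regularity for the harmonic extension ($\|\na f_\cH\|_{L^2(\cSt)} \le C|f|_{H^{1/2}(\RR)}$, which can be obtained by solving explicitly in Fourier, or by Lemma \ref{H1 estimate on S}) gives $|(\tN_0 f, g)| \le C|f|_{H^{1/2}}|g|_{H^{1/2}}$.

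For the coercivity (4), the key observation is that $\cSt$ is translation-invariant in $\tx$, so I can apply the Fourier transform in $\tx$. The ODE $(-\xi^2 + \p_{\tz}^2)\hat f_\cH(\xi,\tz) = 0$ with boundary data $\hat f_\cH(\xi, 0) = \hat f(\xi)$ and $\p_{\tz}\hat f_\cH(\xi, -\om) = 0$ solves explicitly as
\[
\hat f_\cH(\xi,\tz) = \hat f(\xi)\,\frac{\cosh\bigl(|\xi|(\tz+\om)\bigr)}{\cosh(|\xi|\om)},
\]
so the symbol of $\tN_0$ is $\sigma(\xi) = |\xi|\tanh(|\xi|\om)$. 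Consequently, by Plancherel,
\[
(\tN_0 f, f) = \int_{\RR} |\xi|\tanh(|\xi|\om)\,|\hat f(\xi)|^2\,d\xi.
\]
The symbol satisfies $\sigma(\xi) \sim \om\,\xi^2$ near $\xi=0$ and $\sigma(\xi) \sim |\xi|$ at infinity, hence $\sigma(\xi) \ge c_1 |\xi|$ for $|\xi|\ge 1$ and $\sigma(\xi) \ge 0$ everywhere. Choosing $\mu$ larger than the resulting constant in the low-frequency region then gives $\sigma(\xi) + \mu \ge c\,(1+\xi^2)^{1/2}$ for a suitable $c>0$, which is exactly the inequality $(\tN_0 f, f) + \mu|f|^2_{L^2} \ge c|f|^2_{H^{1/2}}$.

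The step I expect to require the most care is the low-frequency behaviour in (4), since the symbol $|\xi|\tanh(|\xi|\om)$ degenerates quadratically at the origin rather than linearly; this is precisely why the lower-order correction $\mu|f|^2_{L^2}$ is needed on the left-hand side, and the constants $\mu, c$ will depend on $\om$. Everything else is routine once the harmonic-extension identity and the Fourier representation are in place.
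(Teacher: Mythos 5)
Your proof is correct. Note, however, that the paper does not prove this lemma at all: it simply recalls the statement from Proposition 3.8 of \cite{Lannes}, where the corresponding properties of the flat-strip Dirichlet--Neumann operator are established by variational arguments that also cover non-flat bottoms. Your argument is a self-contained alternative that exploits the exact geometry of $\cSt$: translation invariance in $\tx$ makes $\tN_0$ a Fourier multiplier with symbol $|\xi|\tanh(\om|\xi|)$, from which (2)--(4) are immediate, and the Green's identity $\int_{\cSt}\na f_\cH\cdot\na g_\cH = (\tN_0 f, g)$ gives (1)--(3) (for the integration by parts on the unbounded strip you should remark that for Schwartz data the explicit Fourier formula shows $f_\cH$ and $\na f_\cH$ decay rapidly as $\tx\to\pm\infty$, so the boundary terms at infinity vanish; likewise Plancherel plus the bound $\sigma(\xi)\le|\xi|$ already yields the elliptic estimate $\|\na f_\cH\|_{L^2(\cSt)}\le C|f|_{H^{1/2}}$ you invoke). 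Your identification of the quadratic degeneracy of the symbol at $\xi=0$ as the reason the zeroth-order term $\mu|f|^2_{L^2}$ is needed, with $\mu, c$ depending on $\om$, is exactly right. What the citation to \cite{Lannes} buys is generality (variable strips, uniformity statements used elsewhere in that theory); what your computation buys is transparency and explicit constants in the present flat-strip setting.
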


\bigskip

 We now consider weighted estimates involving  the D-N operator.
\begin{proposition}\label{DN weighted estimate}
{\it Let  the integer $l\ge 1$,  $\ga$ satisfy
\[
|\ga|<\min\{\f\pi{2\om}, \f\pi{2\om_r}\},
\]
 and the weight $\alpha=\alpha(\tx)$ be defined by \eqref{weight function}. Then the following statements hold true:

(1) For any $f$ such that $\alpha^{\ga}f\in H^{3/2}(\Gamma_t)$, one has
\[
\big|\alpha^{\ga}\tN_0f\big|_{H^{1/2}}\le C|\alpha^{\ga}f|_{H^{3/2}}
\]
with a constant $C=C(c_0,\ga,  \cSt)$.

(2) For any $f$ such that $\alpha^{\ga}f\in H^{l+1}(\Gamma_t)$, one has
\[
\big|\alpha^{\ga}\tN_0f\big|_{H^l}\le C|\alpha^{\ga}f|_{H^{l+1}}
\]
with a constant $C=C(l, c_0,\ga,  \cSt)$.
}
\end{proposition}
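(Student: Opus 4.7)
\textbf{Plan for Proposition \ref{DN weighted estimate}.} The plan is to reduce both statements to the weighted interior elliptic estimate of Proposition \ref{Weighted elliptic estimate on S}, combined with the trace theorem of Lemma \ref{trace}. Let $u := f_{\cH}$ denote the solution to the mixed boundary problem \eqref{MBVP on S} with $h = 0$, $g = 0$ and Dirichlet data $f$ on $\G_t$; on the flat strip $\cSt$ the outward unit normal at $\G_t$ is $(0,1)$, so $\tN_0 f = \dtz u|_{\tz = 0}$. Since the weight $\al = \al(\tx)$ depends only on $\tx$, it commutes with $\dtz$, and therefore
\[
\al^\ga \tN_0 f = \dtz(\al^\ga u)\big|_{\tz = 0}.
\]
This identity is the key structural observation that lets the weight pass freely through the normal derivative and the trace without generating any commutator.

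For part (1), the hypothesis $|\ga| < \min\{\pi/(2\om), \pi/(2\om_r)\}$ is exactly the range required by Proposition \ref{Weighted elliptic estimate on S}. Applying that proposition with order $l = 2$ and vanishing interior and bottom data yields $\|\al^\ga u\|_{H^2(\cSt)} \le C|\al^\ga f|_{H^{3/2}(\G_t)}$. Hence $\dtz(\al^\ga u) \in H^1(\cSt)$ with the same control, and Lemma \ref{trace} gives $|\dtz(\al^\ga u)|_{\tz = 0}|_{H^{1/2}(\G_t)} \le C\|\al^\ga u\|_{H^2(\cSt)}$, which is (1).

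For part (2) I would apply Proposition \ref{Weighted elliptic estimate on S} at the two neighbouring integer orders $l+1$ and $l+2$, both of which are $\ge 2$ since $l \ge 1$, and combine each with trace. This produces the pair of half-integer D-N estimates
\[
|\al^\ga \tN_0 f|_{H^{l-1/2}(\G_t)} \le C|\al^\ga f|_{H^{l+1/2}(\G_t)}, \qquad |\al^\ga \tN_0 f|_{H^{l+1/2}(\G_t)} \le C|\al^\ga f|_{H^{l+3/2}(\G_t)}.
\]
Since $\tN_0$ is linear and multiplication by $\al^\ga$ identifies the weighted norm $|\al^\ga\,\cdot\,|_{H^s}$ isometrically with the ordinary $H^s$ norm, I would interpolate between these two bounds at $\theta = 1/2$ via the standard complex interpolation $[H^{s_0}, H^{s_1}]_{1/2} = H^{(s_0+s_1)/2}$. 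This lands on the integer order $l$ on the left-hand side and $l+1$ on the right, giving (2).

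The main obstacle is a small mismatch of regularity scales: Proposition \ref{Weighted elliptic estimate on S} is stated only at integer orders, while (2) is an integer-order boundary estimate sitting strictly between the two half-integer estimates that the scheme ``harmonic-extend, weighted elliptic bound, trace'' produces directly. This is why the argument for (2) requires one genuine interpolation step, whereas (1), which is itself at a half-integer scale, falls out in a single application of the elliptic estimate and the trace. No additional spectral analysis is needed beyond what is already encoded in the hypothesis $|\ga| < \min\{\pi/(2\om), \pi/(2\om_r)\}$, which was precisely the range fixed in Proposition \ref{Weighted elliptic estimate on S} to stay inside the strip bounded by the first positive eigenvalue of the mixed-boundary problem.
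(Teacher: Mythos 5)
Your argument for part (1) is exactly the paper's: since $\al$ depends only on $\tx$ it commutes with $\dtz$, so $\al^{\ga}\tN_0 f=\dtz(\al^{\ga}f_{\cH})|_{\tz=0}$, and then Proposition \ref{Weighted elliptic estimate on S} at order $2$ plus Lemma \ref{trace} give the bound. For part (2) the paper only says ``similar arguments lead to the desired conclusion,'' leaving implicit exactly the mismatch you point out: the scheme extend--elliptic estimate--trace, applied with the integer-order elliptic estimate, naturally produces the half-integer boundary bounds $|\al^{\ga}\tN_0 f|_{H^{l\mp 1/2}}\le C|\al^{\ga}f|_{H^{l+1/2}},\ C|\al^{\ga}f|_{H^{l+3/2}}$ rather than the stated integer-order one. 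Your fix — conjugating by multiplication with $\al^{\ga}$ so that both bounds become boundedness of a single operator $H^{l+1/2}\to H^{l-1/2}$ and $H^{l+3/2}\to H^{l+1/2}$, then interpolating at $\theta=1/2$ — is a correct and clean way to land on $H^{l+1}\to H^{l}$, and since $l\ge 1$ both elliptic orders $l+1,l+2\ge 2$ are admissible. The alternative the paper presumably has in mind is that the weighted elliptic estimate itself extends to fractional orders (as in the KMR framework), which would give (2) in one step; your interpolation buys the same conclusion without invoking any unstated fractional-order elliptic theory, at the cost of one standard interpolation argument. No gap.
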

\begin{proof} (1) The proof is straightforward. In fact, one writes
\[
\big|\alpha^{\ga}\tN_0f\big|_{H^{1/2}}=\big|\dtz(\alpha^{\ga}f_\cH)\big|_{H^{1/2}},
\]
which together with Proposition \ref{Weighted elliptic estimate on S} and Lemma \ref{trace} imply immediately that
\[
\begin{split}
\big|\alpha^{\ga}\tN_0f\big|_{H^{1/2}}
\le C\big\|\alpha^{\ga}f_\cH\big\|_{H^2}
\le C |\alpha^{\ga}f|_{H^{3/2}}.
\end{split}
\]

(2) Higher-order estimate. Similar arguments lead to the desired conclusion.

\end{proof}


Moreover, we also need  some commutator estimates related to the D-N operator.
 \begin{proposition}\label{DN commutator weighted estimate}
Let real $\ga, \ga_1, \ga_2$ satisfy
\[
\ga=\ga_1+\ga_2,\qquad |\ga|, |\ga_1|, |\ga_2|< \min\{\f\pi{2\om}, \f\pi{2\om_r}\}
\] and the weight $\alpha=\alpha(\tx)$ be defined by \eqref{weight function}. Then the following statements hold true:

(1) For any two functions $f, g$ such that $\alpha^{\ga_1}f\in H^{2}(\Gamma_t)$, $\alpha^{\ga_2}g\in H^{1/2}(\Gamma_t)$, one has
\[
\big|\alpha^{\ga}[f, \tN_0]g\big|_{L^2}\le C|\alpha^{\ga_1}f|_{H^{2}}|\alpha^{\ga_2}g|_{H^{1/2}}
\]
with some constant $C=C(c_0,\ga, \cSt)$. 

Moreover, when $\al^{\ga_1}f\in W^{2,\infty}$, one has
\[
\big|\alpha^{\ga}[f, \tN_0]g\big|_{L^2}\le C|\alpha^{\ga_1}f|_{W^{2,\infty}}|\alpha^{\ga_2}g|_{H^{1/2}}
\]

(2) (Higher-order estimate) Let $l\in \mathbb N$. For any two functions $f, g$ such that $\alpha^{\ga_1}f\in H^{l+1}(\Gamma_t)$, $\alpha^{\ga_2}g\in H^{l}(\Gamma_t)$, one has
\[
\big|\alpha^{\ga}[f, \tN_0]g\big|_{H^l}\le C|\alpha^{\ga_1}f|_{H^{l+1}}|\alpha^{\ga_2}g|_{H^{l}}
\]
with some constant $C=C(l, c_0,\ga, \cSt)$.

(3) There exists a constant $C=C(c_0,\ga, \cSt)$ such that
\[
\big|\al^{\ga_1}[\tN_0, \al^{\ga_2}]f\big|_{H^{1/2}}\le C \big|\al^{\ga} f\big|_{H^{1/2}}.
\]
\end{proposition}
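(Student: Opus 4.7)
The plan is to adapt the classical Dirichlet-Neumann commutator identity via harmonic extensions to the weighted setting. For parts (1) and (2), I would introduce the auxiliary function
\[
u=f_\cH\,g_\cH-(fg)_\cH,
\]
so that, using $\Delta f_\cH=\Delta g_\cH=0$, it solves the homogeneous mixed problem
\[
\Delta u=2\na f_\cH\cdot\na g_\cH\quad\hbox{in}\quad \cSt,\qquad u|_{\G_t}=0,\quad \na_{n_b}u|_{\G_b}=0,
\]
and satisfies the identity
\[
[f,\tN_0]g=\na_{n_t}u|_{\G_t}-g\,\tN_0 f,
\]
obtained by expanding $\tN_0(fg)=\na_{n_t}(f_\cH g_\cH-u)|_{\G_t}=f\tN_0 g+g\tN_0 f-\na_{n_t}u|_{\G_t}$. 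This reduces the commutator estimate to a weighted elliptic estimate for $u$ plus a product-type remainder.

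For part (1), applying Proposition \ref{Weighted elliptic estimate on S} with $l=2$ and vanishing boundary data, together with the trace theorem Lemma \ref{trace}, would give
\[
|\al^\ga\na_{n_t}u|_{L^2(\G_t)}\le C\|\al^\ga u\|_{H^2(\cSt)}\le C\|\al^\ga\na f_\cH\cdot\na g_\cH\|_{L^2(\cSt)}.
\]
I would then split via H\"older as $\|\al^{\ga_1}\na f_\cH\|_{L^\infty(\cSt)}\|\al^{\ga_2}\na g_\cH\|_{L^2(\cSt)}$; the $L^2$ factor is controlled by $|\al^{\ga_2}g|_{H^{1/2}(\G_t)}$ through the weighted elliptic estimate applied to $g_\cH$, while the $L^\infty$ factor is controlled by $|\al^{\ga_1}f|_{H^2(\G_t)}$ via the two-dimensional embedding $H^{1+\epsilon}(\cSt)\hookrightarrow L^\infty(\cSt)$ after a short interpolation between Proposition \ref{Weighted elliptic estimate on S} at orders $l=2$ and $l=3$. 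The $W^{2,\infty}$ version follows from the same identity using Lipschitz control of the harmonic extension from the Poisson integral representation. The remainder $g\,\tN_0 f$ is estimated through $L^4\times L^4$ H\"older in combination with the Sobolev embedding $H^{1/4}(\RR)\hookrightarrow L^4(\RR)$ and Proposition \ref{DN weighted estimate}.

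Part (2) would proceed by the same identity at higher order: applying Proposition \ref{Weighted elliptic estimate on S} at order $l+2$ and trace gives
\[
|\al^\ga\na_{n_t}u|_{H^l(\G_t)}\le C\|\al^\ga\na f_\cH\cdot\na g_\cH\|_{H^l(\cSt)},
\]
and a weighted Moser-type product estimate (placing the $L^\infty$ factor on the side of $f$ since it carries the extra derivative) closes the bound; the corresponding $g\,\tN_0 f$ term is handled by standard product estimates in $H^l(\RR)$ together with Proposition \ref{DN weighted estimate}(2). For part (3), the key observation is that $\al^{\ga_2}$ extends naturally to $\cSt$ as a function of $\tx$ alone, and since $n_b$ is vertical in the flat strip, $\na_{n_b}\al^{\ga_2}\equiv 0$ on $\G_b$. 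Setting
\[
v=(\al^{\ga_2}f)_\cH-\al^{\ga_2}f_\cH,
\]
one checks $v|_{\G_t}=0$, $\na_{n_b}v|_{\G_b}=0$, and $[\tN_0,\al^{\ga_2}]f=\na_{n_t}v|_{\G_t}$, with
\[
\Delta v=-2\,\dtx\al^{\ga_2}\cdot\dtx f_\cH-\dtx^2\al^{\ga_2}\cdot f_\cH.
\]
Since $|\dtx\al^{\ga_2}|+|\dtx^2\al^{\ga_2}|\le C\al^{\ga_2}$ uniformly on $\cSt$ by the explicit exponential form of $\al$ near the corners (and smoothness in the middle), one gets $\|\al^{\ga_1}\Delta v\|_{L^2}\le C\|\al^\ga f_\cH\|_{H^1}\le C|\al^\ga f|_{H^{1/2}}$, and Proposition \ref{Weighted elliptic estimate on S} with $l=2$ plus trace finishes the proof.

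The main obstacle will be the tight balance of Sobolev indices in part (1), because Proposition \ref{Weighted elliptic estimate on S} is stated only for integer order: extracting $\|\al^{\ga_1}\na f_\cH\|_{L^\infty(\cSt)}$ from the boundary datum $|\al^{\ga_1}f|_{H^2(\G_t)}$ requires either a half-integer weighted elliptic estimate (obtained by interpolation between the $l=2$ and $l=3$ cases) or a Schauder-type bound for the harmonic extension. A parallel subtlety in part (2) is to distribute the weights consistently under Leibniz so that the identity $\al^\ga=\al^{\ga_1}\al^{\ga_2}$ is preserved at every order of differentiation, with the $L^\infty$ factor always absorbed by the more regular $f$.
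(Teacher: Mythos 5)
Your approach matches the paper's almost exactly: both proofs reduce the commutator to the boundary normal derivative of the auxiliary function $u=f_\cH g_\cH-(fg)_\cH$ (the paper calls it $w$), estimate it through the weighted mixed-boundary elliptic estimate of Proposition \ref{Weighted elliptic estimate on S} and the trace lemma, split the source term $\alpha^\gamma\nabla f_\cH\cdot\nabla g_\cH$ by $L^\infty\times L^2$, and for part (3) construct $w_1=\cH(\alpha^{\gamma_2}f)-\alpha^{\gamma_2}f_\cH$ in the same way. The only real difference is in the remainder $g\,\tN_0 f$: the paper works directly with $\alpha^{\gamma_1}\na_{n_t}f_\cH|_{\G_t}=\dtz(\alpha^{\gamma_1}f_\cH)|_{\G_t}$, controlling its $L^\infty(\G_t)$ norm via $\|\alpha^{\gamma_1}f_\cH\|_{H^{5/2}(\cS_0)}$ and pairing with $|\alpha^{\gamma_2}g|_{L^2}$, while you propose an $L^4\times L^4$ split using $H^{1/4}(\RR)\hookrightarrow L^4(\RR)$ and Proposition \ref{DN weighted estimate}; both work, but the paper's version avoids invoking the D-N operator estimate a second time and keeps the weights explicit on $\na_{n_t}f_\cH$ rather than on $\tN_0 f$. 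Your flagged ``main obstacle'' is real and is present in the paper too: passing from $|\alpha^{\gamma_1}f|_{H^2(\G_t)}$ to $\|\alpha^{\gamma_1}\na f_\cH\|_{L^\infty(\cS_0)}$ requires the weighted elliptic estimate at order $5/2$, which Proposition \ref{Weighted elliptic estimate on S} states only for integer $l$, so an interpolation between $l=2$ and $l=3$ is implicitly being used in the published argument as well.
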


\begin{proof} (1) Recalling  that $f_\cH$ denotes the harmonic extension of $f$ in $\cSt$ , we start with rewriting the commutator on $\G_t$:
\[
[f, \tN_0]g=f\na_{n_t}g_\cH-\na_{n_t}(fg)_\cH=-(\na_{n_t}f_\cH)g
-\na_{n_t}\big(f_\cH g_\cH-(fg)_\cH\big).
\]

As a result, denoting by $w=f_\cH g_\cH-(fg)_\cH$, we obtain
\begin{equation}\label{commu estimate}
\big|\alpha^{\ga}[f, \tN_0]g\big|_{L^2}\le
\big|\alpha^{\ga}(\na_{n_t}f_\cH)g\big|_{L^2}+\big|\alpha^{\ga}\na_{n_t}w\big|_{L^2},
\end{equation}
where $w$ satisfies the system
\[
\begin{cases}
\Delta w=2\na f_\cH\cdot \na g_\cH\quad \hbox{in}\quad \cSt,\\
w|_{\Gamma_t}=0,\quad \na_{n_b}w|_{\Gamma_b}=0.
\end{cases}
\]

For the first term on the right side of \eqref{commu estimate},  we have directly  that
\[
\big|\alpha^{\ga}(\na_{n_t}f_\cH)g\big|_{L^2}\le |\alpha^{\ga_1}\na_{n_t}f_\cH|_{L^\infty}
|\alpha^{\ga_2}g|_{L^2}.
\]
Noticing as before that
\[
\alpha^{\ga}\na_{n_t}f_\cH|_{\G_t}=\dtz(\alpha^{\ga}f_\cH)|_{\G_t}
\]
and applying Lemma \ref{trace} and Proposition \ref{Weighted elliptic estimate on S} on it, we find
\[
\begin{split}
\big|\alpha^{\ga}(\na_{n_t}f_\cH)g\big|_{L^2}&\le C\|\alpha^{\ga_1}f_\cH\|_{H^{5/2}}|\alpha^{\ga_2}g|_{L^2}\\
&\le C|\alpha^{\ga_1}f|_{H^2}|\alpha^{\ga_2}g|_{L^2}.
\end{split}
\]

For the second term $\big|\alpha^{\ga}\na_{n_t}w\big|_{L^2}$ in \eqref{commu estimate},  we have immediately by Proposition \ref{Weighted elliptic estimate on S} that
\[
\begin{split}
\big|\alpha^{\ga}\na_{n_t}w\big|_{L^2(\Gamma_t)}&\le \big|\alpha^{\ga}\na_{n_t}w\big|_{H^{1/2}(\Gamma_t)}
\le C\|\alpha^{\ga}w\|_{H^2(\cSt)}\\
&\le C\|\alpha^{\ga}\na f_\cH\cdot \na g_\cH\|_{L^2(\cSt)}\\
&\le C\|\alpha^{\ga_1}\na f_\cH\|_{L^\infty(\cSt)}\|\alpha^{\ga_2}\na g_\cH\|_{L^2(\cSt)}.
\end{split}
\]
Using  Lemma \ref{embedding}, Lemma \ref{trace} and Proposition \ref{Weighted elliptic estimate on S} again, one has
\[
\|\alpha^{\ga_1}\na f_\cH\|_{L^\infty(\cSt)}\le C|\alpha^{\ga_1} f|_{H^2(\Gamma_t)}.
\]
Moreover, applying Remark \ref{H1 weighted estimate} to $g$ leads to
\[
\|\alpha^{\ga_2}\na g_\cH\|_{L^2(\cSt)}\le C\|\alpha^{\ga_2}g_\cH\|_{H^1(\cSt)}
\le C|\alpha^{\ga_2} g|_{H^{1/2}(\Gamma_t)}.
\]

Summing up these estimates above, the proof for the first estimate in (1) is finished. For the second one, repeating the proof above while replace $f_\cH$ by $f$ (constant with respect to $\tz$), one can derive the desired estimate, where the second-order derivative of $f$ is reached in the estimate of $w$.

\medskip

(2) Higher-order derivative estimates. One has immediate the following estimate in a similar way as above:
\[
\big|\alpha^{\ga}[f, \tN_0]g\big|_{H^1}\le
\big|\alpha^{\ga}(\na_{n_t}f_\cH)g\big|_{H^1}+\big|\alpha^{\ga}\na_{n_t}w\big|_{H^1},
\]
and the desired estimate follows naturally.

\medskip

(3) Direct computations lead to
\[
[\tN_0,\al^{\ga}]f=\dtz\cH\big(\al^{\ga}f\big)|_{\G_t}-\al^{\ga}\dtz f_{\cH}|_{\G_t}=\dtz w_1|_{\G_t},
\]
where
\[
w_1=\cH\big(\al^{\ga}f\big)-\al^{\ga}f_\cH
\]
satisfies
\[
\begin{cases}
\Delta w_1=-2\dtx\al^{\ga} \dtx f_\cH-\dtx^2\al^{\ga} f_\cH\quad\hbox{in}\quad \cSt,\\
w_1|_{\G_t}=0,\quad \na_{n_b}w_1|_{\G_b}=0.
\end{cases}
\]

As a result, one has by Lemma \ref{trace}, Proposition \ref{Weighted elliptic estimate on S} and Remark \ref{H1 weighted estimate} that
\[
\begin{split}
\big|\al^{\ga_1}[\tN_0,\al^{\ga_2}]f\big|_{L^2}&=|\al^{\ga_1}\dtz w_1|_{L^2} \le C\|\al^{\ga_1}w_1\|_{H^2}\\
&\le C\big(\big\|\al^{\ga_1}\dtx\al^{\ga_2} \dtx f_\cH\big\|_{L^2}+\big\|\al^{\ga_1}\dtx^2\al^{\ga_2} f_\cH\big\|_{L^2}\big)\le C\big|\al^{\ga}f\big|_{H^{1/2}}.
\end{split}
\]
Notice that here the derivatives of $\al$ are controlled by $C \al$ thanks to the definition of $\al$. Therefore, the proof is finished.

\end{proof}

\section{Reformulation of the problem and preparations}\label{equation}

Similarly as in previous works \cite{MW1, MW2, MW3}, we use again  the geometric formulation in \cite{SZ, SZ2}.  We start with recalling and rewriting the equation of  the mean curvature $\kappa$ of the surface in the physical domain $\Om$, and then transform it into an equivalent equation  in the strip domain $\cSt$.


\subsection{Geometric formulation in the strip domain}

To begin with,  the mean curvature of $\G_t$ is expressed by
\[
\kappa=\na_{\tau_t}n_t\cdot \tau_t.
\] 
Remembering that the free surface $\G_t$ is expressed as $z=\zeta(t, x)$ explicitly, we know immediately that   $\kappa$ is parameterized as 
\[
\kappa=\dx\Big(\f{\dx\zeta}{\sqrt{1+|\dx\zeta|^2}}\Big).
\]
In the following we use both the geometric expression and  the parameterization of $\kappa $ for the sake of convenience, and there is a slight abuse of the notation $\G_t=\RR\times \{0\}$ of $\cSt$ when we use the parameterization $z=\zeta$ and consider $\tzeta$ in $\RR$ indeed.

We also recall some computations from \cite{SZ, MW1}:
\begin{equation}\label{Dtn expression}
D_t n_t=-\big((\na v)^Tn_t\big)^\top=-(\na_{\tau_t}v\cdot  n_t)\tau_t,\quad D_t \tau_t=(\na_{\tau_t}v \cdot n_t)n_t\qquad\hbox{on}\quad \G_t.
\end{equation}

\bigskip

Next, taking the surface tension coefficient  $\sigma=0$ in \cite{SZ},  we find that $\ka$ satisfies
\begin{equation}\label{Dtk eqn}
D_t\ka=-\Delta_{\G_t}v\cdot n_t-2\na_{\tau_t}n_t\cdot \na_{\tau_t}v \qquad\hbox{on}\quad \G_t.
\end{equation}
Taking $D_t$ on this equation again, one obtains immediately as in \cite{SZ} the following equation:
\begin{equation}\label{J eqn 1}
\begin{split}
    D^2_t \ka&=-n_t\cdot\Delta_{\G_t}D_t v+R_1=n_t\cdot\Delta_{\G_t}(\na P)+R_1,
\end{split}
\end{equation}
where
\[
\begin{split}
R_1=& 2\Big[\tau_t\na_{\tau_t}\big((\na v)^Tn_t\big)^\top\cdot \tau_t+\na_{(\na_{\tau_t} v)^\top}n_t\Big]\cdot \na_{\tau_t} v+\D_{\G_t}v\cdot\big((\na v)^Tn_t\big)^\top+2\na_{\tau_t}n_t\cdot (\na v)^2\tau_t\\
&
-2(\na_{\tau_t} v\cdot n_t)\Big(\na_{\tau_t}n_t\cdot\na_{n_t}v
+\na_{\tau_t}n_t\cdot(\na v)^Tn_t\Big)
-2n_t\cdot D^2v\big(\tau_t,\,(\na_{\tau_t}v)^\top\big)\\
&
-2(\na_{n_t}v\cdot n_t)(\na_{\tau_t}n_t\cdot \na_{\tau_t} v)+n_t\cdot \na v\big((\D_{\G_t}v)^\top\big) -\kappa\big|\big((\na v)^Tn_t\big)^\top\big|^2 +2\na_{\tau_t}n_t\cdot \na_{\tau_t}\na P.
\end{split}
\]
Moreover, direct computations show that the pressure $P$ satisfies the following system
\begin{equation}\label{Pvv system}
\begin{cases}
\Delta P=-tr(\na v)^2\qquad \hbox{in}\quad \Om,\\
P|_{\G_t}=0,\quad \na_{n_b}P|_{\G_b}=\na_vn_b\cdot v.
\end{cases}
\end{equation}

Similarly as in \cite{SZ}, we rewrite the term $n_t\cdot \D_{\G_t}\na P$ in \eqref{J eqn 1}:
\[
n_t\cdot \D_{\G_t}\na P=-\ma \,\cN\ka+R_2,
\]
where
\[
\ma=-\na_{n_t} P
\]
is the Taylor's sign and 
$\cN$ is the Dirichlet-Neumann operator. Here $R_2$ is derived by direct computations related to $\Delta_{\G_t}$:
\begin{equation}\label{R2}
\begin{split}
R_2=&-n_t\cdot \Delta\na P+\kappa n_t\cdot\na_{n_t}\na P+n_t\cdot D^2(\na P)(n_t, n_t)\\
=&-n_t\cdot\na\big(tr(Dv)^2\big)+\na_{n_t}\big(\kappa_\cH \na_{n_t} P\big)-\na_{n_t}\kappa_\cH\na_{\bar n_t}P-\kappa\na_{n_t}\bar n_t\cdot \na P+\na_{n_t}\big(D^2P(\bar n_t, \bar n_t)\big)\\
&-2D^2P(n_t, \na_{n_t}\bar n_t)\\
=&-n_t\cdot\na\big(tr(Dv)^2\big)-\na_{n_t}u_p
+\kappa\na P\cdot \na_{n_t}\bar n_t+2D^2P\big(n_t, \na_{n_t}\bar n_t\big)
\end{split}
\end{equation}
where  
\[
u_p:=\kappa_\cH\na_{\bar n_t}P+D^2P(\bar n_t,\bar n_t),
\]
and $\bar n_t$ is an extension of $n_t$ in $\Om$. For simplicity, we just take 
\[
\bar n_t=n_t \qquad\hbox{in}\quad \Om
\]
since $n_t$ relies on the parametrization $z=\zeta(t, x)$ and depends only on $t, x$.

Moreover,  we need to emphasize that $\na_{n_t}u_p$ is in fact a higher-order term about the same order as $\na^2 v$.  In fact, $u_p$ satisfies
\begin{equation}\label{u1 system}
\begin{cases}
   \Delta u_p=2\na\kappa_{\cH}\cdot \na(\na_{n_t}P)+\kappa_\cH\Delta (\na_{n_t}P)+\Delta \big(D^2 P(n_t, n_t)\big)\qquad  \hbox{in}\quad \Om\\
u_p|_{\G_t}=\kappa \na_{n_t}P+D^2P(n_t, n_t)=\Delta P-\Delta_{\G_t}P=-tr(Dv)^2\big|_{\G_t},\\
\na_{n_b}u_p|_{\G_b}=\kappa_{\cH} \na_{n_b}(\na_{n_t} P)+\na_{n_b}D^2P(n_t, n_t)+2D^2 P(\na_{n_b}n_t, n_t)\big|_{\G_b}
\end{cases}
\end{equation}
where one notes the term $\p^3 P$ on $\G_b$. 

The boundary dependence of the corresponding elliptic system of $P$ usually leads to a loss of half-order derivative of the boundary, but  we will  show that in Corollary \ref{higher order local}  that this still works since the boundary dependence in this case only lies in the bottom, which can be as smooth as we want.

\bigskip

Substituting these computations into  equation \eqref{J eqn 1}, we  find
\begin{equation}\label{Nk eqn}
   D^2_t \ka+\ma \cN \ka=R_1+R_2\qquad\hbox{on}\quad \G_t.
\end{equation}



\bigskip
Now we plan to transform \eqref{Nk eqn} into an equivalent equation in $\cSt$ by the conformal mapping $\cT$ defined before. We always use the notation 
\[
\wt u=u\circ \cT
\]
 in the following lines, where  $u$ is some function defined in $\Om$.

To begin with,  for space derivatives in $\Om$, we have the following transform
\begin{equation}\label{na u transform}
\wt{\na u}=(\na u)\circ \cT=(\na_X\wt X)\circ \cT \na \wt u
\end{equation}
where 
\[
X=(x, z)\in \Om,\quad \wt X=(\tx, \tz)\in \cSt
\] 
and 
\begin{equation}\label{na X matrix}
(\na_X\wt X)\circ \cT=|\cT'|^{-2}\left(\begin{matrix}
\dtx x &\dtx z\\
\dtx z & -\dtx x
\end{matrix}\right),
\quad |\cT'|=|(\dtx x, \dtx z)|=\sqrt{(\dtx x)^2+(\dtx z)^2}.
\end{equation}

Meanwhile, taking $\tz=0$ in $z=z(t, \tx, \tz)$, we obtain directly
\[
z=z(t, \tx, 0)=\zeta(t, x(t, \tx, 0)):= \tzeta(t, \tx),
\]
which is usually denoted by 
\[
\tzeta=\zeta\circ \cT|_{\G_t}
\]
 with a slight abuse of notation.
Consequently, we have
\begin{equation}\label{T prime -1}
|\cT'|\big|_{\G_t}=|\dtx x|\sqrt{1+|(\dtx x)^{-1}\dtx\tzeta|^2}
\end{equation}
and
\begin{equation}\label{na X matrix Gt}
\begin{split}
 (\na_X\wt X)\circ \cT\big|_{\G_t}&=|\cT'|^{-2}\left(\begin{matrix}
\dtx x &\dtx\tzeta\\
\dtx\tzeta & -\dtx x
\end{matrix}\right)\\
&=(\dtx x)^{-1}\f 1{1+\big|(\dtx x)^{-1}\dtx \tzeta\big|^2}
\left(\begin{matrix}
1 &(\dtx x)^{-1}\dtx\tzeta\\
(\dtx x)^{-1}\dtx\tzeta & -1
\end{matrix}\right).
\end{split}
\end{equation}

\bigskip

Second, we transform  the D-N operator $\cN$ in $\Om$ into the D-N operator $\wt\cN_0$ in $\cSt$, which is well-understood with estimates already. In fact, direct computations show that for a function $f$ defined on $\G_t$, the harmonic extension $f_\cH$ satisfies
\[
\begin{cases}
\Delta \wt{f_\cH}=0\qquad \hbox{in}\quad \cSt,\\
\wt{f_\cH}|_{\G_t}=\wt f,\quad \na_{n_b}\wt{f_\cH}|_{\G_b}=0.
\end{cases}
\]
As a result, we obtain
\begin{equation}\label{DN transform}
\big(\cN f\big)\circ \cT=(\na_{n_t} f_\cH|_{\G_t})\circ \cT=|\cT'|^{-1}|_{\G_t}\widetilde{\cN}_0\widetilde{f}
\end{equation}
where   $\tN_0$ is the D-N operator in $\cSt$.

\bigskip

On the other hand, we also need to consider the transformation of $D_t u$ for a smooth function $u$ defined in $\Om$.   Taking $\wt u=u\circ \cT$ in $\cSt$ as above,   direct computations show that
\[
(D_t u)\circ \cT=(\dt u)\circ \cT+(v\cdot \na u)\circ \cT=(\dt u)\circ \cT+\tv\cdot (\na_X\tX)\circ\cT\na \wt u,
\]
where
\[
(\dt u)\circ \cT=\dt\wt u-\tv_{\cT0}\cdot \wt{\na u}=\dt\wt u-\tv_{\cT0}\cdot (\na_X\tX)\circ\cT\na \wt u
\]
with
\[
\tv_{\cT0}=(\dt x,\,\dt z)^T.
\]
Consequently, denoting by
\begin{equation}\label{vT}
\tv_{\cT}=(\na_X\tX)^T\circ\cT(\tv-\tv_{\cT0}),
\end{equation}
we conclude the following formula
\[
(D_t u)\circ \cT=\dt\wt u+\tv_{\cT}\cdot\na\wt u:=\wt D_t \wt u\qquad\hbox{in}\quad \cSt.
\]

\bigskip

Moreover, when we look at a function $f=f(t,x)$ defined by the parametrization of the surface,  i.e. 
\[
f(t,x)=F(t,x,z)|_{\G_t:z=\zeta(t,x)}
\]
for some function $F$, we have
\[
\underline D_t f=(D_t F)|_{z=\zeta}\qquad \hbox{with} \quad \underline D_t=\dt+\underline v_1\dx
\]
where
\[
 v=(v_1, v_2)^T,\quad \underline v_1=v_1|_{z=\zeta}.
\]
{\it To simplify the notations, we also use $D_t$ instead of $\underline D_t$ when no confusion will be made.}

Similar computations as above show that for
\[
\wt f=f\circ \cT|_{\tz=0}=f(t, x(t,\tx, 0))
\]
we have
\begin{equation}\label{Dt transform}
(D_t f)\circ \cT|_{\tz=0}=\dt \wt f+u_1\dtx \wt f:=\wt D_t\wt f
\end{equation}
with
\[
 u_1=(\dtx x)^{-1}(\wt {\underline v}_1-\dt x),\quad \wt {\underline v}_1=\underline v_1\circ \cT|_{\tz=0}.
\]
{\it We will use the same notation $\wt D_t$ as in the strip domain $\cSt$ when no confusion will be made in the following text.}

\bigskip

Based on the preparations above,  we are ready to rewrite \eqref{Nk eqn} in the strip domain $\cSt$. First of all, we denote by
\[
w:=\wt \ka.
\]
Second, using the parameterizing $z=\zeta$ on $\G_t$, compositing  $\cT|_{\G_t: \tz=0}$ on both sides of \eqref{Nk eqn} and applying \eqref{Dt transform}, we  derive  
\begin{equation}\label{w eqn}
\wt D_t^2 w+\wt\ma |\cT'|^{-1}\widetilde{\cN}_0w=\wt R_1+\wt R_2
\end{equation}
where
\[
\wt R_i=R_i\circ \cT,\quad  i=1,2.
\]
Notice that  here {\it we omit the notation $\cdot|_{\G_t}$ for $|\cT'|^{-1}$ when no confusion will be made.}

\subsection{The proper weights} 
Since a natural weight $|\cT'|^{-1}$ appears in \eqref{w eqn}, we plan to identify the weights for all the quantities. To begin with, recalling the weight function $\al(\tx)$ from \eqref{weight function} and  setting
\[
\ga\in\mathbb R \quad\hbox{to be fixed later,}
\]
$\wt D_t w$ is assumed to be equipped with the weight $\al^{-\ga}$ and we try to use  the weighted norm $\big|\al^{-\ga}\wt D_t w\big|_{H^k}$.


Remembering that  $|\cT'|^{-1}\approx \al^{-1}$ thanks to Proposition \ref{CM estimate},
 it seems that we should put the weight $\al^{-\ga-\f12}$ before $w$ while notice that there is the Taylor's sign $\ta$ with $w$.

Besides,  recalling \eqref{na u transform} shows that for every space derivative in $\Om$, there is  approximately one $\al^{-1}$ together with one space derivative in $\cSt$.  Meanwhile, looking at \eqref{T prime -1} and \eqref{na X matrix Gt},  we find a natural weight $\al^{-1}$ in front of $\dtx\tzeta=\dtx x\wt{\dx\zeta}$ , which suggests that the weight for $\dtx\tzeta$ seems to be  $\al^{-1}$.


To have some more clues about weights, we look at the system for $\wt P=P\circ \cT$. In fact, $\tP$ satisfies
\begin{equation}\label{tPvv eqn}
\begin{cases}
\Delta \tP=-|\cT'|^2tr\big((\na_X \tX)\circ \cT\na \tv\big)^2\qquad \hbox{in}\quad \cSt,\\
\tP|_{\G_t}=0,\quad \dtz\tP|_{\G_b}=|\cT'|\big((\na_X \tX)^T\circ \cT\big)\tv\cdot \na\tn_b\cdot \tv.
\end{cases}
\end{equation}
 Proposition \ref{Weighted elliptic estimate on S} suggests that the terms on the right side of the above system decide the weight of $\tP$. Besides, we have
\[
|\cT'|\approx \al,\quad (\na_X \tX)\circ \cT|_{\G_t}\approx \al^{-1}
\]
 thanks to  Proposition \ref{CM estimate}, and notice that  $n_b$ is assumed to be constant near the corners,
 which implies immediately that the boundary term $\dtz\tP|_{\G_b}$ vanishes near $\tx=\infty$.
Consequently,  the weight for $\tP$ should be determined by $\tv$-terms  on the right side.

\bigskip



Now we are ready to identify all the weights.  According to the assumption that $\wt D_t w$ carries the weight $\al^{-\ga}$ and the fact $w\approx \al^{-2}\p^2\tzeta$, we find the weight $\al^{-\ga-2}$ for $\wt D_t\tzeta$.

On the other hand, direct computations and applying the kinematic condition \eqref{kinematic condi} show that
\begin{equation}\label{dt tzeta expression}
\begin{split}
\dt\tzeta&=\wt{\dt\zeta}+\dt x(\dtx x)^{-1}\dtx\tzeta\\
&=\tv\cdot \big(-(\dtx x)^{-1}\dtx\tzeta,\,1)^T+\dt x(\dtx x)^{-1}\dtx\tzeta.
\end{split}
\end{equation}

As a result, checking \eqref{Dt transform}, \eqref{dt tzeta expression}, \eqref{tPvv eqn} and counting the orders of derivatives one by one suggest the following weights:
\[
 \cT':\al^{-1},\quad \ttze:  \alpha ^{-\ga-2},\quad
\tv:  \alpha^{-\ga-2}, \quad \tP: \alpha^{-2\ga-4}.
\]
The weight for $\dt \cT$ will be determined later. 
Moreover,  the weight for
\[
\wt\ma=-\widetilde{\na_{n_t}P}=-|\cT'|^{-1}\dtz\tP|_{\tz=0}
\] should be $\alpha^{-2\ga-3}$,
which implies that
\[
\ta^{1/2}\ \hbox{carries the weight} \ \alpha^{-\ga-3/2}.
\]
We will assume later that $\ta$ is positive with the weight (the weighted Taylor's sign condition),  which together with the system of $\tP$ suggests that $\al^{-2\ga-4}\tP$ doesn't go to zero when $\tx$ goes to $\infty$. Consequently, {\it we will have to investigate the weighted limits for $\tP$ and other quantities as well.}

Meanwhile, we  assume that
\[
w=\wt\ka \quad \hbox{ carries the weight }\ \alpha^{q},
\]
so the weight for $\tzeta$ should be $\al^{q+2}$ (we use $\dtx\tzeta$ indeed).

In the end,  the left side of equation \eqref{w eqn} suggests that $
\ta^{1/2} w$ should carry the weight $\alpha^{-\ga-1/2}$, which means
\[
-\ga-3/2+q=-\ga-1/2,\quad \hbox{i.e.}\quad q=1.
\]
Consequently,  we have the following weights:
\[
w:  \alpha, \quad \tzeta \quad\hbox{or}\ \dtx\tzeta: \alpha^{-1},
\]
which coincide with our observation before.

\subsection{Preparations for the energy estimate}\label{ref set def}
 Before introducing the energy, we  define the set of solutions which involves  the surface function $\zeta$, the conformal mapping $\cT$, the velocity $v$ and the pressure $P$. 
 
 To begin with, we consider the weighted limits of $\ttze$, $\cT'$, $\tP$, $\dtz\tPhi$ which will be used later. We define 
\begin{equation}\label{a zeta t def}
a_{\zeta_t}(t, \tx)=\chi_l(\tx)a_{\zeta_t, l}(t)+\chi_r(\tx)a_{\zeta_t, r}(t)
\end{equation}
where $\chi_l, \chi_r$ are the cut-off functions near $\mp \infty$, and $a_{\zeta_t, i}$ ($i= l, r$) are the weighted limits of $\ttze$:
\[
a_{\zeta_t, l}=\lim_{x\rightarrow -\infty}\al^{-\ga-2}\wt{\dt \zeta},\quad a_{\zeta_t, r}=\lim_{x\rightarrow +\infty}\al^{-\ga-2}\wt{\dt \zeta}.
\]
Therefore, $a_{\zeta_t}$ is well-defined as long as $\al^{-\ga-2}\ttze\in L^\infty(\cSt)$ for each $t$.

Moreover, we set $T_{1,c}$, $a_p$ and $a_{\Phi_z}$ as the corresponding weighted limits for $\cT'$, $\tP$ and $\dtz \tPhi$ defined in \eqref{T1c def}, \eqref{a p def} and Proposition \ref{v estimate}  respectively, which can be  well-defined in a similar way and turn out to depend on $a_{\zeta_t}$.

We now are ready to define the set of solutions to \eqref{w eqn}.
\begin{definition} \label{ref set} Let $T>0$, $k\in \mathbb N$ and $Q_\Lam$ be some positive-coefficient polynomial.
Let $\Lam=\Lam(T, k)$ be the collection of $(\zeta,  \cT, \Phi, P)$ satisfying the water-waves system \eqref{Euler}-\eqref{P condi} (where $\cT$ is defined by $\zeta$) and the following conditions:

(i) $(\zeta,  \cT, \Phi, P)$ lies in  the following spaces
\[
\begin{split}
&\al^{-1}\dtx\tzeta\in L^\infty_T\big(H^{k+3/2}(\RR)\big),\quad \al^{-\ga-2}\ttze-a_{\zeta_t}\in L^\infty_T \big(H^{k+2}(\RR)\big), \al^{-1}\cT'-T_{1,c}\in L^\infty_T\big(H^{k+2}(\cSt)\big),\\
& \al^{-\ga-3}\na\tPhi-(a_{\Phi_x},  a_{\Phi_z})^T\in L^\infty_T\big(H^{k+5/2}(\cSt)\big),\quad \al^{-2\ga-4}\tP-a_p\in L^\infty_T\big(H^{k+7/2}(\cSt)\big),
\end{split}
\]
where $a_{\zeta_t}$,  $T_{1,c}$, $a_p$ and $a_{\Phi_x}, a_{\Phi_z}$ are well defined such that
\[
a_{\zeta_t}\in L^\infty([0, T]),  \quad T_{1,c}, a_p, a_{\Phi_x}, a_{\Phi_z}\in  L^\infty_T\big(L^\infty([-\om, 0])\big);
\]

(ii) $(\zeta,  \cT, \Phi, P)$ is bounded uniformly for $t\in [0, T]$:
\[
\begin{split}
&|\al w|^2_{H^{k+1/2}}+|\al^{-\ga}\wt D_t w-a_{w_t}|^2_{H^{k}}+\|\al^{-1}\cT'-T_{1,c}\|^2_{L^2}+|\al^{-\ga-2}\ttze-a_{\zeta_t}|^2_{L^2}
\\
&+\big\|\al^{-\ga-3}\na\tPhi-(a_{\Phi_x}, a_{\Phi_z})^T\big\|^2_{L^2}+\big\|\al^{-2\ga-4}\tP-a_p\big\|^2_{L^2}+|a_{\zeta_t, i}|^2+|b|^2+|b_r|^2+|b_{r2}|^2\le   Q_0
\end{split}
\]
where $a_{w_t}$ is well defined in Proposition \ref{dt Ka estimate} and 
\[
\begin{split}
Q_0=&Q_\Lam\big(|\al w(0)|_{H^{k+1/2}}, |\al^{-\ga}\wt D_t w(0)-a_{w_t}(0)|_{H^k}, \|\al^{-1}\cT(0)-T_{1,c}(0)\|_{L^2}, |\al^{-\ga-2}\ttze(0)-a_{\zeta_t}(0)|_{L^2},\\
& \big\|\al^{-\ga-3}\na\tPhi(0)-(a_{\Phi_x}, a_{\Phi_z})^T(0)\big\|_{L^2},  \big\|\al^{-2\ga-4}\tP(0)-a_p(0)\big\|_{L^2}, |a_{\zeta_t, i}(0)|, |b(0)|, |b_r(0)|, |b_{r2}(0)|\big).
\end{split}
\]
\end{definition}

Moreover, we define the set for the initial values.
\begin{definition}\label{initial value bound} Let $L>0$ be some fixed constant. $\Lam_0$ is the collection of the initial values of $(\zeta,  \cT, \Phi, P)$ satisfying the water-waves system \eqref{Euler}-\eqref{P condi} such that
\[
\begin{split}
&|\al w(0)|_{H^{k+1/2}}, |\al^{-\ga}\wt D_t w(0)-a_{w_t}(0)|_{H^k}\le L,\\
& \|\al^{-1}\cT(0)-T_{1,c}(0)\|_{L^2}, |\al^{-\ga-2}\ttze(0)-a_{\zeta_t}(0)|_{L^2}, \big\|\al^{-\ga-3}\na\tPhi(0)-(a_{\Phi_x}, a_{\Phi_z})^T(0)\big\|_{L^2}\le L,\\
&\big\|\al^{-2\ga-4}\tP(0)-a_p(0)\big\|_{L^2}, |a_{\zeta_t, i}(0)|, |b(0)|, |b_r(0)|, |b_{r2}(0)|\le L.
\end{split}
\]
\end{definition}

\begin{remark}\label{Assump 1} According to the weighted norms for $\tzeta$ and $\dt\tzeta$ in these definitions above, we see directly  that Assumption \ref{config} holds for $t\in [0, T]$.
\end{remark}
\begin{remark}\label{fix T}
For each $t$, the third point in $\G_t$ of $\cSt$ to fix $\cT$ is always chosen such that
\[
\lim_{\tx\rightarrow-\infty}\al^{-1}\dtx x|_{\G_t: \tz=0}=1.
\]
In fact, this choice  is not important of course and simplifies a little bit the first condition in \eqref{bdy condition ax az}. 
\end{remark}





\bigskip

Since the higher-order energy will be based on  the curvature $w$ (related to the  surface function $\tzeta$), we need to investigate estimates for all the quantities like $\tv,  \tP$ in  \eqref{w eqn} and express them by  $w$. Meanwhile, we notice that the higher-order space derivatives and the time derivative of $\cT$ are  involved a lot. As a result, the  estimates in our paper will be proved under the choice  
\[
(\zeta,  \cT, \Phi, P)\in \Lam(T, k)
\]
with their initial values chosen from $\Lam_0$ and 
\[
k\le \min\{\f\pi\om, \f\pi{\om_r}\}
\]
comes from Proposition\ref{CM estimate}.

Moreover, we need the following weighted Taylor's sign assumption in our energy estimate.

\begin{assumption}\label{assump on a}
There exists a positive constant $a_0$ such that
\[
\alpha^{-2\ga-3}\ta\ge a_0>0 \qquad\hbox{on}\quad \G_t: \tz=0,\quad\hbox{for}\ t\in [0, T].
\]
\end{assumption}

\noindent{\bf Notations}:  \\
(1) We denote by $Q(\cdot)$ some polynomial with positive constant coefficients depending on $k, \ga, \om, \om_r$ and the bound $L$ from Definition \ref{initial value bound};\\
(2) $Q(\cT',\na\tPhi, \tP, w, s)$ stands for a polynomial $Q(\cdot)$ depending on $|\al w|_{H^{s}}$ and the corresponding weighted $L^2$ norms of $\cT'$, $\na\tPhi$, $\tP$ as well as the $L^\infty$ norms of their left and right weighted limits (In fact, all these limits turn out to depend on $T_{1,c}$  and $a_{\zeta_t}$ defined in \eqref{a zeta t def} and \eqref{T1c def}).  The other notations $Q(\cdot, w, s)$ are defined in a similar way;\\
(3) $Q_b$ is a polynomial $Q(|b|_{L^\infty}, |b_r|_{L^\infty}, |b_{r2}|_{L^\infty})$;\\
(4) $C$ is a positive constant depending on $k$, $\om$, $\om_r$, $\ga$ and the bound $L$ from Definition \ref{initial value bound};\\
(5) We will denote by  $|\cdot|$  the norm with respect to $\RR$ (for $\G_t$ or $\G_b$) and by $\|\cdot\|$ the norm with respect to $\cSt$ when no confusion will be made.
\medskip

\subsection{Weighted estimates for the conformal mapping $\cT$}\label{T estimate section}
As the first step,  we  show detailed  boundary-dependence estimates for higher-order space derivatives of $\cT$. 
To begin with, we investigate the behavior of $\cT'$ at  infinity, which is already showed to some extent  by Proposition \ref{CM estimate}.
Corresponding to the left corner, we denote by
\begin{equation}\label{limit T'}
a_{x, l}(t, \tz)=\lim_{\tx\rightarrow -\infty}\al^{-1}\dtx x,\qquad
a_{z, l}(t,\tz)=\lim_{\tx\rightarrow -\infty}\al^{-1}\dtx z,\quad \forall \tz\in  [-\om, 0]
\end{equation}
where we know from Proposition \ref{CM estimate}, Remark \ref{lower order T}, Definition \ref{ref set}, Definition \ref{initial value bound} and Remark \ref{fix T}  that
$a_{x, l}, a_{z, l}$ vary continuously   with respect to $z$ with the following end-point values:
\begin{equation}\label{bdy condition ax az}
\begin{cases}
a_{x, l}(t, 0)=1, \quad a_{x, l}(t, -\om)=b(t)\\
a_{z, l}(t, 0)=\lim_{\tx\rightarrow-\infty}\wt{\dx \zeta}=0,\quad a_{z, l}(t, -\om)=\lim_{\tx\rightarrow-\infty}\wt{\dx b}=-k_bb(t)
\end{cases}
\end{equation}
where $b(t)$ is some function unknown and to be estimated later.

Similarly, we define for the right corner the following limits:
\begin{equation}\label{limit T' right}
a_{x,r}(t, \tz)=\lim_{\tx\rightarrow +\infty}\al^{-1}\dtx x,\qquad
a_{z,r}(t,\tz)=\lim_{\tx\rightarrow +\infty}\al^{-1}\dtx z,\qquad \forall \tz\in [-\om, 0]
\end{equation}
 with the  values
\begin{equation}\label{bdy condition ax az right}
\begin{cases}
a_{x, r}(t, 0)=b_r(t), \quad a_{x,r}(t, -\om)=b_{r2}(t)\\
a_{z, r}(t, 0)=\lim_{\tx\rightarrow+\infty}\wt{\dx \zeta}=0,\quad a_{z, r}(t, -\om)=\lim_{\tx\rightarrow+\infty}\wt{\dx b}=-k_{b, r}b_{r2}(t).
\end{cases}
\end{equation}

Moreover, we denote by 
\[
a_x=\chi_{l}(\tx) a_{x, l}(t, \tz)+\chi_{r}(\tx)a_{x, r}(t, \tz), \quad a_z=\chi_{l}(\tx) a_{z, l}(t, \tz)+\chi_{ r}(\tx)a_{z, r}(t, \tz)
\]
where the cut-off functions $\chi_{i}$ are already defined before.

To consider higher-order space derivatives for $\cT$, we  denote by 
\begin{equation}\label{T1c def}
T_{1, c}= \chi_l T_{1,l}+\chi_r T_{1,r}
\end{equation}
where 
\[
T_{1, l}=(a_{x, l}, a_{z, l}),\quad T_{1, r}=(a_{x, r}, a_{z, r}).
\]
We plan to use the  norm $\|\al^{-1}\cT'-T_{1, c}\|_{H^k}$, or equivalently the norms $\|\al^{-1}\dtx x- a_x\|_{H^k}$, $\|\al^{-1}\dtx z-a_z\|_{H^k}$.

To understand this norm, we need some more computations.  In fact,  when $\al^{-1}\dtx z-  a_z\in H^k(\cSt)$ for some integer $k\ge 0$, we have
\[
\begin{split}
\dtx (\al^{-1}\dtx z-a_z)=&-\al^{-2}\al' \dtx z+\al^{-1}\dtx^2z+\chi'_{ l}a_{z, l}+\chi'_{r} a_{z,r}\\
=&-(\al^{-2}\al' \dtx z-\chi_{l,1}a_{z, l}+\beta_r \chi_{r,1}a_{z, r})+(\al^{-1}\dtx^2 z- \chi_{l, 1}a_{z, l}+\beta_r \chi_{r, 1}a_{z, r})\\
&+\chi'_{ l}a_{z, l}+\chi'_{r} a_{z,r}
\end{split}
\]
where $\chi_{i, 1}$ are  cut-off functions similar as $\chi_i$ with $supp\chi_{i, 1}$ slightly smaller than $supp \chi_i$.

As a result, this together with $\al^{-1}\dtx z-  a_z\in L^2(\cSt)$ implies
\[
\al^{-1}\dtx^2 z- a_{z, 1}\in L^2(\cSt)
\]
with 
\[
a_{z, 1}=\chi_{ l, 1}a_{z, l}-\beta_r \chi_{ r, 1}a_{z, r},
\]
and the following  estimate holds
\[
\begin{split}
\|\al^{-1}\dtx^2 z-a_{z, 1}\|_{L^2}&\le \|\dtx (\al^{-1}\dtx z-  a_z)\|_{L^2}+\|\al^{-2}\al' \dtx z-a_{z, 1}\|_{L^2}+\|\chi'_{c, i}a_{z, i}\|_{L^2}\\
&\le C\big(\|\al^{-1}\dtx z- a_z\|_{H^1}+|a_{z,i}|_{L^\infty}\big).
\end{split}
\]
Here we denote by $|a_{z,i}|_{L^\infty}$ the summation of $|a_{z,i}|_{L^\infty}$ on $i=l ,r$.

Therefore, we  also know by an induction that
\[
\al^{-1}\dtx^mz-a_{z, m}\in L^2(\cSt),\quad 1\le m\le k+1\qquad\hbox{when}\quad \al^{-1}\dtx z- a_z\in H^k(\cSt)
\]
with the notation
\[
a_{z, m}=\chi_{l, m}a_{z, l}+(-\beta_r )^m\chi_{r, m}a_{z, r}
\]
and $\chi_{i, m}$ are cut-off functions similar as $\chi_i$ with smaller supports than $\chi_{i, m-1}$.
Moreover,  $\|\al^{-1}\dtx^mz-a_{z, m}\|_{L^2}$ can be controlled by  $\|\al^{-1}\dtx z-  a_z\|_{H^k}$ and $|a_{z, i}|_{L^\infty}$.

Consequently, we summerize these computations in the following lemma with a slightly more general form.
\begin{lemma}\label{minus norm dx}
Let $u$ be a smooth enough function in $\cSt$ and
\[
a_u(\tx, \tz)=\chi_la_{u,l}+\chi_ra_{u,r}=\chi_{l}\lim_{\tx\rightarrow-\infty} \al^{-\ga}u+\chi_{r}\lim_{\tx\rightarrow+\infty} \al^{-\ga}u.
\] Assume that $\al^{-\ga}u-a_u\in H^k(\cSt)$ with $\ga$ real.

Then we have for any integer $m$ satisfying $1\le  m\le k$ that
\[
\al^{-\ga}\dtx^lu-a_{u, m}\in L^2(\cSt)
\]
with 
\[
a_{u, m}=\ga^m\big(\chi_{l, m}a_{u, l}+(-\beta_r )^m\chi_{r, m}a_{u, r}\big)
\]
and the estimate holds
\[
\|\al^{-\ga}\dtx^lu-a_{u, m}\|_{L^2}\le C\big(\|\al^{-\ga}u-a_u\|_{H^k}+|a_u|_{L^\infty}\big)
\]
with the constant $C$ depending on $k, \cSt, \chi_i, \al, \beta_r$.
\end{lemma}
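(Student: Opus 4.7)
The plan is to prove the statement by induction on $m$, with the base case $m=1$ being exactly the computation sketched in the paragraphs preceding the lemma, and the inductive step obtained by reapplying that base case to the function $\dtx^{m-1}u$. In fact I will prove a slightly stronger statement, namely that $\al^{-\ga}\dtx^m u - a_{u,m}\in H^{k-m}(\cSt)$ with the quantitative bound, which is what allows the induction to close.

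For the base case $m=1$, set $v:=\al^{-\ga}u-a_u\in H^k(\cSt)$ so that $u=\al^\ga(v+a_u)$, and compute
\[
\al^{-\ga}\dtx u \;=\; \dtx v + \dtx a_u + \ga\,\al^{-1}\al'\,(v+a_u).
\]
On the support of $\chi_l$ the factor $\al^{-1}\al'$ equals $1$, while on the support of $\chi_r$ it equals $-\beta_r$, and on the complement of $\mathrm{supp}\,\chi_l\cup\mathrm{supp}\,\chi_r$ the weight $\al$ is a smooth bounded function with bounded derivatives. Subtracting the natural candidate $a_{u,1}=\ga\chi_{l,1}a_{u,l}-\ga\beta_r\chi_{r,1}a_{u,r}$ gives
\[
\al^{-\ga}\dtx u - a_{u,1} \;=\; \dtx v + \ga\,\al^{-1}\al'\,v \;+\; \bigl(\dtx a_u + \ga\,\al^{-1}\al'\,a_u - a_{u,1}\bigr).
\]
The first two terms are in $H^{k-1}(\cSt)$ with norm controlled by $\|v\|_{H^k}$, while the bracketed remainder is smooth and compactly supported in $\tx$: indeed $\ga\al^{-1}\al'\chi_l$ and $\ga\chi_{l,1}$ both equal $\ga$ on $\mathrm{supp}\,\chi_{l,1}\cap\{\tx\le -c_0-1\}$ and both vanish outside $\mathrm{supp}\,\chi_l$, so their difference is supported in a compact annulus where it is estimated by $|a_{u,l}|_{L^\infty}\le|a_u|_{L^\infty}$; the same analysis applies near $+\infty$ with the sign factor $-\beta_r$. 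This yields the $L^2$ bound with constant depending only on $k$, $\cSt$, $\al$, $\beta_r$ and the chosen cut-offs.

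For the inductive step, assume the stronger conclusion at order $m-1\le k-1$. Setting $u_1:=\dtx^{m-1}u$, the hypothesis reads $\al^{-\ga}u_1 - a_{u,m-1}\in H^{k-m+1}(\cSt)$, i.e.\ $u_1$ is of exactly the type to which the base case applies (with the same power $\ga$, and with left and right weighted limits $\ga^{m-1}a_{u,l}$ and $\ga^{m-1}(-\beta_r)^{m-1}a_{u,r}$ read off from $a_{u,m-1}$). Applying the base-case identity to $u_1$, the new first-order weighted limit is
\[
\ga\bigl(\chi_{l,m}\cdot\ga^{m-1}a_{u,l}\bigr)+(-\ga\beta_r)\bigl(\chi_{r,m}\cdot\ga^{m-1}(-\beta_r)^{m-1}a_{u,r}\bigr)
\;=\;\ga^m\chi_{l,m}a_{u,l}+\ga^m(-\beta_r)^m\chi_{r,m}a_{u,r}
\;=\;a_{u,m},
\]
exactly the prescribed $a_{u,m}$, and the $L^2$-bound chains through. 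The nested cut-offs are chosen so that $\mathrm{supp}\,\chi_{i,m}$ is strictly contained in $\mathrm{supp}\,\chi_{i,m-1}$, which guarantees that at each step the remainder from differentiating $\chi_{i,m-1}a_{u,i}$ is compactly supported inside the region where the previous limit has already been subtracted off.

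I do not foresee a genuine obstacle: the argument is essentially bookkeeping of the sign constants $\ga$ and $-\beta_r$, and of the nested cut-off functions. The only place where a little care is needed is in the inductive step, where one must ensure that the base case is being applied to $u_1=\dtx^{m-1}u$ with the correct regularity $H^{k-m+1}$ rather than merely $L^2$; this is precisely why I strengthen the induction hypothesis to track the full Sobolev regularity, and it is automatic from the base-case identity above.
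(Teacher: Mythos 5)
Your proposal is correct and follows essentially the same route the paper takes: the paper sketches the base case (for the special case $\ga=1$ applied to $\dtx z$) by the explicit commutator computation $\dtx(\al^{-\ga}u - a_u) = (\al^{-\ga}\dtx u - a_{u,1}) + (\text{compactly supported remainder from the cut-offs and }\al^{-1}\al')$ and then asserts the general case "by an induction," which is what you have written out in full. Your strengthening of the induction hypothesis to $H^{k-m}$ regularity is implicitly present in the paper's argument (otherwise its induction would not chain), and your bookkeeping of the constants $\ga$ and $-\beta_r$ through the nested cut-offs $\chi_{i,m}$ reproduces the stated formula for $a_{u,m}$.
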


\medskip

Focusing on $\cT$ again and applying C-R equations and Lemma \ref{minus norm dx}, we know immediately that $a_{x, i}$ and $a_{z, i}$ are related and we only write the case $i=l$:
\[
\begin{split}
\dtz a_{z, l}&=\lim_{\tx\rightarrow -\infty}\al^{-1}\dtx \dtz z=\lim_{\tx\rightarrow -\infty}\al^{-1}\dtx^2 x=a_{x, l},\\
\dtz a_{x, l}&=\lim_{\tx\rightarrow -\infty}\al^{-1}\dtx \dtz x=-\lim_{\tx\rightarrow -\infty}\al^{-1}\dtx^2 z=-a_{z, l},
\end{split}
\]
where we need $x, z\in C^2(\cSt)$, which is satisfied in Definition \ref{ref set}  already.

Solving these equations above with boundary conditions \eqref{bdy condition ax az}, we obtain
\begin{equation}\label{ax az}
a_{x, l}=\cos \tz +\f{\cos \om-b(t)}{\sin\om}\sin\tz,\quad a_{z, l}=\f{k_bb(t)}{\sin\om}\sin\tz
\end{equation}
where $k_b$ is the slope of $\G_b$ in $\Om$ near the left corner.
The expressions for $a_{x, r}$ and $a_{z, r}$ are similar. In fact, these expressions are very  natural.  When we focus on the left corner of $\Om$  which approximates a   sector, the conformal mapping $\cT$ is simply like $e^{\wt Z} $.

Now we are ready to deal with higher-order space derivatives of $\cT$.
\begin{proposition}\label{dxT estimate}
Let $T_{1,c}$ be given by \eqref{T1c def}. Then for any integer $k\ge 1$, there holds
\[
\|\al^{-1}\cT'-T_{1,c}\|_{H^{k+2}}\le Q\big(\cT', w, k+1/2\big)
\]
Moreover,  we have 
\[
|T_{1,c}|\le C\big(|b(t)|+|b_r(t)|+|b_{r2}(t)|\big).
\]
\end{proposition}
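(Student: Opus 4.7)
The plan is to use the conformality of $\cT$ so that $\dtx x$ and $\dtx z$ are harmonic conjugates in $\cSt$, subtract off the explicit asymptotic profiles given by \eqref{ax az} and its right-corner analogue, and close the estimate via the weighted Dirichlet bound of Proposition \ref{Dirichlet system estimate} applied to $\dtx z$ with the weight $\al^{-1}$. The choice $\ga=-1$ is admissible because $\om,\om_r\in(0,\pi/2)$ gives $1<\min\{\pi/\om,\pi/\om_r\}$; the estimate for $\dtx x$ then follows from harmonic conjugacy together with the normalization of Remark \ref{fix T}.

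For the second, pointwise bound I would read off \eqref{ax az} (and its right-corner counterpart): each $a_{x,i}(\tz)$ and $a_{z,i}(\tz)$ is a linear combination of $\cos\tz,\sin\tz$ at the left corner (resp.\ $\cos(\beta_r\tz),\sin(\beta_r\tz)$ at the right) whose coefficients are affine in $b(t), b_r(t), b_{r2}(t)$. Hence $|a_{x,i}|_{L^\infty([-\om,0])}+|a_{z,i}|_{L^\infty([-\om,0])}\le C(|b|+|b_r|+|b_{r2}|)$, and cutting off with $\chi_l,\chi_r$ preserves this bound for $T_{1,c}$.

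For the main Sobolev bound I would set $F_z=\dtx z-\al\, a_z$. By construction $e^{\tx}a_{z,l}(\tz)$ and $e^{-\beta_r\tx}a_{z,r}(\tz)$ are harmonic in $\cSt$ (they solve the corresponding eigenvalue problem in $\tz$), so $\Delta F_z$ is supported on the compact cut-off region $\{-c_0-1\le\tx\le C_0+1\}$ and is controlled pointwise by Proposition \ref{CM estimate}. The bottom trace $F_z|_{\G_b}=\dtx\wt{b}-(\al\,a_z)|_{\tz=-\om}$ is explicit in $k_b,k_{b,r},b,b_r,b_{r2}$ (using that $\G_b$ is linear near each corner), so its weighted norm is bounded by $Q_b$ plus a $T_{1,c}$-contribution. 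The top trace $F_z|_{\G_t}=\dtx\tzeta-(\al\,a_z)|_{\tz=0}$ is the crux: its $|\al^{-1}\cdot|_{H^{k+3/2}}$-norm must be controlled by the energy, which I would do by inverting the curvature relation $\kappa=\dx(\dx\zeta/\sqrt{1+|\dx\zeta|^2})$ pulled back through $\cT$, using Corollary \ref{dx x estimate}, Lemma \ref{basic product}, and the tangency $\dx\zeta|_{x_i}=0$ to extract $|\al^{-1}F_z|_{H^{k+3/2}(\G_t)}\le Q(\cT',w,k+1/2)$. Proposition \ref{Dirichlet system estimate} with $l=k+2$, $\ga=-1$ then yields $\|\al^{-1}F_z\|_{H^{k+2}}\le Q(\cT',w,k+1/2)$, and the analogous bound for $F_x=\dtx x-\al\, a_x$ follows from harmonic conjugacy with the additive constant pinned down by Remark \ref{fix T}.

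The main obstacle is the top-boundary inversion of the nonlinear curvature formula: pulling $\kappa$ back through $\cT$ introduces factors of $(\dtx x)^{-1}$ and $\cT'$, and one must verify carefully that the top-order term on the right-hand side contributes exactly $|\al w|_{H^{k+1/2}}$ rather than a higher norm, with all lower-order corrections absorbed into $Q$. Coordinating this simultaneously at both corners, together with the matching of $\dtx\tzeta$ to $(\al\,a_z)|_{\tz=0}$ subject to the tangency condition and the fact that $b(t),b_r(t),b_{r2}(t)$ are themselves part of the unknowns, is the technical heart of the argument.
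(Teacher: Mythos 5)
Your scheme of subtracting the explicit profile and solving a Dirichlet problem for $F_z=\dtx z-\al\,a_z$ has a decisive gap at the bottom boundary: the trace there is not explicit. On $\G_b$ one has $\dtx z=\wt{\dx b}\,\dtx x$, and near the corners (where $\dx b=k_b\neq 0$, since the surface is tangentially horizontal at the contact point and the contact angle lies in $(0,\pi/2)$; cf.\ \eqref{bdy condition ax az}) this equals $k_b\,\dtx x|_{\G_b}$. So the Dirichlet datum for $F_z$ on $\G_b$ contains the unknown $\al^{-1}\dtx x-a_x$ restricted to $\G_b$ in exactly the norm $H^{k+3/2}(\G_b)$ that Proposition \ref{Dirichlet system estimate} requires at level $H^{k+2}$; and $F_x$ is only recovered from $F_z$ afterwards by conjugacy, so the scheme is circular (even away from the corners, $\wt{\dx b}=\dx b(x(t,\tx,-\om))$ involves the unknown parametrization). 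This is precisely why the paper does not work with $\dtx z$ itself but, following \cite{SWZ}, with the holomorphic quotient $\cT''/(i\cT')$: its real part equals $w|\cT'|$ on $\G_t$ and $\wt\kappa_b|\cT'|$ on $\G_b$, so the bottom datum is governed by the curvature of the fixed bottom (vanishing near the corners), and the unknown enters the boundary data only through $|\cT'|$, one derivative below top order. A secondary point in the same step: recovering the conjugate's weighted $L^2$ norm in a strip is not automatic (the paper explicitly notes in its Step 2 that $\|u_i\|_{L^2}$ is not controlled by $\|u_r\|_{L^2}$), though the low-order norm could be borrowed from $\Lam(T,k)$.

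The top trace has the same self-referential character, and it is where your argument would in effect quote Corollary \ref{zeta to w estimate}, which in the paper is a consequence of Proposition \ref{dxT estimate} (by trace from $\|\al^{-1}\cT'-T_{1,c}\|_{H^{k+2}}$), not an input to it. Inverting $w=(\dtx x)^{-1}\dtx\big[(\dtx x)^{-1}b_\zeta^{-1/2}\dtx\tzeta\big]$ to bound $|\al^{-1}\dtx\tzeta|_{H^{k+3/2}}$ forces you to estimate products such as $(\al^{-1}\dtx x)(\al w)$ in $H^{k+1/2}(\G_t)$, and the contributions where the top (fractional) derivatives fall on $\al^{-1}\dtx x$ are not controlled by Proposition \ref{CM estimate} (which only yields $W^{k,\infty}$ control of $\al^{-1}\cT'$); they require the very norms $\|\al^{-1}\cT'-T_{1,c}\|_{H^{k+1}}$, $\|\al^{-1}\cT''-T_{2,c}\|_{H^{k}}$ being proved. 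The paper breaks this circularity by a specific bootstrap: the intermediate estimate \eqref{T' higher order mid} carrying the loss $\|\al^{-1}\cT''-T_{2,c}\|_{H^k}$, a lower-order version of the same bound combined with an $\epsilon$-interpolation to absorb $\|\al^{-1}\cT'-T_{1,c}\|_{H^{k+1}}$, and only then the full $H^{k+2}$ estimate. You acknowledge this inversion as ``the technical heart'' but supply no mechanism to close it, so as written the main estimate does not follow. Only the second, pointwise bound $|T_{1,c}|\le C(|b|+|b_r|+|b_{r2}|)$, read off from \eqref{ax az} and its right-corner analogue, is complete and matches the paper.
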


\begin{proof} {\bf Step 1}. We first explore some relationship between $\cT$ and the surface curvature $\kappa$, which is inspired by Shatah-Walsh-Zeng\cite{SWZ}.
According to the definition of $\cT$, the curvature  can be expressed as
\[
\wt\kappa=\kappa\circ \cT=\f{\dtx x\dtx^2 z-\dtx^2 x\dtx z}{|\cT'|^3}\qquad \hbox{on}\quad \G_t,
\]
which leads directly to the following expression
\[
Re \f{\cT''}{i \cT'}=\wt\kappa|\cT'|\qquad \hbox{on}\quad \G_t.
\]
Moreover, the same expression can be derived on $\G_b$ with the curvature $\kappa_b$.

Therefore, we express the holomorphic function as 
\begin{equation}\label{holomorphic T}
 \f{\cT''}{i \cT'}=u_r+i\,u_i,
\end{equation}
where the real component $u_r$ satisfies
\[
\begin{cases}
\Delta u_r=0\qquad \hbox{in}\quad \cSt,\\
 u_r|_{\G_t}=\wt\kappa|\cT'|\,|_{\G_t}=w|\cT'|\,|_{\G_t},\quad  u_r|_{\G_b}=\wt\kappa_b|\cT'|\,|_{\G_b}.
\end{cases}
\]

Performing standard elliptic estimate in Sobolev spaces, we have
\[
\|u_r\|_{H^{k+1}}\le C \big(\big|w|\cT'|\big|_{H^{k+1/2}}+\big|\wt\kappa_b|\cT'|\big|_{H^{k+1/2}(\G_b)}\big)
\]
where $C$ is a constant depending on $k, \cSt$.

We firstly take care of the term $\big|w|\cT'|\big|_{H^{k+1/2}}$. In fact, one derivative on $|\cT'|$ in this norm results in terms like $g\dtx^2 x w$ and $g\dtx^2 z w$, where $\al w\in H^{k+1/2}$,  and  $g$ is some function depending on $\cT^{(l)}$ ($l\le k+1$) and can be bounded by $L^\infty$ norm of $\cT^{(l)}$ by Proposition \ref{CM estimate}.
Consequently, we write
\[
\dtx^2 x w=\al^{-1}\dtx^2 x \al w=(\al^{-1}\dtx^2 x-a_{x, 1})\al w+a_{x, 1} \al w
\]
where $a_{x, 1}$ is defined  in Lemma \ref{minus norm dx} where $u=\dtx x$.
Meanwhile, $\dtx^2 z w$ can be expressed in a similar way. As a result, thanks to Lemma \ref{minus norm dx} and \eqref{T1c def},  we know that the corresponding weighted limit for $\cT''$ can be written in the following form:  
\[
T_{2,c}=\chi_{l, 1}T_{l,1}-\beta_r\chi_{r,1} T_{r,1}.
\]
  These expressions lead to
\[
\big|w|\cT'|\big|_{H^{k+1/2}}\le C\big(1+\big|\al^{-1}\cT''-T_{2,c}\big|_{H^{k-1/2}}+|a_{x, i}|_{L^\infty}+|a_{z, i}|_{L^\infty}\big)
|\al w|_{H^{k+1/2}}
\]
where $C$ is a constant depending on the bound $L$ given in Definition \ref{initial value bound}.

Similarly, we can also prove that
\[
\begin{split}
\big|\wt\kappa_b|\cT'|\big|_{H^{k+1/2}(\G_b)}&\le C\big(1+\big|\al^{-1}\cT''-T_{2,c}\big|_{H^{k-1/2}(\G_b)}+|a_{x, i}|_{L^\infty}+|a_{z, i}|_{L^\infty}\big)|\al \wt\kappa_b|_{H^{k+1/2}(\G_b)}\\
&\le C\big(1+\big|\al^{-1}\cT''-T_{2,c}\big|_{H^{k-1/2}(\G_b)}+|a_{x, i}|_{L^\infty}+|a_{z, i}|_{L^\infty}\big),
\end{split}
\]
where $|\al \wt\kappa_b|_{H^{k+1/2}(\G_b)}$ can be controlled by the  the initial bound of in $\Lam(T,k)$ and the bottom $\G_b$.

As a result, going back to the estimate of $u_r$ and applying Lemm \ref{trace}, we derive
\[
\begin{split}
\|u_r\|_{H^{k+1}}\le &C\big(1+\big|\al^{-1}\cT''-T_{2,c}\big|_{H^{k-1/2}}+\big|\al^{-1}\cT''-T_{2,c}\big|_{H^{k-1/2}(\G_b)}+|a_{x, i}|_{L^\infty}+|a_{z, i}|_{L^\infty}\big)\times\\
& \big(1+|\al w|_{H^{k+1/2}}\big)\\
\le & C\big(1+\big\|\al^{-1}\cT''-T_{2,c}\big\|_{H^{k}}+|a_{x, i}|_{L^\infty}+|a_{z, i}|_{L^\infty}\big)\big(1+|\al w|_{H^{k+1/2}}\big)
\end{split}
\]

\bigskip

{\bf Step 2}. Retrieve the estimate of $u_i$ from $u_r$. {\it Different from the infinite-depth case, direct computations show that the $L^2$ norm of $u_i$ cannot be bounded by $\|u_r\|_{L^2}$.} But  we can  still have higher-order estimates for $u_i$. 
Applying C-R equations, we have
\[
\|\na u_i\|_{H^k}=\|\na u_r\|_{H^k}\le C\big(1+\big\|\al^{-1}\cT''-T_{2,c}\big\|_{H^{k}}+|a_{x, i}|_{L^\infty}+|a_{z, i}|_{L^\infty}\big)\big(1+|\al w|_{H^{k+1/2}}\big).
\]

As a result, going back to the estimate of the holomorphic function $ \f{\cT''}{i \cT'}$, we need to take one more derivative and set
\[
H:= \f{\cT'''}{i\cT'}-\f{\cT''i\cT''}{(i\cT')^2}=\Big(\f{\cT''}{i \cT'}\Big)'=\dtx u_r+i \dtx u_i.
\]
Similarly as before, the corresponding weighted limit for $\cT'''$ is denoted by 
\[
T_{3,c}=\chi_{l, 2}T_{l,1}+\beta_r^2\chi_{r,2} T_{r,1}
\]
with $\chi_{i, 2}$ defined in Lemma \ref{minus norm dx}.
We plan to derive the estimate of $\cT'$ from $\dtx u_r, \dtx u_i$. Considering the weighted limits of derivatives of $\cT$ as above and rewriting $H$ into the following form
\[
H=\f{\cT'''-\al T_{3,c}}{i\cT'}-\f{i\cT''(\cT''-\al T_{2,c})}{(i\cT')^2}-\f{i\al (T_{2,c}\cT''-T_{3,c}\cT')}{(i\cT')^2},
\]
we arrive at
\begin{equation}\label{T3 exp}
\begin{split}
\cT'''-\al T_{3,c}=&\f{i\cT''(\cT''-\al T_{2,c})}{i\cT'}+\f{i\al T_{2,c}(\cT''-\al T_{2,c})}{i\cT'}-\f{i\al T_{3,c}(\cT'-\al T_{1,c})}{i\cT'}\\
&+\f{i\al^2(T^2_{2,c}-T_{3,c}T_{1,c})}{i\cT'}+i\cT'(\dtx u_r+i \dtx u_i).
\end{split}
\end{equation}
Noticing that 
\[
\begin{split}
T^2_{2,c}-T_{3,c}T_{1,c}&=(\chi_{l, 1}T_{l,1}-\beta_r\chi_{r,1} T_{r,1})^2-(\chi_{l, 2}T_{l,1}+\beta_r^2\chi_{r,2} T_{r,1})(\chi_{l}T_{l,1}+\chi_{r} T_{r,1})\\
&=(\chi_{l,1}^2-\chi_{l,2}\chi_l)T^2_{l,1}-\beta_r^2(\chi^2_{r,1}-\chi_{r,2}\chi_r)T^2_{r,1},
\end{split}
\]
we conclude 
\[
\begin{split}
\|\al^{-1}\cT'''-T_{1,c}\|_{H^k}\le& \Big\|\f{(\al^{-1}\cT''-T_{2,c})^2}{\cT'}\Big\|_{H^k}+2\Big\|\f{T_{2,c}(\cT''-\al T_{2,c})}{\cT'}\Big\|_{H^k}+\Big\|\f{ T_{3,c}(\cT'-\al T_{1,c})}{\cT'}\Big\|_{H^k}\\
&+\Big\|\f{\al(T^2_{2,c}-T_{3,c}T_{1,c})}{\cT'}\Big\|_{H^k}+\big\|\al^{-1}\cT'(\dtx u_r+i \dtx u_i)\big\|_{H^k}.
\end{split}
\]
Applying Proposition \ref{CM estimate},  Lemma \ref{minus norm dx} and the estimates of $u_r, u_i$ above leads to 
\begin{equation}\label{T' higher order mid}
\begin{split}
&\|\al^{-1}\cT'''-T_{1,c}\|_{H^k}\\
&\le C\big[\big\|\al^{-1}\cT''- T_{2,c}\big\|^2_{H^k}+\big\|\al^{-1}\cT'- T_{1,c}\big\|^2_{H^k}+|a_{x, i}|^2_{L^\infty}+|a_{z, i}|^2_{L^\infty}+\|\dtx u_r\|_{H^k} +\|\dtx u_i\|_{H^k}\big]\\
&\le  C\big[\big\|\al^{-1}\cT'-T_{1,c}\big\|^2_{H^{k+1}}+|a_{x, i}|^2_{L^\infty}+|a_{z, i}|^2_{L^\infty}+\big(1+\big\|\al^{-1}\cT'-T_{1,c}\big\|_{H^{k+1}} +|a_{x, i}|_{L^\infty}\\
&\quad+|a_{z, i}|_{L^\infty}\big) \big(1+|\al w|_{H^{k+1/2}}\big)\big].
\end{split}
\end{equation}

On the other hand, considering $H^{k-1}$ estimate for $\al^{-1}\cT'''-\chi_lT_{1,c}$ in a similar way as above (where one $\cT''$ from the first term on the right side of \eqref{T3 exp} is bounded by Proposition \ref{CM estimate}), we obtain 
\[
\begin{split}
&\|\al^{-1}\cT'''-T_{3,c}\|_{H^{k-1}}\\
&\le C \big[(1+|a_{x, i}|_{L^\infty}+|a_{z, i}|_{L^\infty})\big\|\al^{-1}\cT'-T_{1,c}\big\|_{H^{k}}+|a_{x, i}|^2_{L^\infty}+|a_{z, i}|^2_{L^\infty}+\big(1+\big\|\al^{-1}\cT'-T_{1,c}\big\|_{H^{k}}\\
&\quad+|a_{x, i}|_{L^\infty}+|a_{z, i}|_{L^\infty}\big)\ \big(1+|\al w|_{H^{k-1/2}}\big)\big].
\end{split}
\]
Moreover, applying an interpolation in Sobolev spaces leads to
\[
\begin{split}
 \|\al^{-1}\cT'''-T_{3,c}\|_{H^{k-1}}
\le 
 & \epsilon\,(1+|a_{x, i}|_{L^\infty}+|a_{z, i}|_{L^\infty})\big\|\al^{-1}\cT'- T_{1,c}\big\|_{H^{k+1}}\\
 &+Q\big(\epsilon^{-1}, \big\|\al^{-1}\cT'- T_{1,c}\big\|_{L^2}, |a_{x, i}|_{L^\infty}, |a_{z, i}|_{L^\infty}\big) \big(1+|\al w|_{H^{k-1/2}}\big)
\end{split}
\] 
where $\epsilon>0$ is a small constant.  Consequently, we obtain by  Lemma \ref{minus norm dx} that
\[
\big\|\al^{-1}\cT'- T_{1,c}\big\|_{H^{k+1}}\le Q\big(\big\|\al^{-1}\cT'- T_{1,c}\big\|_{L^2}, |a_{x, i}|_{L^\infty}, |a_{z, i}|_{L^\infty}\big) \big(1+|\al w|_{H^{k-1/2}}\big),
\]
where the bound $L$ in Definition \ref{initial value bound} is applied to $|a_{x, i}|_{L^\infty}+|a_{z, i}|_{L^\infty}$.
As a result, substituting this estimate into \eqref{T' higher order mid} and applying Lemma \ref{minus norm dx} again, we obtain the desired estimate for $\cT$. The estimate for $T_{1,c}$ follows directly from \eqref{ax az}.

\end{proof}

We present a useful estimate which has appeared in the proof above. 
\begin{corollary}\label{estimate of x z}
Let $f$ be a function smooth enough in $\cSt$ and the integer $k\ge 2$. Then there hold
\[
\|\dtx^2x f\|_{H^k}+\|\dtx^2z f\|_{H^k}\le Q(\cT', w, k-1/2)\|\al f\|_{H^k}
\]
and
\[
|\dtx^2x f|_{H^k(\G_j)}+|\dtx^2z f|_{H^k(\G_j)}\le Q(\cT', w, k)|\al f|_{H^k(\G_j)}
\]
where $j=t, b$.
\end{corollary}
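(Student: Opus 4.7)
The plan is to reduce both bounds to standard product estimates in the appropriate $H^k$ space, after subtracting the weighted limits of $\dtx^2 x$ and $\dtx^2 z$ at $\tx=\pm\infty$. Concretely, I write
\[
\dtx^2 x\cdot f = (\al^{-1}\dtx^2 x - a_{x,1})\cdot \al f + a_{x,1}\cdot \al f,
\]
with the analogous decomposition for $\dtx^2 z\cdot f$, where $a_{x,1}=\chi_{l,1}a_{x,l}-\beta_r\chi_{r,1}a_{x,r}$ (and $a_{z,1}$) is the second-order weighted limit supplied by Lemma \ref{minus norm dx} applied to $u=\dtx x$ (respectively $u=\dtx z$) with $\ga=1$.

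For the bulk estimate, $H^k(\cSt)$ is a Banach algebra once $k\ge 2>\dim\cSt/2$, hence by Lemma \ref{embedding}
\[
\|\dtx^2 x f\|_{H^k}\le C\big(\|\al^{-1}\dtx^2 x - a_{x,1}\|_{H^k}+|a_{x,1}|_{W^{k,\infty}}\big)\|\al f\|_{H^k}.
\]
The first factor is controlled by invoking the computation preceding Lemma \ref{minus norm dx} once on $u=\dtx x$, which reduces it to $\|\al^{-1}\cT'-T_{1,c}\|_{H^{k+1}}+|T_{1,c}|_{L^\infty}$; this is bounded by $Q(\cT',w,k-1/2)$ thanks to Proposition \ref{dxT estimate} applied at level $k-1$, for which the assumption $k\ge 2$ is precisely what is needed. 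The $W^{k,\infty}$ norm of $a_{x,1}$ is a constant depending on $|b|,|b_r|,|b_{r2}|\le L$, through the explicit formula \eqref{ax az} and the smoothness of the cut-offs $\chi_{i,1}$.

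The boundary estimate has the same structure, using that $H^k(\G_j)$ is a one-dimensional Banach algebra for $k\ge 1$:
\[
|\dtx^2 x f|_{H^k(\G_j)}\le C\big(|\al^{-1}\dtx^2 x - a_{x,1}|_{H^k(\G_j)}+|a_{x,1}|_{W^{k,\infty}(\G_j)}\big)|\al f|_{H^k(\G_j)}.
\]
For the trace norm I apply Lemma \ref{trace} to dominate it by $\|\al^{-1}\dtx^2 x - a_{x,1}\|_{H^{k+1/2}(\cSt)}$, which reduces via Lemma \ref{minus norm dx} to $\|\al^{-1}\cT'-T_{1,c}\|_{H^{k+3/2}(\cSt)}$. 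I obtain the latter by interpolating Proposition \ref{dxT estimate} between its integer levels $k-1$ and $k$, arriving at the bound $Q(\cT',w,k)$; the extra half-derivative in the polynomial compared to the bulk case is exactly the cost of one trace.

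I expect the only non-routine point to be this fractional extension of Proposition \ref{dxT estimate}; the product estimates, the embedding giving the Banach-algebra property, and the explicit bound on $a_{x,1}$, $a_{z,1}$ via \eqref{ax az} are all routine once the decomposition above is in place.
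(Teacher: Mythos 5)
Your treatment of the interior estimate is correct and is essentially identical to the paper's: the decomposition
\[
\dtx^2 x\, f = (\al^{-1}\dtx^2 x - a_{x,1})\,\al f + a_{x,1}\,\al f,
\]
the Banach--algebra property of $H^k(\cSt)$ for $k\ge 2$, Lemma \ref{minus norm dx} with $u=\dtx x$, and Proposition \ref{dxT estimate} at level $k-1$ (which is where $k\ge 2$ enters) reproduce the bound $Q(\cT', w, k-1/2)\|\al f\|_{H^k}$ exactly as in the paper.

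The boundary estimate, however, has a genuine gap at the last step. You reduce to controlling $\|\al^{-1}\cT'-T_{1,c}\|_{H^{k+3/2}(\cSt)}$, which is the right target, but you then claim to bound it by $Q(\cT',w,k)$ ``by interpolating Proposition \ref{dxT estimate} between its integer levels $k-1$ and $k$.'' Sobolev interpolation gives
\[
\|\al^{-1}\cT'-T_{1,c}\|_{H^{k+3/2}} \le \|\al^{-1}\cT'-T_{1,c}\|_{H^{k+1}}^{1/2}\,\|\al^{-1}\cT'-T_{1,c}\|_{H^{k+2}}^{1/2}
\le Q(\cT',w,k-1/2)^{1/2}\,Q(\cT',w,k+1/2)^{1/2},
\]
and the right-hand side is a polynomial in $|\al w|_{H^{k-1/2}}$ and $|\al w|_{H^{k+1/2}}$, not in $|\al w|_{H^k}$. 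Since interpolation of norms goes the other way ($|\al w|_{H^k}^2\le |\al w|_{H^{k-1/2}}|\al w|_{H^{k+1/2}}$, a lower bound on the product), you cannot collapse this into $Q(\cT',w,k)$; the best you get by this route is $Q(\cT',w,k+1/2)$, which is strictly weaker than the stated conclusion. What is actually needed is the statement of Proposition \ref{dxT estimate} at the \emph{fractional} index $k-1/2$, i.e.\ a direct proof of $\|\al^{-1}\cT'-T_{1,c}\|_{H^{k+3/2}}\le Q(\cT',w,k)$ by re-running the elliptic estimate for $u_r$ and the bootstrap at half-integer regularity (the arguments there do extend, since the trace estimate and product estimates used are valid at fractional Sobolev indices). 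You flag this at the very end as ``the only non-routine point,'' but you present it as an interpolation consequence, which it is not; these two routes give different bounds, and the interpolation one does not close the estimate as stated.
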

\begin{proof} 
In fact, similar argument as in the proof above shows that
\[
\begin{split}
\|\dtx^2x f\|_{H^k}&\le \big\|(\al^{-1}\dtx^2 x-a_{x, 1})\al f\big\|_{H^k}+\|a_{x, 1} \al f\big\|_{H^k}\\
&\le C\big(\|\al^{-1}\dtx x-a_{x, 1}\|_{H^{k+1}}+|a_{x, i}|_{L^\infty}\big)\|\al f\|_{H^k},
\end{split}
\]
where $a_{x, 1}$ is defined in Lemma \ref{minus norm dx} with $u=\dtx x$.
Applying Proposition \ref{dxT estimate}  leads to the desired estimate immediately. As a result, the remaining estimates can be  proved in a similar way.

\end{proof}

Meanwhile,  considering $\dtx z$ component of $\cT'$ on $\G_t$, we obtain  immediately the following estimate.
\begin{corollary}\label{zeta to w estimate} Let the integer $k\ge 1$. Then one has
\[
\big|\al^{-1}\dtx\tzeta\big|_{H^{k+3/2}}\le Q(\cT, w, k+1/2).
\]
\end{corollary}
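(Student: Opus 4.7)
The claim is essentially the trace on $\G_t$ of the bulk estimate already proved in Proposition \ref{dxT estimate}, combined with the observation that the weighted limits of $\dtx z$ vanish at the free surface. The plan is as follows.

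First, I would identify $\dtx\tzeta$ with the restriction to $\tz=0$ of the second component of $\cT'$. Since $\tzeta(t,\tx)=z(t,\tx,0)$, one has $\dtx\tzeta=(\dtx z)|_{\G_t}$. Next, I would examine the weighted limit $a_z=\chi_l a_{z,l}(t,\tz)+\chi_r a_{z,r}(t,\tz)$ from \eqref{T1c def} on $\tz=0$: by the boundary conditions \eqref{bdy condition ax az} and \eqref{bdy condition ax az right}, both $a_{z,l}(t,0)=\lim_{\tx\to-\infty}\wt{\dx\zeta}=0$ and $a_{z,r}(t,0)=\lim_{\tx\to+\infty}\wt{\dx\zeta}=0$. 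Therefore $a_z|_{\tz=0}=0$, so
\[
\al^{-1}\dtx\tzeta=(\al^{-1}\dtx z - a_z)\big|_{\G_t}.
\]

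Now I would apply the trace lemma (Lemma \ref{trace}) to the second component of $\al^{-1}\cT'-T_{1,c}$, which gives
\[
\bigl|\al^{-1}\dtx z - a_z\bigr|_{H^{k+3/2}(\G_t)} \le C\,\bigl\|\al^{-1}\cT'-T_{1,c}\bigr\|_{H^{k+2}(\cSt)}.
\]
Finally, invoking Proposition \ref{dxT estimate} bounds the right-hand side by $Q(\cT',w,k+1/2)$, yielding the stated estimate.

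There is no real obstacle here: the entire argument is a two-line consequence of Proposition \ref{dxT estimate} plus the trace theorem, once one notices the vanishing of $a_z$ at $\tz=0$ which is forced by the parametrization $z=\zeta(t,x)$ of the free surface and the fact that $\wt{\dx\zeta}\to 0$ at the corners (Assumption \ref{config} via the tangential horizontality). This is why the estimate is labeled a corollary rather than given a full proof.
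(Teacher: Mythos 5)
Your proof is correct and follows exactly the same route as the paper's: identify $\dtx\tzeta$ with the trace of $\dtx z$ at $\tz=0$, observe $a_z|_{\tz=0}=0$ from the boundary conditions \eqref{bdy condition ax az} and \eqref{bdy condition ax az right}, then apply Lemma \ref{trace} and Proposition \ref{dxT estimate}. The only difference is that you spell out the vanishing of $a_z$ at $\tz=0$ a bit more explicitly than the paper does, which is a welcome clarification but not a different argument.
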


\begin{proof} 
In fact, noticing that  
\[
a_z|_{\tz=0}=0,\quad \dtx z|_{\tz=0}=\dtx\tzeta
\] 
 and applying Lemma \ref{trace} lead to 
\[
\big|\al^{-1}\dtx \tzeta\big|_{H^{k+3/2}}\le C\|\al^{-1}\cT'-T_{1,c}\|_{H^{k+2}}.
\]
Therefore, the proof is finished by applying Proposition \ref{dxT estimate}. 

\end{proof}

Besides, we also need another ``local" higher-order estimate of $\cT$ near the bottom $\G_b$, which will be used in the a priori estimate. Notice that these kinds of higher-order estimate work thanks to the separation of $\G_t, \G_b$ for $\cSt$,  which is handled by localization near the bottom as in classical water waves, see  \cite{ABZ2009}.

\begin{corollary}\label{higher order local}Let the integer $k\ge 1$. There holds
\[
\big|\al^{-1}\cT'-T_{1,c}\big|_{H^{k+2}(\G_b)}\le Q(\cT', w, k)
\]
\end{corollary}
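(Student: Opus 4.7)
The plan is to improve upon the $H^{k+3/2}(\G_b)$ bound that would follow directly by trace from the global Proposition~\ref{dxT estimate}, by exploiting two structural features: the fixed positive separation between $\G_t$ and $\G_b$ in the strip $\cSt$, and the fact that $\wt\kappa_b$ is smooth and has compact support in $\tx$ (since $b$ is a straight line near the corners, so $\kappa_b\equiv 0$ there). I would work with the same holomorphic setup from the proof of Proposition~\ref{dxT estimate}: $H=\cT''/(i\cT')=u_r+iu_i$, where $u_r$ is harmonic in $\cSt$ with Dirichlet data $u_r|_{\G_t}=w|\cT'|$ and $u_r|_{\G_b}=\wt\kappa_b|\cT'|$. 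The key asymmetry is that the $\G_b$-data involves the very smooth, compactly supported $\wt\kappa_b$, while the $\G_t$-data involves $w$ whose regularity is limited.

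The first step is to introduce a smooth cut-off $\chi_b(\tz)$ with $\chi_b\equiv 1$ for $\tz\le-2\om/3$ and $\chi_b\equiv 0$ for $\tz\ge-\om/3$, so that $\chi_b u_r$ satisfies a Dirichlet system in $\cSt$ with zero top data, bottom data $\wt\kappa_b|\cT'|_{\G_b}$, and a commutator source $2\dtz\chi_b\,\dtz u_r+\dtz^2\chi_b\,u_r$ supported strictly between the two boundaries. The second step is to apply the weighted Dirichlet elliptic estimate (Proposition~\ref{Dirichlet system estimate}) at a higher order than the global one. Because the commutator is supported in the interior of $\cSt$, the interior smoothing property of harmonic functions turns lower-order global control of $u_r$ (given by Proposition~\ref{dxT estimate} applied at one order lower, using only $|\al w|_{H^k}$) into arbitrarily smooth bounds on the source. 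At the same time, the compact $\tx$-support of $\wt\kappa_b$ trivialises any weight issue near the corners on the bottom data, so $\wt\kappa_b|\cT'|_{\G_b}$ inherits the Sobolev regularity of $\cT'|_{\G_b}$ on a compact region (where the $\al$-weight is bounded above and below). This yields an improved local estimate for $u_r$ near $\G_b$.

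The third step is to convert this into control of $\cT'$ on $\G_b$. Using the Cauchy--Riemann identity $\dtx u_i=-\dtz u_r$ integrated along $\G_b$ (together with a weighted limit analogous to Lemma~\ref{minus norm dx} for the additive constant), one recovers $u_i|_{\G_b}$ in the same Sobolev class. From the identity $\cT''=i\cT'(u_r+iu_i)$ and the fact that $\G_b$ is horizontal in $\cSt$, so $\dtx\cT'|_{\G_b}=\cT''|_{\G_b}$, an iteration (mirroring the argument leading to \eqref{T3 exp}) expresses the tangential derivatives of $\al^{-1}\cT'-T_{1,c}$ up to order $k+2$ on $\G_b$, the subtraction of $T_{1,c}$ handling the limits at $\tx=\pm\infty$ through Lemma~\ref{minus norm dx}.

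The main obstacle will be avoiding circularity while carefully managing the weighted norms: the bottom-data regularity $\wt\kappa_b|\cT'|_{\G_b}$ is built from $\cT'|_{\G_b}$, which is exactly what we aim to bound. The argument must therefore combine a \emph{local} gain (using the compact support of $\wt\kappa_b$, so that the weighted bound there reduces to a standard one on a compact set away from the corners) with the \emph{global} estimate from Proposition~\ref{dxT estimate} applied at a strictly lower order, so that the new $H^{k+2}(\G_b)$-bound is genuinely bootstrapped rather than assumed. A secondary technical point is to ensure the interior elliptic smoothing for the commutator source interacts correctly with the $\al$-weighted framework, which is where the separation of $\G_t$ and $\G_b$ (fixed height $\om$) is essential.
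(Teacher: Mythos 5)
Your proposal follows essentially the same route as the paper's proof: a vertical cut-off $\chi_b(\tz)$ near $\G_b$, an elliptic estimate for $\chi_b u_r$ exploiting the smooth (and, near the corners, flat) bottom data $\wt\kappa_b|\cT'|$ together with the lower-order global bound on $u_r$ from Proposition \ref{dxT estimate} and an interpolation, then the Cauchy--Riemann equations and a localized repetition of the iteration \eqref{T' higher order mid}/\eqref{T3 exp} in the strip $\mathcal S_b$, finished by a trace theorem. The only deviation, recovering $u_i|_{\G_b}$ itself by integrating $\dtx u_i=-\dtz u_r$, is both unnecessary and delicate (the paper explicitly notes that $u_i$ itself is not controlled by $u_r$ in $L^2$); as in \eqref{T3 exp}, only $\dtx u_i$ enters the iteration, so your argument should be phrased at the level of $\na u_i$, exactly as the paper does.
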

\begin{proof} In fact, we only need to modify  the proof of Proposition \ref{dxT estimate}. To begin with, we set
\[
\chi_b(\tz)=\begin{cases} 1,\quad \tz=-\om,\\
0, \quad \tz\ge -\om+\delta
\end{cases}
\]
for some small constant $\delta>0$. As a result, we find that $\chi_b u_r$ with $u_r$ from \eqref{holomorphic T} satisfies
\[
\begin{cases}
\Delta \chi_bu_r=[\Delta, \chi_b]u_r\qquad \hbox{in}\quad \cSt,\\
\chi_b u_r|_{\G_t}=0,\quad  u_r|_{\G_b}=\wt\kappa_b|\cT'|\,|_{\G_b}.
\end{cases}
\]
We have by standard elliptic estimates, Proposition \ref{CM estimate} and Corollary \ref{estimate of x z} that
\[
\begin{split}
\|\chi_bu_r\|_{H^{k+3/2}}&\le C\big(\|[\Delta, \chi_b]u_r\|_{H^{k-1/2}}+\big|\wt\kappa_b|\cT'|\big|_{H^{k+1}(\G_b)}\big)\\
&\le C\big(\|u_r\|_{H^{k+1/2}}+Q(\cT', w, k)\big),
\end{split}
\]
while notice that the sufficient smoothness of the bottom $\G_b$ of $\Om$ is used here. 

Moreover, applying the estimate of $u_r$ from the proof  of Proposition \ref{dxT estimate} and using an interpolation, we obtain the following estimate
\[
\|\chi_bu_r\|_{H^{k+3/2}}\le Q(\cT', w, k),
\]
which implies immediately
\[
\|u_r\|_{H^{k+3/2}(\mathcal S_b)}\le Q(\cT', w, k)
\]
where $\mathcal S_b\subset \cSt$ is a flat strip near $\G_b$ with the height smaller than $\delta$. 

Besides, we also derive 
\[
\|\na u_i\|_{H^{k+1/2}(\mathcal S_b)}=\|\na u_r\|_{H^{k+1/2}(\mathcal S_b)}\le Q(\cT', w, k).
\]

Therefore, repeating estimate \eqref{T' higher order mid} from the proof  of Proposition \ref{dxT estimate} and applying the new estimates for $u_r, u_i$ above, we derive a higher-order estimate of $\cT$ in $\mathcal S_b$:
\[
\begin{split}
&\big\|\al^{-1}\cT'''-T_{3,c}\big\|_{H^{k+1/2}(\mathcal S_b)}\\
&\le  C\big[\big\|\al^{-1}\cT'- T_{1,c}\big\|^2_{H^{k+3/2}(\mathcal S_b)}+|a_{x, i}|^2_{L^\infty}+|a_{z, i}|^2_{L^\infty}+\|\dtx u_r\|_{H^{k+1/2}(\mathcal S_b)} +\|\dtx u_i\|_{H^{k+1/2}(\mathcal S_b)}\big]\\
&\le Q(\cT', w, k)
\end{split}
\]
In the end, applying Proposition \ref{dxT estimate} and a trace theorem again, the proof is finished.

\end{proof}

\bigskip

 Next, we investigate the time dependence of the conformal mapping, where a loss of $1/2$ derivative appears due to the space derivative in \eqref{kinematic condi}.
\begin{proposition}\label{dtT estimate} Let  $\ga$ satisfy
\[
0<\ga+1\le \min\{\f{2\pi}{\om}, \f{2\pi}{\om_r}\}.
\] Then there holds
 for $k\ge 0$ that
\[
\|\al^{-(\ga+3)/2}\dt \cT\|_{H^{k+2}}\le  Q(\cT', w, k+1/2)\big(\big|\al^{-\ga-2}\wt{\dt \zeta}-a_{\zeta_t}\big|_{H^{k+3/2}}+|a_{\zeta_t, i}|_{L^\infty}\big),
\]
where  $a_{\zeta_t}$ is the weighted limit defined in \eqref{a zeta t def}.
\end{proposition}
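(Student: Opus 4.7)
The key idea is to introduce the auxiliary holomorphic function
\[
V(t,\wt Z):=\frac{\dt\cT(t,\wt Z)}{\cT'(t,\wt Z)}\quad\text{in }\cSt,
\]
which is holomorphic in $\wt Z$ for each fixed $t$ since $\cT$ is holomorphic and time differentiation commutes with $\dtx,\dtz$. The factor $\cT'\sim\al$ absorbs one power of the weight, so estimating $V$ in weight $\al^{-(\ga+1)/2}$ — which by the hypothesis $\ga+1\le\min(2\pi/\om,2\pi/\om_r)$ satisfies $(\ga+1)/2\le\min(\pi/\om,\pi/\om_r)$ and thus lies in the admissible Dirichlet range of Proposition~\ref{Dirichlet system estimate} — will translate into the desired weight $\al^{-(\ga+3)/2}$ on $\dt\cT$.

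\textbf{Step 1 (boundary values of $\mathrm{Im}\,V$).} Differentiating $\cT|_{\tz=-\om}\in\G_b$ in $t$ shows that $\dt\cT|_{\G_b}$ is tangent to $\G_b$, as is $\cT'|_{\G_b}$; viewed as complex numbers both are real multiples of the same unit tangent, so $V|_{\G_b}\in\RR$ and $\mathrm{Im}(V)|_{\G_b}=0$. On $\G_t$, using $\dt\cT|_{\G_t}=(\dt x|_{\tz=0},\dt\tzeta)$, $\cT'|_{\G_t}=(\dtx x,\dtx\tzeta)|_{\tz=0}$, together with the chain-rule identities $\dt\tzeta=\wt{\dt\zeta}+\wt{\dx\zeta}\,\dt x|_{\tz=0}$ and $\dtx\tzeta=\wt{\dx\zeta}\,\dtx x|_{\tz=0}$, the terms involving the unknown $\dt x|_{\tz=0}$ cancel and we obtain
\[
\mathrm{Im}(V)|_{\G_t}=\frac{\wt{\dt\zeta}}{|\cT'|_{\G_t}\sqrt{1+|\wt{\dx\zeta}|^2}}.
\]

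\textbf{Step 2 (weighted Dirichlet estimate).} Set $u=\mathrm{Im}\,V$; then $u$ is harmonic in $\cSt$ with homogeneous Dirichlet data on $\G_b$ and the data above on $\G_t$. Splitting $\al^{-\ga-2}\wt{\dt\zeta}=a_{\zeta_t}+(\al^{-\ga-2}\wt{\dt\zeta}-a_{\zeta_t})$, I rewrite
\[
\al^{-(\ga+1)/2}u|_{\G_t}=\frac{\al^{(\ga+1)/2}\,(\al^{-\ga-2}\wt{\dt\zeta})}{(\al^{-1}|\cT'|_{\G_t})\sqrt{1+|\wt{\dx\zeta}|^2}}.
\]
The denominator is uniformly bounded above and below by Proposition~\ref{CM estimate} and Corollary~\ref{zeta to w estimate}, and $\al^{(\ga+1)/2}$ decays exponentially at $\pm\infty$. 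The $a_{\zeta_t}$-contribution is therefore a smooth exponentially-decaying function with $H^{k+3/2}(\RR)$ norm bounded by $C|a_{\zeta_t,i}|_{L^\infty}$, while the remainder is controlled in $H^{k+3/2}$ by $|\al^{-\ga-2}\ttze-a_{\zeta_t}|_{H^{k+3/2}}$ using the product estimate of Lemma~\ref{basic product}, with lower-order factors governed by $Q(\cT',w,k+1/2)$ via Proposition~\ref{dxT estimate}. Proposition~\ref{Dirichlet system estimate} then yields
\[
\|\al^{-(\ga+1)/2}u\|_{H^{k+2}(\cSt)}\le Q(\cT',w,k+1/2)\Bigl(|\al^{-\ga-2}\ttze-a_{\zeta_t}|_{H^{k+3/2}}+|a_{\zeta_t,i}|_{L^\infty}\Bigr).
\]

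\textbf{Step 3 (conclude) and main obstacle.} Recover $\mathrm{Re}\,V$ from $u$ via the Cauchy-Riemann equations $\nabla\mathrm{Re}\,V=(\dtz u,-\dtx u)$, the resulting additive real constant being pinned down by time-differentiating the normalization $\lim_{\tx\to-\infty}\al^{-1}\dtx x|_{\tz=0}=1$ from Remark~\ref{fix T}, and this gives the same weighted $H^{k+2}$ bound for $V$ itself. Finally, $\al^{-(\ga+3)/2}\dt\cT=(\al^{-1}\cT')\cdot(\al^{-(\ga+1)/2}V)$, and the product estimate together with Proposition~\ref{dxT estimate} (to bound $\al^{-1}\cT'-T_{1,c}\in H^{k+2}$) completes the proof. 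The main obstacle is Step~2: cleanly separating the top data for $u$ into the ``constant-at-corner'' part (absorbed into $|a_{\zeta_t,i}|_{L^\infty}$) and a genuine $H^{k+3/2}$-remainder (controlled by $|\al^{-\ga-2}\ttze-a_{\zeta_t}|_{H^{k+3/2}}$), which depends essentially on the cancellation in Step~1 eliminating $\dt x|_{\tz=0}$ from the boundary data. A secondary subtlety is recovering $\mathrm{Re}\,V$ in the weighted $L^2$-norm (analogous to the loss noted in the proof of Proposition~\ref{dxT estimate}), where the finite depth of the strip forces the normalization of Remark~\ref{fix T} to fix the constant of integration.
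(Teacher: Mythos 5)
Your Steps 1 and 2 follow the paper's own proof: the same holomorphic function $\dt\cT/\cT'$, the same Dirichlet problem for its imaginary part $u_i$ with data $|\cT'|^{-2}\dtx x\,\wt{\dt\zeta}$ on $\G_t$ and $0$ on $\G_b$, and the same splitting of that data into the $a_{\zeta_t}$ part and an $H^{k+3/2}$ remainder before applying Proposition \ref{Dirichlet system estimate}. The genuine gap is in your Step 3, the recovery of $u_r=\mathrm{Re}(\dt\cT/\cT')$. The Cauchy--Riemann equations only control $\na u_r$ in the weighted spaces; to obtain the $L^2$ piece of $\|\al^{-(\ga+1)/2}u_r\|_{H^{k+2}}$ (and hence the full claimed norm of $\dt\cT$) you must show that $u_r$ itself decays like $\al^{(\ga+1)/2}$ at \emph{both} ends, quantitatively. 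Your normalization argument cannot deliver this: $u_r$ is determined by its Neumann data only up to one additive constant, and time-differentiating the normalization of Remark \ref{fix T} is a condition at the left end only; at best (and after justifying the interchange of limit and $\dt$) it forces $\lim_{\tx\to-\infty}u_r=0$ and says nothing about the right end. Indeed, using $\dtx u_r=\dtz u_i$, the harmonicity of $u_i$ and $u_i|_{\G_b}=0$, one checks
\[
\lim_{\tx\to+\infty}u_r-\lim_{\tx\to-\infty}u_r=\frac1\om\int_{\RR}u_i(\tx,0)\,d\tx,
\]
which is not zero for free; the vanishing of the right-end limit amounts to $b_r'(t)=0$, which in the paper is a \emph{consequence} of this very proposition (Remark \ref{dt T corner}) and therefore not an available input.

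This is precisely where the paper does the real work that your proposal omits: it writes the Neumann problem for $u_r$ ($\dtz u_r|_{\G_t}=-\dtx u_i|_{\G_t}$, $\dtz u_r|_{\G_b}=0$), solves it explicitly by Laplace transform, $\check u_r=\frac{\cos(\lam(\om+\tz))}{\sin(\lam\om)}\,\check u_i|_{\tz=0}$, and uses Parseval on the lines $\mathrm{Re}\,\lam=(\ga+1)/2$ and $\mathrm{Re}\,\lam=-(\ga+1)\beta_r/2$ together with a uniform bound on the multiplier to get $\|\al^{-(\ga+1)/2}u_r\|_{L^2}\le C\,\big|\al^{-(\ga+1)/2}u_i|_{\tz=0}\big|_{L^2}$; this is what encodes the two-sided weighted decay of $u_r$ and, a posteriori, $b_r'=b'=b_{r2}'=0$. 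Without this step (or a substitute establishing the weighted $L^2$ bound for $u_r$ at both ends), your final product $\al^{-(\ga+3)/2}\dt\cT=(\al^{-1}\cT')(\al^{-(\ga+1)/2}V)$ controls only derivative-type pieces of the norm. A secondary, fixable omission: even at the left end, passing from ``$\na u_r$ in weighted $L^2$ and the limit vanishes'' to ``$u_r$ in weighted $L^2$'' requires a Hardy-type inequality with exponential weight, which you do not state.
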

\begin{proof} The proof here is similar as the proof of the previous proposition. In fact, inspired by \cite{SWZ}, we start with computing the following holomorphic function 
\[
\f{\dt \cT}{\cT'}=\f{\dt x+i\dt z}{\dtx x+i \dtx z}=\f1{|\cT'|^2}\big[\big(\dtx x\dt x+\dtx z\dt z\big)+i\big(\dtx x\dt z-\dtx z \dt x\big)\big].
\]
We set again 
\[
\f{\dt \cT}{\cT'}:= u_r+i u_i.
\]

{\bf Step 1.} $u_i$ estimate. Since  
\[
\big(\dt z-(\dtx x)^{-1}\dtx z\dt x\big)|_{\G_t: \tz=0}=\wt{\dt \zeta}\qquad\hbox{and}\quad 
\big(\dt z-(\dtx x)^{-1}\dtx z\dt x\big)|_{\G_b: \tz=-\om}=0,
\]
we obtain immediately the Dirichlet system of $u_i$:
\[
\begin{cases}
\Delta u_i=0\qquad \hbox{in}\quad \cSt,\\
u_i|_{\G_t}=|\cT'|^{-2}\dtx x\wt{\dt \zeta},\quad u_i|_{\G_b}=0.
\end{cases}
\]
Applying Proposition \ref{Dirichlet system estimate}  leads to 
\[
\big\|\al^{-(\ga+1)/2}u_i\big\|_{H^{k+2}}\le C\big|\al^{-(\ga+1)/2}|\cT'|^{-2}\dtx x\wt{\dt \zeta}\big|_{H^{k+3/2}},
\]
where one requires
\[
\f{|\ga+1|}2\le \min\{\f\pi{\om}, \f\pi{\om_r}\}.
\]
For the right-side term, direct computations and applying Proposition \ref{CM estimate} and Corollary \ref{estimate of x z} lead to
\[
\big|\al^{-(\ga+1)/2}|\cT'|^{-2}\dtx x\wt{\dt \zeta}\big|_{H^{k+3/2}}\le Q(\cT', w, k+1/2)\big|\al^{-(\ga+1)/2}\al^{-1}\ttze\big|_{H^{k+3/2}},
\]
where the right side can be expanded as below following the spirit of Corollary \ref{estimate of x z}:
\[
\big|\al^{-(\ga+1)/2}\al^{-1}\ttze\big|_{H^{k+3/2}}\le \big| \al^{(\ga+1)/2}(\al^{-\ga-2}\ttze-a_{\zeta_t})\big|_{H^{k+3/2}}+\big|\al^{(\ga+1)/2}a_{\zeta_t}\big|_{H^{k+3/2}}.
\]
Noticing that when $\ga+1> 0$, the second term on the right side above is integrable so that we conclude the estimate of $u_i$:
\[
\big\|\al^{-(\ga+1)/2}u_i\big\|_{H^{k+2}}\le Q(\cT', w, k-1)
\big(\big|\al^{-\ga-2}\ttze-a_{\zeta_t}\big|_{H^{k+3/2}}+|a_{\zeta_t}|_{L^\infty}\big).
\]

\bigskip 

{\bf Step 2. } $u_r$ estimate. In fact, using C-R equations leads directly to  the higher-order estimate of $u_r$:
\[
\big\|\al^{-(\ga+1)/2}\na u_r\big\|_{H^{k+1}}= \big\|\al^{-(\ga+1)/2}\na u_i\big\|_{H^{k+1}}\le Q(\cT', w, k+1/2)
\big(\big|\al^{-\ga-2}\ttze-a_{\zeta_t}\big|_{H^{k+3/2}}+|a_{\zeta_t}|_{L^\infty}\big).
\]

It remains to prove the weighted $L^2$ estimate of $u_r$. We plan to recover it by using the Laplace Transform in $\cSt$. 
To begin with, C-R equations imply the following system of $u_r$:
\[
\begin{cases}
\Delta u_r=0\qquad \hbox{in}\quad \cSt,\\
\dtz u_r|_{\G_t}=-\dtx u_i|_{\G_t},\quad \dtz u_i|_{\G_b}=-\dtx u_i|_{\G_b}=0.
\end{cases}
\]
Recalling the Laplace Transform on a sufficiently smooth function $u=u(\tx)$: 
\[
\check{u}(\lam)=\mathcal L u(\lam)=\int_\RR e^{-\lam \tx}u(\tx)d\tx,\qquad \forall \lam\in \mathbb C,
\]
we have immediately the following system of $\check u_r(t,\lam, \tz)$:
\[
\begin{cases}
\Delta \check u_r=0,\qquad \tz\in[-\om, 0],\\
\dtz \check u_r|_{\tz=0}=-\dtx \check u_i|_{\tz=0},\quad \dtz \check u_r|_{\tz=-\om}=0.
\end{cases}
\]
Solving the system above directly, one has
\[
\check u_r=\f{\cos \big(\lam(\om+\tz)\big)}{\sin(\lam\om)}\check u_i|_{\tz=0}=\f{e^{i\lam(\om+\tz)}+e^{-i\lam(\om+\tz)}}{e^{i\lam\om}-e^{-i\lam\om}}\check u_i|_{\tz=0}.
\]

Therefore, denoting by 
\[
\ga_1=\f{\ga+1}2,
\] 
the weighted $L^2$ norm of $u_r$ is decomposed as below
\begin{equation}\label{u r middle}
\begin{split}
\|\al^{-\ga_1}u_r\|_{L^2}&\le \|\al^{-\ga_1}\chi_l u_r\|_{L^2}+\|\al^{-\ga_1}\chi_r u_r\|_{L^2}+\|\al^{-\ga_1}(1-\chi_l -\chi_r)u_r\|_{L^2}\\
&\le C\big(\|e^{-\ga_1\tx}u_r\|_{L^2}+\|e^{\ga_1\beta_r\tx}u_r\|_{L^2}\big),
\end{split}
\end{equation}
where both the two weighted terms on the right side above need to be handled.

For the first term,  applying the Parseval equality for the Laplace Transform, one obtains
\[
\big\|e^{-\ga_1\tx}u_r\big\|^2_{L^2}=\int^0_{-\om}\f 1{2\pi i}\int_{Re\lam=\ga_1}\big|\check u_r(\lam, \tz)\big|^2d\lam d\tz
\]
with 
\[
\lam=\ga_1+i\xi.
\]
Substituting the expression of $\check u_r$ into this integral, we derive
\[
\begin{split}
\big\|e^{-\ga_1\tx}u_r\big\|^2_{L^2}&=\int^0_{-\om}\f 1{2\pi i}\int_{Re\lam=\ga_1}\Big|\f{\cos \big(\lam(\om+\tz)\big)}{\sin(\lam\om)}\check u_i|_{\tz=0}\Big|^2d\lam d\tz\\
&:=\int^0_{-\om}\f 1{2\pi i}\int_{Re\lam=\ga_1} B\,d\lam d\tz,
\end{split}
\]
where 
\[
\begin{split}
B&=\Big|\f{e^{i(\ga_1+i\xi)(\om+\tz)}+e^{-i(\ga_1+i\xi)(\om+\tz)}}{e^{i(\ga_1+i\xi)\om}-e^{-i(\ga_1+i\xi)\om}}\Big|^2\big|\check u_i|_{\tz=0}\big|^2\\
&=\Big|\f{e^{-\xi(\om+\tz)}e^{i\ga_1(\om+\tz)}+e^{\xi(\om+\tz)}e^{-i\ga_1(\om+\tz)}}{e^{-\xi\om}e^{i\ga_1\om}-e^{\xi\om}e^{-i\ga_1\om}}\Big|^2\big|\check u_i|_{\tz=0}\big|^2
\le C\big|\check u_i|_{\tz=0}\big|^2
\end{split}
\]
with the constant $C$ depending on $\om, \ga$.

Consequently, we arrive at the following estimate
\[
\big\|e^{-\ga_1\tx}u_r\big\|^2_{L^2}\le C\int^0_{-\om}\f 1{2\pi i}\int_{Re\lam=\ga_1}\big|\check u_i|_{\tz=0}\big|^2d\lam d\tz\le C \big|e^{-\ga_1\tx}u_i|_{\tz=0}\big|^2_{L^2}.
\]

Meanwhile, for the second term on the right side of \eqref{u r middle}, similar arguments as above lead to 
\[
\big\|e^{\ga_1\beta_r\tx}u_r\big\|^2_{L^2}\le C \big|e^{\ga_1\beta_r\tx}u_i|_{\tz=0}\big|^2_{L^2}.
\]
Substituting the boundary value 
\[
u_i|_{\tz=0}=|\cT'|^{-2}\dtx x\wt{\dt \zeta}
\]
into the two estimates above, applying Proposition \ref{CM estimate} and going back to \eqref{u r middle}, one derives
\[
\begin{split}
\big\|\al^{-\ga_1}u_r\big\|_{L^2}&\le C\big(\big|e^{-\ga_1\tx}\al^{-1}\ttze\big|_{L^2}+\big|e^{\ga_1\beta_r\tx}\al^{-1}\ttze\big|_{L^2}\big)\\
&\le C\big|\al^{-\ga_1}\al^{-1}\ttze\big|_{L^2}\le C\big(\big|\al^{-\ga-2}\ttze-a_{\zeta_t}\big|_{L^2}+|a_{\zeta_t}|_{L^\infty}\big),
\end{split}
\]
where $\ga_1=(\ga+1)/2>0$ is needed again.

\medskip

As a result, summing up all these estimates of $u_r$, we finally conclude that 
\[
\big\|\al^{-(\ga+1)/2} u_r\big\|_{H^{k+2}}\le Q(\cT', w, k+1/2)
\big(\big|\al^{-\ga-2}\ttze-a_{\zeta_t}\big|_{H^{k+3/2}}+|a_{\zeta_t}|_{L^\infty}\big).
\]

\medskip

{\bf Step 3. } End of the proof. Going back to the $\p_t\cT$ expression in the beginning of the proof, we can finish the proof thanks to Step 1-2 and Corollary \ref{estimate of x z}.

\end{proof}

\begin{remark}\label{dt T corner} Combining Proposition \ref{dtT estimate} with \eqref{T1c def}, \eqref{ax az} and  Proposition \ref{dxT estimate}, we conclude that the weighted limits
\[
\dt T_{1,i}=0,\,(i=l,r),\qquad\hbox{i.e.}\quad b'(t)=b_r'(t)=b_{r2}'(t)=0.
\]
This happens due to the fact that a higher-order  weight is used for  $\dt\cT$ in our settings.
\end{remark}

On the other hand, we  consider the weighted estimate for  $\ttze$, which is  a part of the boundary value of $\dt\cT$ and  deals with the relationship between $\wt D_t w$ and $\wt{\dt\zeta}$. Notice that  a loss of $1/2$ order space derivative still exists here.

\begin{proposition}\label{dt Ka estimate} Let the  integer $k\ge 0$ and
\[
a_{w_t}(t, \tx)=\chi_la_{w_t, l}+\chi_r a_{w_t, r}
\]
with 
\[
a_{w_t, l}(t)=\lim_{\tx\rightarrow-\infty} \al^{-\ga}\wt D_t w,\quad a_{w_t, r}(t)=\lim_{\tx\rightarrow+\infty} \al^{-\ga}\wt D_t w.
\]
Then the following estimate holds:
\[
\begin{split}
\big|\al^{-\ga-2}\wt{\dt\zeta}-a_{\zeta_t}\big|_{H^{k+3/2}}\le  &Q(\cT', \tPhi,   w, k+1/2)
\big(\big|\al^{-\ga}\wt D_t w-a_{w_t}\big|_{H^{k-1/2}}+\big|\al^{-\ga-2}\tv-a_v\big|_{H^{k+3/2}}\\
&+|a_v|_{L^\infty}+\big|\al^{-\ga-2}\ttze-a_{\zeta_t}\big|_{L^2}
+|a_{\zeta_t, i}|_{L^\infty}\big).
\end{split}
\]
\end{proposition}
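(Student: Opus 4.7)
The estimate is essentially the kinematic boundary condition \eqref{kinematic condi} expressed in the weighted Sobolev framework of the strip, combined with the curvature equation \eqref{Dtk eqn} to close the top order.

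The first step is the pointwise identity. Starting from $\dt\zeta = v_2 - v_1\dx\zeta$ on $\G_t$ (the parametrized form of \eqref{kinematic condi}) and composing with $\cT|_{\tz=0}$ produces
\[
\wt{\dt\zeta} = \tv_2|_{\G_t} - \tv_1|_{\G_t}\,(\dtx x)^{-1}\dtx\tzeta,
\]
which will be the master algebraic relation. Using Assumption \ref{config} together with Corollary \ref{zeta to w estimate} and Proposition \ref{dxT estimate}, the factor $(\dtx x)^{-1}\dtx\tzeta$ vanishes in the limits $\tx\to\pm\infty$, so passing to the $\al^{-\ga-2}$-weighted limit in the master identity forces $a_{\zeta_t, i} = (a_{v, i})_2|_{\tz=0}$. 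This identifies $a_{\zeta_t}$ with the $\tz=0$ trace of the second component of $a_v$ and makes the subtraction meaningful.

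The second step is the weighted $H^{k+3/2}$ bound. Subtracting $a_{\zeta_t}$ from $\al^{-\ga-2}\wt{\dt\zeta}$ splits into two pieces: $\al^{-\ga-2}\tv_2|_{\G_t} - (a_v)_2|_{\tz=0}$ and $\al^{-\ga-2}\tv_1|_{\G_t}(\dtx x)^{-1}\dtx\tzeta$. The first piece is controlled via Lemma \ref{trace} applied to the hypothesis on $\al^{-\ga-2}\tv - a_v \in H^{k+3/2}$. For the second piece I would use weighted product estimates (the higher-order extension of Lemma \ref{basic product}), combined with $|\al^{-1}\dtx\tzeta|_{H^{k+3/2}} \le Q(\cT', w, k+1/2)$ from Corollary \ref{zeta to w estimate} and the trace bound for $\al^{-1}\cT' - T_{1,c}$ from Proposition \ref{dxT estimate}; these together produce the polynomial prefactor $Q(\cT', \tPhi, w, k+1/2)$, the lower-order $L^2$ remainder $|\al^{-\ga-2}\ttze - a_{\zeta_t}|_{L^2}$, and the $L^\infty$ bound on $a_v$.

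The presence of $|\al^{-\ga}\wt D_t w - a_{w_t}|_{H^{k-1/2}}$ in the bound is explained by the curvature equation \eqref{Dtk eqn}: up to lower-order terms, $\wt D_t w$ equals $-\wt{\Delta_{\G_t}v\cdot n_t}$, which encodes two tangential derivatives of $\tv|_{\G_t}$. Thus the top-order contribution to the $H^{k+3/2}$ norm of $\wt{\dt\zeta}$, routed through the kinematic identity, is algebraically controlled by $\wt D_t w$ at $H^{k-1/2}$ level with a two-derivative gain; packaging this term into the statement lets the estimate be invoked directly in the downstream energy argument. The main obstacle I anticipate is the half-derivative loss inherent in the trace to $\G_t$ together with the surface-tension-free regularity of $\tzeta$: the weighted norms and the non-decaying limits $a_{\zeta_t}, a_v$ must be tracked extremely carefully so that the subtraction yields a remainder in $H^{k+3/2}$ uniformly as $\tx\to\pm\infty$, and the product estimate must simultaneously accommodate the interior $L^\infty$ bound on $a_v$ and the top-order decay near the corners.
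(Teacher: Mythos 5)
Your route is genuinely different from the paper's, and as written it does not establish the substance of the proposition. The paper does not estimate $\wt{\dt\zeta}$ directly from the kinematic identity: it recalls the second expression for $D_t\kappa$ from \cite{SZ}, substitutes $v\cdot n_t=(1+|\dx\zeta|^2)^{-1/2}\dt\zeta$ via \eqref{kinematic condi}, and transforms \eqref{Dtk eqn} into a second-order equation in $\tx$ for $\ttze$ with source $\wt D_t w-\wt F$, namely $\wt D_t w=-(\dtx x)^{-1}b_\zeta^{-1/2}\dtx\big[(\dtx x)^{-1}b_\zeta^{-1/2}\dtx(b_\zeta^{-1/2}\ttze)\big]+\wt F$. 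It then computes the limits $a_{w_t,i}$ in terms of $a_{\zeta_t,i}$ and inverts this tangential operator by a weighted integration-by-parts argument (pairing $\al^{-\ga}\wt D_t w-a_{w_t}$ against $\al^{-\ga}(\dtx x)^{-2}\ttze-a_{\zeta_t,x}$ and extracting the coercive term $B_8$), so that $\ttze$ gains two derivatives over $\wt D_t w$; the velocity enters only through the one-derivative remainder $\wt F$, which is where the $\tv$-terms on the right side come from. This two-derivative gain is exactly what your final paragraph asserts (``algebraically controlled by $\wt D_t w$ at $H^{k-1/2}$ level'') but never proves: your operative estimate in Step 2 never uses $\wt D_t w$ at all.

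Concretely, Step 2 bounds the top order of $\wt{\dt\zeta}$ by the $H^{k+3/2}$ norm of $\al^{-\ga-2}\tv-a_v$. If you read that term as a boundary norm on $\G_t$ (the paper's single-bar convention), the stated inequality does follow almost tautologically from \eqref{dt zeta eqn}, since that single term already dominates the left side; but this trivializes the proposition, because in the paper's scheme the $H^{k+3/2}(\G_t)$ trace of $\tv$ is not an independent input --- it is itself recovered from $\ttze$ at the same order through Proposition \ref{v estimate} and Lemma \ref{trace} --- and the point of Proposition \ref{dt Ka estimate} is precisely that the top order of $\ttze$ is produced by $\wt D_t w$, with the $\tv$-term entering at a strength that can later be absorbed by interpolation (as in Proposition \ref{v estimate higher order}). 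If instead you intend the interior norm on $\cSt$, which is what your appeal to Lemma \ref{trace} ``applied to the hypothesis on $\al^{-\ga-2}\tv-a_v\in H^{k+3/2}$'' suggests, then the trace only yields $H^{k+1}(\G_t)$ and your product estimate is short by half a derivative at top order; that half derivative cannot be recovered from the kinematic identity, and avoiding exactly this loss is why the paper routes the estimate through \eqref{Dtk eqn}. Either way, the real work of the proof --- the identification $a_{w_t,i}=-(\ga+1)(\ga+2)(\cdots)\,a_{\zeta_t,i}$, the weighted pairing and integration by parts, and the bookkeeping of the non-decaying limits $a_{w_t}$, $a_{\zeta_t,x}$ --- is absent from your proposal.
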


\begin{proof} 
To begin with, we recall another expression for $D_t\ka$ from \cite{SZ}:
\[
D_t\ka=-\Delta_{\G_t}(v\cdot n_t)-|\na_{\tau_t}n_t|^2(v\cdot n_t)+\na_{\tau_t}\na_{v^\top}n_t\cdot \tau_t-(\na_{\tau_t}v^\top)^\top\cdot \na n_t\cdot \tau_t\qquad\hbox{on}\quad \G_t.
\]
Applying the parametrization $z=\zeta$ and noticing that
\[
v\cdot n_t=(1+|\dx\zeta|^2)^{-\f12} \dt\zeta
\]
 by the kinematic condition \eqref{kinematic condi}, we rewrite the equation above as
 \[
 D_t\ka=-\na_{\tau_t}\na_{\tau_t}\big((1+|\dx\zeta|^2)^{-\f12} \dt\zeta\big)+F(\p v, \p^2 n_t, \p \tau_t),
 \]
where
\[
F(\p v, \p^2 n_t, \p \tau_t)=-|\na_{\tau_t}n_t|^2(v\cdot n_t)+\na_{\tau_t}\na_{v^\top}n_t\cdot \tau_t-(\na_{\tau_t}v^\top)^\top\cdot \na n_t\cdot \tau_t
\]
with at most two derivatives in one term.

Pulling this equation onto the upper boundary of $\cSt$ by $\cT$ and applying \eqref{na X matrix Gt}, \eqref{Dt transform}, we have
\[
\wt D_t w=\wt D_t\wt\ka=-(\dtx x)^{-1}b_\zeta^{-\f12}\dtx\big[(\dtx x)^{-1}b_\zeta^{-\f12}\dtx\big(b_\zeta^{-\f12} \ttze\big)\big]+\wt F
\]
where
\[
(\na_{\tau_t})\circ \cT|_{\G_t}=(\na_X\wt X)^T\circ \cT\,\wt\tau_t\cdot \na=(\dtx x)^{-1}b_\zeta^{-\f12}\dtx
\] and we write
\begin{equation}\label{b zeta}
b_\zeta:=1+|(\dtx x)^{-1}\dtx\tzeta|^2.
\end{equation}

Recalling \eqref{bdy condition ax az}, \eqref{bdy condition ax az right}, the definition of $a_{\zeta_t}$ in Proposition \ref{dtT estimate} and  the space of $\al\tzeta$ and taking the limit of $\al^{-\ga}\wt D_t w$, we find immediately 
\[
\begin{split}
a_{w_t, l}&=\lim_{\tx\rightarrow-\infty} \al^{-\ga}\wt D_t w=-(\ga+1)(\ga+2)a_{\zeta_t, l}(t),\\
a_{w_t, r}&=\lim_{\tx\rightarrow+\infty} \al^{-\ga}\wt D_t w=-(\ga+1)(\ga+2)\beta_r^2b^2_r(t)a_{\zeta_t, r}(t).
\end{split}
\]
This means that $\al^{-\ga}\wt D_t w$ doesn't vanish near $\infty$ in general. 

\medskip

As a result, denoting by 
\[
a_{\zeta_t, x}=\chi_la_{\zeta_t, l}(t)+\chi_r b^2_r(t)a_{\zeta_t, r}(t)
\]
and applying $L^2(\RR)$ inner product on the expression $\al^{-\ga}\wt D_t w -a_{w_t}$ with  $\al^{-\ga}(\dtx x)^{-2}\ttze -a_{\zeta_t, x}$ leads to 
\begin{equation}\label{dt Ka integral}
\begin{split}
&\int_{\RR}\big(\al^{-\ga}\wt D_t w -a_{w_t}\big)\big(\al^{-\ga}(\dtx x)^{-2}\ttze -a_{\zeta_t, x}\big) d\tx\\
&=
-\int_{\RR}\Big(\al^{-\ga}(\dtx x)^{-1}b_\zeta^{-\f12}\dtx\big[(\dtx x)^{-1}b_\zeta^{-\f12}\dtx\big(b_\zeta^{-\f12} \ttze\big)\big]+a_{w_t}\Big)\,\big(\al^{-\ga}(\dtx x)^{-2}\ttze-a_{\zeta_t, x}\big) d\tx \\
&\quad +\int_{\RR}\al^{-\ga}\wt F\,\big(\al^{-\ga}(\dtx x)^{-2}\ttze-a_{\zeta_t, x}\big) d\tx\\
&:= A_1+A_2,
\end{split}
\end{equation}
and we deal with these two integrals one by one.  

\medskip

Frist,  applying integration by parts shows immediately that
\[
\begin{split}
A_1=&-\al^{-\ga}(\dtx x)^{-1}b_\zeta^{-\f12}(\dtx x)^{-1}b_\zeta^{-\f12}\dtx\big(b_\zeta^{-\f12} \ttze\big)
\big(\al^{-\ga}(\dtx x)^{-2}\ttze-a_{\zeta_t, x}\big) \big|^{+\infty}_{-\infty}\\
&+\int_{\RR}\dtx\big(\al^{-\ga}(\dtx x)^{-1}b_\zeta^{-\f12}\big)(\dtx x)^{-1}b_\zeta^{-\f12}\dtx\big(b_\zeta^{-\f12} \ttze\big)\big(\al^{-\ga}(\dtx x)^{-2}\ttze-a_{\zeta_t, x}\big)d\tx\\
&+\int_{\RR}\al^{-\ga}(\dtx x)^{-1}b_\zeta^{-\f12}(\dtx x)^{-1}b_\zeta^{-\f12}\dtx\big(b_\zeta^{-\f12} \ttze\big)\dtx\big(\al^{-\ga}(\dtx x)^{-2}\ttze-a_{\zeta_t, x}\big)d\tx\\
&-\int_{\RR}a_{w_t}\big(\al^{-\ga}(\dtx x)^{-2}\ttze-a_{\zeta_t, x}\big)d\tx\\
:=& A_{11}+A_{12}+A_{13}+A_{14},
\end{split}
\]
where direct computations as before lead to
\[
A_{11}=0.
\]

\noindent - $A_{12}$ part. Expanding the derivative  $\dtx(\cdot)$ terms directly leads to 
\[
\begin{split}
A_{12}=&\int_{\RR}\dtx\big(\al^{-\ga}(\dtx x)^{-1}\big)b_\zeta^{-\f12}(\dtx x)^{-1}b_\zeta^{-\f12}b_\zeta^{-\f12} \dtx\ttze\big(\al^{-\ga}(\dtx x)^{-2}\ttze-a_{\zeta_t, x}\big)d\tx\\
&+\int_{\RR}\dtx\big(\al^{-\ga}(\dtx x)^{-1}\big)b_\zeta^{-\f12}(\dtx x)^{-1}b_\zeta^{-\f12}(\dtx b_\zeta^{-\f12}) \ttze\big(\al^{-\ga}(\dtx x)^{-2}\ttze-a_{\zeta_t, x}\big)d\tx\\
&+\int_{\RR}\al^{-\ga}(\dtx x)^{-1}\dtx(b_\zeta^{-\f12})(\dtx x)^{-1}b_\zeta^{-\f12}\dtx \big(b_\zeta^{-\f12} \ttze\big)\big(\al^{-\ga}(\dtx x)^{-2}\ttze-a_{\zeta_t, x}\big)d\tx\\
:=& A_{121}+A_{122}+A_{123},
\end{split}
\]
where one notes that $\dtx (b_\zeta^{-\f12}) $ in $A_{122}, A_{123}$  can be estimated directly by some corresponding Sobolev norm of $\al^{-1}\tzeta$. 

Similarly as before, adding the ``limit part" to $A_{121}$ and noticing that 
\[
\dtx\big(\al^{-\ga}(\dtx x)^{-1}\big)\approx -(\ga+1)\al^{-\ga-1},\quad \tx\rightarrow-\infty
\]
where the computations are similar as those for $a_{w_t}$, 
 we obtain
\[
\begin{split}
A_{121}=&\int_{\RR}b_\zeta^{-\f32}\Big(\dtx\big(\al^{-\ga}(\dtx x)^{-1}\big)(\dtx x)^{-1}\dtx\ttze-a_{w_t}\Big)\big(\al^{-\ga}(\dtx x)^{-2}\ttze-a_{\zeta_t, x}\big)d\tx\\
&+\int_{\RR}b_\zeta^{-\f32}a_{w_t}\big(\al^{-\ga}(\dtx x)^{-2}\ttze-a_{\zeta_t, x}\big)d\tx\\
:=&B_1+B_2.
\end{split}
\]
As a result, similar arguments and applying Corollary \ref{estimate of x z} as before lead to 
\[
\begin{split}
|B_1|\le& Q(\cT', w, 1)\big(\big|\al^{-\ga-2}\dtx\ttze-a_{\zeta_t, 1}\big|_{L^2}+|a_{\zeta_t}|_{L^\infty}\big)\big(\big|\al^{-\ga-2}\ttze-a_{\zeta_t}\big|_{L^2}+|a_{\zeta_t}|_{L^\infty}\big)
\end{split}
\]
with $a_{\zeta_t, 1}$ defined in Lemma \ref{minus norm dx}.

Moreover, $A_{122}$ can be handled in a similar way as $B_1$  with the help of Corollary \ref{zeta to w estimate} where  we take $L^2$ norm on $\dtx (b_\zeta^{-\f12}) $ part:
\[
\begin{split}
|A_{122}|
\le& Q(\cT', w, 1)\big(|\al^{-1}\dtx \tzeta|_{L^2}+|\al^{-1}\dtx^2\tzeta|_{L^2}\big)\big(\big|\al^{-\ga-2}\ttze-a_{\zeta_t}\big|_{L^\infty}+|a_{\zeta_t}|_{L^\infty}\big)
\big(\big|\al^{-\ga-2}\ttze-a_{\zeta_t}\big|_{L^2}\\
&+|a_{\zeta_t}|_{L^\infty}\big)\\
\le &Q(\cT', w, 1)\big(\big|\al^{-\ga-2}\ttze-a_{\zeta_t}\big|_{H^1}+|a_{\zeta_t}|_{L^\infty}\big)\big(\big|\al^{-\ga-2}\ttze-a_{\zeta_t}\big|_{L^2}
+|a_{\zeta_t}|_{L^\infty}\big)
\end{split}
\]

For the last term in $A_{12}$, expanding  $\dtx$ part and proceeding as above show that
\[
\begin{split}
A_{123}=&\int_{\RR}\dtx(b_\zeta^{-\f12})b_\zeta^{-1}\Big(\al^{-\ga}(\dtx x)^{-2}\dtx\ttze-(\ga+2)\chi_la_{\zeta_t,l}-(\ga+2)\beta_rb_r^2(t)a_{\zeta_t, r}\Big)\times\\
&\big(\al^{-\ga}(\dtx x)^{-2}\ttze-a_{\zeta_t, x}\big)d\tx
+\int_{\RR}\dtx(b_\zeta^{-\f12})b_\zeta^{-1}\big((\ga+2)\chi_la_{\zeta_t,l}+(\ga+2)\beta_rb_r^2(t)a_{\zeta_t, r}\big)\times\\
&\big(\al^{-\ga}(\dtx x)^{-2}\ttze-a_{\zeta_t, x}\big)d\tx
+\int_{\RR}\al^{-\ga}(\dtx x)^{-1}\dtx(b_\zeta^{-\f12})(\dtx x)^{-1}b_\zeta^{-\f12}\dtx(b_\zeta^{-\f12}) \ttze\times
\\
&\big(\al^{-\ga}(\dtx x)^{-2}\ttze-a_{\zeta_t, x}\big)d\tx\\
:=&B_3+B_4+B_5,
\end{split}
\]
where $B_3, B_4, B_5$ can be handled as above and hence the estimates are omitted here.

\medskip

\noindent - $A_{13}$ part. Expanding the derivative term $\dtx\big(b_{\zeta}^{-\f12}\ttze\big)$ and adding corresponding limit part lead to
\[
\begin{split}
A_{13}=&\int_{\RR}b_\zeta^{-1}\dtx(b_\zeta^{-\f12})\big(\al^{-\ga}(\dtx x)^{-2}\ttze-a_{\zeta_t, x}\big)\dtx\big(\al^{-\ga}(\dtx x)^{-2}\ttze-a_{\zeta_t, x}\big)d\tx\\
&+\int_{\RR}b_\zeta^{-1}\dtx(b_\zeta^{-\f12})a_{\zeta_t}\dtx\big(\al^{-\ga}(\dtx x)^{-2}\ttze-a_{\zeta_t, x}\big)d\tx\\
&+\int_{\RR}b_\zeta^{-\f32}\dtx\big(\al^{-\ga}(\dtx x)^{-2}\ttze-a_{\zeta_t, x}\big)\dtx\big(\al^{-\ga}(\dtx x)^{-2}\ttze-a_{\zeta_t, x}\big)d\tx\\
&+\int_{\RR}b_\zeta^{-\f32}\chi_l'a_{\zeta_t}\dtx\big(\al^{-\ga}(\dtx x)^{-2}\ttze-a_{\zeta_t, x}\big)d\tx\\
&-\int_{\RR}b_\zeta^{-\f32}\Big(\dtx\big(\al^{-\ga}(\dtx x)^{-2}\big)\ttze+(\ga+2)\chi_la_{\zeta_t,l}+(\ga+2)\beta_rb_r^2(t)a_{\zeta_t, r}\Big)\dtx\big(\al^{-\ga}(\dtx x)^{-2}\ttze-a_{\zeta_t, x}\big)d\tx\\
&+\int_{\RR}b_\zeta^{-\f32}\big((\ga+2)\chi_la_{\zeta_t,l}+(\ga+2)\beta_rb_r^2(t)a_{\zeta_t, r}\big)\dtx\big(\al^{-\ga}(\dtx x)^{-2}\ttze-a_{\zeta_t, x}\big)d\tx\\
:=& B_6+\cdots+B_{11},
\end{split}
\]
where $B_6, B_7, B_9, B_{10}$ can be handled similarly as above and the estimates are omitted again. 

Moreover, integrating by parts in $B_{11}$ leads to 
\[
|B_{11}|\le Q(\cT', w, 1)|a_{\zeta_t}|_{L^\infty}\big(\big|\al^{-\ga-2}\ttze-a_{\zeta_t}\big|_{L^2}
+|a_{\zeta_t}|_{L^\infty}\big)
\]

For the main non-negative term $B_8$, one obtains directly by Corollary \ref{estimate of x z} and Corollary \ref{zeta to w estimate}  the following estimate:
\begin{equation}\label{B8 term}
\begin{split}
\big|\al^{-\ga-2}\ttze-a_{\zeta_t}\big|^2_{H^1}\le&Q(\cT', w, 1)\big(B_8+|a_{\zeta_t}|^2_{L^\infty}\big)
+\big|\al^{-\ga-2}\ttze-a_{\zeta_t}\big|^2_{L^2}.
\end{split}
\end{equation}

\medskip

Consequently, summing up all these estimates, we obtain for $A_1$ the following inequality
\[
\begin{split}
A_1\ge  A_{14}+B_2+B_8-Q\big(\cT',   w, 1\big)\big(\big|\al^{-\ga-2}\ttze-a_{\zeta_t}\big|_{H^1}+|a_{\zeta_t}|_{L^\infty}\big)\big(\big|\al^{-\ga-2}\ttze-a_{\zeta_t}\big|_{L^2}
+|a_{\zeta_t}|_{L^\infty}\big),
\end{split}
\]
where 
\[
\begin{split}
A_{14}+B_2=&\int_{\RR}\big(1-b_\zeta^{-\f32}\big)a_{w_t}\big(\al^{-\ga}(\dtx x)^{-2}\ttze-a_{\zeta_t, x}\big)d\tx
\end{split}
\]
and can be handled similarly as above thanks to $1-b_\zeta^{-\f32}$. 

Therefore, we conclude that
\begin{equation}\label{A1 term}
\begin{split}
 A_1\ge  B_8-Q\big(\cT',   w, 1\big)\big(\big|\al^{-\ga-2}\ttze-a_{\zeta_t}\big|_{H^1}+|a_{\zeta_t}|_{L^\infty}\big)\big(\big|\al^{-\ga-2}\ttze-a_{\zeta_t}\big|_{L^2}
+|a_{\zeta_t}|_{L^\infty}\big).
\end{split}
\end{equation}

\bigskip

\noindent - $A_2$ part. For the $\na_{\tau_t}\na_{v^\top}n_t\cdot \tau_t$ term in $F$, we perform an integration by parts again (especially for the higher-order estimate) and proceed as before. Meanwhile, notice that each term in  $\wt F$ always contains $\p \wt n_t$ when one adds the ``weighted limit" to $\tv$ term and there is at most order-one derivative for $\tv$. Here we define the weighted limit of $\tv$ as
\begin{equation}\label{av def}
a_v=\chi_l \lim_{\tx\rightarrow -\infty}\al^{-\ga-2}\tv+\chi_r \lim_{\tx\rightarrow +\infty}\al^{-\ga-2}\tv.
\end{equation}
 Consequently,  we have  the following estimate
\begin{equation}\label{A2 term}
\begin{split}
 A_2\le & Q\big(\cT',   w, 1\big)\big(\big|\al^{-\ga-2}\tv-a_v\big|_{H^1}+|a_v|_{L^\infty}\big)\big(\big|\al^{-\ga-2}\ttze-a_{\zeta_t}\big|_{L^2}
+|a_{\zeta_t}|_{L^\infty}\big)
\end{split}
\end{equation}
where Corollary \ref{zeta to w estimate} and Proposition \ref{v estimate} are applied.

\medskip

\noindent - Weighted $H^1$ estimate of $\ttze$. Summing up all these estimates \eqref{B8 term}, \eqref{A1 term} and \eqref{A2 term} above, we finally conclude that 
\[
\begin{split}
\big|\al^{-\ga-2}\ttze-a_{\zeta_t}\big|^2_{H^1}\le & Q\big(\cT',  w, 1\big)\big(\big|\al^{-\ga}\wt D_t w-a_{w_t}\big|^2_{L^2}+\big|\al^{-\ga-2}\tv-a_v\big|^2_{H^1}+|a_v|^2_{L^\infty}\\
&+\big|\al^{-\ga-2}\ttze-a_{\zeta_t}\big|^2_{L^2}
+|a_{\zeta_t}|^2_{L^\infty}\big).
\end{split}
\]

In the end, for higher-order derivative estimates, one can proceed in a standard way by taking $\dtx$ derivatives and using $L^2$ inner product from the beginning of this proof. Moreover, counting the order of derivatives for $\tv, \tzeta$,  we obtain the desired estimate.

\end{proof}

\subsection{Weighted estimates for the velocity and other quantities}\label{v estimate section}

Based on the estimates of $\cT$,  we now turn to the estimates for the velocity  $\tv=v\circ \cT$, which is also very delicate and involves several steps to reach the desired regularity. Due to the natural loss of $1/2$ order  space derivative from the kinematic condition \eqref{kinematic condi},  we firstly use the potential $\tPhi=\Phi\circ\cT$  to prove a relatively lower-order estimate for $v$.  

 In fact,  \eqref{kinematic condi} is rewritten into
\begin{equation}\label{dt zeta eqn}
\wt{\dt\zeta}=\big(1+|(\dtx x)^{-1}\dtx\tzeta|^2\big)^{\f12}\tv\cdot\wt n_t=-(\dtx x)^{-1}\dtz\tPhi\big|_{\tz=0}.
\end{equation}
We have the following Neumann system for $\tPhi$:
\begin{equation}\label{tPhi system}
\begin{cases}
\Delta \tPhi=0\qquad \hbox{in}\quad \cSt,\\
\dtz\tPhi\big|_{\G_t}=-(\dtx x)\wt{\dt\zeta},\quad \dtz\tPhi\big|_{\G_b}=0
\end{cases}
\end{equation}
where a loss of $1/2$ derivative for $\Phi$ takes place.

\begin{proposition}\label{v estimate} Let real $\ga>-1$, integer  $k\ge 1$, and $\tPhi$ satisfy \eqref{tPhi system}. 

(1) Let
\[
a_{\Phi_z, l}=\lim_{\tx\rightarrow -\infty}\al^{-\ga-3}\dtz\tPhi, \quad a_{\Phi_z, r}=\lim_{\tx\rightarrow +\infty}\al^{-\ga-3}\dtz\tPhi
\]
 and 
\[
a_{\Phi_z}(t,\tz)=\chi_l a_{\Phi_z, l}+\chi_r a_{\Phi_z, r}.
\]

Then the following estimates hold:
\[
\begin{split}
\|\al^{-\ga-3}\dtz\tPhi-a_{\Phi_z}\|_{H^{k+2}}\le Q(\cT', \dtz\tPhi, w, k+1/2)\big[\big|\al^{-\ga-2}\wt{\dt\zeta}- a_{\zeta_t}\big|_{H^{k+3/2}}+|a_{\zeta_t, i}|_{L^\infty}\big]
\end{split}
\]
and 
\[
|a_{\Phi_z, i}|_{W^{k+2, \infty}_{z}}\le C|a_{\zeta_t, i}|_{L^\infty}.
\]

(2) Let $a_v$ be defined in \eqref{av def}.
One also has 
\[
\begin{split}
\|\al^{-\ga-2}\tv-a_v\|_{H^{k+2}}\le Q(\cT', \na\tPhi, w, k+1/2)\big[\big|\al^{-\ga-2}\wt{\dt\zeta}- a_{\zeta_t}\big|_{H^{k+3/2}}+|a_{\zeta_t, i}|_{L^\infty}\big],
\end{split}\]
where $a_v$
satisfies 
\[
|a_v|_{W^{k+2, \infty}}\le Q_b |a_{\zeta_t, i}|_{L^\infty}.
\]
\end{proposition}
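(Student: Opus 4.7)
My plan uses that although \eqref{tPhi system} is a Neumann problem, the function $u:=\dtz\tPhi$ is harmonic in $\cSt$ with the Dirichlet data $u|_{\G_b}=0$ and $u|_{\G_t}=-\dtx x\,\ttze$. I would first determine the weighted limits $a_{\Phi_z,l},a_{\Phi_z,r}$ by asymptotic analysis: as $\tx\to-\infty$, Proposition \ref{dxT estimate} and \eqref{bdy condition ax az} give $\dtx x\sim\al\,a_{x,l}$ with $a_{x,l}(0)=1$, and by hypothesis $\ttze\sim\al^{\ga+2}a_{\zeta_t,l}$, so the datum is $\sim-\al^{\ga+3}a_{\zeta_t,l}$ at $\tz=0$. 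The separation-of-variables ansatz $u\sim\al^{\ga+3}g_l(\tz)$ inserted into $\Delta u=0$ yields $g_l''+(\ga+3)^2g_l=0$ with $g_l(-\om)=0$, $g_l(0)=-a_{\zeta_t,l}$, explicitly solved by
\[
a_{\Phi_z,l}(\tz)=g_l(\tz)=-a_{\zeta_t,l}\,\f{\sin\big((\ga+3)(\tz+\om)\big)}{\sin\big((\ga+3)\om\big)},
\]
and analogously $a_{\Phi_z,r}$. The pointwise $W^{k+2,\infty}_{\tz}$ bound by $C|a_{\zeta_t,i}|_{L^\infty}$ is immediate from these sinusoidal expressions, under the non-resonance $(\ga+3)\om\ne m\pi$ that follows from the range of $\ga$.

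For the Sobolev estimate I would introduce a cut-off background $u_0:=\chi_l\al^{\ga+3}g_l(\tz)+\chi_r\al^{\ga+3}g_r(\tz)$ matching the leading asymptotics of $u$, and study $\psi:=u-u_0$. It satisfies a Dirichlet problem with right-hand side $-\Delta u_0$ (supported in the cut-off transition regions and decaying faster than $\al^{\ga+3}$) and data $\psi|_{\G_t}=-\dtx x\,\ttze-u_0|_{\G_t}$ in which the resonant mode has been removed. Expanding via $\dtx x\,\ttze=(\al a_x)(\al^{\ga+2}a_{\zeta_t})+(\dtx x-\al a_x)\ttze+\al a_x(\ttze-\al^{\ga+2}a_{\zeta_t})$, each residual piece pairs a weighted $H^{k+2}$ factor (from Proposition \ref{dxT estimate}) with a weighted $H^{k+3/2}$ factor (from the hypothesis on $\ttze$), so that $\al^{-\ga-3}\psi|_{\G_t}$ is bounded in $H^{k+3/2}$ by the desired quantity. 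The main obstacle is the admissibility of the weight $\al^{-\ga-3}$ in Proposition \ref{Dirichlet system estimate}: the hypothesis on $\ga$ places $\ga+3$ past the first Dirichlet eigenvalue $\pi/\om$, so the proposition does not apply to $u$ directly. The explicit subtraction of $u_0$ is precisely the singularity-decomposition device of \cite{KMR} that absorbs the first-mode contribution and reduces the estimate on $\psi$ to the next spectral level, which the hypothesis $0<\ga+1\le\min\{2\pi/\om,2\pi/\om_r\}$ covers.

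For Part (2), the identity $\tv=(\na_X\wt X)^T\circ\cT\,\na\tPhi$ combined with \eqref{na X matrix} writes each component of $\tv$ as a rational expression in $\cT'$ and $\na\tPhi$. An estimate for $\dtx\tPhi$ of the same weighted order follows by applying the Cauchy--Riemann/Laplace-transform argument of Step 2 in Proposition \ref{dtT estimate} to the holomorphic function $\dtx\tPhi-i\dtz\tPhi$, producing a weighted limit $a_{\Phi_x,i}$ explicitly related to $a_{\Phi_z,i}$ by the conjugate ODE on $[-\om,0]$. Multiplying factor by factor and invoking Proposition \ref{dxT estimate} yields the bound on $\al^{-\ga-2}\tv-a_v$, with
\[
a_v=\f{1}{a_{x,i}^2+a_{z,i}^2}\big(a_{x,i}a_{\Phi_x,i}+a_{z,i}a_{\Phi_z,i},\ a_{z,i}a_{\Phi_x,i}-a_{x,i}a_{\Phi_z,i}\big)^T
\]
at each corner. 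The $W^{k+2,\infty}$ bound then follows from \eqref{ax az} together with the bound on $a_{\Phi_z,i}$, the polynomial $Q_b$ absorbing the dependence on $b,b_r,b_{r2}$ via $a_{x,l}(-\om)=b(t)$ and the analogous right-corner values.
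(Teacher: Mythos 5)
Your computation of the corner profiles (the ODE $g''+(\ga+3)^2g=0$ with $g(-\om)=0$, $g(0)=-a_{\zeta_t,l}$, whose solution coincides with \eqref{estimate a phi}), your expansion of the boundary datum $-\dtx x\,\ttze$, and the outline of Part (2) (expressing $\tv$ through \eqref{na X matrix}, treating $\dtx\tPhi$ by C-R relations, and the formula for $a_v$ matching \eqref{a v expression}) all agree with the paper. The gap is in the one step that carries the whole Sobolev bound: you propose to apply Proposition \ref{Dirichlet system estimate} to $\psi=u-u_0$ with the weight $\al^{-\ga-3}$, concede that this exponent can lie outside the admissible range $|\cdot|<\min\{\pi/\om,\pi/\om_r\}$, and then assert that subtracting $u_0$ ``absorbs the first-mode contribution'' and pushes the estimate to the next spectral level. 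This does not work as stated. Your $u_0\sim\al^{\ga+3}g_i(\tz)$ is the \emph{forced} profile matching the decay rate of the boundary data; it is not the homogeneous Dirichlet eigenmode $e^{(\pi/\om)\tx}\sin(\pi\tz/\om)$ whose presence is what limits the weighted theory once $\ga+3$ passes $\pi/\om$. Removing $u_0$ therefore does not move the spectral threshold; crossing an eigenvalue would require a genuine KMR asymptotic-expansion theorem (subtraction of the eigenmode with an a priori unknown coefficient), which is neither stated in the paper nor supplied by you. The arithmetic is also off: staying below the second eigenvalue needs $\ga+3<2\pi/\om$, i.e. $\ga+1<2\pi/\om-2$, which $0<\ga+1\le 2\pi/\om$ does not give; and Proposition \ref{v estimate} assumes only $\ga>-1$ with no upper bound, so any argument confining $\ga+3$ to a spectral window proves less than the statement (your side remark that non-resonance $(\ga+3)\om\ne m\pi$ ``follows from the range of $\ga$'' is likewise unfounded).

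The paper's proof avoids large-weight elliptic theory altogether: it works with $w_\Phi=\al^{-\ga-3}\dtz\tPhi-a_{\Phi_z}$, which is exactly your $\al^{-\ga-3}\psi$, and writes the Dirichlet problem satisfied by $w_\Phi$ itself, so the weight enters only through the commutator $[\Delta,\al^{-\ga-3}]\dtz\tPhi$, whose terms are one order lower and carry bounded coefficients because derivatives of $\al$ are comparable to $\al$. It then applies the elliptic estimate on the strip at an admissible (essentially zero) weight, controls the commutator contributions by $\|w_\Phi\|_{H^{k+1}}$ plus the limit terms via Lemma \ref{minus norm dx}, and closes with an interpolation. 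If you replace your out-of-range application of Proposition \ref{Dirichlet system estimate} by this conjugation-plus-commutator argument, the remainder of your outline, including Part (2), goes through.
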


\begin{proof} {\bf Step 1.} Estimate of $\dtz\tPhi$.
Similarly as in the above estimate for $\cT$, we introduce
\[
w_\Phi=\al^{-\ga-3}\dtz\tPhi-a_{\Phi_z},
\]
which satisfies the following system
\[
\begin{cases}
\Delta w_\Phi=[\Delta, \al^{-\ga-3}]\dtz\tPhi-\Delta a_{\Phi_z}\qquad \hbox{in}\quad \cSt,\\
w_\Phi\big|_{\G_t}=-\al^{-\ga-3}(\dtx x)\wt{\dt\zeta}- a_{\Phi_z}|_{\tz=0},\quad w_\Phi\big|_{\G_b}=0.
\end{cases}
\]
Here $a_{\Phi_z}$ satisfies corresponding boundary conditions for $i=l, r$ naturally:
\begin{equation}\label{a Phi boundary}
 a_{\Phi_z, l}|_{\tz=0}=-a_{\zeta_t, l},\quad a_{\Phi_z, r}|_{\tz=0}=-b_r a_{\zeta_t, r},\quad
  a_{\Phi_z, i}|_{\tz=-\om}=0.
\end{equation}

Moreover, thanks to  Lemma \ref{minus norm dx} and 
\[
\Delta \dtz \tPhi=0\qquad\hbox{in}\quad \cSt, 
\] 
direct computations show that $a_{\Phi_z, l}$ satisfies the following equation
\begin{equation}\label{a phi eqn}
\dtz^2 a_{\Phi_z, l}=-(\ga+3)^2 a_{\Phi_z, l}.
\end{equation}


As a result, the ODE system \eqref{a phi eqn} and \eqref{a Phi boundary} of $a_{\Phi_z, l}$ can be solved similarly as for $a_{x, i}, a_{z, i}$ from \eqref{limit T'}, and we have immediately 
\begin{equation}\label{estimate a phi}
\begin{split}
a_{\Phi_z, l}=-a_{\zeta_t, l}\cos\big((\ga+3)\tz\big)-a_{\zeta_t, l}\cot\big((\ga+3)\om\big)\sin\big((\ga+3)\tz\big)
\end{split}
\end{equation} 
with the constant $C$ depending on $\ga, m$. Moreover, similar expression and estimate hold for $a_{\Phi_z, r}$ depending on $a_{\zeta_t}$ and $b_r$.

On the other hand, when $\tx\le -c_0$ (in order to be more clear),  recalling  the definition  of the weight $\al$ from \eqref{weight function} and direct computations again lead to 
\[
\begin{split}
\Delta w_\Phi=&(\al^{-\ga-3})''\dtz\tPhi+2(\al^{-\ga-3})'\dtx\dtz\tPhi-\chi_l''a_{\Phi_z, l}-\chi_l\dtz^2a_{\Phi_z, l}\\
=&(\ga+3)^2\al^{-\ga-3}\dtz\Phi-2(\ga+3)\al^{-\ga-3}\dtx\dtz\tPhi-\chi_l''a_{\Phi_z, l}-\chi_l\dtz^2a_{\Phi_z, l}\\
=&(\ga+3)^2\big(\al^{-\ga-3}\dtz\Phi-\chi_la_{\Phi_z,l}\big)-2(\ga+3)\big(\al^{-\ga-3}\dtx\dtz\tPhi-(\ga+3)\chi_{l, 1}a_{\Phi_z, l}\big)
-\chi_l''a_{\Phi_z, l}\\
&+2(\ga+3)^2(\chi_l-\chi_{l,1})a_{\Phi_z,l}
\end{split}
\]
where \eqref{a phi eqn} is applied for $a_{\Phi_z}$ and $\chi_{l,1}$ is defined in Lemma \ref{minus norm dx}.

As a result, applying Proposition \ref{Dirichlet system estimate}, Proposition \ref{CM estimate}, Corollary \ref{dx x estimate}, Corollary \ref{estimate of x z} and Lemma \ref{minus norm dx}, we obtain
\[
\begin{split}
\|w_\Phi\|_{H^{k+2}}
\le& C\big( \big\|\al^{-\ga-3}\dtz\Phi-a_{\Phi_z}\big\|_{H^{k}}+\big\|\al^{-\ga-3}\dtx\dtz\tPhi-a_{\Phi_z,1}\big\|_{H^{k}}+\|\chi_i''a_{\Phi_z}\|_{H^{k}}\\
&+\|(\chi_i-\chi_{i, 1})a_{\Phi_z}\|_{H^{k}}+\big|\al^{-\ga-3}(\dtx x)\wt{\dt\zeta}+a_{\Phi_z}\big|_{H^{k+3/2}}\big)\\
\le & C\|w_\Phi\|_{H^{k+1}}+Q(\cT', w, k+1/2)\big[\big|\al^{-\ga-2}\wt{\dt\zeta}-a_{\zeta_t}\big|_{H^{k+3/2}}+|a_{\zeta_t, i}|_{L^\infty}\big].
\end{split}
\]
Notice that here one has thanks to \eqref{a Phi boundary} that
\[
\begin{split}
&\big|\al^{-\ga-3}(\dtx x)\wt{\dt\zeta}+a_{\Phi_z}\big|_{H^{k+3/2}}\\
&\le \big|\al^{-1}\dtx x\chi_l(\al^{-\ga-2}\ttze- a_{\zeta_t, l})\big|_{H^{k+3/2}}+\big|\al^{-1}\dtx x\,\chi_r(\al^{-\ga-2}\ttze- a_{\zeta_t, r})\big|_{H^{k+3/2}}\\
&\quad +|a_{\zeta_t, l}\chi_l(\al^{-1}\dtx x-1)|_{H^{k+3/2}}+|a_{\zeta_t, r}\chi_r(\al^{-1}\dtx x-b_r)|_{H^{k+3/2}}+\big|(1-\chi_l-\chi_r)\al^{-\ga-3}(\dtx x)\wt{\dt\zeta}\big|_{H^{k+3/2}}\\
&\le Q(\cT', w, k+1/2)\big[\big|\al^{-\ga-2}\wt{\dt\zeta}- a_{\zeta_t}\big|_{H^{k+3/2}}+|a_{\zeta_t, i}|_{L^\infty}\big],
\end{split}
\]
where we recall \eqref{limit T'} and \eqref{limit T' right} for the limits of $\al^{-1}\dtx x$.

Consequently, applying an interpolation to $\|w_\Phi\|_{H^{k}}$, we derive the desired estimate for $\dtz\tPhi$. 

\medskip

{\bf Step 2.} Estimate of $\dtx\tPhi$ and $\tv$. In fact,  using the equation of $\tPhi$ and applying the estimate of $\dtz\tPhi$,  we can have
\[
\begin{split}
\big\|\al^{-\ga-3}\dtx\tPhi-a_{\Phi_x}\big\|_{H^{k+2}}
\le Q(\cT', \na\tPhi, w, k+1/2)\big[\big|\al^{-\ga-2}\wt{\dt\zeta}-a_{\zeta_t}\big|_{H^{k+3/2}}+|a_{\zeta_t, i}|_{L^\infty}\big]
\end{split}
\]
where 
\[
a_{\Phi_x}=\chi_l a_{\Phi_x}+\chi_r a_{\Phi_x, r}
\] 
with 
\[
a_{\Phi_x, l}=\lim_{\tx\rightarrow -\infty}\al^{-\ga-3}\dtx\tPhi,\quad a_{\Phi_x, r}=\lim_{\tx\rightarrow +\infty}\al^{-\ga-3}\dtx\tPhi
\]
 satisfying
\[
a_{\Phi_x, l}=-\f 1{\ga+3}\dtz a_{\Phi_z, l},\quad a_{\Phi_x, r}=-\f 1{(\ga+3)\beta_r}\dtz a_{\Phi_z, r}.
\]

In the end, for the estimate of the velocity $v$, recalling that $\tv=\wt{\na \Phi}$ and applying \eqref{na X matrix} lead to 
\[
\tv=(\na_X\wt X)\circ \cT \na \tPhi=|\cT'|^{-2}\left(\begin{matrix}
\dtx x \dtx \tPhi+\dtx z\dtz\tPhi\\
\dtx z\dtx\tPhi -\dtx x\dtz\tPhi
\end{matrix}\right),
\]
which implies 
\begin{equation}\label{a v expression}
\begin{split}
a_{v, l}&=\lim_{\tx\rightarrow -\infty}\al^{-\ga-2}\tv=\lim_{\tx\rightarrow -\infty}\f{1}{\al^{-2}(\dtx x)^2+\al^{-2}(\dtx z)^2}
\left(\begin{matrix}
\al^{-1}\dtx x \al^{-\ga-3}\dtx \tPhi+\al^{-1}\dtx z\al^{-\ga-3}\dtz\tPhi\\
\al^{-1}\dtx z\al^{-\ga-3}\dtx\tPhi -\al^{-1}\dtx x\al^{-\ga-3}\dtz\tPhi
\end{matrix}\right)\\
&=\f{1}{a_{x, l}^2+a_{z, l}^2}\left(\begin{matrix}a_{x, l} a_{\Phi_x, l}+a_{z, l}a_{\Phi_z, l}\\
a_{z, l} a_{\Phi_x, l}-a_{x, l}a_{\Phi_z, l}\end{matrix}\right).
\end{split}
\end{equation}
Moreover, one has  a similar expression for $a_{v, r}$.

As a result, we can have the $W^{k, \infty}$ estimate of $a_{v, i}$ thanks to \eqref{ax az} and \eqref{estimate a phi}. Applying Corollary \ref{estimate of x z}, we obtain the desired estimate for $\tv$. 

\end{proof}

Second, we obtain a higher  order estimate for $\tv$ near the bottom $\G_b$ thanks to Corollary \ref{higher order local}.
\begin{corollary}\label{v higher order} Under the same assumption of  Proposition \ref{v estimate}, one has  
\[
\begin{split}
\|\al^{-\ga-2}\tv-a_v\|_{H^{k+5/2}(\cS_b)}\le Q(\cT', \na\tPhi, w, k)\big[\big|\al^{-\ga-2}\wt{\dt\zeta}- a_{\zeta_t}\big|_{H^{k+1}}+|a_{\zeta_t, i}|_{L^\infty}\big],
\end{split}
\]
where $\cS_b\subset \cSt$ is the flat strip near $\G_b$.
\end{corollary}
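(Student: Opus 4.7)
The strategy is to follow the same cutoff localization as in Corollary \ref{higher order local}: near the bottom $\G_b$ the Neumann data $\dtz\tPhi|_{\G_b}=0$ is homogeneous and smooth, and the limit $a_{\Phi_z}|_{\tz=-\om}=0$ vanishes by \eqref{a Phi boundary}, so one can gain $1/2$ order of derivative for $\na\tPhi$ via a standard cutoff/elliptic-regularity argument. Combined with the local higher-order bound for $\cT'$ that is extracted in the course of proving Corollary \ref{higher order local}, substitution into the formula $\tv=(\na_X\wt X)\circ\cT\,\na\tPhi$ produces the desired $H^{k+5/2}(\cS_b)$ bound for $\al^{-\ga-2}\tv-a_v$.

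First I would record from \eqref{a v expression} and the identity $\tv=(\na_X\wt X)\circ\cT\,\na\tPhi$ that $\al^{-\ga-2}\tv-a_v$ is a smooth rational function of $\al^{-1}\cT'-T_{1,c}$, $T_{1,c}$, $\al^{-\ga-3}\na\tPhi-a_\Phi$, and $a_\Phi$. By the Sobolev product estimate on the flat strip $\cS_b$, together with the $W^{k+5/2,\infty}$ bounds on $T_{1,c}$ and $a_\Phi$ (Proposition \ref{dxT estimate} and Proposition \ref{v estimate}), it suffices to prove
\[
\|\al^{-1}\cT'-T_{1,c}\|_{H^{k+5/2}(\cS_b)}+\|\al^{-\ga-3}\na\tPhi-a_\Phi\|_{H^{k+5/2}(\cS_b)}\le Q(\cT',\na\tPhi,w,k)\bigl[|\al^{-\ga-2}\ttze-a_{\zeta_t}|_{H^{k+1}}+|a_{\zeta_t,i}|_{L^\infty}\bigr].
\]
The bound for the first summand is obtained in the proof of Corollary \ref{higher order local}, where the localized harmonic auxiliary $\chi_b u_r$ is estimated in $H^{k+3/2}$ and then upgraded to $H^{k+5/2}$ for $\cT'$ in $\cS_b$ by the expansion \eqref{T3 exp}.

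For the second summand I would localize as follows. Take the same cutoff $\chi_b(\tz)$ as in Corollary \ref{higher order local}, supported in $\{\tz\le -\om+\delta\}$, identically $1$ in a neighborhood of $\G_b$ and flat there. Set $w_\Phi=\al^{-\ga-3}\dtz\tPhi-a_{\Phi_z}$ as in Step 1 of the proof of Proposition \ref{v estimate}. Because $\dtz\tPhi|_{\G_b}=0$ and $a_{\Phi_z}|_{\tz=-\om}=0$, one has $w_\Phi|_{\G_b}=0$, so $\chi_b w_\Phi$ satisfies a Dirichlet problem with homogeneous data on both sides (on $\G_t$ because $\chi_b$ vanishes there, on $\G_b$ by the previous observation) and with source $\chi_b\Delta w_\Phi+[\Delta,\chi_b]w_\Phi$. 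The commutator costs at most one derivative of $w_\Phi$, and $\Delta w_\Phi$ was already computed in the proof of Proposition \ref{v estimate} as a linear combination of weighted derivatives of $\dtz\tPhi$ and smooth remainders involving $a_{\Phi_z}$; the $H^{k+2}$ bound on $w_\Phi$ from Proposition \ref{v estimate} then places the full source in $H^{k+1/2}(\cS_b)$. Applying Proposition \ref{Dirichlet system estimate} at order $k+5/2$ yields the sought bound for $\chi_b w_\Phi$. The horizontal component $\al^{-\ga-3}\dtx\tPhi-a_{\Phi_x}$ is recovered by the Cauchy--Riemann relations for $\dtx\tPhi-i\dtz\tPhi$, transferring the same $H^{k+5/2}(\cS_b)$ estimate to the full gradient $\na\tPhi$.

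The main obstacle will be the bookkeeping of weights in the commutator and source terms, namely verifying that $[\Delta,\chi_b]w_\Phi$ and $\chi_b\Delta w_\Phi$ lie in the weighted Sobolev space prescribed by Proposition \ref{Dirichlet system estimate} with constants of the form $Q(\cT',\na\tPhi,w,k)$ rather than $Q(\cT',\na\tPhi,w,k+1/2)$; this is precisely where the extra regularity of $\tPhi$ produced by the interior localization replaces the use of the higher surface norm $|\al^{-\ga-2}\ttze-a_{\zeta_t}|_{H^{k+3/2}}$ of Proposition \ref{v estimate}. Once this routine but delicate check is in place, assembly via the first step closes the proof.
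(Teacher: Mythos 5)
Your proposal mirrors the paper's own argument: both localize near $\G_b$ with the cutoff $\chi_b$ from Corollary \ref{higher order local}, exploit that $w_\Phi|_{\G_b}=0$ so that $\chi_b w_\Phi$ solves a homogeneous Dirichlet problem whose source (commutator plus $\chi_b\Delta w_\Phi$) costs only one derivative of $w_\Phi$, apply the weighted Dirichlet elliptic estimate at order $k+5/2$, recover $\dtx\tPhi$ from harmonicity of $\tPhi$, and then combine with Corollary \ref{higher order local} for $\cT'$ via the expression $\tv=(\na_X\tX)\circ\cT\,\na\tPhi$. This is essentially the same proof, so no further comparison is needed.
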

\begin{proof} The proof is similar as the proof of Corollary \eqref{higher order local}. We first prove the estimate of $\dtz\tPhi$ and use again $w_\Phi=\al^{-\ga-3}\dtz\tPhi-a_{\Phi_z}$ from the proof of Proposition \ref{v estimate}. 

In fact, recalling the  cut-off function $\chi_b$ near $\G_b$ from Corollary \eqref{higher order local}, $\chi_b w_{\Phi}$ satisfies 
\[
\begin{cases}
\Delta (\chi_bw_\Phi)=[\Delta, \chi_b]w_\Phi-\chi_b\Delta w_\Phi\qquad \hbox{in}\quad \cSt,\\
\chi_bw_\Phi\big|_{\G_t}=0,\quad \chi_bw_\Phi\big|_{\G_b}=0.
\end{cases}
\]
Applying Proposition \ref{v estimate} and the expression of $\Delta w_\Phi$ in that proof, we have
\[\begin{split}
\|\chi_bw_\Phi\|_{H^{k+5/2}}&\le C\big(\|w_\Phi\|_{H^{k+3/2}}+\|\chi_b\Delta w_\Phi\|_{H^{k+1/2}}\big)\\
&\le C\big(\|w_\Phi\|_{H^{k+3/2}}+|a_{\zeta_t, i}|_{L^\infty}\big)\\
&\le Q(\cT', \dtz\tPhi, w, k)\big[|a_{\zeta_t, i}|_{L^\infty}+\big|\al^{-\ga-2}\wt{\dt\zeta}- a_{\zeta_t}\big|_{H^{k+1}}\big].
\end{split}\]

Moreover, similar analysis as before, we also derive a  similar estimate for $\dtx \tPhi$ near $\G_b$.  Consequently, thanks to the higher order estimate of $\cT'$ in Corollary \eqref{higher order local} and  the expression of $\tv$ in terms of $\cT', \na\tPhi$ in the proof of  Proposition \ref{v estimate}, we conclude immediately the desired estimate for $\tv$.

\end{proof}

Third, we present the  higher order estimate for $\tv$.
\begin{proposition}\label{v estimate higher order} Under the same assumption of  Proposition \ref{v estimate}, one has 
\[
\begin{split}
\|\al^{-\ga-2}\tv-a_v\|_{H^{k+5/2}}\le &Q(\cT', \na\tPhi, \ttze, w, k+1/2)\big[1+\big|\al^{-\ga-2}\wt D_t w-a_{w_t}\big|_{H^k}+|a_{\zeta_t, i}|_{L^\infty}\big].
\end{split}
\]
\end{proposition}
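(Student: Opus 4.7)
The strategy is to bootstrap the lower-order bounds of Propositions \ref{v estimate} and \ref{dt Ka estimate}, extract an improved two-derivative gain for $\ttze$ from the boundary identity behind Proposition \ref{dt Ka estimate}, and finally close the estimate via the Dirichlet system for $w_\Phi = \al^{-\ga-3}\dtz\tPhi - a_{\Phi_z}$ used in the proof of Proposition \ref{v estimate}. The starting point is to apply Proposition \ref{v estimate} and Proposition \ref{dt Ka estimate} at integer index $k$: chaining them and using interpolation against the $L^2$-bounds of Definition \ref{ref set} to absorb the $|\tv - a_v|_{H^{k+3/2}}$ term produces a preliminary bound
\[
|\al^{-\ga-2}\tv - a_v|_{H^{k+2}} + |\al^{-\ga-2}\ttze - a_{\zeta_t}|_{H^{k+3/2}} \le Q(\cT',\na\tPhi,\ttze,w,k+1/2)\bigl(1 + |\al^{-\ga}\wt D_t w - a_{w_t}|_{H^{k}} + |a_{\zeta_t,i}|_{L^\infty}\bigr).
\]

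The key step is to exploit the identity, already derived in the proof of Proposition \ref{dt Ka estimate}, that on $\G_t$
\[
\wt D_t w = -(\dtx x)^{-1}b_\zeta^{-1/2}\dtx\bigl[(\dtx x)^{-1}b_\zeta^{-1/2}\dtx\bigl(b_\zeta^{-1/2}\ttze\bigr)\bigr] + \wt F,
\]
where $\wt F$ collects terms containing at most one tangential derivative of $\tv$ together with derivatives of $\wt n_t$ and $\wt\tau_t$. The principal part is a weighted second-order tangential operator whose symbol is $\asymp \al^{-2}\dtx^2$, and the limit relations $a_{w_t,l} = -(\ga+1)(\ga+2)a_{\zeta_t,l}$, $a_{w_t,r} = -(\ga+1)(\ga+2)\beta_r^2 b_r^2(t) a_{\zeta_t,r}$ (already computed in the proof of Proposition \ref{dt Ka estimate}) ensure that $\wt D_t w - a_{w_t}$ behaves like $\al^{-2}\dtx^2(\ttze - \al^{\ga+2}a_{\zeta_t})$ near each infinity. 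Inverting this operator by tangential integration and tracking the asymptotics via Lemma \ref{minus norm dx}, together with Corollary \ref{zeta to w estimate} and Proposition \ref{dxT estimate} to remove the coefficients $(\dtx x)^{-1}$, $b_\zeta^{-1/2}$, yields
\[
|\al^{-\ga-2}\ttze - a_{\zeta_t}|_{H^{k+2}} \le Q(\cT',\na\tPhi,\ttze,w,k+1/2)\bigl(1 + |\al^{-\ga}\wt D_t w - a_{w_t}|_{H^{k}} + |\al^{-\ga-2}\tv - a_v|_{H^{k+1}} + |a_{\zeta_t,i}|_{L^\infty}\bigr),
\]
a genuine two-derivative gain. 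Because $\wt F$ is linear in first-order derivatives of $\tv$, only $|\tv-a_v|_{H^{k+1}}$ appears on the right, and this is handled by the preliminary bound of Step 1.

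With $\ttze$ now controlled in $H^{k+2}$, we return to the Dirichlet system for $w_\Phi$. Its boundary data, $-\al^{-\ga-3}(\dtx x)\ttze - a_{\Phi_z}|_{\tz=0}$ on $\G_t$ and $0$ on $\G_b$, now lies in $H^{k+2}(\G_t)$, so Proposition \ref{Dirichlet system estimate} applied at order $k+5/2$ gives $\|\al^{-\ga-3}\dtz\tPhi - a_{\Phi_z}\|_{H^{k+5/2}}$; the analogous bound for $\al^{-\ga-3}\dtx\tPhi - a_{\Phi_x}$ follows from $\Delta\tPhi = 0$ and Corollary \ref{estimate of x z}. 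The identity $\tv = (\na_X\wt X)\circ\cT\,\na\tPhi$, combined with Proposition \ref{dxT estimate} for the conformal-map factors, then produces the desired $H^{k+5/2}$-bound on $\al^{-\ga-2}\tv - a_v$.

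The main obstacle is the middle step: the inversion of the weighted second-order tangential operator must avoid reintroducing the $|\tv-a_v|_{H^{k+3/2}}$ norm that appears unavoidably in the $L^2$-pairing argument of Proposition \ref{dt Ka estimate}. This forces a term-by-term expansion of the principal $\dtx^2$ piece, grouping the lower-order coefficient derivatives into contributions that can be estimated by $|\tv-a_v|_{H^{k+1}}$ (thereby exploiting that $\wt F$ sees only first-order tangential derivatives of $\tv$), and matching the asymptotic constants $a_{w_t,i}$, $a_{\zeta_t,i}$ at both corners via Lemma \ref{minus norm dx}. Once this bookkeeping is done cleanly, the remaining elliptic estimate for $\tPhi$ is routine and the proof closes.
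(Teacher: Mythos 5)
There is a genuine gap in your middle step. You claim the remainder $\wt F$ in the $\wt D_t w$ identity "is linear in first-order derivatives of $\tv$," so that only $|\al^{-\ga-2}\tv - a_v|_{H^{k+1}}$ appears on the right, and you conclude a two-derivative gain $\al^{-\ga-2}\ttze - a_{\zeta_t}\in H^{k+2}$. But inspect the remainder
\[
F(\p v, \p^2 n_t, \p \tau_t)=-|\na_{\tau_t}n_t|^2(v\cdot n_t)+\na_{\tau_t}\na_{v^\top}n_t\cdot \tau_t-(\na_{\tau_t}v^\top)^\top\cdot \na n_t\cdot \tau_t :
\]
the middle term contains $v\cdot \p^2 n_t$. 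Since $n_t$ is a function of $\dx\zeta$, $\p^2 n_t$ carries $\dx^3\zeta$, whose pullback is essentially $\dtx w$ up to conformal factors. At the top regularity of the energy, $\al w\in H^{k+1/2}(\RR)$, so this piece of $\wt F$ lies only in $H^{k-1/2}$ weighted, not $H^{k}$. Inverting the weighted second-order tangential operator therefore yields at best $\al^{-\ga-2}\ttze - a_{\zeta_t}\in H^{k+3/2}$ — exactly what Proposition \ref{dt Ka estimate} already gives, with no improvement. Feeding $\ttze\in H^{k+3/2}$ into the Dirichlet system for $w_\Phi$ returns $\tv\in H^{k+2}$, which is just Proposition \ref{v estimate} again; you are short by half a derivative of the stated $H^{k+5/2}$.

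The paper circumvents precisely this barrier by not attempting to improve the $\ttze$ regularity through the kinematic relation at all (the introduction explicitly flags the $1/2$-derivative loss there as the core difficulty). Instead, following Shatah--Zeng, it estimates the Neumann boundary datum $\na_{n_t}v = (\na v)^T n_t =: u_v$ directly on $\G_t$, splitting $u_v = u_v^\top + u_v^\perp n_t$. The tangential piece solves a second-order elliptic equation on $\G_t$ whose source is $\na_{\tau_t}(D_t\kappa)$ — so $|\al^{-\ga}\wt D_t w - a_{w_t}|_{H^k}$ enters without loss — while the normal piece is recovered from the D-N operator $\tN_0$ via the divergence identity $\na\cdot u_v = r(\p v, \p\bar n_t,\p\tau_t)$. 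This produces $\al^{-\ga-1}\wt{u_v} - a_{u_v}\in H^{k+1}$, and because $u_v$ is \emph{Neumann} data for $\tv$ on $\G_t$, the mixed elliptic estimate (Proposition \ref{Weighted elliptic estimate on S}) gains $3/2$ derivatives over the boundary datum, not the $1/2$ derivative a Dirichlet problem would give. That extra derivative is exactly the one your route cannot supply; obtaining it requires the $u_v^\top/u_v^\perp$ decomposition, which your proposal skips entirely.
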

\begin{proof} The proof adopts the idea of  the proof for Proposition 4.3 \cite{SZ}, while  the key point is the estimate of the boundary condition $\na_{n_t} v|_{\G_t}$.  As mentioned in the introduction, the key difference compared to the smooth-boundary estimates lies in the estimate for  the higher-order terms in the bottom.

{\bf Step 1}. Decomposition of the boundary condition on $\G_t$ for $v$. 
In fact, remembering that the fluid is irrotational (see \eqref{incomp-irro}), we rewrite
\begin{equation}\label{uv def}
\na_{n_t}v=(\na v)^T n_t:= u_v\qquad\hbox{on}\quad \G_t.
\end{equation}
Decomposing $u_v$ into
\[
u_v=u^\top_v+u^\perp_v n_t:=(u_v\cdot \tau_t)\tau_t+(u_v\cdot n_t)n_t,
\]
we deal with the estimate of $u^\top_v$ and $u^\perp_v$ one by one.

\medskip

{\bf Step 2}. The tangential part $u^\top_v$. Direct computations show that
\[
(\Delta_{\G_t} u^\top_v)\cdot \tau_t=(\na_{\tau_t}\na_{\tau_t}u^\top_v)\cdot \tau_t=\na_{\tau_t}(\cD_{\tau_t}u^\top_v\cdot \tau_t)=\na_{\tau_t}(\cD\cdot u^\top_v),
\]
where $\cD$ is the covariant (tangential) derivative on $\G_t$. Therefore, we have
\[
-\Delta_{\G_t}u^\top_v=-\na_{\tau_t}(\cD\cdot u^\top_v)\tau_t\qquad\hbox{on}\quad \G_t.
\]

On the other hand, we also know that 
\[
\begin{split}
\cD\cdot u^\top_v&=\na_{\tau_t}u^\top_v\cdot \tau_t=\na_{\tau_t}\big[\big(n_t\cdot(\na v)\tau_t\big)\tau_t\big]\cdot \tau_t\\
&=\na_{\tau_t}\big(n_t\cdot (\na v)\tau_t\big)=\na_{\tau_t}(n_t\cdot \na_{\tau_t}v)
=\na_{\tau_t}\na_{\tau_t}v\cdot n_t+\na_{\tau_t}v\cdot \na_{\tau_t}n_t.
\end{split}
\]
Since 
\[
\Delta_{\G_t}=\na_{\tau_t}\na_{\tau_t}-\na_{\cD_{\tau_t}\tau_t}=\na_{\tau_t}\na_{\tau_t}
\]
in our 2D settings, substituting \eqref{Dtk eqn} into the above expression of $\cD\cdot u^\top_v$ leads to
\[
\cD\cdot u^\top_v=-D_t\kappa-\na_{\tau_t}n_t\cdot \na_{\tau_t} v\qquad\hbox{on}\quad \G_t.
\]
Consequently, we derive the following equation
\[
-\Delta_{\G_t}u^\top_v=\na_{\tau_t}(D_t\kappa)\tau_t+\na_{\tau_t}\big(\na_{\tau_t}n_t\cdot \na_{\tau_t} v\big)\tau_t 
\qquad\hbox{on}\quad \G_t,
\]
which is an elliptic equation on $\G_t$ and can be handled similar as before. 

Performing $\cT$ on both sides of the above equation, we arrive at 
\[
-\big((\dtx x)^{-1}b^{-\f12}_\zeta\dtx\big)^2\wt{u^\top_v}=(\dtx x)^{-1}b^{-\f12}_\zeta\dtx(\wt D_tw)\wt \tau_t+
(\dtx x)^{-1}b^{-\f12}_\zeta\dtx\big(\wt{\na_{\tau_t}n_t}\cdot \wt{\na_{\tau_t} v}\big)\wt \tau_t
\]
on the upper boundary of $\cSt$, where $b_\zeta$ is defined in \eqref{b zeta}.

As a result, similar analysis as in the proof of Proposition \ref{dt Ka estimate}, we have immediately the following elliptic estimate
\begin{equation}\label{uv tangential estimate}
\big|\al^{-\ga-1}\wt{u^\top_v}-a_{u_v,\tau}\big|_{H^{k+1}}\le Q(\cT', w, k)\big(\big|\al^{-\ga}\wt D_tw-a_{w_t}\big|_{H^k}+\big|\al^{-\ga-2}\tv-a_v\big|_{H^{k+1}}+|a_{\zeta_t,i}|_{L^\infty}\big),
\end{equation}
where the weighted limit $a_{u_v,\tau}$ is defined for $\wt{u^\top_v}$ in a similar way as before and  can be handled (depending on $a_{\zeta_t, i}$). Here we omit the analysis for weighted limits since they are decided by the corresponding equations and can always be handled algebraically as before.

\medskip

{\bf Step 3}. The normal part $u^\perp_v$.  Since 
\[
u^\perp_v=u_v\cdot n_t=n_t\cdot \na_{n_t}v,
\]
we extend $n_t$ as $\bar n_t$ in $\Om$ (to be fixed later) and denote by $\bar u^\perp_v$ the extension of $u^\perp_v$ with $\bar n_t$.
 
Direct computations show that
\[
\begin{split}
\na\cdot u_v&=\na_{\tau_t}u_v\cdot \tau_t+\na_{n_t}u_v\cdot n_t\\
&=(\na_{\tau_t}\na_{\tau_t}v+\na_{n_t}\na_{\bar n_t} v)\cdot n_t+\na_{\tau_t}v\cdot \na_{\tau_t}n_t+\na_{ n_t}v\cdot \na_{n_t}\bar n_t-n_t\cdot \na v(\na_{\tau_t}\tau_t+\na_{n_t}\bar n_t)\\
&=(\Delta v)\cdot n_t+r(\p v, \p n_t, \p \tau_t)=r(\p v, \p \bar n_t, \p \tau_t),
\end{split}
\]
where the equation $\Delta v=0$ is applied and 
\[
r(\p v, \p \bar n_t, \p \tau_t)=\na_{\tau_t}v\cdot \na_{\tau_t}n_t+\na_{ n_t}v\cdot \na_{n_t}\bar n_t-2n_t\cdot \na v(\na_{\tau_t}\tau_t+\na_{n_t}\bar n_t)
\]
is the lower-order part.

Moreover, we also have on $\G_t$ that
\[
\begin{split}
\na\cdot u_v&=\na_{\tau_t}u_v\cdot \tau_t+\na_{n_t}u_v\cdot n_t\\
&=\na_{\tau_t}u^\top_v\cdot \tau_t+\na_{\tau_t}(u^\perp n_t)\cdot \tau_t+\na_{n_t}u_v\cdot n_t\\
&=\cD\cdot u^\top_v+\kappa u^\perp_v+\na_{n_t}u_v\cdot n_t.
\end{split}
\]
Therefore, summing up these expressions above leads to
\begin{equation}\label{N uv eqn}
\na_{n_t}u_v\cdot n_t=-\cD\cdot u^\top_v-\kappa u^\perp_v+r(\p v, \p n_t, \p \tau_t).
\end{equation}

Next, we plan to derive $\cN u^\perp_v$ from the left side of \eqref{N uv eqn} as in \cite{SZ}, which will be adapted to our weighted version in $\cSt$. 

 In fact, we start with rewriting the left side as 
\[
\na_{n_t}u_v\cdot n_t=\na_{n_t}(\bar n_t\cdot \na_{\bar n_t}v)-(\na v)^Tn_t\cdot \na_{n_t}\bar n_t.
\]
Substituting it into \eqref{N uv eqn}, we obtain
\[
\na_{n_t}\bar u^\perp_v=\na_{n_t}(\bar n_t\cdot \na_{\bar n_t}v)=-\cD\cdot u^\top_v+r_1(\p v, \p \bar n_t, \p \tau_t),
\]
where 
\[
r_1(\p v, \p \bar n_t, \p \tau_t)=(\na v)^Tn_t\cdot \na_{n_t}\bar n_t-\kappa u^\perp_v+r(\p v, \p \bar n_t, \p \tau_t).
\]

Performing $\cT$ on both sides of the above equation and noticing that 
\[
\wt{\na_{n_t}F}=b^{-\f12}_\zeta(\dtx x)^{-1}\dtz \wt F\qquad\hbox{on}\quad \G_t,
\]
we have
\begin{equation}\label{uv n eqn middle}
b^{-\f12}_\zeta(\dtx x)^{-1}\dtz\wt{\bar u^\perp_v}=-\wt{\cD\cdot u^\top_v}+r_1(\wt{\p v}, \wt{\p \bar n_t}, \wt{\p \tau_t})\qquad\hbox{on}\quad \G_t.
\end{equation}

Denoting by $a_{\bar u_v,n}$ the weighted limit of $\wt{\bar u^\perp_v}$ and $a_{u_v, n}$ the weighted limit of $\wt{u^\perp_v}$ on the boundary, the left side above is rewritten into 
\[
\begin{split}
b^{-\f12}_\zeta(\dtx x)^{-1}\dtz\wt{\bar u^\perp_v}
&=\al^{\ga+1}b^{-\f12}_\zeta(\dtx x)^{-1}\dtz\big(\al^{-\ga-1}\wt{\bar u^\perp_v}-a_{\bar u_v,n}\big)+\al^{\ga+1}b^{-\f12}_\zeta(\dtx x)^{-1}\dtz a_{\bar u_v,n}\\
&=\al^{\ga+1}b^{-\f12}_\zeta(\dtx x)^{-1}\big[\tN_0(\al^{-\ga-1}\wt{\bar u^\perp_v}-a_{\bar u_v,n})+\dtz w_v\big]
+\al^{\ga+1}b^{-\f12}_\zeta(\dtx x)^{-1}\dtz a_{\bar u_v,n}.
\end{split}
\]
Here 
\[
w_v =(\al^{-\ga-1}\wt{\bar u^\perp_v}-a_{\bar u_v,n})-\cH(\al^{-\ga-1}\wt{ u^\perp_v}-a_{u_v,n})
\]
satisfies
\[
\begin{cases}
\Delta w_v=\Delta(\al^{-\ga-1}\wt{\bar u^\perp_v})-\Delta a_{\bar u_v,n}=\Delta\big(\al^{-\ga-1}\wt {\bar n_t}\cdot \wt{\na_{\bar n_t}v}\big)-\Delta a_{\bar u_v,n}\qquad\hbox{in}\quad \cSt,\\
w_v|_{\tz=0}=0,\\
\dtz w_v|_{\tz=-\om}=\dtz\big(\al^{-\ga-1}\wt {\bar n_t}\cdot \wt{\na_{\bar n_t}v}\big)|_{\tz=-\om}-\dtz a_{\bar u_v,n}|_{\tz=-\om}:=g(\p^2\tv, \p \wt{\bar n_t}).
\end{cases}
\]

Substituting this expression into \eqref{uv n eqn middle}, we conclude that
\[
\tN_0(\al^{-\ga-1}\wt{ u^\perp_v}-a_{ u_v,n})=-\dtz w_v-\dtz a_{\bar u_v,n} -\al^{-\ga-1}b^{\f12}_\zeta(\dtx x)\big[\wt{\cD\cdot u^\top_v}-r_1(\wt{\p v}, \wt{\p \bar n_t}, \wt{\p \tau_t})\big].
\]
As a result, we perform the following estimate
\[
\big|\tN_0(\al^{-\ga-1}\wt{ u^\perp_v}-a_{ u_v,n})\big|_{H^k}\le |\dtz w_v|_{H^k}+\big|\dtz a_{\bar u_v,n} +\al^{-\ga-1}b^{\f12}_\zeta(\dtx x)\big[\wt{\cD\cdot u^\top_v}-r_1(\wt{\p v}, \wt{\p \bar n_t}, \wt{\p \tau_t})\big]\big|_{H^k}.
\]
Since the corresponding weighted limits for $\wt{u^\top_v}$ and $\tv$ can be retrieved from $\dtz a_{\bar u_v,n}$ on the right side above and  the analysis is omitted again, we derive by Lemma \ref{trace}, Proposition \ref{CM estimate} and \eqref{uv tangential estimate} that
\begin{equation}\label{N0 uv estimate mid}
\begin{split}
&\big|\tN_0(\al^{-\ga-1}\wt{ u^\perp_v}-a_{u_v,n})\big|_{H^k}\\
&\le C\| w_v\|_{H^{k+3/2}}+Q(\cT', w, k)\big(|\al^{-\ga-1}\wt{u^\top_v}-a_{u_v,\tau}\big|_{H^{k+1}}+\|\al^{-\ga-2}\tv-a_v\|_{H^{k+3/2}}\big)\\
&\le C\| w_v\|_{H^{k+3/2}}+Q(\cT', w, k)\big(|\al^{-\ga}\wt D_t w-a_{w_t}|+\|\al^{-\ga-2}\tv-a_v\|_{H^{k+3/2}}+|a_{\zeta_t,i}|_{L^\infty}\big).
\end{split}
\end{equation}

Moreover, similar analysis as in the proof of Proposition \ref{v estimate},  one has  the following elliptic estimate
\[
\begin{split}
\| w_v\|_{H^{k+3/2}}&\le C\big(\|\Delta w_v\|_{H^{k-1/2}}+|\dtz w_v|_{H^k(\G_b)}\big)\\
&\le Q(\cT', w, k+1/2)\big(\|\al^{-\ga-2}\tv-a_v\|_{H^{k+3/2}}+|a_{\zeta_t,i}|_{L^\infty}
+\|\al^{-\ga-2}\tv-a_v\|_{H^{k+5/2}(\cS_b)}\big).
\end{split}
\]
Applying the high-order derivative estimate of $\tv$ in Corollary \ref{v higher order} and lower-order derivative estimate in Proposition \ref{v estimate}, we derive
\[
\| w_v\|_{H^{k+3/2}}\le Q(\cT', w, k+1/2)\big(|a_{\zeta_t,i}|_{L^\infty}+|\al^{-\ga-2}\ttze-a_{\zeta_t}|_{H^{k+1}}\big).
\]
Substituting this estimate back into \eqref{N0 uv estimate mid}, we conclude that 
\[
\big|\tN_0(\al^{-\ga-1}\wt{\bar u^\perp_v}-a_{\bar u_v,n})\big|_{H^k}\le Q(\cT', w, k+1/2)\big(|\al^{-\ga}\wt D_t w-a_{w_t}|+|a_{\zeta_t,i}|_{L^\infty}+|\al^{-\ga-2}\ttze-a_{\zeta_t}|_{H^{k+1}}\big),
\]
which implies immediately the desired  estimate for the normal component $u^\perp_v$:
\begin{equation}\label{uv normal estimate}
\begin{split}
\big|\al^{-\ga-1}\wt{u^\perp_v}-a_{u_v,n}\big|_{H^{k+1}}
&\le C\big(\big|\tN_0(\al^{-\ga-1}\wt{ u^\perp_v}-a_{ u_v,n})\big|_{H^k}+\big|\al^{-\ga-1}\wt{u^\perp_v}-a_{u_v,n}\big|_{L^2}\big)\\
&\le Q(\cT', w, k+1/2)\big(|\al^{-\ga}\wt D_t w-a_{w_t}|_{H^k}+|a_{\zeta_t,i}|_{L^\infty}+|\al^{-\ga-2}\ttze-a_{\zeta_t}|_{H^{k+1}}\big).
\end{split}
\end{equation}

\medskip

{\bf Step 4}. Elliptic system for $\tv$ and the estimate.  In the end, we are ready to show the desired higher-order derivative estimate fo $\tv$. In fact, the system for $\al^{-\ga-2}\tv-a_v$ reads
\[
\begin{cases}
\Delta (\al^{-\ga-2}\tv-a_v)=\Delta (\al^{-\ga-2}\tv)-\Delta a_v\qquad\hbox{in}\quad \cSt,\\
\dtz(\al^{-\ga-2}\tv-a_v)|_{\tz=0}=\al^{-\ga-2}b^{\f12}_\zeta (\dtx x) \wt{u_v}-\dtz a_v|_{\tz=0},\\
\al^{-\ga-2}\tv-a_v|_{\tz=-\om} \quad\hbox{is done in Corollary \ref{v higher order}.}
\end{cases}
\]
Meanwhile on the upper surface, we have the following estimate by summing up \eqref{uv tangential estimate} and \eqref{uv normal estimate}:
\[
\big|\al^{-\ga-1}\wt{u_v}-a_{u_v}\big|_{H^{k+1}}\le Q(\cT', w, k+1/2)\big(|\al^{-\ga}\wt D_t w-a_{w_t}|_{H^k}+|a_{\zeta_t,i}|_{L^\infty}+|\al^{-\ga-2}\ttze-a_{\zeta_t}|_{H^{k+1}}\big),
\]
where $a_{u_v}$ is the corresponding weighted limit of $\wt{u_v}$.

As a result, similarly as in the proof Proposition \ref{v estimate} and thanks to Corollary \ref{v higher order},  we conclude estimate for $\tv$:
\[
\begin{split}
&\|\al^{-\ga-2}\tv-a_v\|_{H^{k+5/2}}\\
&\le C\big(\|\Delta (\al^{-\ga-2}\tv)-\Delta a_v\|_{H^{k+1/2}}+\big|\al^{-\ga-2}b^{\f12}_\zeta (\dtx x) \wt{u_v}-\dtz a_v|_{\tz=0}\big|_{H^{k+1}}+|\al^{-\ga-2}\tv-a_v|_{H^{k+2}(\G_b)}\big)\\
& \le Q(\cT', w, k)\big(\|\Delta (\al^{-\ga-2}\tv)-\Delta a_v\|_{H^{k+1/2}}+\big|\al^{-\ga-1}\wt{u_v}-a_{u_v}\big|_{H^{k+1}}+\|\al^{-\ga-2}\tv-a_v\|_{H^{k+5/2}(\cS_b)}\big)\\
&\le Q(\cT', w, k+1/2)\big(|\al^{-\ga}\wt D_t w-a_{w_t}|_{H^k}+|a_{\zeta_t,i}|_{L^\infty}+|\al^{-\ga-2}\ttze-a_{\zeta_t}|_{H^{k+1}}\big).
\end{split}
\]
Moreover, applying Proposition \ref{dt Ka estimate} to the right-side term $|\al^{-\ga-2}\ttze-a_{\zeta_t}|_{H^{k+1}}$ and using an interpolation on $\tv$,   we obtain 
\[
\begin{split}
\|\al^{-\ga-2}\tv-a_v\|_{H^{k+5/2}}\le& Q(\cT', w, k+1/2)\big(|\al^{-\ga}\wt D_t w-a_{w_t}|_{H^k}+|a_{\zeta_t,i}|_{L^\infty}+\|\al^{-\ga-2}\tv-a_v\|_{H^{k+3/2}}\\
&+|\al^{-\ga-2}\ttze-a_{\zeta_t}|_{L^2}\big).
\end{split}\]
Therefore, the proof is finished.

\end{proof}

\bigskip

In the end,  we deal with $\tP$ and $\ma$. {\it Recalling Assumption \ref{assump on a}  and system \eqref{tPvv eqn} for $\tP$ (especially the boundary conditions), we know immediately that $\al^{-2\ga-4}\tP$ should not vanish  when $\tx\rightarrow \pm\infty$.} Therefore, setting
\begin{equation}\label{a p def}
a_p=\chi_l a_{p,l}+\chi_ra_{p, r}
\end{equation}
with 
\[
a_{p, l}(t, \tz)=\lim_{\tx\rightarrow -\infty}\al^{-2\ga-4}\tP,\quad a_{p, r}(t, \tz)=\lim_{\tx\rightarrow +\infty}\al^{-2\ga-4}\tP,
\]
we obtain from \eqref{tPvv eqn} the following system for $a_{p, l}$:
\begin{equation}\label{ap eqn}
\begin{cases}
\dtz^2 a_{p, l}=-(2\ga+4)^2a_{p, l}- a_{Dv^2},\\
a_{p, l}|_{\tz=0}=0,\quad \dtz a_{p, l}|_{\tz=-\om}=0
\end{cases}
\end{equation}
 where  
 \[
 a_{Dv^2}=\lim_{\tx\rightarrow -\infty}\al^{-2\ga-4}|\cT'|^2tr\big((\na_X \tX)\circ \cT\na \tv\big)^2
 \]
is  the combination of weighted limits for $\al^{-2\ga-4}|\cT'|^2tr\big((\na_X \tX)\circ \cT\na \tv\big)^2$ defined in a similar way as before.  
The computations in $ a_{Dv^2}$ are similar as in \eqref{a v expression} and is hence omitted here. Consequently, this limit depends on $T_{1,c}$, $a_{\Phi_z}$. Moreover, we know
\[
\dtz a_{p, l}|_{\tz=0} <0
\]
thanks to Assumption \ref{assump on a}. The analysis for $a_{p, r}$ follows   similarly.

\begin{proposition}\label{tPvv estimate}
Let real $\ga>-1$, integer $k\ge 0$ and $\tP$ solve \eqref{tPvv eqn}. Then there holds:
\[
\begin{split}
\|\al^{-2\ga-4}\tP-a_p\|_{H^{k+7/2}}
\le  Q(\cT', \na\tPhi, \ttze, \tP, w, k+1/2)\big(1+|\al^{-\ga}\wt D_t w-a_{w_t}|^2_{H^k}+|a_{\zeta_t, i}|^2_{L^\infty}\big)
\end{split}
\]
where  
\[
|a_{p, i}|_{W^{k+2, \infty}_\tz}\le Q_b |a_{\zeta_t, i}(t)|^2.
\]
\end{proposition}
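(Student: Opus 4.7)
The plan is to set $w_P := \al^{-2\ga-4}\tP - a_p$ and derive a mixed boundary value problem whose data lie in \emph{unweighted} Sobolev spaces, after which Proposition \ref{Weighted elliptic estimate on S} applied with weight $0$ (trivially permissible) delivers the estimate. A direct computation gives
\[
\Delta w_P = \al^{-2\ga-4}\Delta \tP + [\Delta, \al^{-2\ga-4}]\tP - \Delta a_p \quad\hbox{in}\quad \cSt,
\]
with $w_P|_{\G_t} = -a_p|_{\tz=0} = 0$ thanks to the first boundary condition in \eqref{ap eqn}, and $\dtz w_P|_{\G_b} = \al^{-2\ga-4}|\cT'|\bigl((\na_X\tX)^T\circ\cT\bigr)\tv\cdot \na\tn_b\cdot\tv - \dtz a_p|_{\G_b}$, where $\dtz a_p|_{\G_b} = 0$ by the second condition in \eqref{ap eqn}, and where $\na \tn_b \equiv 0$ near the corners since $\G_b$ is a line segment there, so this bottom datum is supported in a bounded region where $\al$ is bounded below.

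The heart of the matter is to match the three contributions to $\Delta w_P$. Near $\tx \to -\infty$ where $\al = e^\tx$, one has $\dtx \al^{-2\ga-4} = -(2\ga+4)\al^{-2\ga-4}$, so the commutator can be rewritten as $[\Delta,\al^{-2\ga-4}]\tP = -2(2\ga+4)\dtx(\al^{-2\ga-4}\tP) - (2\ga+4)^2 \al^{-2\ga-4}\tP$, while $\Delta a_p$ contributes $\chi_l \dtz^2 a_{p,l} + (\hbox{cut-off commutators}) = -(2\ga+4)^2 \chi_l a_{p,l} - \chi_l a_{Dv^2,l} + (\hbox{bounded-support terms})$ by the ODE \eqref{ap eqn}. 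The $-(2\ga+4)^2$ factors combine with $-\al^{-2\ga-4}\tP$ and $\chi_l a_{p,l}$ precisely into a linear operator applied to $w_P$ itself, and the definition of $a_{Dv^2}$ as the weighted limit of $\al^{-2\ga-4}|\cT'|^2 tr\bigl((\na_X\tX)\circ\cT\na\tv\bigr)^2$ is engineered so that it cancels the leading part of $\al^{-2\ga-4}\Delta\tP = -\al^{-2\ga-4}|\cT'|^2 tr\bigl((\na_X\tX)\circ\cT\na\tv\bigr)^2$. The analogous cancellation takes place at $+\infty$. What survives is a source quadratic in the fluctuations $\al^{-\ga-2}\tv - a_v$, $\al^{-1}\cT' - T_{1,c}$, plus cut-off commutators supported in a bounded region.

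Once this cancellation is organized, Proposition \ref{Weighted elliptic estimate on S} (at weight $0$, with $l = k+4$ and then an interpolation down to the half-integer order $k+7/2$) yields
\[
\|w_P\|_{H^{k+7/2}} \le C\bigl(\|\Delta w_P\|_{H^{k+3/2}} + |\dtz w_P|_{H^{k+2}(\G_b)}\bigr).
\]
The interior source is estimated by the Sobolev algebra using the higher-order estimate for $\tv$ from Proposition \ref{v estimate higher order} (the quadratic nature of $tr((\na_X\tX)\circ\cT\na\tv)^2$ is why $|\al^{-\ga}\wt D_t w - a_{w_t}|^2_{H^k}$ appears in the bound), combined with Proposition \ref{dxT estimate} and Proposition \ref{dt Ka estimate}. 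The bottom datum is controlled by Corollary \ref{v higher order} together with the compact support coming from $\na\tn_b$. Finally, solving the linear ODE \eqref{ap eqn} for $a_{p,l}$ explicitly (and likewise for $a_{p,r}$) expresses $a_{p,i}$ as a trigonometric combination depending linearly on $a_{Dv^2,i}$, which by \eqref{a v expression} is quadratic in $a_{\zeta_t,i}$ with coefficients encoding the bottom geometry; this yields the stated $W^{k+2,\infty}_{\tz}$-bound by $Q_b |a_{\zeta_t,i}|^2$.

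\textbf{Main obstacle.} The delicate step is to verify that the three singular contributions in $\Delta w_P$ cancel cleanly enough to leave a source of quadratic type in the fluctuations. This is a nontrivial algebraic matching between the Laplacian of $\al^{-2\ga-4}\tP$, the first- and second-order derivatives of $\al^{-2\ga-4}$ against $\tP$, and the $\tz$-ODE satisfied by the weighted limits; it works precisely because the weight $\al^{-2\ga-4}$ and the eigenvalue $(2\ga+4)^2$ in \eqref{ap eqn} are tuned to one another through the underlying harmonic structure. A secondary subtlety is the loss of regularity inherent in the quadratic coupling through $\tv$: tracking the orders carefully forces the appearance of $H^{k+5/2}$ for $\tv$ from Proposition \ref{v estimate higher order}, which in turn pulls in $\wt D_t w$ and hence the $|\al^{-\ga}\wt D_t w - a_{w_t}|^2_{H^k}$ term on the right.
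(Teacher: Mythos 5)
Your proposal is correct and follows essentially the same route as the paper: define $w_P=\al^{-2\ga-4}\tP-a_p$, use the $\tz$-ODE for $a_{p,i}$ to cancel the leading commutator terms so that the remaining source is the fluctuation of $\al^{-2\ga-4}|\cT'|^2\,tr\bigl((\na_X\tX)\circ\cT\,\na\tv\bigr)^2$ around $a_{Dv^2}$, apply the mixed-boundary elliptic estimate in $\cSt$, control the quadratic $\tv$-source by Proposition \ref{v estimate higher order} (which produces the $|\al^{-\ga}\wt D_t w-a_{w_t}|^2_{H^k}$), handle the bottom datum via the local higher-order estimates (Corollaries \ref{v higher order} and \ref{higher order local}), and read off the $W^{k+2,\infty}_\tz$ bound for $a_{p,i}$ from the explicit solution of the ODE together with \eqref{estimate a phi} and \eqref{a v expression}. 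The only cosmetic difference is that you invoke the flat (unweighted) elliptic estimate followed by an interpolation to the half-integer order, while the paper states the estimate directly at order $k+7/2$ and keeps a lower-order $\|\al^{-2\ga-4}\tP-a_p\|_{H^{k+5/2}}$ term to be absorbed; both devices are standard and equivalent.
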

\begin{proof} The proof is similar as before. To begin with, we define
\[
w_p=\al^{-2\ga-4}\tP-a_p,
\]
which satisfies 
\[
\begin{cases}
\Delta w_p=[\Delta, \al^{-2\ga-4}]\tP+\al^{-2\ga-4}\Delta \tP-\Delta a_p \qquad\hbox{in}\quad \cSt,\\
w_p|_{\G_t}=0,\quad \dtz w_p|_{\G_b}=\al^{-2\ga-4}\dtz\tP|_{\G_b}.
\end{cases}
\]
Here the right side above can be computed in a similar way as in the system of $w_\Phi$ from the proof of Proposition \ref{v estimate}, where we apply \eqref{tPvv eqn}, \eqref{ap eqn} and only focus on ``the left part'':
\[
\begin{split}
\Delta w_p=&\big((\al^{-2\ga-4})''\tP-(2\ga+4)^2\chi_l a_{p, l}\big)-2\big(-(\al^{-2\ga-4})'\dtx \tP-(2\ga+4)^2\chi_{l,1}a_{p, l}\big)\\
&-\big[\al^{-2\ga-4}|\cT'|^2tr\big((\na_X \tX)\circ \cT\na \tv\big)^2+\chi_l\big(\dtz^2 a_{p, l}+(2\ga+4)^2a_{p, l}\big)\big]\\
&+2(2\ga+4)^2(\chi_l-\chi_{l,1})a_{p,l}-\chi_l''a_{p, l}
\end{split}
\]

Applying standard elliptic estimate, Lemma \ref{minus norm dx} and \eqref{ap eqn} leads to
\[
\begin{split}
&\|w_p\|_{H^{k+7/2}}=\|\al^{-2\ga-4}\tP- a_p\|_{H^{k+7/2}}\\
&\le C\big(\big\|\al^{-2\ga-4}\tP- a_p\|_{H^{k+5/2}}+|a_{p,i}|_{W^{k+2,\infty}_\tz}+\big\|\al^{-2\ga-4}|\cT'|^2tr\big((\na_X \tX)\circ \cT\na \tv\big)^2-a_{Dv^2}\big\|_{H^{k+3/2}}\\
&\quad +\big|\al^{-2\ga-4}|\cT'|\big((\na_X \tX)^t\circ \cT\big)\tv\cdot \na\tn_b\cdot \tv\big|_{H^{k+2}(\G_b)}
\big),
\end{split}
\]
where we recall that $ a_{Dv^2}$ is defined in \eqref{ap eqn} and notice that the highest-order derivatives for both $\cT$ and $\tv$ appear in the bottom part.

Applying \eqref{na X matrix},  Proposition \ref{CM estimate}, Corollary \ref{estimate of x z} and Corollary \ref{higher order local}, one finds
\[
\begin{split}
\|\al^{-2\ga-4}\tP-a_p\|_{H^{k+7/2}}\le& C\big(\big\|\al^{-2\ga-4}\tP-a_p\big\|_{H^{k+5/2}}+|a_{p,i}|_{W^{k+2,\infty}_z}\big)+Q(\cT',  w, k)\big\|\al^{-\ga-2}\tv-a_v\big\|^2_{H^{k+5/2}}
\end{split}
\]
As a result, the desired estimate for $\tP$  follows by applying  Proposition \ref{v estimate higher order}.  

In the end, applying standard one dimensional elliptic estimate to  system  \eqref{ap eqn} , we have the  estimate of $a_{p, l}$ thanks to the estimate \eqref{estimate a phi} for $a_{\Phi, z}$. Therefore, the desired estimate for $a_{p, i}$ follows.

\end{proof}

Based on the weighted estimate for $\tP$, we are able to prove the estimate for $\ta$, $\dt \ta$.
\begin{proposition} \label{a estimate}
Let real $\ga>-1$ and integer $k\ge 0$.One has
\[
\begin{split}
\big|\al^{-2\ga-3}\ta-\dtz a_p\big|_{H^{k+3/2}}\le Q(\cT', \na\tPhi, \ttze, \tP,  w, k+1/2)
\big(1+|\al^{-\ga}\wt D_t w-a_{w_t}|^2_{H^{k-1/2}}+|a_{\zeta_t, i}|^2_{L^\infty}\big).
\end{split}
\]

Moreover, the following estimate for $\dt\ta$ holds:
\[
\big|\al^{-2\ga-3}\dt\ta\big|_{L^\infty}\le Q(\cT', \na\tPhi, \ttze, \tP,  w, 3)Q\big(|\al^{-\ga}\wt D_t w-a_{w_t}|_{L^2}, |a_{\zeta_t, i}|_{L^\infty}\big).
\]
\end{proposition}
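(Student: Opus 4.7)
\emph{Estimate for $\ta$.} Using the defining identity $\ta=-|\cT'|^{-1}\dtz\tP|_{\tz=0}$ and the fact that $\al$ is independent of $\tz$, I rewrite
\[
\al^{-2\ga-3}\ta=-\bigl(\al|\cT'|^{-1}\bigr)\,\dtz\bigl(\al^{-2\ga-4}\tP\bigr)\bigr|_{\tz=0}.
\]
Let $\iota$ denote the combined weighted limit of $\al|\cT'|^{-1}$ at $\tx\to\mp\infty$ on $\tz=0$, computed from $a_{x,i},a_{z,i}$ through \eqref{ax az} and its analogue. Adding and subtracting $\iota\,\dtz a_p|_{\tz=0}$ produces the splitting
\[
\al^{-2\ga-3}\ta+\iota\,\dtz a_p|_{\tz=0}=-\bigl(\al|\cT'|^{-1}\bigr)\dtz\bigl(\al^{-2\ga-4}\tP-a_p\bigr)\bigr|_{\tz=0}-\bigl(\al|\cT'|^{-1}-\iota\bigr)\dtz a_p|_{\tz=0}.
\]
The first term is controlled in $H^{k+3/2}(\G_t)$ by the trace theorem applied to Proposition \ref{tPvv estimate}; the second is the product of a factor with vanishing weighted limits at $\pm\infty$ (handled via Proposition \ref{dxT estimate}, Corollary \ref{estimate of x z} and Lemma \ref{minus norm dx}) and a smooth-in-$\tz$ function whose $W^{k+2,\infty}_{\tz}$ bound is furnished by the ODE \eqref{ap eqn} together with the $a_{p,i}$ bound from Proposition \ref{tPvv estimate}. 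Products are closed by Lemma \ref{basic product}, which identifies the weighted limit of $\al^{-2\ga-3}\ta$ (up to the paper's sign convention for $\dtz a_p$) and delivers the announced estimate after inserting Propositions \ref{tPvv estimate} and \ref{v estimate higher order}.

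\emph{Estimate for $\dt\ta$.} Differentiating in $t$ gives
\[
\al^{-2\ga-3}\dt\ta=-\al^{-2\ga-3}\bigl(\dt|\cT'|^{-1}\bigr)\dtz\tP\bigr|_{\tz=0}-\al^{-2\ga-3}|\cT'|^{-1}\dt\dtz\tP\bigr|_{\tz=0},
\]
and since only an $L^\infty(\RR)$ bound is wanted I plan to reduce everything to a Sobolev trace of index strictly above $1/2$ and invoke Lemma \ref{embedding}. For the first term, $\dt|\cT'|^{-1}=-|\cT'|^{-3}\cT'\cdot\dt\cT'$ is controlled by Proposition \ref{dtT estimate} (which in turn involves $|\al^{-\ga-2}\ttze-a_{\zeta_t}|$, already majorised by $|\al^{-\ga}\wt D_tw-a_{w_t}|_{L^2}$ via Proposition \ref{dt Ka estimate}), and is paired with $\al^{-2\ga-4}\dtz\tP$ estimated through Proposition \ref{tPvv estimate}. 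For the second term, I differentiate the elliptic system \eqref{tPvv eqn} in $t$: since the free surface $\G_t=\{\tz=0\}$ is fixed in the reference strip, $P|_{\G_t}=0$ gives $\tP|_{\tz=0}=0$ and hence $\dt\tP|_{\tz=0}=0$, so the system for $\dt\tP$ has homogeneous Dirichlet data on $\G_t$ and a fully computable Neumann datum on the fixed bottom $\G_b$. Its bulk source involves $\na\tv\cdot\na\dt\tv$, and $\dt\tv$ is rewritten via the Euler identity $D_tv=-\na P$ composed with $\cT$, which trades the time derivative of $\tv$ for $\wt{\na P}$ and a convective correction controlled by Proposition \ref{v estimate}. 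Applying the weighted mixed-boundary estimate of Proposition \ref{Weighted elliptic estimate on S} at a properly chosen weight (within the eigenvalue-admissible window) and taking a trace produces a bound for $\al^{-2\ga-4}\dt\dtz\tP|_{\tz=0}$ in $H^{1/2+\eps}$, from which the $L^\infty$ estimate follows by Sobolev embedding.

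\emph{Main obstacle.} The hard step is closing the $\dt\tP$ estimate within the low-regularity polynomial $Q(\cdot,3)$ applied to $|\al^{-\ga}\wt D_tw-a_{w_t}|_{L^2}$. The quadratic bulk forcing after one time derivative is a product of $\na\tv$ with $\na\dt\tv$, and although the Euler equation converts $\dt\tv$ into $\wt{\na P}$ plus convective terms, the weighted limits of all these quantities at $\pm\infty$ must be tracked algebraically (via the identities for $a_{\Phi_z,i},a_{v,i},a_{p,i}$) to guarantee that nothing diverges, and the $1/2$-derivative loss inherent in the kinematic condition must be absorbed exactly as in Proposition \ref{dt Ka estimate}. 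Remark \ref{dt T corner}, which asserts $\dt T_{1,c}=0$, is essential here: it forces the weighted limits at $\pm\infty$ of several time derivatives to vanish, removing what would otherwise be a boundary obstruction to the elliptic estimate for $\dt\tP$.
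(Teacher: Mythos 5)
Your estimate for $\ta$ follows the paper's route up to minor repackaging: the paper writes $\ta=-|\cT'|^{-1}\dtz\tP|_{\tz=0}$, reduces $\big|\al^{-2\ga-3}\ta-\dtz a_p\big|_{H^{k+3/2}}$ by Lemma \ref{trace}, Proposition \ref{CM estimate} and Corollary \ref{estimate of x z} to $\big\|\al^{-2\ga-4}\tP-a_p\big\|_{H^{k+3}}+|a_{p,i}|_{W^{k+3,\infty}_\tz}$, and then invokes Proposition \ref{tPvv estimate}; your add-and-subtract of the weighted limit $\iota$ of $\al|\cT'|^{-1}$ and of $\dtz a_p$ is the same bookkeeping, and your remark on the sign of $\dtz a_p$ is warranted (the paper's statement has a sign slip: since $\ta=-|\cT'|^{-1}\dtz\tP$ and $\dtz a_{p,l}|_{\tz=0}<0$ by Assumption \ref{assump on a}, the weighted limit of $\al^{-2\ga-3}\ta$ is $-\dtz a_p|_{\tz=0}$, consistent with $\ta>0$).

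For $\dt\ta$ you take a genuinely different decomposition. The paper writes $\dt\ta=-(D_t\na_{n_t}P)\circ\cT|_{\G_t}-u_1\dtx\ta$ and then expands $D_t\na_{n_t}P=[D_t,\na_{n_t}]P+\na_{n_t}\Delta^{-1}(h_1,g_1)+\na_{n_t}\Delta^{-1}(h_2,g_2)$ with explicit $h_i, g_i$ that are already algebraic in $\na v,\na^2v,\na P,\na^2P$ (Euler having been used in $\Om$ before transforming), so the final bound is closed in terms of Propositions \ref{v estimate higher order} and \ref{tPvv estimate} with no new $\dt$-estimate needed; you instead differentiate directly in $t$ and differentiate the strip system \eqref{tPvv eqn} to get a system for $\dt\tP$, trading $\dt\tv$ for $\wt{\na P}$ via Euler afterwards. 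Both orders of operations are equivalent; yours exploits the fixed-strip boundary $\dt\tP|_{\tz=0}=0$ cleanly, while the paper's postpones the transformation and never introduces $\dt\tP$ as a stand-alone object.

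There is, however, a concrete gap in the elliptic step. You propose to apply the weighted mixed-boundary estimate of Proposition \ref{Weighted elliptic estimate on S} ``at a properly chosen weight'' to the system for $\dt\tP$; but the natural weight is $\al^{-2\ga-4}$, and the hypothesis of Proposition \ref{Weighted elliptic estimate on S} requires $|2\ga+4|<\min\{\pi/(2\om),\pi/(2\om_r)\}$, which is \emph{not} implied by the paper's range $\ga>-1$, $\ga+1\le\min\{2\pi/\om,2\pi/\om_r\}$ (for instance $\ga=0$, $\om$ near $\pi/2$ already fails). The paper avoids this in the proof of Proposition \ref{tPvv estimate} by working not with the raw weighted function but with $w_p=\al^{-2\ga-4}\tP-a_p$, which solves an inhomogeneous problem with sources that cancel at $\pm\infty$ thanks to \eqref{ap eqn}, and then invoking \emph{unweighted} elliptic estimates. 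For $\dt\tP$ the same device applies: the fact $\dt a_p=0$ from \eqref{time rate}, which you correctly flag as essential, is precisely what makes the weighted limit of $\al^{-2\ga-4}\dt\tP$ vanish at $\pm\infty$, so one must verify (as in the proof of Proposition \ref{tPvv estimate}) that the transformed sources for $\dt\tP$ decay suitably after subtracting their limits, and then close with the unweighted theory of Lemma \ref{H1 estimate on S} and standard regularity — not with Proposition \ref{Weighted elliptic estimate on S} directly. Spelling this out is what is missing; once done, your route is sound.
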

\begin{proof}
As the first step,   direct computations show that
\[
\ta=-\big(\na_{n_t}P|_{\G_t}\big)\circ\cT=-\wt n_t\cdot (\na_X\wt X)\circ \cT\na \tP\big|_{\tz=0}=-|\cT'|^{-1}\dtz\tP|_{\tz=0},
\]
so we know immediately from Lemma \ref{trace}, Proposition \ref{CM estimate},  Corollary \ref{estimate of x z} and \eqref{na X matrix Gt} that
\[
\begin{split}
\big|\al^{-2\ga-3}\ta-\dtz a_p\big|_{H^{k+3/2}}
&\le Q(\cT', w, k+1/2)\big(\big|\al^{-2\ga-4}\dtz\tP-\dtz a_p\big|_{H^{k+3/2}}+|a_{p,i}|_{W^{k+3, \infty}_\tz}\big)\\
&\le Q(\cT', w, k+1/2)\big(\big\|\al^{-2\ga-4}\tP-a_p\big\|_{H^{k+3}}+|a_{\zeta_t, i}|_{L^\infty}\big).
\end{split}
\]
Therefore, applying  Proposition \ref{tPvv estimate} with the order $k+3$ leads to the desired estimate for $\ta$.\medskip

Second, we  have by \eqref{Dt transform} that
\[
\dt\ta=-(D_t\na_{n_t}P)\circ \cT\big|_{\G_t}-u_1\dtx \ta,\quad
 u_1=(\dtx x)^{-1}(\wt {\underline v}_1-\dt x)
\]
where direct computations lead to
\[
\begin{split}
D_t\na_{n_t}P&=
[D_t,\,\na_{n_t}]P+\na_{n_t}D_t P\\
&=\big(D_tn_t\cdot \na P-n_t\cdot \na v\cdot \na P\big)+\na_{n_t}\Delta^{-1}(h_1, g_1)+\na_{n_t}\Delta^{-1}(h_2, g_2)
\end{split}
\]
where
\[
\begin{split}
h_1&=-tr\big[\big(-\na^2 P-(\na v)^T\na v\big)\na v+\na v\big(-\na^2 P-(\na v)^T\na v\big)\big],\\
g_1&= -\na_vn_b\cdot(\na P+{\bf g})+\na_{(\na P+{\bf g})}n_b\cdot v-v\cdot \na v\cdot \na n_b\cdot v-\na_v\big((\na v)^Tn_b\big)^\top\cdot v,\\
h_2&=2\na v\cdot\na^2 P+\Delta v\cdot \na P,\\
g_2&=(\na_{n_b}v-\na_v n_b)\cdot \na P.
\end{split}
\]

As a result, adding corresponding limit terms and proceeding as before,  we derive by Lemma \ref{embedding}, Proposition \ref{CM estimate},  Corollary \ref{estimate of x z}, Proposition \ref{dtT estimate}  that
\[
\big|\al^{-2\ga-3}\dt\ta\big|_{L^\infty}\le Q(\cT', w, 1)Q\big(\big\|\al^{-\ga-2}\tv-a_v\big\|_{H^{5/2}}, \big\|\al^{-2\ga-4}\tP-a_p\big\|_{H^{5/2}}\big).
\]
 As a result, the desired estimate follows from  Proposition \ref{v estimate higher order}  and  Proposition \ref{tPvv estimate}.

\end{proof}

\subsection{Modification of the equation} Based on all the discussions and estimates above, we find that $\al^{-\ga}\wt D_t w$ doesn't vanish when $\tx\rightarrow \pm\infty$  in general. As a direct consequence, we need to modify euqation \eqref{w eqn} of $w$ immediately adding the ``limit part" of $\wt D_t w$.

In fact, recalling $a_{w_t}$ from Proposition \ref{dt Ka estimate}, we rewrite \eqref{w eqn} into 
\begin{equation}\label{new w eqn}
\wt D_t\big(\wt D_t w-\al^\ga a_{w_t}\big)+\wt\ma |\cT'|^{-1}\widetilde{\cN}_0w=\wt R,
\end{equation}
where 
\[
\wt R=\widetilde{R}_1+\wt R_2-\wt D_t\big(\al^\ga a_{w_t}\big),
\]
and direct computations show that
\[
\wt D_t\big(\al^\ga a_{w_t}\big)=-(\ga+1)(\ga+2)\al^\ga\big(\chi_la'_{\zeta_t,l}+\chi_r\beta_r^2(b^2_ra_{\zeta_t, r})'\big)+(\ga+1)(\ga+2)u_1\big((\al^\ga\chi_l)'a_{\zeta_t, l}+(\al^\ga\chi_r)'\beta_r^2b_r^2a_{\zeta, r}\big)
\]
with $u_1$ defined in \eqref{Dt transform}.

Naturally, we also need to investigate $a'_{\zeta_t, i}$ ($i=l, r$). Starting from \eqref{dt zeta eqn} and multiplying the weight $\al^{-\ga-2}$ on both sides, we obtain
\[
\begin{split}
\p_t\big(\al^{-\ga-2}\ttze\big)=\al^{-\ga-2}\dt \ttze =&\al^{-\ga-2}\p_t(b_{\zeta}^{\f12})\tv\cdot \wt n_t+\al^{-\ga-2}b_{\zeta}^{\f12}\p_t\tv\cdot \wt n_t+\al^{-\ga-2}b_{\zeta}^{\f12}\tv\cdot \p_t\wt n_t\\
=&-\al^{-\ga-2}b_{\zeta}^{-\f12}(\dtx x)^{-1}\dtx\tzeta\big[-(\dtx x)^{-2}\dtx\p_t x\dtx\tzeta+(\dtx x)^{-1}\dtx\p_t\tzeta\,\big]\tv\cdot \wt n_t\\
&+\al^{-\ga-2}b_{\zeta}^{\f12}\big(\wt D_t \tv-u_1\dtx \tv\big)\cdot \wt n_t+\al^{-\ga-2}b_{\zeta}^{\f12}\tv\cdot \big(\wt D_t\wt n_t-u_1\dtx \wt n_t\big).
\end{split}
\]
Substituting \eqref{dt tzeta expression}, the Euler equation \eqref{Euler} and \eqref{Dtn expression} into the equation above, one has
\begin{equation}\label{zeta tt eqn}
\begin{split}
\al^{-\ga-2}\dt \ttze =&-\al^{-\ga-2}b_{\zeta}^{-\f12}(\dtx x)^{-1}\dtx\tzeta\big[(\dtx x)^{-1}\dtx\wt{\dt\zeta}-(\dtx x)^{-1}\p_t x(\dtx x)^{-2}\dtx^2 x\dtx\tzeta\\
&+(\dtx x)^{-1}\p_t x(\dtx x)^{-1}\dtx^2\tzeta\,\big]\tv\cdot \wt n_t+\al^{-\ga-2}b_{\zeta}^{\f12}\big(-\wt{\na P}-u_1\dtx \tv\big)\cdot \wt n_t\\
&+\al^{-\ga-2}b_{\zeta}^{\f12}\tv\cdot \big(-(\na_{\tau_t}v\cdot \wt n_t)\tau_t-u_1\dtx \wt n_t\big).
\end{split}
\end{equation}
Taking the limit when $\tx\rightarrow \pm\infty$ and checking the right side term by term, we conclude
\[
a'_{\zeta_t, i}=0,\quad i=l, r.
\]
Notice that when there is gravity, one has
\[
a'_{\zeta_t, i}=\infty.
\]

Therefore, we conclude that
\begin{equation}\label{Dt al a w}
\begin{split}
\wt D_t\big(\al^\ga a_{w_t}\big)=&-(\ga+1)(\ga+2)\beta_r^2\al^\ga\chi_r 2b_rb_r' a_{\zeta_t, l}\\
&+(\ga+1)(\ga+2)(\dtx x)^{-1}(\wt {\underline v}_1-\dt x)\big((\al^\ga\chi_l)'a_{\zeta_t, l}+(\al^\ga\chi_r)'\beta_r^2b_r^2a_{\zeta_t, r}\big).
\end{split}
\end{equation}

Besides, recalling \eqref{estimate a phi}, \eqref{a v expression}, \eqref{ap eqn} and Remark \ref{dt T corner}, we know immediately that 
\begin{equation}\label{time rate}
\dt a_{\Phi_z}= \dt a_v=\dt a_p=0.
\end{equation}
These happen due to our settings of the time derivatives which own higher-order weights since $\ga+1>0$.

\section{A priori estimates} \label{energy}


we define the order-k energy ($k\ge 0$) on the surface $\G_t$ of the strip domain $\cSt$:
\begin{equation}\label{energy on S}
\begin{split}
E_{k,\ga}(t)=&\big|\alpha^{-\ga}\dtx^k(\wt D_t w-\al^\ga a_{w_t})\big|^2_{L^2}
+\big|\ta^{\frac12}\alpha^{-\ga}|\cT'|^{-\frac12}\dtx^k w\big|^2_{L^2}\\
&+\big(\tN_0(\ta^{\frac12}\alpha^{-\ga}|\cT'|^{-\frac12}\dtx^k w), \ta^{\frac12}\alpha^{-\ga}|\cT'|^{-\frac12}\dtx^k w\big)_{L^2},
\end{split}
\end{equation}
 and the lower-order energy
\[
\begin{split}
E_0(t)=&\big\|\al^{-1}\cT'-T_{1,c}\big\|^2_{L^2}+\big|\al^{-\ga-2}\ttze-a_{\zeta_t}\big|^2_{L^2} +\big\|\al^{-\ga-3}\na\Phi-(a_{\Phi_x}, a_{\Phi_z})^T\big\|^2_{L^2}+\big\|\al^{-2\ga-4}\tP-a_p\big\|^2_{L^2}\\
&+|a_{\zeta_t, i}|^2_{L^\infty}+|b|^2_{L^\infty}+|b_r|^2_{L^\infty}+|b_{r2}|^2_{L^\infty}
\end{split}
\]
where $T_{1,c}$, $a_{\zeta_t}$, $a_{\Phi_x}$, $a_{\Phi_z}$, and $a_p$ are the corresponding weighted limits given in \eqref{T1c def}, \eqref{a zeta t def}, Proposition \ref{v estimate} and \eqref{a p def}  respectively.

The total energy is then defined by 
\[
\cE(t)=\sum_{l\le k}E_{l,\ga}(t)+E_0(t).
\]

The main theorem of our paper is presented below.
\begin{theorem}\label{energy estimate} Assume that  the integer $k$ and real $\ga$ satisfy
\[
2\le k\le \min\{\f\pi\om, \f\pi{\om_r}\}, \quad 0<\ga+1\le \min\{\f{2\pi}{\om}, \f{2\pi}{\om_r}\}
\]
and $Q_E(\cdot, \cdot, \cdot), P(\cdot)$ are  fixed polynomials with positive constant coefficients depending only on $k, \ga, \om, \om_r$ and the bound $L$  from Definition \ref{initial value bound}.

Let  a solution to the water-waves system \eqref{Euler}-\eqref{P condi} be given by the free surface graph function $\zeta$ and its time derivative $\dt\zeta$ with $\al^{-1}\dtx\tzeta\in H^{k+3/2}(\RR)$ and $\al^{-\ga-2}\ttze-a_{\zeta_t}\in H^{k+2}(\RR)$ and the initial values are chosen from $\Lam_0$. Moreover, assume that  Assumption \ref{assump on a} holds initially for $t=0$. 

Then there exists $T^*>0$ depending only on $Q_E(0)=Q_E\big(E_0(0), \big|\al w(0)|^2_{H^{k+1/2}}, \big|\al^{-\ga}\wt D_t w(0)-a_{w_t}(0)\big|^2_{H^k}\big)$ 
such that when $T\le T^*$,
we have $(\zeta,  \cT, \Phi, P)\in \Lam(T,k)$  and the following  estimate holds 
\[
\cE(t)\le \cE(0)+\int^t_0P\big(\cE(s)\big) ds,\qquad \forall t\in [0, T].
\]
\end{theorem}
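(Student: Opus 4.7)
The plan is to perform an $L^2$-based energy estimate on equation \eqref{new w eqn} after applying $\dtx^k$, using the weighted elliptic theory of Section \ref{elliptic estimate} and the weighted norm estimates of $\cT$, $\tv$, $\tP$, $\ta$ from Section \ref{equation} to close the nonlinear terms by the total energy $\cE(t)$. I first differentiate \eqref{new w eqn} by $\dtx^k$ to produce a wave-type equation for $W_k := \dtx^k w$ of the form
\[
\wt D_t\bigl(\wt D_t W_k - \al^\ga \dtx^k a_{w_t}\bigr) + \wt\ma |\cT'|^{-1}\widetilde\cN_0 W_k = \dtx^k \wt R + \cK_k,
\]
where $\cK_k$ collects all commutators $[\dtx^k,\wt D_t^2]w$, $[\dtx^k, \wt\ma|\cT'|^{-1}\widetilde\cN_0]w$, and the contribution of $\wt D_t(\al^\ga a_{w_t})$ computed in \eqref{Dt al a w}. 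I then take the $L^2$ inner product against the natural dual quantity $\al^{-2\ga}\bigl(\wt D_t W_k - \al^\ga \dtx^k a_{w_t}\bigr)$, which is exactly what generates the three terms in $E_{k,\ga}(t)$ after using the self-adjointness and positivity of $\widetilde\cN_0$ (Lemma \ref{basic DN estimate}).

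The key step is to convert the resulting integral identity into $\frac{d}{dt}E_{k,\ga}(t) \le P(\cE(t))$. The kinetic part is immediate after an integration by parts in $t$ using the transport structure $\wt D_t = \dt + u_1\dtx$, producing a boundary harmless term proportional to $\dtx u_1$ times $|\al^{-\ga}(\wt D_t W_k - \al^\ga\dtx^k a_{w_t})|^2$. The potential part requires matching the cross term
\[
\bigl(\wt\ma |\cT'|^{-1}\widetilde\cN_0 W_k,\ \al^{-2\ga}(\wt D_t W_k - \al^\ga \dtx^k a_{w_t})\bigr)_{L^2}
\]
with $\tfrac{1}{2}\frac{d}{dt}\bigl(|\ta^{1/2}\al^{-\ga}|\cT'|^{-1/2}W_k|^2_{L^2} + (\widetilde\cN_0(\ta^{1/2}\al^{-\ga}|\cT'|^{-1/2}W_k), \ta^{1/2}\al^{-\ga}|\cT'|^{-1/2}W_k)_{L^2}\bigr)$. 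The discrepancy is an error involving $\wt D_t\wt\ma$, $\wt D_t\cT'$, commutators $[\widetilde\cN_0,\ta^{1/2}\al^{-\ga}|\cT'|^{-1/2}]$, and $[\widetilde\cN_0,\wt D_t]$; all of these are controlled by Proposition \ref{DN weighted estimate} and Proposition \ref{DN commutator weighted estimate}, together with the $L^\infty$ bound on $\al^{-2\ga-3}\dt\ta$ from Proposition \ref{a estimate} and the weighted estimate of $\dt\cT$ from Proposition \ref{dtT estimate}.

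Once these structural computations are in place, the right-hand side is bounded schematically by $|\dtx^k \wt R|_{L^2_{\al^{-\ga}}} + |\cK_k|_{L^2_{\al^{-\ga}}}$, and each term is controlled by inspecting the explicit expressions for $R_1, R_2$ in \eqref{R2}: they involve at most two derivatives of $v$ and $P$, together with $w$ and $\na_{n_t}n_t$, all of which are estimated in weighted norms by Proposition \ref{v estimate higher order} and Proposition \ref{tPvv estimate}, with the kinematic identity and Proposition \ref{dt Ka estimate} converting back to $|\al^{-\ga-2}\ttze - a_{\zeta_t}|$. Differentiating the lower-order pieces of $E_0(t)$ in time is straightforward: $|a_{\zeta_t,i}|$, $|b|$, $|b_r|$, $|b_{r2}|$ are constant in time by Remark \ref{dt T corner} and \eqref{time rate}, while the weighted $L^2$ pieces of $\cT'$, $\ttze$, $\na\tPhi$, $\tP$ follow from Propositions \ref{dxT estimate}--\ref{v estimate higher order} and \ref{tPvv estimate}. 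Finally, the weighted Taylor sign assumption is propagated on a short time interval $[0,T^*]$ using the $L^\infty$ bound on $\al^{-2\ga-3}\dt\ta$ of Proposition \ref{a estimate}, which determines $T^*$ through the initial value $a_0$.

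The main obstacle I expect is the precise handling of the principal commutator between $\wt D_t$ and the operator $\wt\ma|\cT'|^{-1}\widetilde\cN_0$, because $\widetilde\cN_0$ is nonlocal and the weights $\al^{-\ga}$ do not commute with it; the commutator-with-weight estimate in Proposition \ref{DN commutator weighted estimate}(3) is designed exactly for this purpose, but in the highest-order case one must be careful that no derivative of order $k+1$ on $w$ appears without a weighted $\widetilde\cN_0^{1/2}$ equivalent (absorbable by the $\widetilde\cN_0$ piece of $E_{k,\ga}$). A secondary obstacle is controlling the term $\wt D_t(\al^\ga a_{w_t})$ globally in $\tx$: the cutoff derivatives $(\al^\ga\chi_i)'$ in \eqref{Dt al a w} are compactly supported, so they are handled by the local Sobolev theory and do not enter near the corners where the weights degenerate.
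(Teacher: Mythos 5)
Your proposal follows essentially the same strategy as the paper's proof: plug the modified equation \eqref{new w eqn} into $\tfrac{d}{dt}E_{k,\ga}(t)$, exploit the self-adjointness and positivity of $\widetilde\cN_0$ so that the main ``kinetic'' and ``potential'' pairings cancel up to commutators in $\dtx^k$, the weights $\al^{-\ga}$, $\ta^{1/2}$, $|\cT'|^{-1/2}$ and the transport vector field $u_1\dtx$, then close using Propositions \ref{CM estimate}--\ref{a estimate}, \ref{DN weighted estimate}, \ref{DN commutator weighted estimate} and the $\widetilde R$ decomposition, while the lower-order energy and the propagation of the Taylor sign are handled via Remark \ref{dt T corner}, \eqref{time rate} and the $L^\infty$ bound on $\al^{-2\ga-3}\dt\ta$. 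The only cosmetic difference is that you apply $\dtx^k$ to the equation first and then pair against $\al^{-2\ga}(\wt D_t W_k-\al^\ga\dtx^k a_{w_t})$, whereas the paper differentiates $E_{k,\ga}$ in time first and then substitutes the equation into $I_1$; these lead to the same commutator terms $I_{11}$--$I_{18}$ and $B_1,B_2,B_3$, so the argument is the same in substance. One ingredient you leave implicit but should make explicit is the equivalence step (Lemma \ref{equiv energy}): the right-hand side estimates naturally produce $|\al w|_{H^{k+1/2}}$ and $|\al^{-\ga}\wt D_t w-a_{w_t}|_{H^k}$, and the weighted Taylor sign and Lemma \ref{basic product}(3) are needed to show these are controlled by $\cE(t)$ so the differential inequality can actually close.
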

\begin{remark}\label{rmk on Taylor}  Based on the proof of this theorem, Assumption \ref{assump on a} holds for $t\in [0, T]$, i.e. the weighted Taylor's sign holds with a new lower bound $a_0/2$ in stead of $a_0$. 
\end{remark}

\begin{proof}
To begin with, direct computations lead to
\begin{equation}\label{dt E}
\begin{split}
\f12\f d{dt}E_{k,\ga}(t)=& 
\big(\al^{-\ga}\dtx^k\dt  (\wt D_t w-\al^\ga a_{w_t}), \al^{-\ga}\dtx^k(\wt D_t w-\al^\ga a_{w_t})\big)\\
&+\big(\dt(\ta^{1/2}\al^{-\ga}|\cT'|^{-1/2})\dtx^k w, \ta^{1/2}\al^{-\ga}|\cT'|^{-1/2}\dtx^k w\big)\\
&+ \big(\ta^{1/2}\al^{-\ga}|\cT'|^{-1/2}\dtx^k\dt w, \ta^{1/2}\al^{-\ga}|\cT'|^{-1/2}\dtx^k w\big)\\
&+\big(\tN_0(\ta^{1/2}\al^{-\ga}|\cT'|^{-1/2}\dtx^k w), \dt(\ta^{1/2}\al^{-\ga}|\cT'|^{-1/2})\dtx^k w\big)\\
&+\big(\tN_0(\ta^{1/2}\al^{-\ga}|\cT'|^{-1/2}\dtx^k w), \ta^{1/2}\al^{-\ga}|\cT'|^{-1/2}\dtx^k\dt w\big)\\
:=& I_1+I_2+\cdots+I_5,
\end{split}
\end{equation}
where we take $(\cdot,\cdot)$ for the inner product $(\cdot,\cdot)_{L^2(\RR)}$. 
\bigskip

 {\bf Step 1. Estimates for $I_1+I_5$.}  In fact, one could see that $I_1+I_5$ is the main part in the energy estimate. Plugging equation \eqref{new w eqn} into $I_1$, one has
\[
\begin{split}
I_1+I_5=&-\big(\al^{-\ga}\dtx^k [u_1\dtx (\wt D_t w-\al^\ga a_{w_t})],\,\al^{-\ga}\dtx^k(\wt D_t w-\al^\ga a_{w_t})\big)\\
&-\big(\al^{-\ga}\dtx^k[\ta|\cT'|^{-1}\tN_0 w], \al^{-\ga}\dtx^k(\wt D_t w-\al^\ga a_{w_t})\big)\\
& +\big(\al^{-\ga}\dtx^k\widetilde R, \al^{-\ga}\dtx^k(\wt D_t w-\al^\ga a_{w_t})\big)
+\big(\tN_0(\ta^{1/2}\al^{-\ga}|\cT'|^{-1/2}\dtx^k w), \ta^{1/2}\al^{-\ga}|\cT'|^{-1/2}\dtx^k\dt w\big).
\end{split}
\]
Using commutators to bring some cancellations, one derives
\begin{equation}\label{I2+I6}
\begin{split}
I_1+I_5
=&-\big(\al^{-\ga}\dtx^k [u_1\dtx (\wt D_t w-\al^\ga a_{w_t})],\,\al^{-\ga}\dtx^k(\wt D_t w-\al^\ga a_{w_t})\big)\\
&-\big(\al^{-\ga}[\dtx^k, \ta]|\cT'|^{-1}\tN_0  w, \al^{-\ga}\dtx^k(\wt D_t w-\al^\ga a_{w_t})\big)
\\
& -\big(\al^{-\ga}\ta[\dtx^k, |\cT'|^{-1}]\tN_0w, \al^{-\ga}\dtx^k(\wt D_t w-\al^\ga a_{w_t})\big)+\big(\al^{-\ga}\dtx^k\widetilde R, \al^{-\ga}\dtx^k(\wt D_t w-\al^\ga a_{w_t})\big)
\\
&-\big(\tN_0(\ta^{1/2}\al^{-\ga}|\cT'|^{-1/2}\dtx^k w), \ta^{1/2}\al^{-\ga}|\cT'|^{-1/2}\dtx^k (u_1\dtx w)\big)\\
&+\big(\tN_0(\ta^{1/2}\al^{-\ga}|\cT'|^{-1/2}\dtx^k w), \ta^{1/2}\al^{-\ga}|\cT'|^{-1/2}\dtx^k (\al^{\ga}a_{w_t})\big)\\
& +\big([\tN_0, \ta^{1/2}]\al^{-\ga}|\cT'|^{-1/2}\dtx^k w, \ta^{1/2}\al^{-\ga}|\cT'|^{-1/2}\dtx^k(\wt D_t w-\al^\ga a_{w_t})\big)\\
& +\big(\ta^{1/2}[\tN_0, \al^{-\ga}|\cT'|^{-1/2}] \dtx^k w, \ta^{1/2}\al^{-\ga}|\cT'|^{-1/2}\dtx^k(\wt D_t w-\al^\ga a_{w_t})\big)\\
:=&I_{11}+I_{12}+\cdots+I_{18},
\end{split}
\end{equation}
where careful weighted commutators estimates are needed. We  firstly deal with all these terms except  for $I_{14}$, and the  term $I_{14}$ (which involves $\widetilde R$) is handled later.
\bigskip

\noindent - $I_{11}$ estimate. Direct computations and integration by parts lead to
\[
\begin{split}
I_{11}=&-\big(\al^{-\ga}[\dtx^k,  u_1]\dtx (\wt D_t w-\al^\ga a_{w_t}),\,\al^{-\ga}\dtx^k(\wt D_t w-\al^\ga a_{w_t})\big)\\
&-\big(\al^{-\ga}u_1\dtx\dtx^k (\wt D_t w-\al^\ga a_{w_t}),\,\al^{-\ga}\dtx^k(\wt D_t w-\al^\ga a_{w_t})\big)\\
=&-\big(\al^{-\ga}[\dtx^k,  u_1]\dtx(\wt D_t w-\al^\ga a_{w_t}),\,\al^{-\ga}\dtx^k(\wt D_t w-\al^\ga a_{w_t})\big)\\
&+\f12\big(\dtx(\al^{-2\ga}u_1)\dtx^k (\wt D_t w-\al^\ga a_{w_t}),\, \dtx^k(\wt D_t w-\al^\ga a_{w_t})\big),
\end{split}
\]
where we recall $u_1$ from \eqref{Dt transform}.  Noticing that
\[
\dtx \wt{\underline v}=\dtx \big(v(t, x, \zeta)\circ \cT\big)=\dtx x \big( \wt{\dx v}|_{\G_t}+(\dtx x)^{-1}\dtx \tzeta\,\wt{\p_z v}|_{\G_t}\big)
\]
where \eqref{na u transform} is applied to $\wt{\p v}$.  So we count the derivatives and apply  Lemma \ref{embedding}, Corollary \ref{dx x estimate} and Corollary \ref{estimate of x z}  to obtain
\[
\begin{split}
|I_{11}|&\le C\big( |u_1|_{W^{k,\infty}}+\big|\al^{2\ga}\dtx(\al^{-2\ga}u_1)\big|_{L^\infty}\big)\big|\al^{-\ga}(\wt D_t w-\al^\ga a_{w_t})\big|_{H^k}\\
&\le Q(\cT', w, k)\big(\big| \al^{-\ga-2}\tv-a_v\big|_{H^{k+1}}+\|a_v\|_{L^\infty}+\big|\al^{-(\ga+3)/2}\dt x\big|_{H^{k+1}}\big)\big|\al^{-\ga}\wt D_t w-a_{w_t}\big|^2_{H^k}.
\end{split}
\]
Besides,   using Proposition \ref{dtT estimate} and Proposition \ref{v estimate higher order}, we   arrive at the following estimate:
\[
|I_{11}|\le Q(\cT', \na\Phi, \ttze, w, k)\big(1+\big|\al^{-\ga}\wt D_t w-a_{w_t}\big|_{H^{k-1}}+|a_{\zeta_t, i}|_{L^\infty}\big)\big|\al^{-\ga}\wt D_t w-a_{w_t}\big|^2_{H^k}.
\]

\medskip

\noindent - $I_{12}$ estimate.
We have by \eqref{T prime -1}, Lemma \ref{embedding}, Corollary \ref{dx x estimate} and Corollary \ref{estimate of x z} that
\[
\begin{split}
|I_{12}|&\le \big|\al^{-\ga}[\dtx^k, \ta]|\cT'|^{-1}\tN_0w\big|_{L^2}\big|\al^{-\ga}\dtx^k(\wt D_t w-\al^\ga a_{w_t})\big|_{L^2}\\
&\le Q(\cT', w, 1) \big|\al^{-\ga-2}\ta\big|_{W^{k,\infty}}\big|\al\tN_0w\big|_{H^{k-1}}\big|\al^{-\ga}\wt D_t w-a_{w_t}\big|_{H^k}.
\end{split}
\]

Using Proposition \ref{a estimate}, we obtain 
\[
\begin{split}
|I_{12}|
&\le Q(\cT', w, 1) \big(\big|\al^{-2\ga-3}\ta-\dtz a_p\big|_{H^{k+1}}+| a_{p, i}|_{W^{k+1,\infty}_\tz}\big)\big|\al w\big|_{H^{k}}\big|\al^{-\ga}\wt D_t w-a_{w_t}\big|_{H^k}\\
&\le Q(\cT', \na\Phi, \tP, w, k) \big(1+\big|\al^{-\ga}\wt D_t w-a_{w_t}\big|_{H^{k-1}}+|a_{\zeta_t, i}|^2_{L^\infty}\big)\big|\al^{-\ga}\wt D_t w-a_{w_t}\big|_{H^k}.
\end{split}
\]
\medskip

\noindent - $I_{13}$ estimate.
Similar arguments as for $I_{12}$, we arrive at 
\[
\begin{split}
I_{13}&\le Q(\cT', w, k-1)  \big|\al^{-2\ga-3}\ta\big|_{L^\infty}\big|\al w\big|_{H^{k}}\big|\al^{-\ga}\wt D_t w-a_{w_t}\big|_{H^k}\\
&\le Q(\cT', \na\Phi, \ttze, \tP, w, k) \big(1+\big|\al^{-\ga}\wt D_t w-a_{w_t}\big|_{L^2}+|a_{\zeta_t, i}|^2_{L^\infty}\big)\big|\al^{-\ga}\wt D_t w-a_{w_t}\big|_{H^k}
\end{split}
\]
where Proposition \ref{a estimate} and Proposition \ref{v estimate higher order}  are applied.
\medskip

\noindent - $I_{15}$ estimate.  Similarly as for $I_{11}$, we have
\[
\begin{split}
I_{15}=&-\f12\big(\tN_0(\ta^{1/2}\al^{-\ga}|\cT'|^{-1/2}\dtx^k w), \ta^{1/2}\al^{-\ga}|\cT'|^{-1/2}[\dtx^k, u_1]\dtx w\big)\\
&+\f12\big(\tN_0(\ta^{1/2}\al^{-\ga}|\cT'|^{-1/2}\dtx^k w), u_1\dtx(\ta^{1/2}\al^{-\ga}|\cT'|^{-1/2})\dtx^k w\big)\\
&+\f12\big([u_1\dtx, \tN_0](\ta^{1/2}\al^{-\ga}|\cT'|^{-1/2}\dtx^k w), \ta^{1/2}\al^{-\ga}|\cT'|^{-1/2}\dtx^k w)\big).
\end{split}
\]
Applying Lemma \ref{basic product}, Lemma \ref{basic DN estimate}, Corollary \ref{dx x estimate}, Corollary \ref{estimate of x z}  and Proposition 3.18 \cite{Lannes} (for the standard commutator estimate involving $\tN_0$ in the third term), we obtain
\[
\begin{split}
|I_{15}|
\le & C\big|\ta^{1/2}\al^{-\ga}|\cT'|^{-1/2}\dtx^k w\big|_{H^{1/2}}\big(\big|\ta^{1/2}\al^{-\ga}|\cT'|^{-1/2}[\dtx^k, u_1]\dtx w\big|_{H^{1/2}}\\
&+\big|u_1\dtx(\ta^{1/2}\al^{-\ga}|\cT'|^{-1/2})\dtx^k w\big|_{H^{1/2}}\big)+C |u_1|_{W^{1,\infty}}\big|\ta^{1/2}\al^{-\ga}|\cT'|^{-1/2}\dtx^k w\big|^2_{L^2}\\
 \le & C\big|\al^{-\ga-3/2}\ta^{1/2}|^2_{W^{1,\infty}}\big(1+\big|1-\al^{1/2}|\cT'|^{-1/2}\big|^2_{H^2}\big)|u_1|_{H^{k+1/2}}
\big|\al w\big|^2_{H^{k+1/2}}.
\end{split}
\]
Moreover, direct computations under Assumption \ref{assump on a}  and applying Proposition \ref{a estimate}, Corollary \ref{dx x estimate},  Corollary \ref{estimate of x z} , one has
\[
\big|\al^{-\ga-3/2}\ta^{1/2}\big|_{W^{1,\infty}}\le Q(\cT', \na\Phi, \ttze, \tP, w, 1) \big(1+\big|\al^{-\ga}\wt D_t w-a_{w_t}\big|_{L^2}+|a_{\zeta_t}|^2_{L^\infty}\big)
\]
and
\[
\big|1-\al^{1/2}|\cT'|^{-1/2}\big|_{H^2}\le Q(\cT', w, 1),
\]
which together with Proposition  \ref{dtT estimate}, Proposition \ref{dt Ka estimate} and Proposition \ref{v estimate higher order}  results in
\[
|I_{15}|\le Q(\cT', \na\Phi, \ttze, \tP, w, k+1/2) \big(1+\big|\al^{-\ga}\wt D_t w-a_{w_t}\big|_{H^{k-3/2}}+|a_{\zeta_t, i}|_{L^\infty}\big).
\]

\medskip

\noindent - $I_{16}$ estimate. Similarly as in the previous estimate, we obtain
\[
\begin{split}
|I_{16}|
 & \le C\big|\ta^{1/2}\al^{-\ga}|\cT'|^{-1/2}\dtx^k w\big|_{H^{1/2}}\big|\ta^{1/2}\al^{-\ga}|\cT'|^{-1/2}\dtx^k (\al^{\ga}a_{w_t})\big|_{H^{1/2}}\\
& \le Q(\cT', \na\Phi, \ttze, \tP, w, k+1/2) \big(1\big|\al^{-\ga}\wt D_t w-a_{w_t}\big|_{H^{k-1}}+|a_{\zeta_t, i}|_{L^\infty}\big)|a_{\zeta_t, i}|_{L^\infty}.
\end{split}
\]

\medskip

\noindent - $I_{17}$ estimate. We have
\[
\begin{split}
|I_{17}|&\le C\big|[\ta^{1/2}, \tN_0]\al^{-\ga}|\cT'|^{-1/2}\dtx^kw\big|_{L^2}\big||\cT'|^{-1/2}\ta^{1/2}\big|_{L^\infty}\big|\al^{-\ga}\wt D_t w-a_{w_t}\big|_{H^k}\\
&\le C\big|\al^{-\ga-3/2}\ta^{1/2}\big|_{W^{2,\infty}}\big|\al^{\ga+3/2}\al^{-\ga}|\cT'|^{-1/2}\dtx^kw\big|_{H^{1/2}}
\big|\al^{-1/2}\ta^{1/2}\big|_{L^\infty}\big|\al^{-\ga}\wt D_t w-a_{w_t}\big|_{H^k},
\end{split}
\]
where we use Proposition \ref{DN commutator weighted estimate} (1) for the commutator $[\ta^{1/2}, \tN_0]$ with $\ga_1=-\ga-3/2$,  $\ga_2=\ga+3/2$ and $g=\al^{-\ga}|\cT'|^{-1/2}\dtx^kw$. Therefore, we require also
\[
\ga+3/2=|\ga+3/2|< \min\{\f\pi{2\om}, \f\pi{2\om_r}\}.
\]
Similar arguments as above lead to the following estimate:
\[
|I_{17}| \le Q(\cT', \na\Phi, \ttze, \tP, w, 1) \big(1+\big|\al^{-\ga}\wt D_t w-a_{w_t}\big|_{L^2}+|a_{\zeta_t, i}|_{L^\infty}\big)|\al w|_{H^{k+1/2}}\big|\al^{-\ga}\wt D_t w-a_{w_t}\big|_{H^k}.
\]

\medskip

\noindent - $I_{18}$ estimate.
Applying similar analysis as above again, we derive
\[
\begin{split}
|I_{18}|&\le C \big|\al^{-\ga-3/2}\ta^{1/2}\big|_{L^\infty}\big|\al^{\ga+3/2}[\tN_0, \al^{-\ga}|\cT'|^{-1/2}]\dtx^k w\big|_{L^2}\big|\al^{-1/2}\ta^{1/2}\big|_{L^\infty}\big|\al^{-\ga}\wt D_t w-a_{w_t}\big|_{H^k}\\
&\le   Q(\cT', \na\Phi, \ttze, \tP, w, 1) \big(1+\big|\al^{-\ga}\wt D_t w-a_{w_t}\big|_{L^2}+|a_{\zeta_t, i}|_{L^\infty}\big)|\al w|_{H^{k+1/2}}\big|\al^{-\ga}\wt D_t w-a_{w_t}\big|_{H^k}.
\end{split}
\]

\bigskip

{\bf Step 2. The remainder part: Estimates for $I_{14}$. } Recalling from \eqref{new w eqn} that
\[
\widetilde R=R_1\circ \cT+R_2\circ \cT-\wt D_t(\al^\ga a_{w_t})\qquad\hbox{on}\quad \G_t
\]
where $R_1$, $R_2$ and $\wt D_t(\al^\ga a_{w_t})$ are expressed in \eqref{J eqn 1},\eqref{R2} and  \eqref{Dt al a w}.

Therefore, we split $I_{24}$ into the following three parts:
\[
\begin{split}
I_{14}=&\big(\al^{-\ga}\dtx^k\wt R_1, \al^{-\ga}\dtx^k(\wt D_t w-\al^\ga a_{w_t})\big)+\big(\al^{-\ga}\dtx^k\wt R_2, \al^{-\ga}\dtx^k(\wt D_t w-\al^\ga a_{w_t})\big)\\
&-\big(\al^{-\ga}\dtx^k\wt D_t(\al^\ga a_{w_t}), \al^{-\ga}\dtx^k(\wt D_t w-\al^\ga a_{w_t})\big)\\
:=&B_1+B_2+B_3.
\end{split}
\]

\medskip
\noindent - Term $B_1$. Checking term by term, we know directly from \eqref{J eqn 1} that  the terms  in $R_1$ are always like
\[
(\partial v)^2\partial n_t,  \partial^2 v\partial v,  \partial^2P\partial n_t.
\]
Except $\p^2\tzeta$ parts with $\al^{-1}$, we know that
 $R_1\circ \cT|_{\G_t}$ contains  terms like
 \[
\al^{-3}(\partial \tv)^2, \al^{-3}\partial^2\tv\partial \tv, \al^{-3}\partial^2\tP.
 \]

As a result,  applying Proposition \ref{CM estimate}, Corollary \ref{dx x estimate}, Corollary \ref{estimate of x z} and Corollary \ref{zeta to w estimate} similarly as before and remembering that $\ga>-1$, we obtain for $k\ge 1$  that
\[
\begin{split}
&|B_1|\le C\big|\al^{-\ga}\wt R_1\big|_{H^k}\big|\al^{-\ga}\wt D_t w-a_{w_t}\big|_{H^k}\\
&\le  Q(\cT', w, k)\big|\al^{-1}\tzeta\big|_{H^{k+2}}
\big(\big|\al^{-\ga-3}(\partial \tv)^2\big|_{H^{k}}+\big|\al^{-\ga-3}\partial^2\tv\partial \tv\big|_{H^{k}}+\big|\al^{-\ga-3}\partial^2\tP\big|_{H^{k}}\big)\big|\al^{-\ga}\wt D_t w-a_{w_t}\big|_{H^k}\\
&\le Q(\cT', w, k)\big[\big|\al^{-\ga-2}\tv-a_v\big|^2_{H^{k+2}}+ \big|\al^{-2\ga-4}\tP-a_p\big|_{H^{k+2}}+|\al^{\ga+1}\chi_l|_{H^k}\big(|a_v|_{W^{k+2, \infty}}+|a_{p, i}|_{W^{k+2, \infty}_\tz}\big)\big]\times\\
&\quad \big|\al^{-\ga}\wt D_t w-a_{w_t}\big|_{H^k}.
\end{split}
\]

Using Proposition \ref{tPvv estimate}, Proposition \ref{v estimate higher order} and substituting the estimates above into $B_1$ estimate,  we conclude
\[
\begin{split}
|B_1|&\le Q(\cT', \na\Phi, \ttze, \tP, w, k+1/2)\big(1++\big|\al^{-\ga}\wt D_t w-a_{w_t}\big|_{H^k}+|a_{\zeta_t, i}|_{L^\infty}\big)\big|\al^{-\ga}\wt D_t w-a_{w_t}\big|_{H^k}
\end{split}
\]
\medskip

\noindent - Term $B_2$. Similarly as  above, one has from \eqref{R2}   the following estimates for $k\ge 1$:
\[
\begin{split}
|B_2|&\le C\big|\al^{-\ga}\wt R_2\big|_{H^k} \big|\al^{-\ga}\wt D_t w-a_{w_t}\big|_{H^k}\\
&\le Q(\cT', w, k)\big(\big|\al^{-\ga-2}\tv\big|^2_{H^{k+2}}+ \big|\al^{-2\ga-4}\tP\big|_{H^{k+2}}+\big|\al^{-\ga-1}\dtz\tilde u_p\big|_{H^{k}}
\big)\big|\al^{-\ga}\wt D_t w-a_{w_t}\big|_{H^k}.
\end{split}
\]

Now it remains  to deal with $u_p$ term, which is also a higher-order-derivative term since system \eqref{u1 system} involves $\p^3P$ and $\cT'''$ at $\G_b$.  Therefore, we need a close look at this boundary term.

In fact,  transforming system \eqref{u1 system} directly to an equivalent system in $\cSt$ as system \eqref{tPvv eqn} for $\tP$ and applying Lemma \ref{trace}, Proposition \ref{CM estimate}, Proposition \ref{Weighted elliptic estimate on S} and Corollary \ref{estimate of x z} as before, we have
\begin{equation}\label{up estimate mid}  
\begin{split}
&\big|\al^{-\ga-1}\dtz\tilde u_p\big|_{H^{k}}\le C\big\|\al^{-\ga-1}\tilde u_1\big\|_{H^{k+3/2}}\\
&\le  Q(\cT',  w, k)\big[\big\|\al^{-\ga-2}\p\wt \kappa_\cH\p\tP\big\|_{H^{k-1/2}}+\big\|\al^{-\ga-2}\p\wt\kappa_\cH\p^2\tP\big\|_{H^{k-1/2}}+\big\|\al^{-\ga-2}\wt \kappa_\cH\p^2\tP\big\|_{H^{k-1/2}}\\
&\quad 
+\big\|\wt\kappa_\cH \al^{-\ga-2}\p(\p\tv\p\tv)\big\|_{H^{k-1/2}}+\big\|\al^{-\ga-3}\p\tv\p\tv\big\|_{H^{k+3/2}}
+\big\|\al^{-\ga-3}\tP\big\|_{H^{k+5/2}}+\big|\al^{-\ga-3}\p \tv\p\tv\big|_{H^{k+1}}\\
&\quad +\big|\al^{-\ga-2}\wt\kappa_\cH\p^2 \tP\big|_{H^k(\G_b)}+\big|\al^{-\ga-2}\wt\kappa_\cH\p \tP\big|_{H^k(\G_b)}
+\big(\big|\al^{-1}\cT'-T_{1,c}\big|_{H^{k+2}(\G_b)}+|T_{1,c}|_{L^\infty}\big)\times\\
&\quad\big|\al^{-\ga-3}\tP\big|_{H^{k+3}(\G_b)}
\big]
\end{split}
\end{equation}
where we omit the detailed estimates for $n_t$ and $\cT$ and only keep track of the highest-order term of $\cT$ on $\G_b$. 

Recalling the weights  for $w=\wt \kappa$, $\tv$ and $\tP$ and taking the following estimate of $\tv$ for example, we obtain
\[
\begin{split}
&\big\|\al^{-\ga-3}\dtx\tv\dtx\tv\big\|_{H^{k+3/2}}\\
&\le \big\|\al^{\ga+1}\big(\al^{-\ga-2}\dtx\tv-a_{v, 1}\big)\big(\al^{-\ga-2}\dtx\tv-a_{v, 1}\big)\big\|_{H^{k+3/2}}+C\big\|\al^{\ga+1}\al^{-\ga-2}\dtx\tv  a_{v, 1}\big\|_{H^{k+3/2}}\\
&\quad +C\|\al^{\ga+1}( a_{v, 1})^2\|_{H^{k+3/2}}\\
&\le Q(\cT', \na\tPhi, \ttze, w, k+1/2)\big(1+\big|\al^{-\ga}\wt D_t w-a_{w_t}\big|_{H^k}+|a_{\zeta_t}|^2_{L^\infty}\big)
\end{split}
\]
where Proposition \ref{v estimate higher order} is applied and $a_{v, 1}$ is defined in Lemma \ref{minus norm dx}.

As a result, thanks to Corollary \ref{higher order local}, the estimates for the other terms above in the right side of \eqref{up estimate mid} follow in a similar way. 

\medskip

Consequently, going back to the estimate for $B_2$ and applying Proposition \ref{tPvv estimate}, Proposition \ref{v estimate higher order} and Proposition \ref{dt Ka estimate}, we obtain
\[
\begin{split}
|B_2|\le  Q(\cT', \na\tPhi, \ttze,  \tP, w, k+1/2)\big(1+\big|\al^{-\ga}\wt D_t  w-a_{w_t}\big|^2_{H^k}+|a_{\zeta_t, i}|^2_{L^\infty}\big)\big|\al^{-\ga}\wt D_t  w-a_{w_t}\big|_{H^k}.
\end{split}
\]

\medskip

Besides, for term $B_3$, we have immediately the following estimate by  Proposition \ref{CM estimate} QQ:
\[
\begin{split}
|B_3|&\le C\big(\big|\al^{-1-(\ga+1)/2}\tv\big|_{W^{k,\infty}}+\big|\al^{-1-(\ga+1)/2}\p_t x\big|_{W^{k,\infty}}\big)\big|\al^{(\ga+1)/2}a_{\zeta_t}\big|_{L^2}\big|\al^{-\ga}\wt D_t  w-a_{w_t}\big|_{H^k}\\
&\le Q(\cT', \na\tPhi, \tP, w, k+1/2)\big(1+\big|\al^{-\ga}\wt D_t  w-a_{w_t}\big|^2_{H^k}+|a_{\zeta_t, i}|^2_{L^\infty}\big)\big|\al^{-\ga}\wt D_t  w-a_{w_t}\big|_{H^k}
\end{split}
\]

Summing up these estimates above, we finally conclude that
\[
\begin{split}
|I_{14}|\le& Q(\cT', \na\tPhi, \tP, w, k+1/2)\big(1+\big|\al^{-\ga}\wt D_t w-a_{w_t}\big|^2_{H^{k-1}}+\big|\al^{-\ga-2}\ttze-a_{\zeta_t}\big|^2_{L^2}
+|a_{\zeta_t, i}|^2_{L^\infty}\big)\times\\
& \big|\al^{-\ga}\wt D_t  w-a_{w_t}\big|_{H^k}.
\end{split}
\]

\bigskip

{\bf Step 3. Estimates for $I_2, I_3, I_4$.} We  handle these terms one by one.

To begin with, we have by Proposition \ref{CM estimate}, Proposition \ref{dtT estimate},  Proposition \ref{a estimate} and Proposition \ref{dt Ka estimate} that
\[
\begin{split}
|I_2|&\le \big|\al^{-\ga-1/2}|\cT'|^{-1/2}\ta^{1/2}\dt\big((\al^{-2\ga-3}\ta\big)^{1/2}|\cT'|^{-1/2}\big)\big|_{L^\infty}\big|\al\dtx^k w\big|^2_{L^2}\\
&\le C\big[\big|\al^{-2\ga-3}\dt\ta\big|_{L^\infty}+\big|\al^{-2\ga-3}\ta\big|_{L^\infty}\big|\al^{1/2}|\cT'|^{-5/2}\dt(|\cT'|^2)\big|_{L^\infty}\big]\big|\al w\big|^2_{H^k}\\
&\le Q(\cT', \na\tPhi, \ttze, \tP, w, 1)\big(1+\big|\al^{-\ga}\wt D_t w- a_{w_t}\big|_{H^2}
+|a_{\zeta_t, i}|_{L^\infty}\big)\big|\al w\big|^2_{H^k}
\end{split}
\]
Meanwhile, the estimate for $I_4$ follows similarly as above:
\[
|I_4|\le Q(\cT', \na\tPhi, \ttze, \tP, w, 2)\big(1+\big|\al^{-\ga}\wt D_t w- a_{w_t}\big|_{H^3}
+|a_{\zeta_t, i}|_{L^\infty}\big)\big|\al w\big|^2_{H^{k+1/2}}.
\]

On the other hand, $I_3$ can be handled in a similar way as $I_{11}$ and $I_{16}$:
\[
\begin{split}
I_3=&\big(\ta^{1/2}\al^{-\ga}|\cT'|^{-1/2}\dtx^k(\wt D_t w-\al^\ga a_{w_t}), \ta^{1/2}\al^{-\ga}|\cT'|^{-1/2}\dtx^k w\big)\\
&-\big(\ta^{1/2}\al^{-\ga}|\cT'|^{-1/2}\dtx^k(u_1\dtx w), \ta^{1/2}\al^{-\ga}|\cT'|^{-1/2}\dtx^k w\big)\\
&+\big(\ta^{1/2}\al^{-\ga}|\cT'|^{-1/2}\dtx^k(\al^\ga a_{w_t}), \ta^{1/2}\al^{-\ga}|\cT'|^{-1/2}\dtx^k w\big),
\end{split}
\]
so summing up these estimates above leads to 
\[
\begin{split}
|I_2|+|I_3|+|I_4|\le Q(\cT', \na\tPhi, \ttze,  \tP, w, 2)\big(1+\big|\al^{-\ga}\wt D_t w-a_{w_t}\big|_{H^k}+|a_{\zeta_t, i}|_{L^\infty}
+\big|\al w\big|_{H^{k}}\big)\big|\al w\big|_{H^{k}}
\end{split}
\]
\medskip

{\bf Step 4. The lower-order energy.}  We still need to deal with the lower-order energy $E_0(t)$. 

For the first term of $\cT'$, one has by applying Remark \ref{dt T corner}, Proposition \ref{dtT estimate} and Proposition \ref{dt Ka estimate} that
\[
\f12\f d{dt}\big|\al^{-1}\cT'-T_{1,c}\big|^2_{L^2}=\big(\al^{-1}\p_t\cT', \al^{-1}\cT'-T_{1,c}\big)\le Q(\cT', \ttze,  a_{\zeta_t}, w, 1).
\]

For $\ttze$ term, recalling \eqref{zeta tt eqn} and dealing all the terms as above lead to 
\[
\f12\f d{dt}\big|\al^{-\ga-2}\ttze-a_{\zeta_t}\big|^2_{L^2}=\big(\p_t(\al^{-\ga-2}\ttze), \al^{-\ga-2}\ttze-a_{\zeta_t}\big) \le Q(\cT', \na\tPhi, \ttze, \tP,  w,  1).
\]

For $\na\tPhi$ term, going back directly to  system \eqref{tPhi system} and taking $\dt$, one obtains immediately the following system 
\[
\begin{cases}
\Delta \dt\dtz\tPhi=0\qquad \hbox{in}\quad \cSt,\\
\dt\dtz\tPhi\big|_{\G_t}=-(\dtx\dt x)\wt{\dt\zeta}-(\dtx x)\dt\ttze,\quad \dt\dtz\tPhi\big|_{\G_b}=0.
\end{cases}
\]
where one can resort to \eqref{zeta tt eqn} for the expression of $\dt \ttze$.  Applying Proposition \ref{Dirichlet system estimate} and using similar arguments as above lead to directly 
\[
\big\|\al^{-\ga-3}\dt\dtz\tPhi\big\|_{L^2}\le Q(\cT',  \na\tPhi, \ttze,   \tP, w, 1).
\]

As a result, one has the following estimate
\[
\begin{split}
\f12\f d{dt}\big\|\al^{-\ga-3}\dtz\tPhi-a_{\Phi_z}\big\|^2_{L^2}&=\big(\al^{-\ga-3}\dt\dtz\tPhi-\dt a_{\Phi_z}, \al^{-\ga-3}\dtz\tPhi-a_{\Phi_z}\big)\\
&\le \big\|\al^{-\ga-3}\dt\dtz\tPhi\big\|_{L^2}\big\|\al^{-\ga-3}\dtz\tPhi-a_{\Phi_z}\big\|_{L^2}\\
&\le Q(\cT', \na\tPhi, \ttze,  \tP, w, 1).
\end{split}
\]
where \eqref{time rate} is applied.

Meanwhile, for $\tP$ term, recalling \eqref{time rate} again and using similar arguments as for $\tPhi$ above show that 
\[
\f12\f d{dt}\big\|\al^{-2\ga-4}\tP-a_p\big\|^2_{L^2}\le\big\|\al^{-2\ga-4}\dt\tP\big\|_{L^2}\big\|\al^{-2\ga-4}\tP-a_p\big\|_{L^2}\le Q(\cT',\na\tPhi,  \ttze,  \tP, w, 1).
\]

Consequently, using Remark \ref{dt T corner} and \eqref{time rate} again we  conclude for the lower-order energy part that
\[
\f12\f d{dt}E_0(t)\le Q(\cT', \na\tPhi,\ttze,  \tP, w,  1).
\]

\bigskip

 {\bf Step 5. The end of  the energy estimate.} 
Summing up  all these estimates from Step 1 to Step 4, we arrive at the following energy estimate:
\begin{equation}\label{energy ineq}
\frac 12 \frac d{dt}\cE(t)\le Q_k(t),
\end{equation}
where 
\[
Q_k(t)=Q\big(E_0(t), \big|\al w|^2_{H^{k+1/2}}, \big|\al^{-\ga}\wt D_t w-a_{w_t}\big|^2_{H^k}\big).
\]

To close the energy estimate, we still need to consider the relationship between $\big|\al w\big|_{H^{k+1/2}}$ and the higher-order term of $w$ in $E_{k,\ga}$.  
\begin{lemma}\label{equiv energy} Let integer $k\ge 2$. Then there hold
\[
\big|\al w|^2_{H^{k+1/2}}+\big|\al^{-\ga}\wt D_t w-a_{w_t}\big|^2_{H^k}\le C\cE(t),
\]
and
\[
\cE(t)\le  Q_k(t).
\]
\end{lemma}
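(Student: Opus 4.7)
The plan is to prove the two inequalities separately, the second being essentially algebraic weight-tracking, while the first relies on the coercivity of $\tN_0$ combined with the weighted Taylor sign.

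For the upper bound $\cE(t)\le Q_k(t)$, I would treat each $E_{l,\ga}$ with $l\le k$ in turn. The first piece $|\al^{-\ga}\dtx^l(\wt D_t w-\al^\ga a_{w_t})|^2_{L^2}$ is handled by commuting $\al^{-\ga}$ past $\dtx^l$; since on each of $supp\,\chi_l$ and $supp\,\chi_r$ the logarithmic derivatives of $\al^{-\ga}$ are constants, every commutator term is bounded by $|\al^{-\ga}\wt D_t w-a_{w_t}|_{H^l}$ plus a contribution from $|a_{\zeta_t,i}|_{L^\infty}$ (which lives in $E_0$). For the other two pieces, I use the upper bound $|\al^{-2\ga-3}\ta|_{L^\infty}\le C(1+|a_{\zeta_t,i}|_{L^\infty}^2+\cdots)$ coming from Proposition \ref{a estimate} together with Proposition \ref{CM estimate} to see that the multiplier $\ta^{1/2}\al^{-\ga}|\cT'|^{-1/2}$ is bounded by $C\al$ in $W^{1,\infty}$; Lemma \ref{basic product} and Lemma \ref{basic DN estimate}(3) then give a bound by $|\al w|^2_{H^{l+1/2}}\le |\al w|^2_{H^{k+1/2}}$. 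Finally $E_0(t)$ is already one of the arguments of $Q_k(t)$ by construction.

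For the lower bound, the key tool is the sharp coercivity estimate in Lemma \ref{basic DN estimate}(4). Applying it to $f_l=\ta^{1/2}\al^{-\ga}|\cT'|^{-1/2}\dtx^l w$ gives
\[
c|f_l|^2_{H^{1/2}}\le (\tN_0 f_l,f_l)+\mu|f_l|^2_{L^2}\le (1+\mu)E_{l,\ga}(t),
\]
since the $L^2$ norm of $f_l$ is itself one of the terms of $E_{l,\ga}$. Now observe that
\[
f_l=(\ta^{1/2}\al^{-\ga-3/2})\,(\al^{1/2}|\cT'|^{-1/2})\,(\al\,\dtx^l w).
\]
By Assumption \ref{assump on a} the first factor is bounded below by $\sqrt{a_0}$, by Proposition \ref{CM estimate} the second is bounded above and below by positive constants, and both factors lie in $W^{1,\infty}$ with bounds depending only on the quantities already controlled by $E_0$ (via Propositions \ref{a estimate} and \ref{CM estimate}). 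Lemma \ref{basic product}(3) therefore yields $|\al\,\dtx^l w|^2_{H^{1/2}}\le C|f_l|^2_{H^{1/2}}\le CE_{l,\ga}(t)$, and after moving $\al$ past $\dtx^l$ via a commutator (absorbed into lower order $E_{j,\ga}$, $j<l$) and summing over $l\le k$, one obtains $|\al w|^2_{H^{k+1/2}}\le C\cE(t)$. The bound on $|\al^{-\ga}\wt D_t w-a_{w_t}|^2_{H^k}$ follows by the same commutator manipulation applied to the first summand of $E_{k,\ga}$, since $\al^{-\ga}\dtx^k(\al^\ga a_{w_t})$ reduces to $\dtx^k a_{w_t}$ plus terms of the form $c_j\chi_{i,m}a_{\zeta_t,i}$, all controlled by $|a_{\zeta_t,i}|^2_{L^\infty}\le E_0$.

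The main obstacle is bookkeeping the weighted commutators carefully enough so that on each cut-off region the multiplier $\ta^{1/2}\al^{-\ga-1}|\cT'|^{-1/2}$ is genuinely bounded below by a positive constant (requiring \emph{both} the weighted Taylor sign from Assumption \ref{assump on a} \emph{and} the upper bound $|\cT'|\le C\al$ from Proposition \ref{CM estimate}), so that Lemma \ref{basic product}(3) can be applied to pass from $|f_l|_{H^{1/2}}$ to $|\al\dtx^l w|_{H^{1/2}}$ without losing the sharp constants. Once this bootstrap is set up, the rest is a straightforward induction on $l=0,1,\ldots,k$, using $E_{l-1,\ga}+E_0$ to absorb the lower order residues generated at step $l$.
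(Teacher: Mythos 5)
Your argument is correct and follows essentially the same route as the paper: the upper bound comes from the boundedness of $(\tN_0 f,f)$ by $|f|_{H^{1/2}}^2$ together with the $W^{1,\infty}$ bound on $\al^{-\ga-3/2}\ta^{1/2}\al^{1/2}|\cT'|^{-1/2}$, and the lower bound from the coercivity of $\tN_0$ plus Lemma \ref{basic product} (3) applied to exactly this factorization, using Assumption \ref{assump on a} and Proposition \ref{CM estimate}; your explicit commutator bookkeeping for moving $\al^{\pm\ga}$ and $\al$ past $\dtx^l$ is simply spelled out where the paper leaves it implicit. Only note that the coercivity you cite as ``Lemma \ref{basic DN estimate}(4)'' is the second half of part (3), and that within the bootstrap the lower bound on the Taylor coefficient at times $t>0$ is propagated from $t=0$ via the $\dt\ta$ estimate of Proposition \ref{a estimate} (cf. Remark \ref{rmk on Taylor}) rather than assumed outright.
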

\begin{proof}(Proof of the lemma) Comparing all the terms in $Q_k(t)$ with those in $E_{k,\ga}(t)$, we know that the key point here lies in the estimate of $\big|\al w\big|_{H^{k+1/2}}$.

We firstly prove the second inequality. In fact,   applying Lemma \ref{basic DN estimate} shows directly for $l\le k$ that 
\[
\begin{split}
c_1 \big|\ta^{\frac12}\alpha^{-\ga}|\cT'|^{-\frac12}\dtx^l w\big|^2_{H^{1/2}}&\le
\big|\ta^{\frac12}\alpha^{-\ga}|\cT'|^{-\frac12}\dtx^l w\big|^2_{L^2}
+\big(\tN_0(\ta^{\frac12}\alpha^{-\ga}|\cT'|^{-\frac12}\dtx^l w), \ta^{\frac12}\alpha^{-\ga}|\cT'|^{-\frac12}\dtx^lw\big)\\
&\le C_1\big|\ta^{\frac12}\alpha^{-\ga}|\cT'|^{-\frac12}\dtx^l w\big|^2_{H^{1/2}}
\end{split}
\]
where $c_1, C_1$ are positive constants depending on $\cSt$.

Moreover, one has in a similar way as in the estimates before that 
\[
\begin{split}
\big|\ta^{\frac12}\alpha^{-\ga}|\cT'|^{-\frac12}\dtx^l w\big|_{H^{1/2}}&=\big|\al^{-\ga-3/2}\ta^{\frac12}\alpha^{3/2}|\cT'|^{-\frac12}\dtx^l w\big|_{H^{1/2}}\\
& \le \big|\al^{-\ga-3/2}\ta^{\frac12}\alpha^{1/2}|\cT'|^{-\frac12}\big|_{W^{1,\infty}}|\al w|_{H^{l+1/2}}\\
&
\le Q(\cT', \na\tPhi, \ttze, \tP, w, 1)|\al w|_{H^{l+1/2}}.
\end{split}
\]
As  a result, the right side of the desired inequality is  finished.

\medskip

For the first inequality,  noticing that $\al^{-\ga-3/2}\ta^{\frac12}\alpha^{1/2}|\cT'|^{-\frac12}$ is  also bounded below by Assumption \ref{assump on a} for $t=0$, Proposition \ref{CM estimate} and Proposition \ref{a estimate},  we have thanks to Lemma \ref{basic product} (3) that
\[
\begin{split}
|\al\dtx^l w|_{H^{1/2}} &\le C\big(a^{-1}_0, \big|\al^{-\ga-3/2}\ta^{\frac12}\alpha^{1/2}|\cT'|^{-\frac12}\al\big|_{W^{1,\infty}}\big)\big|\al^{-\ga-3/2}\ta^{\frac12}\alpha^{1/2}|\cT'|^{-\frac12}\al\dtx^l w\big|_{H^{1/2}}\\
&\le  C\big|\ta^{\frac12}\alpha^{-\ga}|\cT'|^{-\frac12}\dtx^l w\big|_{H^{1/2}},
\end{split}
\]
where we use the bound in $\Lam(T, k)$ as well as the uniform bound $L$ from  Definition \ref{initial value bound} for the lower-order terms in the last inequality.
Therefore, the proof is finished.

\end{proof}

\bigskip

Now we can finally finish the proof of our theorem.  In fact, applying Lemma \ref{equiv energy} to the right side of \eqref{energy ineq}, we obtain
\[
\f12\f{d}{dt}\cE(t)\le P( \cE(t)).
\]
As a result,  the desired energy estimate is proved.
Moreover,  we see from this inequality  immediately that a small positive $T^*$ exists and depends only on the initial values.
\end{proof}

\begin{remark}\label{homo norm} The weighted norms can be better understood if we pull they back into the norms in he physical domain $\Om$. For the sake of simplicity, we only focus on the left corner and omit the details of computations, and the case for the right corner follows similarly. In fact, looking at the part of domain near the left corner, we know that  $|\cT'|\approx \al\approx r$ where $r$ is the radius with respect to the left corner point. 

Therefore, we have the following settings:

(1) For $\al w=\al \wt\kappa\in H^{k+1/2}$, direct transformation shows immediately that 
\[
r^{1/2}(r\p_r)^l\kappa\in L^2\quad\hbox{near the left corner},\quad l\le k,
\] 
which corresponds to the homogeneous norm defined in \cite{KMR}.

(2) For $\al^{-\ga}\wt D_t w-a_{w_t}\in H^k$, we only look at the $L^2$ norm for simplicity:
\[
r^{-1/2}(r^{-\ga}D_t\kappa-a_{w_t})\in L^2 \quad\hbox{near the left corner}.
\]

As a result, we show by the energy estimate that 
\[
\al^{-\ga-2}\tv-a_{v}\in H^{k+5/2}(\cS_0),\quad \al^{-2\ga-4}\tP-a_p\in H^{k+7/2}(\cS_0),
\] 
which means 
\[
r^{-1}(r^{-\ga-2}v-a_{v})\in L^2,\quad r^{-1}(r^{-2\ga-4}P-a_p)\in L^2 \quad\hbox{near the left corner}.
\]
Moreover, according to our estimates for time derivatives, we can also see that there are more weights in time derivatives, which means faster decay for time derivatives near the corners.
\end{remark}

\section*{Acknowledgements}
This work started from a discussion with Chongchun Zeng in 2019. The author would like to thank Chongchun Zeng, David Lannes and Chao Wang for fruitful discussions and helpful suggestions.  This work is supported by National Natural Science Foundation of China no.12071415 and no.12222116.

\end{document}